\newcommand*\tasklabelformat[1]{#1)}
\numberwithin{equation}{section}
\newtheorem*{rep@theorem}{\rep@title}
\newcommand{\newreptheorem}[2]{%
\newenvironment{rep#1}[1]{%
 \def\rep@title{#2 \ref{##1}}%
 \begin{rep@theorem}}%
 {\end{rep@theorem}}}
\theoremstyle{theorem}
\newtheorem{thm}{Theorem}[section]
\newtheorem*{thm*}{Theorem}
\theoremstyle{definition}
\newtheorem{prop}[thm]{Proposition}
\newtheorem*{prop*}{Proposition}
\newtheorem{defn}[thm]{Definition}
\newtheorem{lem}[thm]{Lemma}
\newtheorem{cor}[thm]{Corollary}
\newtheorem*{cor*}{Corollary}
\theoremstyle{remark}
\newtheorem{rem}[thm]{Remark}
\title{\vspace*{-1cm} On the metric structure of section ring}
\author
{Siarhei Finski
}
\date{}
\newcommand{\imun} {\sqrt{-1}}
\newcommand{\res}{{\rm{Res}}}
\newcommand{\comp}{\mathbb{C}}
\newcommand{\real}{\mathbb{R}}
\newcommand{\nat}{\mathbb{N}}
\newcommand{\dist}{{\rm{dist}}}
\newcommand{\enmr}[1]{\text{End}{(#1)}}
\newcommand{\ccal}{\mathscr{C}}
\newcommand{\dbar}{ \overline{\partial} }
\renewcommand{\Re}{\operatorname{Re}}
\renewcommand{\Im}{\operatorname{Im}}
\newcommand{\scal}[2]{\langle #1, #2 \rangle}
\DeclareFontFamily{OMX}{MnSymbolE}{}
\DeclareSymbolFont{MnLargeSymbols}{OMX}{MnSymbolE}{m}{n}
\DeclareFontShape{OMX}{MnSymbolE}{m}{n}{
    <-6>  MnSymbolE5
   <6-7>  MnSymbolE6
   <7-8>  MnSymbolE7
   <8-9>  MnSymbolE8
   <9-10> MnSymbolE9
  <10-12> MnSymbolE10
  <12->   MnSymbolE12
}{}
\DeclareFontShape{OMX}{MnSymbolE}{b}{n}{
    <-6>  MnSymbolE-Bold5
   <6-7>  MnSymbolE-Bold6
   <7-8>  MnSymbolE-Bold7
   <8-9>  MnSymbolE-Bold8
   <9-10> MnSymbolE-Bold9
  <10-12> MnSymbolE-Bold10
  <12->   MnSymbolE-Bold12
}{}
\let\llangle\@undefined
\let\rrangle\@undefined
\DeclareMathDelimiter{\llangle}{\mathopen}%
                     {MnLargeSymbols}{'164}{MnLargeSymbols}{'164}
\DeclareMathDelimiter{\rrangle}{\mathclose}%
                     {MnLargeSymbols}{'171}{MnLargeSymbols}{'171}
\newenvironment{sciabstract}{}
\begin{document} 

\maketitle

\vspace*{-0.6cm}

{\centering \small \textit{In memory of Jean-Pierre Demailly}\par}

\vspace*{0.3cm}

\begin{sciabstract}
  \textbf{Abstract.} The main goal of this article is to study for a projective manifold and an ample line bundle over it the relation between metric and algebraic structures on the associated section ring.
  \par 
  More precisely, we prove that once the kernel is factored out, the multiplication operator of the section ring becomes an approximate isometry (up to a normalization) with respect to the $L^2$-norms and the induced Hermitian tensor product norm. 
  We also show that the analogous result holds for the $L^1$ and $L^{\infty}$-norms if instead of the Hermitian tensor product norm, we consider the projective and injective tensor norms induced by $L^1$ and $L^{\infty}$-norms respectively.
  \par 
 Then we show that $L^2$-norms associated with continuous plurisubharmonic metrics are actually characterized by the multiplicativity properties of this type.
 Using this, we refine the theorem of Phong-Sturm about quantization of Mabuchi geodesics from the weaker level of Fubini-Study convergence to the stronger level of norm equivalences.
  
\end{sciabstract}

\pagestyle{fancy}
\lhead{}
\chead{On the metric structure of section ring}
\rhead{\thepage}
\cfoot{}


\newcommand{\Addresses}{{
  \bigskip
  \footnotesize
  \noindent \textsc{Siarhei Finski, CNRS-CMLS, École Polytechnique F-91128 Palaiseau Cedex, France.}\par\nopagebreak
  \noindent  \textit{E-mail }: \texttt{finski.siarhei@gmail.com}.
}} 

\vspace*{-0.6cm}

\tableofcontents

\section{Introduction}\label{sect_intro}
	Many questions from algebraic and complex geometry can be restated in terms of some problems related to the study of the section ring.
	For example, finite generation of the canonical ring is of great importance to the minimal model program, \cite{BirkarCasciniHaconMcKernan}.
	The K-stability condition, famously known for its connection with the existence of the constant scalar curvature metrics, is related to the asymptotic study of certain filtrations on the section ring \cite{NystromTest}, \cite{SzekeTestConf}, \cite{BouckJohn21}.
	\par 
	In this article, we propose to study further properties of the section ring; now in connection with the metric structure of the manifold.
	\par 
	More precisely, we fix a complex projective manifold $X$ of dimension $n$ and an ample line bundle $L$ over $X$.
	We consider the \textit{section ring} $R(X, L)$, defined as follows
	\begin{equation}
		R(X, L) := \oplus_{k = 1}^{\infty} H^0(X, L^k).
	\end{equation}
	For any $r \in \nat^*$, $k; k_1, \ldots, k_r \in \nat^*$, $k_1 + \cdots + k_r = k$, we define the multiplication map
	\begin{equation}\label{eq_mult_map}
		{\rm{Mult}}_{k_1, \cdots, k_r} : H^0(X, L^{k_1}) \otimes \cdots \otimes H^0(X, L^{k_r}) 
		\to
		H^0(X, L^k),
	\end{equation}	
	as $f_1 \otimes \cdots \otimes f_r \mapsto f_1 \cdots f_r$.
	Those multiplication maps endow $R(X, L)$ with the structure of a graded ring.
	It follows from the ${\rm{Proj}}$-construction that the (graded) ring structure of $R(X, L)$ carries the same amount of information as the pair $(X, L)$.
	\par 
	We now fix a positive Hermitian metric $h^L$ on $L$.
	This means that for the curvature $R^L$ of the Chern connection on $(L, h^L)$, the following closed real $(1, 1)$-differential form is positive
	\begin{equation}\label{eq_omega}
		\omega := c_1(L, h^L) := \frac{\imun}{2 \pi} R^L.
	\end{equation}
	\par 
	For smooth sections $f, f'$ of $L^p$, $p \in \nat$, we define the $L^2$-Hermitian product using the pointwise Hermitian products $\langle \cdot, \cdot \rangle_h$ on $L^p$, induced by $h^L$, as follows
	\begin{equation}\label{eq_l2_prod}
		\scal{f}{f'}_{L^2_p(X, h^L)} := \int_X \scal{f(x)}{f'(x)}_h dv_X(x),
	\end{equation}
	where $dv_X = \frac{1}{n!} \omega^n$.
	We also denote by $\| \cdot \|_{L^2_p(X, h^L)}$ the associated norm.
	We, finally, denote by $\| \cdot \|_{L^1_p(X, h^L)}$ and $\| \cdot \|_{L^{\infty}_p(X, h^L)}$ the $L^1$ and $L^{\infty}$-norms, defined as follows
	\begin{equation}
		\| f \|_{L^1_p(X, h^L)} := \int_X | f |_h dv_X(x), 
		\qquad 
		\| f \|_{L^{\infty}_p(X, h^L)} := \sup_{x \in X} |f(x)|_h.
	\end{equation}
	When restricted to the vector space of holomorphic sections, $H^0(X, L^p)$, we sometimes denote the $L^2$-norm by ${\rm{Hilb}}_p(h^L)$, and the $L^1$ and $L^{\infty}$-norms by ${\rm{Ban}}_p^{1}(h^L)$, ${\rm{Ban}}_p^{\infty}(h^L)$.
	\par
	\begin{sloppypar} 
	Over $R(X, L)$, for $q = 1, \infty$, we define the induced \textit{graded} norms
	\begin{equation}\label{defn_hilb_grad}
		{\rm{Hilb}}(h^L) := \sum_{k = 1}^{\infty} {\rm{Hilb}}_k(h^L),
		\qquad
		{\rm{Ban}}^{q}(h^L) := \sum_{k = 1}^{\infty} {\rm{Ban}}_k^{q}(h^L).
	\end{equation}
	The first goal of this paper is to study the metric properties of $(R(X, L), {\rm{Ban}}^1(h^L))$, $(R(X, L), {\rm{Hilb}}(h^L))$ and $(R(X, L), {\rm{Ban}}^{\infty}(h^L))$ in their relation with the multiplication map.
	Roughly, our first result states that one can inductively recover the approximate metric structure of the section ring from its multiplicative structure.
	\end{sloppypar} 
	\par 
	To explain this more precisely, let us recall some basic definitions from the theory of normed vector spaces.
	Let $V_1, V_2$ be two finitely dimensional vector spaces endowed with norms $N_i = \norm{\cdot}_i$, $i = 1, 2$.
	There are several natural constructions of a norm on the tensor product $V_1 \otimes V_2$.
	\par 
	Recall that the \textit{projective norm} $N_1 \otimes_{\pi} N_2 =  \norm{\cdot}_{N_1 \otimes_{\pi} N_2} = \norm{\cdot}_{\otimes_{\pi}}$ on $V_1 \otimes V_2$ is defined as 
	\begin{equation}\label{eq_defn_proj_norm}
		\norm{f}_{ \otimes_{\pi} }
		=
		\inf
		\Big\{
			\sum \| x_i \|_1 \cdot  \| y_i \|_2
			;
			\quad
			f = \sum x_i \otimes y_i
		\Big\},
	\end{equation}
	where the infimum is taken over different ways of partitioning $f$ into a sum of decomposable terms.
	\par 
	Recall that the \textit{injective norm} $N_1 \otimes_{\epsilon} N_2 = \norm{\cdot}_{N_1 \otimes_{\epsilon} N_2} = \norm{\cdot}_{ \otimes_{\epsilon} }$ on $V_1 \otimes V_2$ is defined as 
	\begin{equation}\label{eq_defn_inf_norm}
		\norm{f}_{ \otimes_{\epsilon} }
		=
		\sup
		\Big\{
			\big|
				(\phi \otimes \psi)(f)
			\big|
			;
			\quad
			\phi \in V_1^*, \psi \in V_2^*, \| \phi \|_{1}^* = \| \psi \|_{2}^* = 1
		\Big\}
	\end{equation}
	where $\| \cdot \|_{i}^*$, $i = 1, 2$, are the dual norms associated to $\| \cdot \|_{i}$.
	\par 
	When both $\| \cdot \|_{i}$ are induced by Hermitian products (then we say that $\| \cdot \|_{i}$ are Hermitian norms), we will denote by $N_1 \otimes N_2 = \| \cdot \|_{N_1 \otimes N_2} = \| \cdot \|_{ \otimes }$ the \textit{Hermitian tensor product norm} induced by the product of the respective Hermitian products.
	See Appendix \ref{app_norms} for relations between the above tensor norms and some of their basic properties, used sometimes implicitly below.
	\par 
	A norm $N_V = \| \cdot \|_V$ on a finitely dimensional vector space $V$ naturally induces the norm $\| \cdot \|_Q$ on any quotient $Q$, $\pi : V \to Q$ of $V$ through the following identity
	\begin{equation}\label{eq_defn_quot_norm}
		\| f \|_Q
		:=
		\inf \big \{
		 \| g \|_V
		 ;
		 \quad
		 g \in V, 
		 \pi(g) = f
		\},
		\qquad f \in Q.
	\end{equation}
	By a slight abuse of notations, we sometimes denote the quotient norm by $[ N_V ]$, i.e. without the reference to the quotient space. 
	This will not cause any trouble as the quotient will be explicit.
	\par 
	Now, it is standard that there is $p_0 \in \nat$, such that for any $k_1, \cdots, k_r \geq p_0$, the map ${\rm{Mult}}_{k_1, \cdots, k_r}$ is surjective, cf. Proposition \ref{prop_mult_surj}.
	Our first main result shows that the induced isomorphism quotient map is an approximate isometry (up to a multiplication by a constant) with respect to the $L^1, L^2$ and $L^{\infty}$-norms once the tensor product is endowed with an appropriate tensor product norm.
	More precisely, in Section \ref{sect_quot_mn}, we establish our first main result.
	\begin{thm}\label{thm_as_isom}
		There are $C > 0$, $p_1 \in \nat^*$, such that for any $k, l \geq p_1$, for the norms over $H^0(X, L^{k + l})$, under (\ref{eq_mult_map}), the following relation holds
		\begin{equation}\label{eq_as_isom1}
			1 - C \Big( \frac{1}{k} + \frac{1}{l} \Big)
			\leq 
			\frac{[{\rm{Hilb}}_k(h^L) \otimes {\rm{Hilb}}_l(h^L)]}{{\rm{Hilb}}_{k + l}(h^L)} 
			\cdot
			\Big( \frac{k \cdot l}{k + l} \Big)^{\frac{n}{2}}  
			\leq 
			1 + C \Big( \frac{1}{k} + \frac{1}{l} \Big).
		\end{equation}
		Similarly, for $L^1$ and $L^{\infty}$-norms the analogous results hold. There, instead of Hermitian tensor product norm, one needs to consider the projective and injective tensor product norms respectively.
		More precisely, in the notations of (\ref{eq_as_isom1}), we have
		\begin{equation}\label{eq_as_isom2}
		\begin{aligned}
			\frac{1}{4^n}
			\cdot
			\Big( 1 - C  \Big( \frac{1}{k} + \frac{1}{l} \Big) \Big)
			\leq 
			& 
			\frac{[{\rm{Ban}}_k^1(h^L) \otimes_{\pi} {\rm{Ban}}_l^1(h^L)]}{{\rm{Ban}}_{k + l}^1(h^L)} 
			\cdot
			\Big( \frac{k \cdot l}{k + l} \Big)^{n} 
			\cdot
			\frac{1}{2^n}
			\\
			&
			\qquad \qquad \qquad \qquad \qquad \qquad 
			\leq 
			4^n
			\cdot
			\Big( 1 + C  \Big( \frac{1}{k} + \frac{1}{l} \Big) \Big),
			\\
			1 - C  \Big( \frac{1}{k} + \frac{1}{l} \Big)
			\leq 
			& \frac{[{\rm{Ban}}_k^{\infty}(h^L) \otimes_{\epsilon} {\rm{Ban}}_l^{\infty}(h^L)]}{{\rm{Ban}}_{k + l}^{\infty}(h^L)} 
			\leq 
			1 + C  \Big( \frac{1}{k} + \frac{1}{l} \Big).
		\end{aligned}
		\end{equation}
	\end{thm}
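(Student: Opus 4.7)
My plan is to prove the three norm statements by related but distinct techniques: a Hilbert-space adjoint identity for $L^2$, an explicit extension to $X \times X$ for $L^{\infty}$, and tensor-norm duality for $L^1$. For the $L^2$ bound, since $M := {\rm{Mult}}_{k, l}$ is surjective for $k, l \geq p_0$, the Hermitian quotient norm is given by the pseudoinverse formula
\begin{equation*}
  \bigl[{\rm{Hilb}}_k(h^L) \otimes {\rm{Hilb}}_l(h^L)\bigr](f)^2 = \bigl\langle f, (MM^*)^{-1} f \bigr\rangle_{L^2_{k+l}(X, h^L)}, \quad f \in H^0(X, L^{k+l}),
\end{equation*}
and $MM^*$ acts on $H^0(X, L^{k+l})$ as the integral operator with kernel $K_k(x, z) K_l(x, z)$, where $K_p$ denotes the Bergman kernel of $H^0(X, L^p)$. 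Using the standard off-diagonal Gaussian asymptotic of $K_p$ in K\"ahler normal coordinates, I would show that this kernel equals $\alpha_{k, l} K_{k+l}(x, z)$, with $\alpha_{k, l} := (kl)^n / (k + l)^n$, up to an error with induced $L^2$-operator norm $O(\alpha_{k, l}(1/k + 1/l))$. Hence $MM^* = \alpha_{k, l}(I + R_{k, l})$ with $\|R_{k, l}\|_{\mathrm{op}} = O(1/k + 1/l)$, and the claim follows by inversion.

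For the $L^{\infty}$ statement, I would use the K\"unneth identification $H^0(X, L^k) \otimes H^0(X, L^l) \cong H^0(X \times X, L^k \boxtimes L^l)$, under which the injective tensor norm becomes the supremum norm on $X \times X$ and $M$ becomes restriction to the diagonal $\Delta$. The lower bound follows at once, since $|G(x, x)|_{L^k \boxtimes L^l} = |f(x)|_{L^{k+l}}$ for any extension $G$. For the upper bound, the candidate extension
\begin{equation*}
  \phi_0(x, y) := \alpha_{k, l}^{-1} \int_X K_k(x, z) K_l(y, z) f(z) \, dv_X(z),
\end{equation*}
combined with the algebraic identity $k \, d(x, z)^2 + l \, d(y, z)^2 = (k + l) \, d(z, z_*)^2 + \tfrac{kl}{k+l} \, d(x, y)^2$ (with $z_*$ the weighted midpoint of $x, y$) and the Gaussian Bergman asymptotic, yields after Gaussian integration in $z$ the pointwise bound
\begin{equation*}
  |\phi_0(x, y)|_{L^k \boxtimes L^l} \leq \bigl(1 + O(1/k + 1/l)\bigr) \|f\|_{L^{\infty}_{k+l}(X, h^L)} \cdot \exp \bigl( - c \, kl \, d(x, y)^2 / (k + l) \bigr),
\end{equation*}
which is at most $(1 + O(1/k + 1/l)) \|f\|_{L^{\infty}_{k+l}(X, h^L)}$. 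Since $M \phi_0 = (I + R_{k, l}) f$ is only an approximate extension, I would then correct by applying the construction above to $(I + R_{k, l})^{-1} f$ in place of $f$; the Neumann series for $(I + R_{k, l})^{-1}$ converges in the $L^{\infty}$-norm, since $\|R_{k, l}\|_{L^{\infty} \to L^{\infty}} = O(1/k + 1/l)$ by pointwise Gaussian control of the kernel $K_k K_l - \alpha_{k, l} K_{k+l}$.

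The $L^1$ statement I would derive by duality. Since $(V \otimes_{\pi} W)^* = V^* \otimes_{\epsilon} W^*$ isometrically, and the dual of a quotient norm coincides with the restriction of the ambient dual norm to the annihilator, the $L^1$-projective quotient norm on $H^0(X, L^{k+l})$ dualizes to an injective subspace norm on $\mathrm{image}(M^*) \subset H^0(X, L^k)^* \otimes H^0(X, L^l)^*$. Identifying $H^0(X, L^p)^* \cong H^0(X, L^p)$ via the $L^2$ pairing, the dual of the $L^1$-norm differs from the $L^{\infty}$-norm by explicit multiplicative constants of order $2^n$, arising from the Gaussian integrals that govern peak-section $L^1$ and $L^{\infty}$ masses. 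Combining this comparison with the $L^{\infty}$ estimate already established then yields the stated $L^1$-bound, with the factors $4^n$ and $2^n$ encoding precisely this near-duality.

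The main technical obstacle I anticipate is in the $L^{\infty}$ step: converting the Hilbert-space operator bound $\|R_{k, l}\|_{\mathrm{op}} = O(1/k + 1/l)$ into a genuinely pointwise operator bound $\|R_{k, l}\|_{L^{\infty} \to L^{\infty}} = O(1/k + 1/l)$ with sharp constant cannot be extracted from an $L^2 \to L^{\infty}$ comparison alone; it requires the first subleading term in the Bergman kernel expansion together with careful control of the corrections to the Gaussian profile.
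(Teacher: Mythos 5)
Your proposal follows the same overall framework as the paper: interpret ${\rm Mult}_{k,l}$ as diagonal restriction $X \times X \to \Delta$ via K\"unneth, and exploit near-diagonal Bergman kernel asymptotics. The $L^2$ argument is essentially identical to the paper's: your $MM^*$ is exactly the paper's ``multiplicative defect'' $A_{k,l}$ (Theorem \ref{thm_mult_def} and (\ref{eq_comp_ext_oper})), its kernel is $K_k(x,z)K_l(x,z)$, and the comparison $K_k K_l \approx \alpha_{k,l} K_{k+l}$ with operator-norm error $O(\alpha_{k,l}(1/k + 1/l))$ is the content of Lemma \ref{lem_mult_def_exp_dec_diag} and Theorem \ref{thm_akl_comp}. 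Your $L^{\infty}$ argument takes $\phi_0 = \alpha_{k,l}^{-1} M^* f$ as candidate and Neumann-corrects to obtain the true decomposition; note that your corrected extension $\alpha_{k,l}^{-1}M^*(I + R_{k,l})^{-1}f = M^*(MM^*)^{-1}f = {\rm Dec}_{k,l}f$ is exactly the paper's optimal decomposition, but the paper bounds its $L^{\infty}$-norm by comparing against an explicit Gaussian-cutoff extension ${\rm Dec}_{k,l}^0$ (see (\ref{eq_ext0_op}) and Theorem \ref{thm_high_term_ext}), which makes the unit constant manifest, whereas you go through kernel-level control of $R_{k,l}$. You correctly flag this as the main technical point; the paper handles it via the expansion in Theorem \ref{thm_berg_off_diag} (first two terms of Dai--Liu--Ma with vanishing subleading term) and Young's inequality at $p = \infty$.

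The $L^1$ treatment is where you genuinely diverge. The paper establishes the $L^1$ operator-norm bound directly by Young's inequality (Theorem \ref{thm_mult_well_def}, second inequality), the $L^1$ decomposition bound directly via the Gaussian cutoff (Theorem \ref{thm_mult_surj}, first inequality), and separately proves the tensor-norm identifications $\frac{1}{4^n}\|\cdot\|_{\otimes_\pi} \le \|\cdot\|_{L^1_{k,l}} \le \|\cdot\|_{\otimes_\pi}$ using a duality lemma for $({\rm Ban}^\infty)^*$ vs.\ ${\rm Ban}^1$ (Theorem \ref{thm_compar_tens_prod2}, analogous to the Bargmann-space dual of Gryc--Kemp). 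Your proposal instead derives the whole $L^1$ statement by dualizing the $L^\infty$ quotient computation against the projective/injective duality. This can work, but be careful: what dualizes cleanly is not the $L^\infty$ quotient estimate itself but the pulled-back subspace norm $\|M^* h\|_{L^\infty_{k,l}} \approx \alpha_{k,l}\|h\|_{L^\infty_{k+l}}$ on ${\rm image}(M^*) \cong \ker(M)^\perp$, which you have from the Gaussian computation for $\phi_0$ (upper bound) and from evaluating $M^*h$ on $\Delta$ (lower bound). If you track the two applications of $({\rm Ban}^1)^* \approx 2^{\pm n}\,{\rm Ban}^\infty$ and the $4^{\pm n}$ from the projective/injective comparison you recover the non-optimal $2^n$, $4^n$ factors of (\ref{eq_as_isom2}), so the route closes; the paper's direct approach is slightly tighter on constants but both are of the same non-optimal order.
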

	\begin{rem}
		a)
		According to Pisier \cite[Théorème 3.1]{PisierDecart}, cf. also \cite[Definition 2 and Theorem 6]{SzarekComp}, the projective and injective tensor norms differ by a value which tends to infinity as the dimensions of the vector spaces tends to infinity.
		Our result is, hence, sensible to the change of the projective norm by the injective norm.
		\par 
		b) The constant $4^n$ from the first equation of (\ref{eq_as_isom2}) is not optimal.
		It seems, however, to us that one cannot replace this constant by $1$, see Remark \ref{rem_dual_barg} for a related discussion.
		\par 
		c) One can guess the appearance of the injective and projective tensor products in (\ref{eq_as_isom2}) from the classical result on the tensor products of $L^1$ and $\ccal^0$-spaces, see Lemma \ref{lem_inj_proj_expl}.
	\end{rem}
	\par 
	In the second part of this paper, we show that the $L^1$, $L^2$ and $L^{\infty}$-norms are characterized by the multiplicative properties as in Theorem \ref{thm_as_isom}.
	Naturally, for our characterization, we need to consider non-smooth metrics.
	\par 
	Recall that a continuous metric $h^L$ on $L$ is called \textit{plurisubharmonic} (\textit{psh} for short) if the $(1, 1)$-current $c_1(L, h^L)$, see (\ref{eq_chern_class_defn}) for a definition, is positive.
	We denote by $\mathcal{H}^L$ the set of continuous psh metrics on $L$.
	Clearly, one can extend the definition of ${\rm{Ban}}^{\infty}(h^L)$ for any $h^L \in \mathcal{H}^L$.
	By using the Bedford-Taylor definition of Monge-Ampère operator, see \cite{BedfordTaylor} or \cite[Proposition I.3.2]{DemCompl}, cf. also (\ref{eq_bed_tay}), one can also extend the definition of the volume form $dv_X := \frac{1}{n!}c_1(L, h^L)^n$ as a non-negative measure on $X$ for any $h^L \in \mathcal{H}^L$.
	Using this measure, we extend the definition of ${\rm{Ban}}^1(h^L)$ and ${\rm{Hilb}}(h^L)$ for any $h^L \in \mathcal{H}^L$. See the discussion after (\ref{eq_bm_defn11}) for details.
	\par 
	Over finitely dimensional vector spaces $V$, we will later need to compare different norms. 
	For this, the Goldman-Iwahori distance (named after \cite{GoldIwah}, cf. also \cite[(1.1)]{BouckErik21}), $d_{\infty}(N_1, N_2)$, between two norms $N_1 = \norm{\cdot}_1$ and $N_2 = \norm{\cdot}_2$ over $V$ is defined as follows
	\begin{equation}\label{defn_gold_iwah}
		d_{\infty} ( N_1, N_2 )
		=
		\sup_{v \in V \setminus \{0\}} \big| \log \norm{v}_1 - \log \norm{v}_2 \big|.
	\end{equation}
	\par 
	Now, we are aiming to classify graded norms on the section ring up to the equivalence relation ($\sim$) defined as follows. We say that graded norms $N = \sum_{k = 1}^{\infty} N_k$ and $N' = \sum_{k = 1}^{\infty} N'_k$ are \textit{equivalent} if the graded pieces satisfy the following asymptotics
	\begin{equation}\label{eq_equiv_rel_defn}
		\frac{1}{k} d_{\infty}(N_k, N'_k)  \to 0.
	\end{equation} 
	One motivation for considering this equivalence relation is that it distinguishes elements from the image of the Hilbert map.
	See Theorem \ref{thm_hilb_leaf} for a more precise statement.
	\begin{sloppypar} 
	It is well-known that for smooth positive metrics $h^L$ on $L$, the multiplicative gap between ${\rm{Ban}}^1_k(h^L)$, ${\rm{Hilb}}_k(h^L)$, ${\rm{Ban}}^{\infty}_k(h^L)$ is at most polynomial in $k$.
	From the results of Berman-Boucksom-Witt Nystr{\"o}m \cite[Theorem 1.14]{BerBoucNys}, cf. Corollary \ref{cor_ident_maps}, we know that more generally, the graded norms ${\rm{Ban}}^1(h^L)$, ${\rm{Hilb}}(h^L)$, ${\rm{Ban}}^{\infty}(h^L)$ are equivalent for any $h^L \in \mathcal{H}^L$.
	Due to this, we concentrate now only on the study of Hermitian norms.
	\end{sloppypar}
	\begin{defn}\label{defn_mult_gen}
		We say that a graded Hermitian norm $N:= \sum_{k = 1}^{\infty} N_k$ on $R(X, L)$ is \textit{multiplicatively generated} if there is $p_0 \in \nat$ and a function $f : \nat_{\geq p_0} \to \real$, verifying $f(k) = o(k)$, as $k \to \infty$, such that for any $r \in \nat^*$, $k; k_1, \ldots, k_r \geq p_0$, $k_1 + \cdots + k_r = k$, under the map (\ref{eq_mult_map}), we have the following bound
		\begin{equation}\label{eq_mult_gen}
			 d_{\infty} \Big( N_k,  
			 \big[ 
			 N_{k_1} \otimes \cdots \otimes N_{k_r}
			 \big]
			 \Big)
			 \leq
			 f(k_1) + \cdots + f(k_r) + f(k).
		\end{equation}
	\end{defn}
	\begin{rem}
		The class of graded multiplicatively generated Hermitian norms is closed under the above equivalence relation.
	\end{rem}
	\par 
	From Theorem \ref{thm_as_isom}, ${\rm{Hilb}}(h^L)$ is multiplicatively generated for any smooth positive metric $h^L$ on $L$, see Remark \ref{rem_mult_gen_sm}. 
	More generally, in Section \ref{sect_mult_gen_psh}, we establish the following result.
	\begin{thm}\label{thm_mult_gen}
		For any $h^L \in \mathcal{H}^L$, the graded norm ${\rm{Hilb}}(h^L)$ is multiplicatively generated.
	\end{thm}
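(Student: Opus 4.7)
The plan is to reduce the statement to the case of a smooth, strictly positive Hermitian metric, where it follows from Theorem \ref{thm_as_isom}, and then transfer to an arbitrary $h^L \in \mathcal{H}^L$ by approximation combined with the norm equivalence of Corollary \ref{cor_ident_maps}.

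First, for a smooth, strictly positive $h^L_0$, I would iterate the binary estimate of Theorem \ref{thm_as_isom} along a balanced binary tree decomposition of $(k_1, \ldots, k_r)$, using that the taking of quotient tensor products is order-preserving to accumulate only additive errors at each merge. This yields
\[
d_\infty\Big( {\rm{Hilb}}_k(h^L_0), \big[ {\rm{Hilb}}_{k_1}(h^L_0) \otimes \cdots \otimes {\rm{Hilb}}_{k_r}(h^L_0) \big] \Big) \leq C_0 \cdot \Big( \log k + \sum_{i=1}^{r} \log k_i \Big),
\]
where $C_0 = C_0(h^L_0)$ encodes the Bergman-kernel remainder bounds underlying Theorem \ref{thm_as_isom}. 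Since $\sum \log k_i \leq r \log(k/r)$ by concavity of $\log$, this exhibits ${\rm{Hilb}}(h^L_0)$ as multiplicatively generated with $f_0(k) = C_0 \log k$ (cf.\ Remark \ref{rem_mult_gen_sm}).

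Now, for a general $h^L \in \mathcal{H}^L$ with local psh weight $\phi$, I would choose a family of smooth positive approximations $h^L_\nu$ with local weights $\phi_\nu$ and $\epsilon_\nu := \sup_X |\phi_\nu - \phi| \to 0$, via a Demailly-type regularization, twisted if necessary by a small multiple of a reference Kähler form in order to preserve strict positivity. The pointwise weight comparison $e^{-k \epsilon_\nu} |f|_{h^L_\nu} \leq |f|_{h^L} \leq e^{k \epsilon_\nu} |f|_{h^L_\nu}$ for $f \in H^0(X, L^k)$ gives immediately $d_\infty\big({\rm{Ban}}^{\infty}_k(h^L), {\rm{Ban}}^{\infty}_k(h^L_\nu)\big) \leq k \epsilon_\nu$, and inserting ${\rm{Ban}}^{\infty}_k$ as an intermediate norm and invoking Corollary \ref{cor_ident_maps} uniformly over the precompact family of potentials $\{\phi_\nu\}$ upgrades this to the Hilbert comparison
\[
d_\infty\big( {\rm{Hilb}}_k(h^L), {\rm{Hilb}}_k(h^L_\nu) \big) \leq k \epsilon_\nu + D(k),
\]
with some $D(k) = o(k)$ depending only on $h^L, X, L$.

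The conclusion then follows by combining the two ingredients with the triangle inequality for $d_\infty$, using the sub-additivity
\[
d_\infty\big( [M_1 \otimes \cdots \otimes M_r], [M_1' \otimes \cdots \otimes M_r'] \big) \leq \sum_{i=1}^{r} d_\infty(M_i, M_i')
\]
for Hermitian tensor products (which is preserved when passing to quotient norms). A diagonal choice $\nu = \nu(k)$, taken so that both $\epsilon_{\nu(k)} \to 0$ and $C_{\nu(k)} \log k = o(k)$, then produces a function $f$ with $f(k) = o(k)$ for which the inequality (\ref{eq_mult_gen}) holds for ${\rm{Hilb}}(h^L)$. The main obstacle is the simultaneous management of the two diverging parameters $\epsilon_\nu \to 0$ and $C_\nu \to \infty$: the regularization scheme has to be chosen carefully so that $C_\nu$ grows at most polynomially in $1/\epsilon_\nu$, in order for the cross-terms $C_{\nu(k)} \log k_i$ coming from the smooth-case estimate to be absorbed into the summed tail $\sum_i f(k_i)$ appearing in (\ref{eq_mult_gen}); if needed, this can be supplemented by a direct $r$-fold strengthening of Theorem \ref{thm_as_isom} that dispenses with the iteration altogether.
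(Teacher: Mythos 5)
Your approach—prove the smooth case via Theorem~\ref{thm_as_isom} and then transfer by regularization—is genuinely different from the paper's, which works with the continuous psh metric \emph{directly}: the upper bound of~(\ref{eq_mult_gen}) comes from the $L^{\infty}$-restriction trick (Lemma~\ref{lem_ban_inf_upp_bnd}) together with Bernstein--Markov for continuous psh metrics (Theorem~\ref{thm_bbw}, Corollary~\ref{cor_ident_maps}), and the lower bound comes from the Ohsawa--Takegoshi extension theorem \emph{with uniform constant} (Theorem~\ref{thm_ot_expl_const}), applied in the form of Lemma~\ref{lem_hilb_low_bnd}. Both ingredients are valid for arbitrary $h^L \in \mathcal{H}^L$, so no approximation by smooth metrics is needed, and the constants never depend on any regularization parameter.

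The difficulty you flag at the end is, as stated, a genuine gap, and the proposed fix does not close it. The problem is not the iteration (the $r$-fold version of Theorem~\ref{thm_as_isom} is Lemma~\ref{lem_mult_gen_sm}, so dispensing with the binary tree changes nothing essential), but the quantifier structure of Definition~\ref{defn_mult_gen}. That definition demands a \emph{fixed} threshold $p_0$ and a \emph{single} function $f$ such that the bound holds for every $r$ and every decomposition with all $k_i \geq p_0$. For the approximating metric $h^L_{\nu}$, the smooth estimate of Lemma~\ref{lem_mult_gen_sm} only holds above a threshold $p_0(\nu)$ and with a constant $C_{\nu}$, both of which blow up as $\nu\to\infty$ (they come from Bergman-kernel asymptotics and hence depend on $\mathcal{C}^2$-control of the weight, which must degenerate when regularizing a merely continuous $\phi$). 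A diagonal choice $\nu=\nu(k)$ based on the \emph{total} degree $k$ then fails for decompositions $k=k_1+\cdots+k_r$ with $r$ large and individual $k_i$ close to $p_0$, since those $k_i$ drop below $p_0(\nu(k))$ and the smooth estimate does not even apply; and even when it does apply, the contribution $C_{\nu(k)}/k_i$ cannot be absorbed into $f(k_i)$ because $C_{\nu(k)}$ depends on $k$, not on $k_i$. Choosing $\nu$ as a function of $\min_i k_i$ instead fixes the threshold but then produces an approximation error $k\,\epsilon_{\nu(\min k_i)}$ that is linear in $k$ when $\min k_i$ stays bounded, which again cannot be dominated by $f(k)+\sum_i f(k_i)$ with $f(m)=o(m)$. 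A secondary issue is that your Hilbert--versus--$L^{\infty}$ comparison invokes Corollary~\ref{cor_ident_maps} ``uniformly over the precompact family $\{\phi_\nu\}$''; the corollary as stated is for a single metric, and the needed uniformity over a regularizing family would itself require proof. These obstructions are exactly what the paper sidesteps by using Demailly's uniform-constant extension theorem and the Bernstein--Markov property, neither of which requires smoothness of $h^L$.
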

	\par 
	We denote by $\mathscr{N}^{R(X, L)}$ the space of graded multiplicatively generated norms on $R(X, L)$, and by $\mathscr{N}^{R(X, L)}_{\sim}$ the set of their equivalence classes.
	By an abuse of notation, let
	\begin{equation}\label{eq_hilb_map}
			{\rm{Hilb}} : \mathcal{H}^L \to \mathscr{N}^{R(X, L)}_{\sim}
	\end{equation}
	denotes the induced map.
	In Section \ref{sect_fin_ress_pf}, we establish our second main result.
	\par 
	\begin{thm}\label{thm_hilb_leaf0}
		The map ${\rm{Hilb}}$ from (\ref{eq_hilb_map}) is a bijection.
	\end{thm}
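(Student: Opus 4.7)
The plan is to invert \({\rm{Hilb}}\) using the Fubini--Study map. For a Hermitian norm \(N_k\) on \(H^0(X, L^k)\), let \(FS_k(N_k)\in \mathcal{H}^L\) denote the continuous psh metric whose local potential is (a constant shift of) \(-\frac{1}{k}\log B_{N_k}(\cdot)\), where \(B_{N_k}\) is the diagonal Bergman kernel. Two ingredients from the literature are needed: \((\alpha)\) the Fubini--Study map is \(O(1/k)\)-Lipschitz from the Goldman--Iwahori distance to the uniform distance \(d_\infty\) on \(\mathcal{H}^L\); \((\beta)\) the Bergman-kernel asymptotic \(d_\infty(FS_k({\rm{Hilb}}_k(h^L)), h^L) \to 0\), valid for any \(h^L\in\mathcal{H}^L\) by the extension of Tian--Bouche--Catlin--Zelditch to continuous psh metrics due to Berman--Boucksom--Witt Nyström \cite{BerBoucNys}. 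Injectivity of \({\rm{Hilb}}\) is then immediate: \((\alpha)\) propagates \({\rm{Hilb}}(h_1^L)\sim{\rm{Hilb}}(h_2^L)\) to \(d_\infty(FS_k({\rm{Hilb}}_k(h_1^L)), FS_k({\rm{Hilb}}_k(h_2^L)))\to 0\), and \((\beta)\) with the triangle inequality forces \(h_1^L=h_2^L\).

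For surjectivity, given a multiplicatively generated \(N=\sum_k N_k\), I would construct the preimage as a Fekete-type limit of the FS potentials \(\phi_k:=-\frac{1}{k}\log B_{N_k}\). The elementary Cauchy--Schwarz estimate \(B_{[N_k\otimes N_l]}(x)\leq B_{N_k}(x) B_{N_l}(x)\), together with \(d_\infty(N_{k+l},[N_k\otimes N_l])\leq f(k)+f(l)+f(k+l)\) from multiplicative generation, yields the approximate superadditivity
\[ (k+l)\phi_{k+l}(x) \;\geq\; k\phi_k(x)+l\phi_l(x) \;-\; 2\bigl(f(k)+f(l)+f(k+l)\bigr). \]
A uniform upper bound on \(\phi_k\) follows from polynomial control of Bergman kernels by sup-norms. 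Fekete's lemma then yields pointwise convergence \(\phi_k\to\phi_\infty\); a Hartogs-type argument upgrades this to uniform convergence and continuity of the limit \(\phi_\infty\), which is psh as a uniform limit of psh potentials. Defining \(h^L\) by the potential \(\phi_\infty\) gives the candidate preimage, with \(FS_k(N_k)\to h^L\) in \(d_\infty\).

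The main obstacle is the final step: upgrading equality of FS-limits to the norm-equivalence \({\rm{Hilb}}(h^L)\sim N\). By Theorem \ref{thm_mult_gen}, \({\rm{Hilb}}(h^L)\) is multiplicatively generated and shares the FS-limit \(h^L\) with \(N\) by \((\beta)\). However, the diagonal Bergman kernel does not determine a Hermitian norm, so one cannot conclude directly. The rigidity lemma I would establish is: any multiplicatively generated Hermitian norm \(N\) with FS-limit \(h^L\) satisfies \(\frac{1}{k}d_\infty(N_k,{\rm{Ban}}_k^\infty(h^L))\to 0\). The strategy is to iterate the multiplicative-generation inequality at an auxiliary scale \(m\) with \(1\ll m\ll k\), reducing \(\log N_k(s)\) to a sum of terms \(\log N_m(\cdot)\) evaluated on factors, each controlled pointwise via the FS convergence at scale \(m\), producing the sup-norm \(\|\cdot\|_{L^\infty(h^L)}\) in the limit. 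Combined with Corollary \ref{cor_ident_maps}, which gives \({\rm{Hilb}}_k(h^L)\sim{\rm{Ban}}_k^\infty(h^L)\), this completes the argument.
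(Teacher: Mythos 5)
Your architecture matches the paper's: invert ${\rm{Hilb}}$ via the Fubini--Study operator, prove uniform convergence of $FS(N_k)^{1/k}$ for multiplicatively generated $N$ (your Fekete-type argument is essentially the paper's Theorem~\ref{thm_conv_mg}, via Lemma~\ref{lem_segre} and the almost-subadditivity machinery), and then establish a rigidity statement asserting that, among multiplicatively generated norms, the FS-limit determines the equivalence class (the paper's Theorem~\ref{thm_tame_conv}). Your injectivity argument via Lemma~\ref{lem_bnd_FS} and Theorem~\ref{thm_quant_hilb_conv} is correct and is the same as the paper's. Your observation that the diagonal Bergman kernel is (sub)multiplicative under the quotient of the Hermitian tensor norm is also correct -- in fact it holds with equality, which is precisely the content of Lemma~\ref{lem_segre}.

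The genuine gap is in the rigidity lemma, which you correctly flag as the main obstacle, but whose proposed proof does not go through. The easy direction is $N_k \geq {\rm{Ban}}_k^\infty(FS(N_k)^{1/k})$, which follows from the definition of $FS$ and gives a lower bound on $N_k$ in terms of ${\rm{Hilb}}_k(FS(N))$ via Corollary~\ref{cor_ident_maps}. The hard direction is the \emph{upper} bound $N_k \leq \exp(o(k))\cdot{\rm{Hilb}}_k(FS(N))$, and your strategy -- iterate multiplicative generation at a scale $m$ and control terms ``pointwise via FS convergence'' -- cannot produce it. The multiplicative-generation inequality reads $N_{rm} \leq [N_m^{\otimes r}]\cdot\exp(o(rm))$, where the right-hand side is a \emph{quotient} norm, i.e. an infimum over decompositions $s = \sum_j f_{j,1}\cdots f_{j,r}$ into sums of $r$-fold products of sections of $L^m$. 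Bounding $[N_m^{\otimes r}](s)$ requires exhibiting such a decomposition with controlled $N_m\otimes\cdots\otimes N_m$-norm, and no amount of pointwise information about Bergman kernels (which only controls evaluation functionals, i.e. duals) produces the requisite decomposition. This is exactly why the paper needs Theorem~\ref{thm_induc}, whose proof combines (i) an explicit computation identifying $[H_1^{\otimes k}]$ with ${\rm{Sym}}^k(H_1)$ on projective space and comparing it with ${\rm{Hilb}}_k$ of the Fubini--Study metric there, and (ii) the semiclassical Ohsawa--Takegoshi extension theorem (Theorem~\ref{thm_semicalss_ot}) applied to the Kodaira embedding, using the commutative diagram~(\ref{eq_kod_map_comm_d}) that factors ${\rm{Mult}}_{1,\ldots,1}$ through the symmetrization followed by restriction along ${\rm{Kod}}_1$. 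Without identifying this extension-theoretic ingredient and the Kodaira-embedding reinterpretation of the multiplication map, the rigidity step -- and hence surjectivity -- is not actually proved. Note also that the necessity of some such nontrivial input is confirmed by Proposition~\ref{prop_expl_exmpl}: FS-convergence alone does \emph{not} determine the equivalence class of a graded Hermitian norm, so the multiplicative-generation hypothesis must enter the argument in an essential, nonpointwise way.
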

	\par 
	We will now refine Theorem \ref{thm_hilb_leaf0} by proving that the above bijection is actually an isometry with respect to the natural metric structures on the spaces $\mathscr{N}^{R(X, L)}_{\sim}$ and $\mathcal{H}^L$.
	For this, we define the $L^{\infty}$-pseudometric $d_{\infty} : \mathscr{N}^{R(X, L)} \times \mathscr{N}^{R(X, L)} \to [0, + \infty]$ as follows
	\begin{equation}\label{dist_gold_iwah}
		d_{\infty}(N, N')
		=
		\limsup_{k \to \infty} \frac{1}{k} d_{\infty}(N_k, N'_k).
	\end{equation}
	By Remark \ref{rem_fin_dist}, for any $N, N' \in \mathscr{N}^{R(X, L)}$, we always have $d_{\infty}(N, N') < + \infty$.
	Clearly, the Hausdorff quotient of $(\mathscr{N}^{R(X, L)}, d_{\infty})$ (i.e. the set of equivalence classes of $\mathscr{N}^{R(X, L)}$ with respect to the equivalence relation defined by the annihilation of $d_{\infty}$) coincides with $\mathscr{N}^{R(X, L)}_{\sim}$, and $d_{\infty}$, hence, yields the induced $L^{\infty}$-distance function 
	\begin{equation}\label{eq_d_inf_sim_sp}
		d_{\infty} : \mathscr{N}^{R(X, L)}_{\sim} \times \mathscr{N}^{R(X, L)}_{\sim} \to [0, + \infty[.
	\end{equation}
	We also define the $L^{\infty}$-distance $d_{\infty} : \mathcal{H}^L \times \mathcal{H}^L \to [0, + \infty[$ as follows
	\begin{equation}\label{dist_smooth}
		d_{\infty}(h^L_0, h^L_1)
		=
		\frac{1}{2}
		\sup \Big | \log \frac{h^L_0}{h^L_1} \Big |.
	\end{equation}
	Our next and final result, proved in Section \ref{sect_hilb}, is as follows.
	\begin{thm}\label{thm_hilb_leaf}
		The map ${\rm{Hilb}}$ from (\ref{eq_hilb_map}) preserves distances (\ref{eq_d_inf_sim_sp}) and  (\ref{dist_smooth}).
	\end{thm}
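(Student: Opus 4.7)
My plan is to prove the two inequalities between $d_{\infty}({\rm{Hilb}}(h^L_0), {\rm{Hilb}}(h^L_1))$ and $d_{\infty}(h^L_0, h^L_1)$ separately. For the upper bound, I first reduce to $L^{\infty}$-norms: by Corollary \ref{cor_ident_maps} the graded norms ${\rm{Hilb}}(h^L)$ and ${\rm{Ban}}^{\infty}(h^L)$ are $\sim$-equivalent for every $h^L \in \mathcal{H}^L$, and since the graded $d_{\infty}$ from (\ref{dist_gold_iwah}) vanishes on $\sim$-equivalent pairs, the triangle inequality yields $d_{\infty}({\rm{Hilb}}(h^L_0), {\rm{Hilb}}(h^L_1)) = d_{\infty}({\rm{Ban}}^{\infty}(h^L_0), {\rm{Ban}}^{\infty}(h^L_1))$. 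Writing $h^L_0 = e^{-2\psi} h^L_1$ with $\sup_X |\psi| = d_{\infty}(h^L_0, h^L_1)$, the pointwise identity $|f|_{h_0^k} = e^{-k\psi} |f|_{h_1^k}$ for $f \in H^0(X, L^k)$ immediately gives $d_{\infty}({\rm{Ban}}^{\infty}_k(h^L_0), {\rm{Ban}}^{\infty}_k(h^L_1)) \leq k \sup_X |\psi|$; dividing by $k$ and passing to $\limsup$ closes the upper bound.

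For the lower bound, I use Fubini--Study reconstruction. Fix a smooth reference metric on $L$, and for a Hermitian norm $N$ on $H^0(X, L^k)$ let $\phi_{{\rm{FS}}(N)} := \frac{1}{2k} \log K_k^N$ denote the potential (relative to the reference) of the Fubini--Study metric ${\rm{FS}}(N)$ on $L$, where $K_k^N(x) := \sup_{f \neq 0} |f(x)|^2_{\rm ref} / \|f\|_N^2$ is the associated Bergman function. The elementary inequality $|\log \sup_f A(f) - \log \sup_f B(f)| \leq \sup_f |\log A(f) - \log B(f)|$, applied to the quantities $A(f) = |f(x)|^2_{\rm ref}/\|f\|_{N_0}^2$ and $B(f) = |f(x)|^2_{\rm ref}/\|f\|_{N_1}^2$, yields the Lipschitz bound
\[
	d_{\infty}({\rm{FS}}(N_0), {\rm{FS}}(N_1))
	\leq
	\tfrac{1}{k}\, d_{\infty}(N_0, N_1).
\]
Combining this with the uniform Fubini--Study convergence $\phi_{{\rm{FS}}({\rm{Hilb}}_k(h^L))} \to \phi_{h^L}$ on $X$ as $k \to \infty$ --- the extension of the classical Tian--Catlin--Zelditch asymptotic from smooth positively-curved metrics to continuous psh metrics, obtained by Demailly's regularization of $h^L$ from above coupled with the envelope-coincidence property of continuous psh metrics --- gives
\begin{align*}
	d_{\infty}(h^L_0, h^L_1)
	&=
	\lim_{k \to \infty} d_{\infty}({\rm{FS}}({\rm{Hilb}}_k(h^L_0)), {\rm{FS}}({\rm{Hilb}}_k(h^L_1)))
	\\
	&\leq
	\liminf_{k \to \infty} \tfrac{1}{k}\, d_{\infty}({\rm{Hilb}}_k(h^L_0), {\rm{Hilb}}_k(h^L_1))
	\\
	&\leq
	d_{\infty}({\rm{Hilb}}(h^L_0), {\rm{Hilb}}(h^L_1)).
\end{align*}

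The main obstacle is establishing the uniform (not merely $L^1$) Fubini--Study convergence for arbitrary $h^L \in \mathcal{H}^L$: the classical Tian--Catlin--Zelditch expansion is inherently smooth, and its uniform extension to continuous psh metrics calls for genuinely pluripotential-theoretic control of the Bergman function, ensuring that the limiting Fubini--Study potentials neither overshoot nor undershoot $\phi_{h^L}$ uniformly in $x$. Once this input is available, the two estimates above combine to give the asserted isometry.
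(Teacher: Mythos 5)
Your argument is correct, and for the lower bound it takes a genuinely different route from the paper. The paper's proof bypasses the Fubini--Study operator entirely: it produces, via Bernstein--Markov (Theorem~\ref{thm_bbw}, Lemma~\ref{lem_bm_rest}) and the Ohsawa--Takegoshi peak-section lemma (Lemma~\ref{lem_ot_loc}), a section $f\in H^0(X,L^k)$ with $|f(x)|_1 \geq e^{-\epsilon k}\,\|f\|_{L^\infty_k(X,h^L_1)}$, then evaluates $\|f\|_{L^\infty_k(X,h^L_0)}$ at the point $x$ where $h^L_0/h^L_1$ is extremal and divides the two inequalities. That is a direct, entirely $L^\infty$-normed comparison. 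You instead factor the lower bound into two clean pieces: (i) the Fubini--Study operator is $\tfrac{1}{k}$-Lipschitz from $(\text{Hermitian norms on }H^0(X,L^k),d_\infty)$ to $(\mathcal{H}^L,d_\infty)$, which follows from the elementary ``$|\log\sup A - \log\sup B|\leq\sup|\log A-\log B|$'' applied to the Bergman function, and (ii) $FS({\rm{Hilb}}_k(h^L))^{1/k}\to h^L$ uniformly. Your factorization is arguably more modular and identifies the true mechanism (a general Lipschitz property of FS plus Tian-type convergence); the paper's is shorter and avoids introducing the FS machinery in this proof.

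The one thing to note is that the ``main obstacle'' you flag is not actually open: the uniform convergence $FS({\rm{Hilb}}_k(h^L))^{1/k}\to h^L$ for arbitrary continuous psh $h^L$ is exactly Theorem~\ref{thm_quant_hilb_conv} of the paper, proved in Section~\ref{sect_fs_psh} via the Bernstein--Markov property (\ref{eq_rho_bnd}) for the upper bound on the Bergman function and the Ohsawa--Takegoshi Lemma~\ref{lem_ot_loc} together with (\ref{eq_rho_bnd2}) for the lower bound --- not via Demailly regularization as you sketched. So your route and the paper's both ultimately draw on the same two analytic inputs (Ohsawa--Takegoshi and Bernstein--Markov); you just route them through Theorem~\ref{thm_quant_hilb_conv}, whereas the paper invokes them directly. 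Also a cosmetic point: in your displayed Lipschitz estimate you should write $FS(N_i)^{1/k}$, since the paper's $d_\infty$ from (\ref{dist_smooth}) lives on $\mathcal{H}^L$ and $FS(N_k)$ is a metric on $L^k$, not on $L$.
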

	\par 
	We will now describe an application of Theorem \ref{thm_hilb_leaf0}.
	We fix $h^L_0, h^L_1 \in \mathcal{H}^L$.
	Consider the weak Mabuchi geodesic $h^L_t \in \mathcal{H}^L$, $t \in [0, 1]$, connecting $h^L_0$ and $h^L_1$, see Section \ref{sect_hilb} for necessary definitions.
	Now, for any $k \in \nat^*$, consider a geodesic $H_{k, t}$, $t \in [0, 1]$, connecting ${\rm{Hilb}}_k(h^L_0)$ and ${\rm{Hilb}}_k(h^L_1)$ in the space of Hermitian norms on $H^0(X, L^k)$.
	Explicitly, if $A_k \in \enmr{H^0(X, L^k)}$ is the self-adjoint map, relating the scalar products on $H^0(X, L^k)$ associated to ${\rm{Hilb}}_k(h^L_0)$ and ${\rm{Hilb}}_k(h^L_1)$ as 
	$
		\scal{\cdot}{\cdot}_{L^2_k(X, h^L_1)}
		=
		\scal{A_k \cdot}{\cdot}_{L^2_k(X, h^L_0)},
	$
	then $H_{k, t}$ is the Hermitian norm associated to the scalar product $\scal{A^t_k \cdot}{\cdot}_{L^2_k(X, h^L_0)}$.
	Consider the associated graded norm $H_t := \sum H_{k, t}$ on $R(X, L)$.
	\begin{thm}\label{thm_pj_st_ref}
		For any $t \in [0, 1]$, the graded Hermitian norms $H_t$ and ${\rm{Hilb}}(h^L_t)$ are equivalent.
	\end{thm}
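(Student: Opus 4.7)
The plan is to apply the bijection of Theorem~\ref{thm_hilb_leaf0}. One first verifies that the graded Hermitian norm $H_t = \sum_k H_{k,t}$ lies in $\mathscr{N}^{R(X, L)}$, then identifies its image under ${\rm{Hilb}}^{-1}$ with $h^L_t$ through Phong-Sturm's theorem on Fubini-Study convergence.

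For the multiplicative generation of $H_t$, the key algebraic observation is the following: if $A_k \in \enmr{H^0(X, L^k)}$ is the self-adjoint operator from the statement, then the tensor product $H_{k_1, t} \otimes \cdots \otimes H_{k_r, t}$ corresponds to the operator $A_{k_1}^t \otimes \cdots \otimes A_{k_r}^t = (A_{k_1} \otimes \cdots \otimes A_{k_r})^t$, and hence itself forms a geodesic in $t$ in the space of Hermitian norms on $H^0(X, L^{k_1}) \otimes \cdots \otimes H^0(X, L^{k_r})$, joining the tensor products of ${\rm{Hilb}}_{k_i}(h^L_s)$ at $s = 0$ and $s = 1$. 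By Theorem~\ref{thm_mult_gen} applied to the endpoint metrics $h^L_0, h^L_1 \in \mathcal{H}^L$, at $s = 0, 1$ the multiplicative-generation bound
\begin{equation*}
    d_\infty\bigl( H_{k, s}, [H_{k_1, s} \otimes \cdots \otimes H_{k_r, s}] \bigr) \leq f(k) + f(k_1) + \cdots + f(k_r)
\end{equation*}
holds with $f(k) = o(k)$. Propagating this estimate to arbitrary $t \in [0, 1]$ relies on the convexity of the Goldman-Iwahori distance $d_\infty$ along geodesics in the Finsler symmetric space of Hermitian norms on a fixed finite-dimensional vector space, combined with the $1$-Lipschitz property of the quotient map $[\,\cdot\,]$ for $d_\infty$. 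Concretely, one compares both $H_{k, t}$ and $[H_{k_1, t} \otimes \cdots \otimes H_{k_r, t}]$ to the true geodesic $G_t$ in Hermitian norms on $H^0(X, L^k)$ joining the quotient norms at $s = 0$ and $s = 1$.

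Once $H_t \in \mathscr{N}^{R(X, L)}$ is established, Theorem~\ref{thm_hilb_leaf0} produces a unique $\tilde{h}^L_t \in \mathcal{H}^L$ with ${\rm{Hilb}}(\tilde{h}^L_t) \sim H_t$. The construction of the inverse map in Theorem~\ref{thm_hilb_leaf0} recovers the psh metric as a suitable limit of the normalized Fubini-Study potentials of the graded pieces $H_{k, t}$. By the theorem of Phong-Sturm, these potentials converge uniformly to the weak Mabuchi geodesic potential of $h^L_t$. Uniqueness of the limit then forces $\tilde{h}^L_t = h^L_t$, which gives $H_t \sim {\rm{Hilb}}(h^L_t)$ as claimed. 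Theorem~\ref{thm_hilb_leaf} can alternatively be invoked here: the isometry property reduces $d_\infty(H_t, {\rm{Hilb}}(h^L_t))$ to $d_\infty(\tilde{h}^L_t, h^L_t)$, which vanishes by the Phong-Sturm identification.

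The main obstacle is the propagation of the multiplicative-generation estimate from the endpoints $s \in \{0, 1\}$ to intermediate $t$. While the tensor product of geodesics is a geodesic, the quotient of a geodesic is generally \emph{not} a geodesic, so the convexity of $d_\infty$ along geodesics cannot be applied directly to the pair $\bigl(H_{k, t}, [H_{k_1, t} \otimes \cdots \otimes H_{k_r, t}]\bigr)$. Routing the comparison through the true quotient geodesic $G_t$ resolves this difficulty: the distance $d_\infty(H_{k, t}, G_t)$ is controlled by the convexity of $d_\infty$ along geodesics, while $d_\infty(G_t, [H_{k_1, t} \otimes \cdots \otimes H_{k_r, t}])$ requires a separate argument, expected to follow from the log-convexity of $t \mapsto \log \|v\|^2$ along a geodesic family of Hermitian norms, applied to preimages under the multiplication map together with the variational description of the quotient Hermitian norm.
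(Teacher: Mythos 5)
You have correctly identified the overall shape of the argument (Fubini--Study convergence via Phong--Sturm/Berndtsson, interpolation of Hermitian geodesics, routing the comparison through the geodesic between the quotient norms) and you have correctly located the danger point: the quotient of a geodesic is not a geodesic, and comparing $[H_{k_1,t}\otimes\cdots\otimes H_{k_r,t}]$ with the geodesic $G_t = H_{k_1,\ldots,k_r;t}$ between the quotient norms at $s=0,1$ is delicate. However, the gap you acknowledge at the end is real and your proposal does not close it --- and the paper deliberately avoids having to close it.

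The crucial point you miss is that the paper does \emph{not} prove that $H_t$ is multiplicatively generated in the two-sided sense of Definition~\ref{defn_mult_gen}, and does not apply Theorem~\ref{thm_hilb_leaf0}. It instead invokes Theorem~\ref{thm_tame_conv}, which only requires the \emph{one-sided} inequality (\ref{eq_half_mult_gen}), namely $N_k \leq [N_{k_1}\otimes\cdots\otimes N_{k_r}]\cdot\exp(\cdots)$, together with uniform Fubini--Study convergence. For this one-sided bound the two parts of Corollary~\ref{cor_interp} (Stein--Weiss-type interpolation) suffice: the first part gives $G_t \leq [H_{k_1,t}\otimes\cdots\otimes H_{k_r,t}]$ (the quotient of a geodesic dominates the geodesic of the quotients), and the second part (monotonicity of geodesics) combined with the endpoint bounds from Theorem~\ref{thm_mult_gen} gives $H_{k,t} \leq G_t\cdot\exp\bigl(f_t(k_1)+\cdots+f_t(k_r)+f_t(k)\bigr)$ with $f_t=(1-t)f_0+tf_1$. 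Chaining these two one-sided inequalities gives (\ref{eq_half_mult_gen}) for $N_k:=H_{k,t}$, and Theorem~\ref{thm_tame_conv} together with Theorem~\ref{thm_ph_st_fs} finishes the proof. Your route, by contrast, insists on the full $d_\infty$-bound (Definition~\ref{defn_mult_gen}), which would require in addition $[H_{k_1,t}\otimes\cdots\otimes H_{k_r,t}] \lesssim G_t$; this reverse inequality is exactly what is unclear (and possibly false in general), and you correctly flag that you only ``expect'' it from a separate log-convexity argument you do not carry out. Note also that once $H_t \sim {\rm{Hilb}}(h^L_t)$ is established, the two-sided multiplicative generation of $H_t$ follows \emph{a posteriori} from Theorem~\ref{thm_mult_gen} and the closure of $\mathscr{N}^{R(X,L)}$ under equivalence, so trying to prove it \emph{a priori} is both harder and unnecessary. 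The lesson is to recognize Theorem~\ref{thm_tame_conv} as the right workhorse here (one-sided hypothesis), not Theorem~\ref{thm_hilb_leaf0} (two-sided hypothesis), and to use interpolation-theoretic monotonicity of geodesics (Corollary~\ref{cor_interp}) in place of convexity of $d_\infty$.
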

	\par 
	Let us now explain briefly the ideas of the proofs of the above results and their places in the context of previous works.
	\par 
	The central idea of our approach in the whole article is to give various interpretations of the multiplication map on the section ring in terms of the restriction morphisms.
	This is useful because the latter morphism can be studied by the methods developed previously by author, \cite{FinOTAs}, \cite{FinToeplImm}, \cite{FinOTRed}.
	\par 
	Theorem \ref{thm_as_isom} relies on the interpretation of the multiplication map as a restriction map from the product manifold $X \times X$ to the diagonal $\Delta$ inside of it, see (\ref{eq_comm_diag}), and on a combination of several statements.
	First, through Künneth theorem, one can view the space of holomorphic sections over $X \times X$ as the tensor product of the spaces of holomorphic sections over $X$, see (\ref{eq_nat_isom1}). 
	In Theorem \ref{thm_compar_tens_prod}, we then express using this isomorphism the $L^1$, $L^2$ and $L^{\infty}$-norms on $X \times X$ in terms of projective, Hermitian and injective tensor product norms respectively induced by the $L^1$, $L^2$ and $L^{\infty}$-norms on $X$.
	Second, in Theorem \ref{thm_mult_well_def}, we establish the lower bounds from Theorem \ref{thm_as_isom} by relying on some ideas from the proof of the semiclassical trace theorem from \cite[\S 4.3]{FinOTAs} applied for the pair of the product manifold $X \times X$ and the diagonal $\Delta$.
	The upper bounds are then established in Theorem \ref{thm_mult_surj} using some tools developed previously in the proof of the semiclassical Ohsawa-Takegoshi extension theorem in \cite{FinOTAs}, \cite{FinToeplImm}, \cite{FinOTRed}, applied here again for the pair $(X \times X, \Delta)$.
	\par 
	Theorem \ref{thm_mult_gen} is established using the result of Berman-Boucksom-Witt Nystr{\"o}m \cite[Theorem 1.14]{BerBoucNys}, cf. Theorem \ref{thm_bbw}, about the Bernstein-Markov property of the Monge-Ampère operator and the Ohsawa-Takegoshi extension theorem with the uniform constant, cf. Theorem \ref{thm_ot_expl_const}, applied to the product manifold and the diagonal in it.
	\par 
	To show Theorem \ref{thm_hilb_leaf0}, we construct explicitly an inverse of the map ${\rm{Hilb}}$.
	This is done using the Fubini-Study operator which associates through the Kodaira map for any norm $N_k$ on $H^0(X, L^k)$ (for $k$ big enough) a continuous psh metric over $L$, which we denote by $FS(N_k)^{\frac{1}{k}}$, see (\ref{eq_fs_defn}) for details.
	We study in Theorem \ref{thm_conv_mg} the convergence of the Fubini-Study metrics on $L$ associated to a multiplicatively generated norm $N$.
	The main content of Theorem \ref{thm_hilb_leaf0} is to show that when the graded Hermitian metric is multiplicatively generated, its equivalence class is determined by its Fubini-Study metric.
	Note that the existence of the limit of the Fubini-Study metric alone doesn't determine the equivalence class of a general graded Hermitian metric on $R(X, L)$, see Section \ref{sect_ex_mg_norm}.
	\par 
	In realms of non-Archimedean geometry, a result, similar to Theorem \ref{thm_hilb_leaf0}, was established by Boucksom-Jonsson \cite[Theorem 6.1]{BouckJohn21} and Reboulet \cite[Theorem A]{Reboulet21}.
	Remark that in the non-Archimedean setting, this statement is obtained as a consequence of certain algebraic results, see \cite[proof of Theorem 2.3]{BouckJohn21}. 
	One of them is the non-Archimedean GAGA principle, another is the study of spectral radius of norms over reduced k-affinoid algebras.
	Our methods, on the contrary, are purely analytic in nature and the only nontrivial ingredient in our proof is the semiclassical version of the Ohsawa-Takegoshi extension theorem from \cite{FinOTAs}.
	Interestingly enough, our proof of Theorem \ref{thm_hilb_leaf0} is built once again on an interpretation of the multiplication map as a version of the restriction map, see (\ref{eq_kod_map_comm_d}).
	But now the restriction is done with respect to the Kodaira embedding and not with respect to the diagonal in the product manifold.
	\par 
	We prove Theorem \ref{thm_hilb_leaf} by standard techniques, essentially relying on the Ohsawa-Takegoshi extension theorem.
	As we will show in Section \ref{sect_hilb} after explaining the necessary definitions, this result can be restated in a form which relates the limit of Goldman-Iwahori distance on the space of norms on $H^0(X, L^k)$, as $k \to \infty$, and the $L^{\infty}$-distance of the Mabuchi weak geodesic in $\mathcal{H}^L$. Hence, the former distance should be regarded as a quantization of the latter one.
	\par 
	This goes in line with the general philosophy that the geometry of $\mathcal{H}^L$ can be approximated by the geometry on the space of norms on $H^0(X, L^k)$, as $k \to \infty$, see Donaldson \cite{DonaldSymSp} and Section \ref{sect_hilb} for a more detailed review of related results.
	\par 
	The proof of Theorem \ref{thm_pj_st_ref}, which goes further in this philosophy, is based on the results of Phong-Sturm \cite[Theorem 1]{PhongSturm} and Berndtsson \cite[Theorem 1.2]{BerndtProb} about the convergence of the Fubini-Study metric associated to $H_t$, on the proof of Theorem \ref{thm_hilb_leaf0} and on some elementary statements about the monotonicity of geodesics in the space of Hermitian norms, which we establish by interpolation theory from functional analysis.
	Theorem \ref{thm_pj_st_ref} refines the mentioned result of Phong-Sturm and Berndtsson by Theorem \ref{thm_quant_hilb_conv}, Lemma \ref{lem_bnd_FS} and Proposition \ref{prop_expl_exmpl}.
	\par 
	This article is organized as follows.
	In Section \ref{sect_ext_thm_comp}, we introduce a set of techniques which will be later used to compare various norms on the section ring.
	In Section \ref{sect_quot_mn}, we study the relation between metric and algebraic structure of the section ring and establish Theorem \ref{thm_as_isom}.
	In Section \ref{sect_mult_gen_mn}, we prove Theorem \ref{thm_mult_gen}, give a classification of multiplicatively generated norms on the section ring and study the metric properties of this set. 
	In particular, we establish Theorems \ref{thm_hilb_leaf0}, \ref{thm_hilb_leaf}, \ref{thm_pj_st_ref}.
	In Appendix \ref{sect_unif_conv}, we develop the theory of almost sub-additive sequences, essential for Section \ref{sect_mult_gen_mn}.
	In Appendix \ref{app_norms}, we recall some classical results about normed vector spaces and the induced norms on the tensor products, which is of utmost importance to the study of $L^1$ and $L^{\infty}$-norms.
	\par 
	\textbf{Notation}.
	For vector bundles $E, F$ over $X$, the vector bundle $E \boxtimes F$ over $X \times X$ is defined as the tensor product $\pi_1^* E \otimes \pi_2^* F$, where $\pi_1, \pi_2$ are the projections $X \times X \to X$ on the first and the second components.
	\par 
	Let $\mu$ be an arbitrary measure on $X$.
	For a fixed Hermitian metric $h^L$ on $L$, we define the $L^2$-norm, which we denote by $\| \cdot \|_{L^2_k(X, h^L, \mu)}$ on $H^0(X, L^k)$ in the same way as in (\ref{eq_l2_prod}), but with the measure $dv_X$ replaced by $\mu$.
	When the metric $\mu$ is given by the symplectic volume form of a Kähler metric $\omega$, we denote the above norm by $\| \cdot \|_{L^2_k(X, h^L, \omega)}$.
	Similarly, we let ${\rm{Hilb}}_k(h^L, \mu)$ and ${\rm{Hilb}}_k(h^L, \omega)$ be the restrictions of the above norms to $H^0(X, L^k)$.
	Sometimes, to strengthen the dependence on the manifold $X$, we denote the above metrics by ${\rm{Hilb}}^X_k(h^L)$, etc.
	Similarly, we use the notations ${\rm{Ban}}^{1, X}(h^L)$ and ${\rm{Ban}}^{\infty, X}(h^L)$.
	When $h^L \in \mathcal{H}^L$, and $\mu = \frac{1}{n!} c_1(F, h^F)^n$ is constructed using Bedford-Taylor extension of the Monge-Ampère operator, cf. (\ref{eq_bed_tay}), we will omit $\mu$ from the notation and borrow the notation for psh metrics from the smooth situation $\| \cdot \|_{L^2_k(X, h^L)}$.
	Similarly, we extend the $L^1$-norm and use the analogous notation.
	\par 
	A Hermitian metric $h^F$ on a vector bundle $F$ is called \textit{bounded} if it can be bounded from above and below by some smooth metrics.
	For a complex manifold $X$, let $K_X$ be the canonical line bundle over it, given by $\wedge^n (T^{(1, 0)*}X)$.
	For a submanifold $Y$ in $X$, we denote by $\res_Y$ the restriction operator from $X$ to $Y$. 
	\par 
	\textbf{Acknowledgement}. 
	We thank wholeheartedly Sébastien Boucksom for many stimulating and fruitful discussions, which helped us to significantly improve this article.
	We thank Rémi Reboulet for several discussions we had after the first version of this paper was available.
	We also thank Yoshinori Hashimoto for an interesting conversation about Section \ref{sect_ex_mg_norm} and László Lempert for allowing us to reproduce his unpublished example from Proposition \ref{prop_ll_ex}.
	We, finally, thank the colleagues and staff of École Polytechnique, CMLS and CNRS for their support and wonderful working conditions.
	\par 
	{\bf{Dedication.}} This article is dedicated to the memory of Jean-Pierre Demailly, whose mathematics, teaching and personality have profoundly influenced me.

\section{Extension theorem and norms on the section ring}\label{sect_ext_thm_comp}
	The main goal of this section is to introduce a set of techniques to compare various norms on the section ring.
	This will be intimately related with Ohsawa-Takegoshi extension theorem, two versions of which we recall in Section \ref{sect_ohs}, and to Bernstein-Markov property, which we recall in Section \ref{sect_bern_m}.
	Those preliminaries will be used in Section \ref{sect_fs_psh}. 
	There we will recall the definition of the Fubini-Study operator, which will be our main technical tool in Section \ref{sect_mult_gen_mn}, and recall that any continuous psh metric can be obtained through approximation using this map.

\subsection{Two versions of Ohsawa-Takegoshi extension theorem}\label{sect_ohs}
	The main goal of this section is to recall a version of Ohsawa-Takegoshi extension theorem with uniform constant due to Demailly \cite[Theorem 2.8]{DemExtRed} and Ohsawa \cite{OhsawaV} and the semiclassical version due to the author \cite{FinOTAs}.
	\par 
	In the first part of this section, we follow closely the presentation from \cite{DemExtRed} and \cite{DemBookAnMet}. Only Theorem \ref{thm_ot_asymp} seems to be new.
	\par 
	Let $Y$ be a complex submanifold of a compact complex manifold $X$.
	Let $F$ be a holomorphic vector bundle $X$.
	Ohsawa-Takegoshi theorem addresses the following extension problem. 
	Given a holomorphic section $f$ of $F$ on $Y$, we would like to find a holomorphic extension $\tilde{f}$ of $f$ to $X$, together with a good estimate on the $L^2$-norm of $\tilde{f}$ in terms of the $L^2$-norm of $f$.
	It turns out that it is always possible modulo some positivity condition on the curvature of $F$.
	To state this result precisely, we need to fix some notations first.
	\par 
	\begin{defn}
		We call a function $\psi : X \to [-\infty, +\infty[$ on a complex manifold $X$ \textit{quasi-plurisubharmonic} (quasi-psh) if $\psi$ is locally the sum of a psh function and of a smooth function (or equivalently, if $\imun \partial \dbar \psi$ is locally bounded from below). 
		In addition, we say that $\psi$ has \textit{neat analytic singularities} if every point $x \in X$ possesses an open neighborhood $U$ on which $\psi$ can be written as 
		\begin{equation}
			\psi(z) = c \cdot \log \sum_{1 \leq j \leq N} \big| g_k(z) \big|^2 + w(z),
		\end{equation}
		where $c \geq 0$, $g_k$ are holomorphic over $U$ and $w$ is smooth.
	\end{defn}
	\begin{defn}\label{defn_mult_ideal}
		If $\psi$ is a quasi-psh function on a complex manifold $X$, the \textit{multiplier ideal sheaf} $\mathcal{J}(\psi)$ is the coherent analytic subsheaf of $\mathscr{O}_X$, cf. \cite[Proposition 5.7]{DemBookAnMet}, defined by 
		\begin{equation}
			\mathcal{J}(\psi) := \Big\{
				f \in \mathscr{O}_{X, x}; \quad \text{there exists } U \ni x, \int_U |f|^2 e^{- \psi} d \lambda < + \infty
			\Big\}.
		\end{equation}
		where $U$ is an open coordinate neighborhood of $x$, and $d \lambda$ the standard Lebesgue measure in the corresponding open chart of $\comp^n$. 
		We say that the singularities of $\psi$ are \textit{log canonical singularities} along the zero variety $Y = V (\mathcal{J}(\psi))$ if $\mathcal{J}(\psi) = \mathcal{J}_Y$, where $\mathcal{J}_Y$ is the sheaf of holomorphic sections vanishing along $Y$.
	\end{defn}
	\par 
	Let us define a bump function $\rho : \real \to [0, 1]$ as follows
	\begin{equation}\label{defn_rho_fun}
		\rho(x) =
		\begin{cases}
			1, \quad \text{for $|x| < \frac{1}{4}$},\\
			0, \quad \text{for $|x| > \frac{1}{2}$}.
		\end{cases}
	\end{equation}
	\par
	\begin{lem}\label{lem_delta_psh}
		Let $X$ be a compact Kähler manifold of dimension $n$, and $\omega$ be a Kähler form on $X$. 
		Let $Y$ be a closed submanifold of $X$ of dimension $m$.
		Then for $r > 0$ small enough, the function $\delta_Y : X \setminus Y \to \real$, defined as
		\begin{equation}\label{eq_delta_defn_y}
			\delta_Y(x) := 2 (n - m) \log \big(\dist_X(x, Y) \big) \cdot \rho 
			\Big(
				 \frac{\dist_X(x, Y)}{r} 
			\Big),
		\end{equation}
		is quasi-psh.
		Moreover, it has neat analytic log canonical singularities along $Y$. 
	\end{lem}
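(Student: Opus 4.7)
The plan is to work locally near $Y$. Choose $r_0 > 0$ smaller than the injectivity radius of the normal exponential map of $Y$ in $X$. Then on the tubular neighborhood $U := \{x \in X : \dist_X(x, Y) < r_0\}$, the square of the distance to $Y$, namely $\dist_X(\cdot, Y)^2$, is a smooth function of $x$. We will choose $r$ in the statement below $r_0$, so that the support of $\rho(\dist_X(\cdot, Y)/r)$ lies in $U$.

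Away from $Y$, outside of $U$, the cutoff forces $\delta_Y \equiv 0$, so there is nothing to check. In the transition region $\{r/4 \leq \dist_X(\cdot, Y) \leq r/2\}$, the function $\log \dist_X(\cdot, Y)$ is smooth, and therefore so is $\delta_Y$; hence $\imun \partial \dbar \delta_Y$ is locally bounded there. Inside the region $\{\dist_X(\cdot, Y) < r/4\}$, we have $\delta_Y = 2(n - m) \log \dist_X(\cdot, Y)$. Near a point $y \in Y$, pick holomorphic coordinates $(z_1, \ldots, z_n)$ centered at $y$ in which $Y = \{z_{m+1} = \cdots = z_n = 0\}$ and the Kähler metric $\omega$ is the standard Euclidean form at the origin. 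A Taylor expansion of the smooth function $\dist_X(z, Y)^2$ yields
\begin{equation*}
	\dist_X(z, Y)^2 = \Big( \sum_{j = m+1}^n |z_j|^2 \Big) \cdot \big( 1 + O(|z|) \big),
\end{equation*}
so that, shrinking the coordinate chart, we may take the logarithm and write
\begin{equation*}
	2 \log \dist_X(z, Y) = \log \Big( \sum_{j = m+1}^n |z_j|^2 \Big) + w_0(z),
\end{equation*}
with $w_0$ smooth. Multiplying by $(n - m)$ and combining with the smooth cutoff, one obtains on the chart a decomposition $\delta_Y(z) = (n - m) \log \sum_{j = m+1}^n |g_j(z)|^2 + w(z)$ with $g_j(z) = z_{m+j-m}$ holomorphic and $w$ smooth. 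This is exactly the local form of a quasi-psh function with neat analytic singularities along $Y$: the logarithmic piece is plurisubharmonic as the logarithm of the squared norm of the holomorphic map $(g_{m+1}, \ldots, g_n)$, and $\imun \partial \dbar w$ is bounded.

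It remains to compute the multiplier ideal. By the local model above, $\mathcal{J}(\delta_Y)$ is generated at $y$ by the germs $f$ with $\int_V |f|^2 / (\sum_{j=m+1}^n |z_j|^2)^{n-m} \, d\lambda < \infty$ on a small coordinate neighborhood $V$. Passing to polar coordinates in the normal directions $z_\perp = (z_{m+1}, \ldots, z_n) \in \comp^{n-m}$, the radial integrand becomes $r^{2(n-m)-1}/r^{2(n-m)} = r^{-1}$, so a constant function is not integrable, while any germ vanishing along $Y$ contributes an extra factor $r^2$ and is integrable. Hence $\mathcal{J}(\delta_Y) = \mathcal{J}_Y$, proving the log canonical property.

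The main technical point is the smoothness and leading-order form of $\dist_X(\cdot, Y)^2$ near $Y$; this is classical and follows from the Gauss lemma applied to the normal exponential map of $Y$, which is a diffeomorphism on a tubular neighborhood and identifies $\dist_X(\cdot, Y)^2$ with the squared fiber norm of the normal bundle in a smooth way. Once this is granted, the rest of the argument is a direct verification in the local holomorphic chart.
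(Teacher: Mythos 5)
The crux of the Lemma, and the step the paper explicitly flags as the only non-trivial one, is the quasi-plurisubharmonicity of $\delta_Y$; this is exactly where your argument has a gap. You expand $\dist_X(z, Y)^2 = \big(\sum_{j=m+1}^n |z_j|^2\big)(1 + O(|z|))$ and read off that $w_0 := 2\log\dist_X(\cdot, Y) - \log\sum_{j>m}|z_j|^2$ is smooth. But the Taylor estimate only shows that the ratio $\dist_X^2/\sum_{j>m}|z_j|^2$ is bounded between positive constants, which is far from smoothness. In fact, when $\mathrm{codim}_{\comp}Y = n - m \geq 2$ the ratio is in general not even continuous across $Y$: writing $z_\perp = (z_{m+1}, \ldots, z_n)$, the quadratic part of $\dist_X^2$ in $z_\perp$ at a point $z'$ of $Y$ is $z_\perp^* H(z') z_\perp$, where $H(z')$ is the Hermitian matrix of the metric induced by $\omega$ on the normal bundle, expressed in the frame $\{\partial_{z_{m+1}}, \ldots, \partial_{z_n}\}$; so $\dist_X^2/|z_\perp|^2 \to \xi^* H(z')\xi$ as $z_\perp \to 0$ along $\xi = z_\perp/|z_\perp|$, a limit that depends on $\xi$ unless $H(z')$ is scalar. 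For a generic Kähler metric $H$ is not scalar once $z'$ leaves the base point (already $\omega = \imun\partial\dbar\big(|z_1|^2 + h(z_1,\bar z_1)|z_2|^2 + |z_3|^2\big)$ near $Y = \{z_2 = z_3 = 0\}\subset\comp^3$, $h$ non-constant, gives $H = \mathrm{diag}(h,1)$), and replacing $z_j$ by other holomorphic generators of $\mathcal{J}_Y$ would require a factorization $H(z') = c(z') A(z')^* A(z')$ with $A$ holomorphic, which is obstructed in general.

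Since your only justification for ``$\imun\partial\dbar w$ is bounded'' is the unproven smoothness of $w$, the argument never actually establishes the lower bound on $\imun\partial\dbar\log\dist_X(\cdot,Y)^2$ near $Y$ that quasi-plurisubharmonicity requires. This is what the cited sources, Demailly \cite[Proposition 1.4]{Dem82} and \cite[Theorem 2.31]{FinOTAs}, prove by a genuine estimate on $\imun\partial\dbar\log\dist^2$, working in Fermi coordinates, which are not holomorphic, and carefully controlling the transition to holomorphic ones. Your multiplier ideal computation and the handling of the cutoff and transition regions are fine, but the central step is missing, and it is needed precisely in the setting the paper uses: the diagonal $\Delta \subset X\times X$ has codimension $n$, which is $\geq 2$ whenever $\dim X \geq 2$.
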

	\begin{proof}
		Remark first that only the statement about the quasi-plurisubharmonicity is non-trivial.
		For the proof of a similar statement, which works for general analytic submanifolds, see Demailly \cite[Proposition 1.4]{Dem82}.
		For exactly this statement, proved in a more general setting of manifolds of bounded geometry, a reader may consult \cite[Theorem 2.31]{FinOTAs}.
	\end{proof}	
	\par 
	Let us now consider quasi-psh function $\psi$ with log canonical singularities on $X$ such that $Y := V (\mathcal{J}(\psi))$ is a submanifold.
	Let $\mu_X$ be a fixed measure on $X$ with smooth positive density with respect to the Lebesgue measure on $X$.
	Let us associate in a canonical way to it a measure on $Y$.
	We fix $g \in \ccal^{\infty}_c(Y)$ and let $\tilde{g}$ be a compactly supported continuous extension of $g$ on $X$.
	We define the measure $\mu_Y[\psi]$ on $Y$ through the following limit
	\begin{equation}
		\int_{Y} g d \mu_Y[\psi]
		:=
		\limsup_{t \to -\infty} \int_{U_t} \tilde{g} e^{- \psi} d\mu_X,
	\end{equation}
	where $U_t \subset X$, $t \in \real$, is defined as $U_t := \{ x \in X;  t < \psi(x) < t + 1\}$.
	It is possible to see that the limit does not depend on the continuous extension $\tilde{g}$, and that one gets in this way a measure with smooth positive density with respect to the Lebesgue measure on $Y$, cf. \cite[Proposition 4.5]{DemExtRed}.
	\par 
	In the special case when $\mu$ corresponds to the Riemannian volume form $dV_{X, \omega}$ of Kähler manifold $(X, \omega)$ and $\psi$ equals to $\delta_Y$ from (\ref{eq_delta_defn_y}), an easy verification, cf. \cite[(2.6)]{DemExtRed}, shows that the volume form $\mu_Y[\psi]$ is related to the Riemannian volume form $dV_{Y, \omega|_Y}$ induced by $\omega|_Y$ as follows
	\begin{equation}\label{eq_mu_ind}
		\mu_Y[\psi] := \frac{2^{n - m + 1} \cdot \pi^{n - m}}{(n - m - 1)!} dV_{Y, \omega|_Y}.
	\end{equation}
	\par 
	Let us now introduce the function $\gamma : \real \to \real_+$ by the following formula
	\begin{equation}
		\gamma (x) = \begin{cases}
			\exp (- x ), & \text{if }\, x > 0,
			\\
			\frac{1}{1 + 4 x^2} & \text{if }\, x \leq 0.
		\end{cases}
	\end{equation}
	\par 
	\begin{thm}[ {\cite[Theorem 2.8, Remark 2.9b)]{DemExtRed} }]\label{thm_ot_expl_const}
	 	Let $X$ be a compact Kähler manifold, and $\omega$ be a Kähler form on $X$. 
	 	Let $(L, h^L)$ be a holomorphic line bundle equipped with a continuous Hermitian metric $h^L$ on $X$, and let $\psi : X \to [-\infty, +\infty[$ be a quasi-psh function on $X$ with neat analytic singularities. 
	 	Assume that the submanifold $Y$ of $X$ defined by $Y = V (\mathcal{J}(\psi))$ is smooth and $\psi$ has log canonical singularities. 
	 	Finally, assume that the $(1, 1)$-current 
	 	\begin{equation}\label{eq_cond_ot11}
	 		c_1(L, h^L) + \frac{\imun \alpha}{2 \pi} \partial \dbar \psi,
	 	\end{equation}
	 	is non-negative for any $\alpha \in [1, 2]$.
	 	Then for any section $f \in H^0(Y, (L \otimes K_X)|_Y)$, there is a holomorphic extension $\tilde{f} \in H^0(X, L \otimes K_X)$ of $f$, such that the following $L^2$-bound is satisfied
	 	\begin{equation}
	 		\int_X \gamma(\psi) |\tilde{f}|_{\omega, h}^2 e^{-\psi} dV_{X, \omega}
	 		\leq
	 		17
	 		\cdot
	 		\int_Y |f|_{\omega, h}^2  dV_{Y, \omega}[\psi],
	 	\end{equation}
	 	where the pointwise norm is calculated with respect to $h^L$ and $\omega$.
	 \end{thm}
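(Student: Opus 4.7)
The plan is to follow Demailly's $L^2$-method, based on a twisted Bochner--Kodaira--Nakano inequality. The central idea is to construct $\tilde{f}$ as the difference between a smooth non-holomorphic extension of $f$ and an $L^2$-solution of an associated $\dbar$-equation with datum supported in a neighborhood of $Y$.

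First, I would reduce to the case where $\psi$ is smooth by regularizing through $\psi_\epsilon := \frac{1}{2}\log(e^{2\psi} + \epsilon^2)$; the curvature assumption (\ref{eq_cond_ot11}) is preserved up to a vanishing error, and all estimates will be proved uniformly in $\epsilon$ so that a standard weak compactness argument allows one to pass to the limit at the end. Next, I would pick a smooth (not necessarily holomorphic) extension $F$ of $f$ to a tubular neighborhood of $Y$, truncate it via a cutoff $\theta(\psi)$ that equals $1$ near $\psi = -\infty$ (i.e.\ near $Y$) and vanishes where $\psi$ is large, and consider $v := \dbar(\theta F)$, which is a smooth $\dbar$-closed $(0,1)$-form with values in $L \otimes K_X$ whose support lies in the annular region $\{t_0 < \psi < t_0 + 1\}$. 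The section $\tilde{f} := \theta F - u$ will then be holomorphic provided $\dbar u = v$, and it will agree with $f$ along $Y$ provided $u$ vanishes on $Y$.

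The technical heart is to solve $\dbar u = v$ together with an $L^2$-estimate against the singular weight $e^{-\psi}$. For this, I would apply the twisted Bochner--Kodaira--Nakano identity with weights $\eta(\psi)$ and $\lambda(\psi)$ depending only on $\psi$; after integration by parts, the curvature term becomes
\begin{equation*}
\eta \cdot c_1(L,h^L) - \imun \partial \dbar \eta - \lambda^{-1} \imun \partial \eta \wedge \dbar \eta
= \eta \cdot \bigl[ c_1(L,h^L) + \tfrac{\imun}{2\pi}\alpha(\psi) \partial \dbar \psi \bigr] + (\text{error})
\end{equation*}
for an appropriate effective coefficient $\alpha(\psi)$. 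Choosing $\eta$ and $\lambda$ so that $\alpha$ stays in $[1,2]$ makes the bracket non-negative by hypothesis (\ref{eq_cond_ot11}), and the error terms become controllable. The specific piecewise formula for $\gamma$ (exponential on one side, the Poisson-type weight $(1+4x^2)^{-1}$ on the other) arises precisely as the solution of the ODE that optimizes this matching; the constant $17$ then emerges from explicit integration of auxiliary ODEs, essentially following the computation in \cite{DemExtRed}. Since $\mathcal{J}(\psi) = \mathcal{J}_Y$, the resulting bound $\int |u|^2 e^{-\psi} < \infty$ forces $u$ to vanish along $Y$, so $\tilde{f}$ genuinely extends $f$; the right-hand side $L^2$-integral over $Y$ then comes from evaluating $\theta F$ in the annular limit via the definition of $dV_{Y,\omega}[\psi]$.

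The principal obstacle is sharpening the abstract $L^2$-extension theorem, which normally yields a geometry-dependent constant, into one with the uniform value $17$. This requires tight bookkeeping in the twisted identity and the clever choice of the weight $\gamma$ via the associated ODE; the log-canonical condition is used twice, once to ensure $dV_{Y,\omega}[\psi]$ is a genuine smooth measure on $Y$ and once to force the vanishing of $u$ along $Y$. A secondary technical point is verifying that the neat analytic singularities of $\psi$ are compatible with the smoothing procedure so that all the steps pass to the limit uniformly in $\epsilon$.
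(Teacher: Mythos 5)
The paper does not prove this theorem: it is quoted directly from Demailly \cite[Theorem 2.8, Remark 2.9b)]{DemExtRed}, with no argument supplied in the present article. Your sketch correctly outlines the $L^2$-method used in that reference (regularization of the quasi-psh weight, a cutoff extension $\theta(\psi)F$, solution of $\dbar u = \dbar(\theta F)$ via a twisted Bochner--Kodaira--Nakano inequality with auxiliary weights $\eta(\psi),\lambda(\psi)$, and the ODE analysis that produces the explicit profile $\gamma$ and the numerical constant $17$), so it is consistent with the cited argument rather than with any proof internal to the paper.
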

	 \par 
	 Now, in the context of this article, the canonical line bundle and the condition (\ref{eq_cond_ot11}) do not appear naturally. Due to this, we will state an easy consequence of Theorem \ref{thm_ot_expl_const}, which will be better adapted for our needs.
	 \par 
	 We introduce first the notations for the following theorem.
	 Let $X$ be a projective manifold, $\omega$ is a Kähler metric on $X$ and $L$ is a fixed ample line bundle on $X$.
	 Let $Y$ be a closed submanifold of $X$.
	 Let $h^L$ be a continuous psh metric on $L$.
	 \begin{thm}\label{thm_ot_asymp}
		There are $C > 0$, $p_0 \in \nat$, such that for any $k \in \nat$, $k \geq p_0$, and any section $f \in H^0(Y, L|_Y^k)$, there is a holomorphic extension $\tilde{f} \in H^0(X, L^k)$ of $f$, such that the following $L^2$-bound is satisfied
	 	\begin{equation}\label{eq_ot_asymp}
	 		\| \tilde{f} \|_{L^2_k(X, h^L, \omega)}
	 		\leq
	 		C
	 		\| f \|_{L^2_k(Y, h^L, \omega)}.
	 	\end{equation}
	 \end{thm}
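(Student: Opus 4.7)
The plan is to reduce Theorem~\ref{thm_ot_asymp} to Theorem~\ref{thm_ot_expl_const} applied to the line bundle $\tilde L := L^k \otimes K_X^{-1}$ together with the weight $\psi := \delta_Y$ from Lemma~\ref{lem_delta_psh}. Since $(\tilde L \otimes K_X)|_Y = L^k|_Y$, a holomorphic extension in $H^0(X, \tilde L \otimes K_X)$ is the same thing as one in $H^0(X, L^k)$, and Lemma~\ref{lem_delta_psh} directly ensures that $\delta_Y$ has neat analytic log canonical singularities along $Y$. I would fix a smooth positive metric $h_0^L$ on $L$ (which exists as $L$ is ample), set $\omega_0 := c_1(L, h_0^L) > 0$, pick a smooth Hermitian metric $h^{K_X^{-1}}$ on $K_X^{-1}$, and for an auxiliary integer $k_0 \in \nat^*$ to be fixed shortly and $k \geq k_0$ equip $\tilde L$ with the continuous Hermitian metric $h^{\tilde L} := (h^L)^{k - k_0} \otimes (h_0^L)^{k_0} \otimes h^{K_X^{-1}}$.

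The crux is the verification of the curvature condition (\ref{eq_cond_ot11}) with constants that do not depend on $k$. As currents on $X$,
\begin{equation*}
  c_1(\tilde L, h^{\tilde L}) + \frac{\imun \alpha}{2 \pi} \partial \dbar \delta_Y = (k - k_0) \cdot c_1(L, h^L) + k_0 \omega_0 + c_1(K_X^{-1}, h^{K_X^{-1}}) + \frac{\imun \alpha}{2 \pi} \partial \dbar \delta_Y.
\end{equation*}
The first summand is nonnegative by plurisubharmonicity of $h^L$. By the quasi-plurisubharmonicity of $\delta_Y$ and smoothness of $c_1(K_X^{-1}, h^{K_X^{-1}})$, there exist $C_1, C_2 > 0$, depending only on $X, Y, \omega, h_0^L, h^{K_X^{-1}}$, such that $\frac{\imun}{2 \pi} \partial \dbar \delta_Y \geq -C_1 \omega_0$ and $c_1(K_X^{-1}, h^{K_X^{-1}}) \geq -C_2 \omega_0$. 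Choosing any $k_0 \geq 2 C_1 + C_2$ then makes the displayed expression nonnegative for every $\alpha \in [1, 2]$ and every $k \geq k_0$, so Theorem~\ref{thm_ot_expl_const} applies.

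For each $f \in H^0(Y, L^k|_Y)$ this produces an extension $\tilde f \in H^0(X, L^k)$ satisfying
\begin{equation*}
  \int_X \gamma(\delta_Y) |\tilde f|^2 e^{-\delta_Y} dV_{X, \omega} \leq 17 \int_Y |f|^2 dV_{Y, \omega}[\delta_Y],
\end{equation*}
with pointwise norms computed from $h^{\tilde L}$ and $\omega$. The weight $\gamma(\delta_Y) e^{-\delta_Y}$ is bounded below by a positive constant on $X$ (near $Y$ the factor $e^{-\delta_Y}$ dominates $1/(1 + 4\delta_Y^2)$, while away from a neighbourhood of $Y$ the function $\delta_Y$ vanishes), and (\ref{eq_mu_ind}) identifies $dV_{Y, \omega}[\delta_Y]$ with a constant multiple of $dV_{Y, \omega|_Y}$; combined, these two observations turn the above estimate into $\int_X |\tilde f|^2 dV_{X, \omega} \leq C' \int_Y |f|^2 dV_{Y, \omega|_Y}$, pointwise norms still in the auxiliary metric. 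Finally, the ratio of the auxiliary metric on $L^k = \tilde L \otimes K_X$ to the target $(h^L)^k$ is a bounded continuous positive function depending only on $k_0$ and the auxiliary smooth choices, hence independent of $k$, and absorbing it into the constant gives (\ref{eq_ot_asymp}) with $p_0 := k_0$. The main obstacle is precisely that $h^L$ is only continuous: $c_1(L, h^L)$ cannot be bounded below by a strictly positive smooth form, so the smooth but indefinite contributions of $c_1(K_X^{-1}, h^{K_X^{-1}})$ and of $\alpha \partial \dbar \delta_Y / (2\pi)$ cannot be absorbed into any natural power of the $h^L$-curvature; splitting off a single fixed power $L^{k_0}$ with a smooth positive metric supplies the needed reservoir of strict positivity, uniformly in $k$.
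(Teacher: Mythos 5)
Your proof is correct and takes essentially the same approach as the paper: both reduce to Theorem~\ref{thm_ot_expl_const} applied to $L^k\otimes K_X^{-1}$ with the weight $\delta_Y$, split off a fixed power of a smooth positive metric $(h_0^L)^{k_0}$ to absorb the negative contributions of $\partial\dbar\delta_Y$ and the $K_X$-term, bound $\gamma(\delta_Y)e^{-\delta_Y}$ below, and use boundedness of $h^L$ against $h_0^L$ to return to the target metric. The only cosmetic difference is that the paper uses the $\omega$-induced metric on $K_X$ (so the canonical-bundle factors cancel exactly), whereas you use an arbitrary smooth $h^{K_X^{-1}}$ and absorb the resulting bounded discrepancy into the constant.
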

	 \begin{proof}
	 	Since $L$ is assumed to be ample, by Kodaira theorem it carries a smooth positive metric $h^L_0$.
	 	By Lemma \ref{lem_delta_psh} and positivity of $h^L_0$, we see that there is $p_0 \in \nat$, such that the following  $(1, 1)$-current is non-negative
	 	\begin{equation}
	 		p_0 \cdot c_1(L, h^L_0) 
	 		-
	 		c_1(K_X, h^{K_X}_{\omega})
	 		+
	 		\frac{\imun \alpha}{2 \pi} \partial \dbar \delta_Y,
	 	\end{equation}
	 	for any $\alpha \in [1, 2]$, where $h^{K_X}_{\omega}$ is the metric on $K_X$ induced by $\omega$.
	 	Hence for any $k \in \nat$, we may apply Theorem \ref{thm_ot_expl_const} for quasi-psh function $\delta_Y$ and line bundle $L^k \otimes L^{p_0} \otimes K_X^{-1}$, endowed with the metric $h_1 := (h^L)^k \otimes (h^L_0)^{p_0} \otimes (h^{K_X}_{\omega})^{-1}$.
	 	In particular, from Theorem \ref{thm_ot_expl_const} and (\ref{eq_mu_ind}) we see that for any $f \in H^0(Y, L|_Y^{k + p_0})$, there is an extension $\tilde{f} \in H^0(X, L^{k + p_0})$ of $f$ to $X$, such that the following estimate is satisfied
	 	\begin{equation}\label{eq_ot_asymp1}
	 		\int_X \gamma(\delta_Y) |\tilde{f}|_1^2 e^{-\delta_Y} dV_{X, \omega}
	 		\leq
	 		17
	 		\cdot
	 		\int_Y |f|_1^2  dV_{Y, \omega|_Y},
	 	\end{equation}
	 	where the pointwise norm $|\cdot|_1$ above is taken with respect to $h_1$.
	 	Remark now that since $h^L$ is bounded, there is $C > 0$, such that 
	 	\begin{equation}\label{eq_ot_asymp2}
	 		C^{-1} h^L_0 \leq h^L \leq C h^L_0.
	 	\end{equation}
	 	Now, remark that by taking $r > 0$ from Lemma \ref{lem_delta_psh} small, we may achieve that $\delta_Y$ is non-positive.
	 	Then by an easy fact that for $x > 0$, we have
	 	$
	 		\frac{e^x}{1 + 4 x^2} \geq \frac{1}{17},
	 	$
	 	we deduce the following inequality
	 	\begin{equation}\label{eq_ot_asymp3}
	 		\gamma(\delta_Y) e^{-\delta_Y} \geq \frac{1}{17}.
	 	\end{equation}
	 	By combining (\ref{eq_ot_asymp1}), (\ref{eq_ot_asymp2}) and (\ref{eq_ot_asymp3}), we obtain (\ref{eq_ot_asymp}).
	 \end{proof}
	 \par 
	 In particular, the previous result recovers the well-known fact that the restriction morphism
	 \begin{equation}\label{eq_res_y_mor}
	 	\res_Y : H^0(X, L^k) \to H^0(Y, L|_Y^k),
	 \end{equation}
	 is surjective.
	 The following result refines this statement on the metric level in more regular setting.
	 \par 
	 \begin{thm}[{\cite[Theorem 1.1]{FinOTAs}}]\label{thm_semicalss_ot}
	 	Let $h^L$ be a smooth positive metric on $L$.
	 	There are $C > 0$, $p_1 \in \nat$, such that under (\ref{eq_res_y_mor}), for any $k \geq p_1$, the following relation between the $L^2$-norm on the total manifold $X$ and the $L^2$-norm on the submanifold $Y$ is satisfied
		\begin{equation}
			1 - \frac{C}{k} \leq  \frac{[{\rm{Hilb}}^X_k(h^L)]}{{\rm{Hilb}}^Y_k(h^L)} \cdot k^\frac{n - m}{2} \leq 1 + \frac{C}{k}.
		\end{equation}
	\end{thm}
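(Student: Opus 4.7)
The plan is to recast the quotient norm on $H^0(Y, L|_Y^k)$ as a spectral quantity. Let $\res_Y^* : H^0(Y, L|_Y^k) \to H^0(X, L^k)$ denote the Hilbert space adjoint of $\res_Y$ with respect to the $L^2$-products; its integral kernel, viewed as a function on $X \times Y$, is the Bergman kernel $B_{X, k}$ of $H^0(X, L^k)$. Set $T_k := \res_Y \circ \res_Y^*$. The minimal extension of $f \in H^0(Y, L|_Y^k)$ is the unique one lying in $(\ker \res_Y)^{\perp}$, which is $\res_Y^* T_k^{-1} f$, and therefore
\begin{equation*}
\bigl\| f \bigr\|^2_{[{\rm{Hilb}}_k^X(h^L)]} = \bigl\langle T_k^{-1} f, f \bigr\rangle_{L^2_k(Y, h^L)}.
\end{equation*}
The theorem thus reduces to the operator identity $T_k = k^{n-m}(\mathrm{Id} + O(k^{-1}))$ on $H^0(Y, L|_Y^k)$, with the error measured in operator norm.

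The integral kernel of $T_k$ is the restriction of $B_{X, k}$ to $Y \times Y$, while the identity on $H^0(Y, L|_Y^k)$ has integral kernel $B_{Y, k}$, the Bergman kernel of $H^0(Y, L|_Y^k)$. So the task is to establish the pointwise near-identity $B_{X, k}|_{Y \times Y} \approx k^{n-m} B_{Y, k}$, with remainder of relative size $O(k^{-1})$. In the flat Bargmann--Fock model on $\comp^n = \comp^m \oplus \comp^{n-m}$ with an appropriate Gaussian weight, a direct calculation gives the exact identity $B^{\comp^n}_k((z, 0),(z', 0)) = k^{n-m} \cdot B^{\comp^m}_k(z, z')$. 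The general case follows by passing to rescaled Kähler normal coordinates around an arbitrary $y_0 \in Y$, applying the near-diagonal Bergman kernel expansion (Tian, Catlin, Zelditch, refined by Dai--Liu--Ma and Ma--Marinescu), and comparing the expansions for $B_{X, k}$ and $B_{Y, k}$ term by term; the leading contributions match via the flat model, and the first non-trivial correction is of relative size $O(k^{-1})$.

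The main obstacle is upgrading the pointwise near-diagonal asymptotics to an operator-norm estimate with the sharp $O(k^{-1})$ constant. This requires uniform Agmon-type off-diagonal decay of $B_{X, k}(y, y') - k^{n-m} B_{Y, k}(y, y')$ on the natural semiclassical scale $\dist(y, y') \lesssim k^{-1/2}$, combined with superpolynomial decay outside that scale, both of which can be extracted from the Bergman kernel machinery. Once those estimates are in place, a Schur test bounds the operator norm of the remainder by the uniform $L^1(Y)$-norm of its integral kernel and produces the desired $O(k^{-1})$ error. Inverting $T_k$, which is legitimate for $k$ large, then gives $\|f\|^2_{[{\rm{Hilb}}_k^X(h^L)]} = k^{-(n-m)}(1 + O(k^{-1})) \|f\|^2_{{\rm{Hilb}}_k^Y(h^L)}$, and taking square roots yields both sides of the stated estimate.
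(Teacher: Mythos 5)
This paper does not actually prove Theorem~\ref{thm_semicalss_ot}; it is quoted with a citation to \cite{FinOTAs} and is used as a black box (notably in the proof of Theorem~\ref{thm_induc}). What can be compared is your outline against the closely analogous arguments the paper \emph{does} carry out for the diagonal embedding $\Delta\hookrightarrow X\times X$ (Theorems~\ref{thm_akl_comp}, \ref{thm_mult_well_def}, \ref{thm_mult_surj}, Lemma~\ref{lem_mult_def_exp_dec_diag}), which the author explicitly describes as transplants of the method from \cite{FinOTAs}. Your structure matches that method point for point: the spectral reduction $\|f\|^2_{[{\rm{Hilb}}^X_k]}=\langle T_k^{-1}f,f\rangle$ with $T_k=\res_Y\res_Y^*$ is exactly the paper's identity $A_{k,l}=\res_\Delta\circ(\res_\Delta\circ B^{X\times X}_{k,l})^*$ in (\ref{eq_akl_form})--(\ref{eq_comp_ext_oper2}); the kernel of $T_k$ being $B_{X,k}|_{Y\times Y}$, the comparison with $k^{n-m}B_{Y,k}$ via the Bargmann model $\mathscr{P}_n|_{\comp^m\times\comp^m}=\mathscr{P}_m$, the appeal to the vanishing of the $k^{-1/2}$ term in the Dai--Liu--Ma expansion, and the Schur/Young bound from an $L^1$ estimate of the kernel all appear verbatim (see Theorems~\ref{thm_bk_off_diag}, \ref{thm_berg_off_diag}, Proposition~\ref{prop_norm_berg_proj}, and the proof of Theorem~\ref{thm_akl_comp}).

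One genuine gap to flag: your step ``compare the expansions for $B_{X,k}$ and $B_{Y,k}$ term by term'' silently assumes the normal coordinates on $X$ and on $Y$ based at $y_0$, and the corresponding parallel transport trivializations of $L$, agree to the order needed. For a general submanifold $\exp^X_{y_0}$ and $\exp^Y_{y_0}$ differ at quadratic order by the second fundamental form, and the trivializations likewise pick up a quadratic correction. On the relevant scale $|Z|\sim k^{-1/2}$ this is an $O(k^{-1})$ effect, so the conclusion is not endangered, but establishing this is precisely the nontrivial bookkeeping in \cite{FinOTAs}. Notice that in the only case this paper actually executes --- the diagonal $\Delta\subset X\times X$ with the product metric --- the proof of Lemma~\ref{lem_mult_def_exp_dec_diag} invokes that $\Delta$ is totally geodesic ((\ref{eq_dist_equiv}), (\ref{eq_phi_equiv})) exactly to bypass this issue; that shortcut is unavailable for a general $Y$, so your outline would need to supply the curvature-correction argument to be complete.
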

	\begin{rem}\label{rem_sem_jetss}
		See also \cite[Theorem 1.3]{FinOTRed} for a refinement on the level of holomorphic jets. 
	\end{rem}

\subsection{Bernstein-Markov property and comparison of natural norms}\label{sect_bern_m}
	The main goal of this section is to compare several natural norms on the section ring associated to psh metrics.
	We will first recall the classical construction due to Bedford-Taylor of Monge-Ampère operator on positive $(1, 1)$-currents with bounded potentials, thus extending the domain of operators ${\rm{Ban}}^{1}$, ${\rm{Hilb}}$ to continuous psh functions. Then we recall several results concerning the Bernstein-Markov property of the Bedford-Taylor version of Monge-Ampère operator.
	\par 
	Assume that a holomorphic line bundle $F$ over a complex manifold $Y$ is endowed with a bounded metric $h^F$.
	We define the $(1, 1)$-current $c_1(F, h^F)$ as follows
	\begin{equation}\label{eq_chern_class_defn}
		c_1(F, h^F)
		=
		c_1(F, h^F_{0})
		+
		\frac{ \partial \dbar}{2 \pi  \imun} \Big[ \log \Big( \frac{h^F}{h^F_0} \Big) \Big],
	\end{equation}
	where $h^F_{0}$ is any smooth Hermitian product on $F$, and by $[g]$ we mean the distribution associated to a locally integrable function $g$.
	Poincaré-Lelong equation, cf. \cite[Theorem V.13.9]{DemCompl}, shows that such a definition is independent of the choice of $h^F_{0}$.
	\par 
	Now, adapting the construction of Bedford-Taylor \cite{BedfordTaylor}, cf. \cite[\S III.3]{DemCompl}, in our situation, we can define inductively the positive $(k, k)$-currents $c_1(F, h^F)^k$, $k \in \nat$, for any bounded psh Hermitian metric $h^F$ as follows
	\begin{equation}\label{eq_bed_tay}
		c_1(F, h^F)^{k + 1}
		=
		c_1(F, h^F_{0}) \wedge
		c_1(F, h^F)^k
		+
		\frac{ \partial \dbar}{2 \pi  \imun} \Big[
		\log \Big( \frac{h^F}{h^F_0} \Big)  \cdot
		c_1(F, h^F)^k
		\Big].
	\end{equation}
	The term under the brackets in (\ref{eq_bed_tay}) is well-defined as a $(k, k)$-current since the logarithm function in it is bounded by our assumption on $h^F$ and $c_1(F, h^F)^k$ is represented by a non-negative $(k, k)$-form with measure coefficients.
	By (\ref{eq_bed_tay}), we see that the De Rham class of $c_1(F, h^F)^k$ coincides with $c_1(F)^k$, where the De Rham cohomology is extended to distributions in the usual way.
	\par 
	Following the standard terminology, cf. \cite{NgZer}, \cite{Siciak} and \cite{BerBoucNys}, we say that the pair $(h^L, \mu)$ satisfies the \textit{Bernstein-Markov property}, if for any $\epsilon > 0$ there is $p_1 \in \nat$, such that for any $k \geq p_1$, the following relation between the $L^2$ and $L^{\infty}$-norms holds 
	\begin{equation}\label{eq_bm_defn11}
		\| \cdot \|_{L^2_k(X, h^L, \mu)}
		\geq
		\exp(- \epsilon k)
		\cdot
		\| \cdot \|_{L^{\infty}_k(X, h^L)}.
	\end{equation}	 
	Following \cite{Siciak} and \cite[Definition 1.9]{BerBoucNys}, we say that the pair $(h^L, \mu)$ is \textit{Bernstein-Markov with respect to psh weights} if and only if for any $\epsilon > 0$, there is $C > 0$, such that for any $p \geq 1$ and any upper semi-continuous function $\psi : X \to [-\infty, \infty[$, such that the $(1, 1)$-current $\frac{\imun}{\pi} \partial \dbar \psi + c_1(L, h^L)$ is positive, we have 
	\begin{equation}\label{eq_sbm_defn11}
		\int  e^{p \psi} d \mu
		\geq
		C 
		\exp(-\epsilon p)
		\cdot
		\sup_{x \in X} e^{p \psi}(x).
	\end{equation}
	\par 
	\begin{lem}\label{lem_bm_rest}
		If a pair $(h^L, \mu)$ is Bernstein-Markov with respect to psh weights, then it has Bernstein-Markov property.
		Moreover, in the estimate (\ref{eq_bm_defn11}) one can replace the $L^2$-norm by the $L^1$-norm.
	\end{lem}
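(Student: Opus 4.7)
The plan is to produce both estimates by testing the hypothesis (\ref{eq_sbm_defn11}) against the canonical quasi-psh weight attached to a holomorphic section. Given $f \in H^0(X, L^k)$, I would set $\psi := \frac{1}{k}\log |f|_{h^L}$. Writing $h^L = e^{-\phi}$ in a local trivialization in which $f$ corresponds to a holomorphic function $\tilde{f}$, this becomes $\psi = \frac{1}{k}\log |\tilde{f}| - \phi/2$, which is upper semi-continuous because $\phi$ is continuous (the metric $h^L$ is assumed bounded and continuous) and $\log|\tilde{f}|$ is plurisubharmonic. By Poincaré-Lelong, $\frac{\imun}{\pi}\partial\dbar\log |\tilde{f}| \geq 0$ as a current, while $c_1(L, h^L) = \frac{\imun}{2\pi}\partial\dbar\phi$ locally, so
\begin{equation*}
\frac{\imun}{\pi}\partial\dbar\psi + c_1(L, h^L) = \frac{\imun}{\pi k}\partial\dbar\log|\tilde{f}| \geq 0,
\end{equation*}
and $\psi$ is therefore an admissible weight in (\ref{eq_sbm_defn11}).

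Next, I would apply the hypothesis with $p = k$: since $e^{k\psi} = |f|_{h^L}$, the inequality (\ref{eq_sbm_defn11}) becomes directly
\begin{equation*}
\|f\|_{L^1_k(X, h^L, \mu)} \geq C\, e^{-\epsilon k}\,\|f\|_{L^{\infty}_k(X, h^L)},
\end{equation*}
which is the claimed $L^1$ Bernstein-Markov estimate (the positive constant $C$ is absorbed into the exponential factor by slightly enlarging $\epsilon$ and raising the threshold index $p_1$, using that $\epsilon > 0$ is arbitrary in the hypothesis). Applying (\ref{eq_sbm_defn11}) instead with $p = 2k$, so that $e^{2k\psi} = |f|_{h^L}^2$, and then extracting a square root, yields the analogous $L^2$ inequality $\|f\|_{L^2_k(X, h^L, \mu)} \geq \sqrt{C}\, e^{-\epsilon k}\,\|f\|_{L^{\infty}_k(X, h^L)}$, which is (\ref{eq_bm_defn11}) after the same harmless absorption.

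I do not foresee any serious obstacle: the entire content of the proof is the dictionary between holomorphic sections of $L^k$ and upper semi-continuous weights whose curvature current is bounded below by $-c_1(L, h^L)$; once this dictionary is installed, the lemma reduces to a single application of the hypothesis for $p = k$ and $p = 2k$. The only minor point requiring care is upper semi-continuity of $\psi$, which is automatic from the standing assumption that $h^L$ be continuous, so that the local weight $\phi$ is continuous and $\log|\tilde{f}|$ contributes a plurisubharmonic (hence upper semi-continuous) summand.
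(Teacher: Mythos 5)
Your proposal is correct and follows the same route as the paper's proof: both take $\psi = \frac{1}{k}\log|s|_{h^L}$ for $s \in H^0(X, L^k)$, verify admissibility via Poincar\'e--Lelong, and then apply the hypothesis with $p = 2k$ for the $L^2$ estimate and $p = k$ for the $L^1$ estimate, absorbing the multiplicative constant into $\exp(-\epsilon k)$. Your version merely spells out the upper semi-continuity and the local curvature cancellation in more detail; the argument is the same.
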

	\begin{proof}
		As it was observed in \cite[after Remark 1.10]{BerBoucNys}, to prove the first part of the statement, it suffices to take $p = 2k$ and $\psi = \frac{1}{k} \log |s|$ in (\ref{eq_sbm_defn11}), where $s \in H^0(X, L^k)$.
		The Poincaré-Lelong equation, cf. \cite[(13.2)]{DemCompl}, implies that the above $\psi$ verifies the positivity requirement stated before (\ref{eq_sbm_defn11}) and inequality (\ref{eq_sbm_defn11}) reduces to (\ref{eq_bm_defn11}).
		The proof of the second part of the statement is analogous to the proof of the first one, one only needs to put $p = k$ in (\ref{eq_sbm_defn11}) instead.
	\end{proof}
	\begin{thm}[{Berman-Boucksom-Witt Nystr{\"o}m \cite[Proposition 1.13 and Theorem 1.14]{BerBoucNys}}]\label{thm_bbw}
		For any continuous psh metric $h^L$ on $L$, the pair $(h^L, \frac{1}{n!} c_1(L, h^L)^n)$ satisfies Bernstein-Markov property with respect to psh weights.
	\end{thm}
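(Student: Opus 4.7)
The plan is to follow the strategy of Berman-Boucksom-Witt Nystr{\"o}m, which reduces the Bernstein-Markov property with respect to psh weights to a quantitative volume--capacity comparison for the Monge-Amp\`ere measure of a continuous psh metric.

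First I would normalize. Replacing $\psi$ by $\psi - \sup_X \psi$ (finite since $\psi$ is upper semi-continuous on the compact $X$), the desired inequality (\ref{eq_sbm_defn11}) becomes
\begin{equation*}
	\int_X e^{p \psi} d \mu \geq C e^{-\epsilon p},
	\qquad
	\text{with } \sup_X \psi = 0.
\end{equation*}
Since $e^{p \psi} \geq e^{-p \delta} \mathbf{1}_{\{ \psi \geq -\delta \}}$ for every $\delta > 0$, choosing $\delta = \epsilon/2$ reduces the claim to proving a uniform volume lower bound of the form
\begin{equation*}
	\mu\big( \{ \psi \geq -\delta \} \big) \geq c_\delta > 0,
\end{equation*}
valid for every $L$-psh $\psi$ with $\sup_X \psi = 0$.

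Next I would exploit compactness. The family $\mathcal{F}$ of normalized $L$-psh functions is compact in $L^1(X, dv_X)$ by standard potential-theoretic arguments, and uniformly bounded above. If the desired uniform bound failed, one could extract an $L^1$-limit $\psi_\infty \in \mathcal{F}$ with $\mu(\{ \psi_\infty \geq -\delta \}) = 0$, i.e., $\psi_\infty \leq -\delta$ $\mu$-almost everywhere; the goal becomes showing that no member of $\mathcal{F}$ can be so concentrated against $\mu$.

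The core analytic step, and the main obstacle, is the Kolodziej-type comparison between $\mu$ and the Bedford-Taylor capacity $\mathrm{cap}_L$. Since $h^L$ is \emph{continuous} (and not merely bounded), one can prove that $\mu = \frac{1}{n!} c_1(L, h^L)^n$ does not charge pluripolar sets, and more quantitatively that $\mu(E) \leq F(\mathrm{cap}_L(E))$ for a function $F$ with $F(0^+) = 0$ depending only on the modulus of continuity of the local potentials of $h^L$; the argument proceeds by smooth approximation through Demailly regularization together with the Bedford-Taylor convergence theorems for monotone sequences of continuous psh functions. Combining this with the classical exponential decay of $\mathrm{cap}_L \big(\{ \psi \leq -t \}\big)$ for $L$-psh $\psi$ in $\mathcal{F}$, one rules out the degenerate concentration above and obtains the uniform bound $c_\delta > 0$. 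The delicate point is ensuring that both $F$ and the capacity-decay rate are genuinely uniform across $\mathcal{F}$; this is the step where the continuity hypothesis on $h^L$ is essential, as opposed to the weaker bounded or $L^\infty$-setting.
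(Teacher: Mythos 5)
The paper gives no proof of Theorem~\ref{thm_bbw}: it is stated purely as a citation from Berman-Boucksom-Witt Nystr\"om \cite{BerBoucNys}, and the author never reproduces or sketches the argument. So strictly speaking there is no in-paper proof to compare your proposal against.

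On the substance of your sketch: the opening reduction (normalize to $\sup_X \psi = 0$ and use $e^{p\psi}\geq e^{-p\delta}\mathbf{1}_{\{\psi\geq -\delta\}}$) is correct and essentially matches the criterion isolated in \cite[Proposition 1.13]{BerBoucNys}. However, the middle and final steps contain real gaps. First, the compactness step is not sound as written: $L^1$-convergence $\psi_j\to\psi_\infty$ does \emph{not} by itself transfer the hypothesis $\mu(\{\psi_j\geq -\delta\})\to 0$ into $\mu(\{\psi_\infty\geq -\delta\})=0$, since $\mu = \tfrac{1}{n!}c_1(L,h^L)^n$ may be singular with respect to Lebesgue measure. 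To close this step one must upgrade $L^1$-convergence to convergence in capacity for (truncated) $L$-psh functions and use that $\mu$ is dominated by capacity; this is a genuine analytic input, not a formal consequence of compactness. Second, your concluding appeal to the decay of $\mathrm{cap}_L(\{\psi\leq -t\})$ is aimed at the wrong regime: that estimate controls the \emph{high} sublevel sets (large $t$), while the quantity you need, $\mu(\{\psi\geq -\delta\})\geq c_\delta>0$ for small $\delta$, concerns what happens near the supremum of $\psi$. The correct ingredient here is the domination principle for Monge-Amp\`ere measures of continuous psh metrics (if $\psi\leq -\delta$ holds $\mu$-almost everywhere then $\psi\leq -\delta$ everywhere, contradicting $\sup\psi=0$), not the large-$t$ capacity decay. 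So the general strategy is recognizably that of \cite{BerBoucNys}, but as written the argument has a hole exactly where the continuity of $h^L$ is supposed to enter.
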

	We will also need the following result, philosophically similar to Theorem \ref{thm_bbw}, yet simpler.
	\begin{prop}\label{prop_bm}
		The pair $(h^L, \mu)$ satisfies Bernstein-Markov property with respect to psh weights for any continuous psh metrics $h^L$ on $L$ and any measure $\mu$, majorized from below by some strictly positive volume form on $X$. 
	\end{prop}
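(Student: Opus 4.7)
The plan is to deduce the Bernstein-Markov property (with respect to psh weights) from the classical sub-mean value inequality for plurisubharmonic functions, with the continuity of $h^L$ handling the error between the normalized psh weight and $\psi$ itself.

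First I would fix $\epsilon > 0$ and cover $X$ by finitely many coordinate charts $(U_i, z_i)$ on which $h^L$ admits continuous local potentials $\phi_i$, i.e.\ $c_1(L, h^L)|_{U_i} = \frac{\imun}{\pi} \partial \dbar \phi_i$. Since $h^L$ is continuous and $X$ is compact, each $\phi_i$ is uniformly continuous, and by a Lebesgue number argument applied to this cover, I can pick $r_0 > 0$ (independent of $x_0 \in X$) and a single index $i = i(x_0)$ such that the Euclidean ball $B_i(x_0, r_0) \subset U_i$ (taken in the coordinates $z_i$) satisfies
\begin{equation*}
	\big| \phi_{i}(x) - \phi_{i}(x_0) \big| \leq \tfrac{\epsilon}{2}, \qquad \text{for all } x \in B_i(x_0, r_0).
\end{equation*}

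Second, given any upper semi-continuous $\psi$ with $\frac{\imun}{\pi} \partial \dbar \psi + c_1(L, h^L) \geq 0$, the function $\psi + \phi_i$ is psh on $U_i$, so $e^{p(\psi + \phi_i)}$ is psh, hence subharmonic. The sub-mean value inequality applied at $x_0$ on the coordinate ball $B_i(x_0, r_0)$ with respect to Lebesgue measure $\lambda_i$ gives
\begin{equation*}
	e^{p(\psi + \phi_i)(x_0)}
	\leq
	\frac{1}{\lambda_i \big( B_i(x_0, r_0) \big)}
	\int_{B_i(x_0, r_0)} e^{p(\psi + \phi_i)} d\lambda_i.
\end{equation*}
Using the oscillation bound on $\phi_i$ to cancel the factors $e^{p\phi_i(x_0)}$ and $e^{p\phi_i(x)}$ up to a multiplicative error $e^{p\epsilon/2}$, I obtain
\begin{equation*}
	e^{p\psi(x_0)}
	\leq
	\frac{e^{p\epsilon/2}}{\lambda_i \big( B_i(x_0, r_0) \big)}
	\int_{B_i(x_0, r_0)} e^{p \psi} d\lambda_i.
\end{equation*}

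Third, since $\mu$ is bounded below by a strictly positive smooth volume form and the Lebesgue measures $\lambda_i$ are bounded above by such a volume form in the coordinate charts, there is a constant $C_0 > 0$ (depending only on the cover, the chart volume form, and $r_0$, hence only on $\epsilon$ and the geometry) such that $d\lambda_i \leq C_0 \, d\mu$ on $B_i(x_0, r_0)$ and $\lambda_i(B_i(x_0, r_0)) \geq c_0 > 0$ uniformly in $x_0$. Taking the supremum over $x_0 \in X$ (which is attained since $X$ is compact and $\psi$ is usc, so $\sup \psi < +\infty$) yields
\begin{equation*}
	\sup_{x \in X} e^{p \psi(x)}
	\leq
	\frac{C_0}{c_0} \cdot e^{p\epsilon/2} \cdot \int_{X} e^{p \psi} d\mu,
\end{equation*}
which is exactly (\ref{eq_sbm_defn11}) with the constant $C = c_0/C_0$ and rate $\epsilon/2 < \epsilon$ (both independent of $p$ and of $\psi$). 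The main point where care is needed is the transition from the local sub-mean value inequality to a global statement with $p$-independent constants; this is handled by compactness of $X$ together with uniform continuity of the local potentials, and this is precisely where the hypothesis that $h^L$ is continuous (and not merely bounded) enters the argument.
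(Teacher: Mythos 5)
Your proof is correct and follows essentially the same route the paper indicates (the ``or from mean-value inequality'' option): one makes $\psi$ plurisubharmonic by adding a local potential of $c_1(L,h^L)$ (equivalently, subtracting $\log|s_x|_{h^L}$ for a local holomorphic frame $s_x$), applies the sub-mean value inequality for $e^{p(\psi+\phi_i)}$ on a small ball, cancels the potential using the uniform continuity coming from the continuity of $h^L$ and compactness of $X$, and replaces Lebesgue measure by $\mu$ via the lower bound hypothesis. The only cosmetic difference from the paper's sketch is the use of local $\partial\bar\partial$-potentials instead of $\log|s_x|$, which is the same object up to sign.
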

	\begin{proof}
		It follows directly from \cite[Theorem 1.14]{BerBoucNys} or from mean-value inequality.
	\end{proof}
	\begin{cor}\label{cor_ident_maps}
		For any $h^L \in \mathcal{H}^L$, the following equivalences between the graded norms on $R(X, L)$ hold
		\begin{equation}
			{\rm{Ban}}^1(h^L) \sim {\rm{Hilb}}(h^L) \sim {\rm{Ban}}^{\infty}(h^L).
		\end{equation}
		In particular, the maps (\ref{eq_hilb_map}) are all identical.
		Also, for any measure $\mu$, majorized from below by some strictly positive volume form on $X$ and having finite volume over $X$, we have
		\begin{equation}
			{\rm{Hilb}}(h^L) \sim {\rm{Hilb}}(h^L, \mu).
		\end{equation}
	\end{cor}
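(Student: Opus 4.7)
The plan is to deduce all the equivalences from the Bernstein-Markov property combined with the trivial Hölder/monotonicity inequalities on finite measure spaces. Specifically, on any finite positive measure space $(X, \nu)$ of total mass $\nu(X) < \infty$, Cauchy-Schwarz and the essential-sup bound give
\begin{equation*}
\| \cdot \|_{L^1_k(X, h^L, \nu)} \leq \nu(X)^{1/2} \| \cdot \|_{L^2_k(X, h^L, \nu)} \leq \nu(X) \| \cdot \|_{L^\infty_k(X, h^L)}.
\end{equation*}
Since $h^L$ is continuous psh, the Monge-Ampère measure $\frac{1}{n!} c_1(L, h^L)^n$ defined via (\ref{eq_bed_tay}) is a finite positive Borel measure on the compact manifold $X$, as is $\mu$ by hypothesis; so these bounds contribute only an $O(1)$ term (in particular, $o(k)$) to the Goldman-Iwahori distance $d_\infty(N_k, N_k')$ between the corresponding graded pieces.

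For the reverse direction, I would invoke Theorem \ref{thm_bbw}, asserting that $(h^L, \frac{1}{n!} c_1(L, h^L)^n)$ is Bernstein-Markov with respect to psh weights, together with Proposition \ref{prop_bm}, which yields the same property for $(h^L, \mu)$. By Lemma \ref{lem_bm_rest}, both pairs then satisfy the $L^2$--$L^\infty$ Bernstein-Markov estimate (\ref{eq_bm_defn11}) as well as its $L^1$--$L^\infty$ analogue. Thus for any $\epsilon > 0$ and $k$ large enough, each of ${\rm{Ban}}_k^1(h^L)$, ${\rm{Hilb}}_k(h^L)$, ${\rm{Hilb}}_k(h^L, \mu)$ dominates $\exp(-\epsilon k) \cdot {\rm{Ban}}_k^\infty(h^L)$. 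Combined with the trivial upper bounds above, this yields $d_\infty(N_k, N_k') \leq \epsilon k + O(1)$ for each of the comparisons at issue. Dividing by $k$ and letting $\epsilon \to 0$ gives the claimed equivalences; the required statement ${\rm{Hilb}}(h^L) \sim {\rm{Hilb}}(h^L, \mu)$ follows by chaining through ${\rm{Ban}}^\infty(h^L)$, which is independent of the choice of measure.

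The identity of the three maps in (\ref{eq_hilb_map}) is then immediate from the construction of $\mathscr{N}^{R(X, L)}_\sim$ as the quotient by $\sim$: the Hilbert-type maps associated to $L^1$, $L^2$ and $L^\infty$ send each $h^L$ to the same $\sim$-class. There is no genuine obstacle here; the substantive input is Theorem \ref{thm_bbw}, used as a black box. The structural observation making the argument work is that the exponential Bernstein-Markov loss $\exp(-\epsilon k)$ becomes negligible on the logarithmic scale of $d_\infty$ after the renormalization by $\frac{1}{k}$ and the limit $\epsilon \to 0$, while the constant losses coming from the trivial $L^p$-inequalities vanish outright in that limit.
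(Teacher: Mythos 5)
Your proposal is correct and follows essentially the same route as the paper: the trivial $L^1 \lesssim L^2 \lesssim L^\infty$ inequalities with $O(1)$ constants (coming from the fixed total mass of the Monge-Ampère measure, and finiteness of $\mu$) for one direction, and the Bernstein-Markov estimates from Theorem \ref{thm_bbw}, Lemma \ref{lem_bm_rest} and Proposition \ref{prop_bm} for the other, after which division by $k$ and $\epsilon \to 0$ gives the equivalences.
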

	\begin{proof}
		From the fact that the De Rham class of $c_1(L, h^L)^n$ is fixed, we readily deduce that 
		\begin{equation}\label{eq_est_derham}
		\begin{aligned}
			&
			{\rm{Ban}}^1(h^L) \leq {\rm{Ban}}^{\infty}(h^L) \cdot \frac{1}{n!} c_1(L)^n,
			\\
			&
			{\rm{Hilb}}(h^L) \leq {\rm{Ban}}^{\infty}(h^L) \cdot \sqrt{ \frac{1}{n!} c_1(L)^n},
			\\
			&
			{\rm{Hilb}}(h^L, \mu) \leq {\rm{Ban}}^{\infty}(h^L) \cdot \sqrt{{\rm{vol}}(\mu)}.
		\end{aligned}
		\end{equation}
		This along with Theorem \ref{thm_bbw}, Lemma \ref{lem_bm_rest} and Proposition \ref{prop_bm} imply the result.
	\end{proof}
	As a sidenote, let us establish the following corollary. Compare it with Theorem \ref{thm_semicalss_ot}.
	\begin{cor}\label{cor_comp_res_1}
	 	Let $h^L$ be a continuous psh metric on an ample line bundle $L$.
	 	Then under (\ref{eq_res_y_mor}), the following equivalence of norms holds
		\begin{equation}
			[{\rm{Hilb}}^X(h^L)] \sim {\rm{Hilb}}^Y(h^L).
		\end{equation}
	\end{cor}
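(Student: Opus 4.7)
The plan is to prove the equivalence $[{\rm{Hilb}}^X(h^L)] \sim {\rm{Hilb}}^Y(h^L)$ by establishing matching asymptotic upper and lower bounds on the ratio of the two norms on each graded piece. Concretely, I aim to show that for every $\epsilon > 0$ and all sufficiently large $k$,
\[
e^{-\epsilon k} \cdot {\rm{Hilb}}^Y_k(h^L)(f) \leq [{\rm{Hilb}}^X_k(h^L)](f) \leq e^{\epsilon k} \cdot {\rm{Hilb}}^Y_k(h^L)(f),
\]
uniformly in $f \in H^0(Y, L|_Y^k) \setminus \{0\}$, which is precisely condition (\ref{eq_equiv_rel_defn}) restated on the graded piece of degree $k$. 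The upper bound will come from the Ohsawa-Takegoshi theorem with uniform constant (Theorem \ref{thm_ot_asymp}), and the lower bound from the Bernstein-Markov property (Theorem \ref{thm_bbw}); Corollary \ref{cor_ident_maps} will play the bridging role between the smooth Kähler volume form appearing in Theorem \ref{thm_ot_asymp} and the Bedford-Taylor Monge-Ampère measure implicit in ${\rm{Hilb}}(h^L)$.

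For the upper bound, I first fix an auxiliary smooth Kähler form $\omega$ on $X$. Theorem \ref{thm_ot_asymp} then provides, for every $f \in H^0(Y, L|_Y^k)$ with $k$ large, an extension $\tilde{f} \in H^0(X, L^k)$ satisfying $\|\tilde{f}\|_{L^2_k(X, h^L, \omega)} \leq C \|f\|_{L^2_k(Y, h^L, \omega|_Y)}$. I then invoke Corollary \ref{cor_ident_maps} on $(X, L)$ and on $(Y, L|_Y)$ separately to conclude that the $L^2$-norms built from $dV_\omega$ and $dV_{\omega|_Y}$ differ from the ones built from the Bedford-Taylor measures by factors of $e^{o(k)}$. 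Chaining these comparisons, absorbing the constant $C$ into a sub-exponential term for $k$ large, and passing to the infimum over extensions yields the upper bound.

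For the lower bound, let $g \in H^0(X, L^k)$ be any extension of $f$. Three standard estimates combine. First, the Bernstein-Markov property on $X$ (Theorem \ref{thm_bbw} together with Lemma \ref{lem_bm_rest}) gives $\|g\|_{L^2_k(X, h^L)} \geq e^{-\epsilon k} \|g\|_{L^\infty_k(X, h^L)}$ for $k$ large. Second, the trivial pointwise inequality yields $\|g\|_{L^\infty_k(X, h^L)} \geq \|f\|_{L^\infty_k(Y, h^L)}$. Third, since the Bedford-Taylor measure on $Y$ has finite total mass $(L|_Y)^m / m!$ where $m = \dim Y$, one has $\|f\|_{L^\infty_k(Y, h^L)} \geq C^{-1} \|f\|_{L^2_k(Y, h^L)}$ with $C$ independent of $k$. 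Combining these and passing to the infimum over $g$ gives the lower bound.

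The main obstacle is purely the mismatch between the smooth Kähler volume form appearing in Theorem \ref{thm_ot_asymp} and the possibly singular Bedford-Taylor measure defining ${\rm{Hilb}}(h^L)$; this is handled completely by Corollary \ref{cor_ident_maps}. Beyond this bookkeeping, no further difficulty is anticipated: the argument is a clean assembly of Ohsawa-Takegoshi on one side and Bernstein-Markov on the other.
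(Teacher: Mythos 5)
Your proof is correct and takes essentially the same route as the paper's: Theorem \ref{thm_ot_asymp} plus the last part of Corollary \ref{cor_ident_maps} (applied on both $X$ and $Y$) for the upper bound, and Bernstein--Markov on $X$ plus the trivial restriction inequality for the $L^\infty$-norm plus the volume bound on $Y$ for the lower bound. The only cosmetic difference is that for the lower bound the paper invokes Corollary \ref{cor_ident_maps} as a black box (to pass between ${\rm{Hilb}}$ and ${\rm{Ban}}^{\infty}$ on each manifold), whereas you re-derive the relevant pieces of that corollary inline; the content is identical.
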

	\begin{proof}
		From Corollary \ref{cor_ident_maps} and the obvious fact $[{\rm{Ban}}^{\infty, X}(h^L)] \geq {\rm{Ban}}^{\infty, Y}(h^L)$, stipulating that the $L^{\infty}$-norm of a section is always no smaller than the $L^{\infty}$-norm of its restriction, we conclude that for any $\epsilon > 0$ there is $p_1 \in \nat$, such that for $k \geq p_1$, we have
		\begin{equation}\label{eq_comp_res_12}
			[{\rm{Hilb}}^X(h^L)] \geq \exp(- \epsilon k) \cdot {\rm{Hilb}}^Y(h^L).
		\end{equation}
		\par 
		Now, following the terminology of Theorem \ref{thm_ot_asymp}, we can restate Theorem \ref{thm_ot_asymp} in the following way.
		There are $C > 0$, $p_1 \in \nat$, such that for any $k \geq p_1$, we have
		\begin{equation}
			{\rm{Hilb}}^Y(h^L, \omega) \geq C [{\rm{Hilb}}^X(h^L, \omega)].
		\end{equation}
		From Corollary \ref{cor_ident_maps}, we see that it implies that for any $\epsilon > 0$, there is $p_1 \in \nat$, such that for any $k \geq p_1$, the following inequality is satisfied
		\begin{equation}\label{eq_comp_res_13}
			{\rm{Hilb}}^Y(h^L) \geq \exp(- \epsilon k) \cdot [{\rm{Hilb}}^X(h^L)].
		\end{equation}
		But Corollary \ref{cor_comp_res_1} means exactly that both inequalities (\ref{eq_comp_res_12}) and (\ref{eq_comp_res_13}) are satisfied.
	\end{proof}
	\begin{rem}
		a) From Corollary \ref{cor_ident_maps}, we see that the analogous result holds for $L^1$ and $L^{\infty}$-norms.
		\par 
		b) For positive smooth metrics, the upper bound from (\ref{eq_comp_res_13}) was independently obtained by Randriam \cite[Theorem 3.1.10]{RandriamTh} in a more general setting of reduced submanifold (i.e. for holomorphic jets), see also Remark \ref{rem_sem_jetss}.
		The $L^{\infty}$-version of this result, working more generally for certain non-compact manifolds $X$ and non-smooth $Y$ but only for strictly positive singular metrics $h^L$ was established by Randriam \cite{RandriamCrelle}.
	\end{rem}

\subsection{Fubini-Study operator and continuous psh metrics}\label{sect_fs_psh}
	In this section we recall the definition of the Fubini-Study operator. We will then show that the Hilbert map can be used to approximate any continuous psh metric.
	\par 
	First of all, recall that for $k$ sufficiently large so that $L^k$ is very ample, Fubini-Study operator associates for any Hermitian norm $N_k = \| \cdot \|_k$ on $H^0(X, L^k)$, a positive metric $FS(N_k)$ on $L$, constructed in the following way.
	Consider the Kodaira embedding 
	\begin{equation}\label{eq_kod}
		{\rm{Kod}}_k : X \hookrightarrow \mathbb{P}(H^0(X, L^k)^*),
	\end{equation}
	defined as $x \mapsto H^0(X, L^k \otimes \mathcal{J}_x)$, where $\mathcal{J}_x$ is the ideal sheaf of holomorphic functions vanishing at $x$, and we implicitly used the identification between the hyperplanes in $H^0(X, L^k)$ and lines in $H^0(X, L^k)^*$.
	The evaluation maps, $ev_x : H^0(X, L^k) \to L^k_x$, $x \in X$, provide the isomorphism
	\begin{equation}\label{eq_ev_kod_iso}
		L^{-k} \to {\rm{Kod}}_k^* \mathscr{O}(-1),
	\end{equation}
	where $\mathscr{O}(-1)$ is the tautological bundle over the projective space $\mathbb{P}(H^0(X, L^k)^*)$.
	Let us now endow $H^0(X, L^k)^*$ with the dual norm $N_k^*$ and induce from it a metric $h^{FS}(N_k)$ on the tautological line bundle $\mathscr{O}(-1)$ over $\mathbb{P}(H^0(X, L^k)^*)$. 
	We define the metric $FS(N_k)$ on $L^k$ through
	\begin{equation}\label{eq_fs_defn}
		FS(N_k) = {\rm{Kod}}_k^* ( h^{FS}(N_k)^* ).
	\end{equation}
	A statement below can be seen as an alternative definition.
	\begin{lem}\label{lem_fs_inf_d}
		For any $x \in X$, the metric $FS(N_k)$ satisfies the identity
		\begin{equation}
			\sum_{i = 1}^{l} \big| s_i(x) \big|^2_{FS(N_k)} = 1,
		\end{equation}
		where $s_1, \ldots, s_l$ is an orthonormal basis of $(H^0(X, L^k), N_k)$.
	\end{lem}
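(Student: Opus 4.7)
The plan is to unwind the definitions and carry out a one-line computation in the fiber $L^k_x$. Fix $x \in X$, a trivialization $e \in L^k_x$ with dual $e^* \in L^{-k}_x$ (so $\langle e^*, e \rangle = 1$), and write $s_i(x) = \alpha_i e$ for scalars $\alpha_i \in \comp$. Very ampleness of $L^k$ guarantees that the $\alpha_i$ do not all vanish, so the Kodaira map is defined at $x$.

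First, I would identify the metric on $L^{-k}_x$ coming from $N_k^*$ through the isomorphism (\ref{eq_ev_kod_iso}). By construction, the evaluation map sends $e^* \in L^{-k}_x$ to the functional $\phi \in H^0(X, L^k)^*$ given by $\phi(s) = \langle e^*, s(x) \rangle$, hence $\phi(s_i) = \alpha_i$. Since $\{s_i\}$ is an orthonormal basis for $N_k$, the Hermitian dual norm satisfies $\| \phi \|_{N_k^*}^2 = \sum_i |\alpha_i|^2$, i.e. the induced metric $g$ on $L^{-k}_x$ assigns $|e^*|_g^2 = \sum_j |\alpha_j|^2$.

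Second, I would dualize: the metric $FS(N_k)$ on $L^k_x$ is the dual of $g$, so $|e|_{FS(N_k)}^2 = \big( \sum_j |\alpha_j|^2 \big)^{-1}$, giving
\begin{equation*}
    |s_i(x)|_{FS(N_k)}^2 = \frac{|\alpha_i|^2}{\sum_j |\alpha_j|^2}.
\end{equation*}
Summing over $i$ yields the desired identity. The only mildly delicate point — which is not really an obstacle — is keeping track of the direction of duality (the passage from $\mathscr{O}(-1)$ to $L^{-k}$ via (\ref{eq_ev_kod_iso}) and then back to $L^k$ via $(\cdot)^*$ in (\ref{eq_fs_defn})); once this bookkeeping is done, the statement is a direct consequence of the fact that an orthonormal basis computes the Hermitian dual norm by the sum of squares of coordinate functionals.
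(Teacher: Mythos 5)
Your proof is correct and faithfully unwinds the definitions (\ref{eq_ev_kod_iso}) and (\ref{eq_fs_defn}), supplying the "easy verification" that the paper defers to Ma--Marinescu. The bookkeeping of the two dualizations (from $L^{-k}_x$ to $\mathscr{O}(-1)$, then back to $L^k$ via $(\cdot)^*$) and the use of the orthonormal basis to compute the Hermitian dual norm as $\sum_i |\phi(s_i)|^2$ are exactly the right steps.
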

	\begin{proof}
		An easy verification, cf. Ma-Marinescu \cite[Theorem 5.1.3]{MaHol}.
	\end{proof}
	\par 
	Explicit evaluation shows that $c_1(\mathscr{O}(-1) , h^{FS}(N_k))$ coincides up to a negative constant with the Kähler form of the Fubini-Study metric on $\mathbb{P}(H^0(X, L^k)^*)$ induced by $N_k$.
	In particular, $(c_1(\mathscr{O}(-1)) , h^{FS}(N_k))$ is a negative $(1, 1)$-form.
	\par 
	We will now discuss a result concerning quantization of continuous psh metrics.
	For it, we claim no originality, but despite a huge amount of literature, we weren't able to find a precise reference for it.
	We present the proof below for the convenience of the reader.
	\begin{thm}\label{thm_quant_hilb_conv}
	 For any continuous psh metric $h^L$ on $L$, the sequence of metrics $FS({\rm{Hilb}}_k(h^L))^{\frac{1}{k}}$ on $L$ converges uniformly, as $k \to \infty$, to $h^L$.
	\end{thm}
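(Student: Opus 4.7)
The plan is to reduce the statement, via Lemma \ref{lem_fs_inf_d}, to a uniform estimate on the Bergman kernel function, and then to establish two complementary bounds using the Bernstein--Markov property for the upper bound and the Ohsawa--Takegoshi extension theorem for the lower bound.

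First, fix a local holomorphic frame $e$ of $L$ around an arbitrary point and set $\phi(x) := |e(x)|^2_{h^L}$. Let $\{s_i\}$ be a ${\rm Hilb}_k(h^L)$-orthonormal basis of $H^0(X, L^k)$ and introduce the Bergman function
\begin{equation*}
	B_k(x) := \sum_i |s_i(x)|^2_{(h^L)^k}.
\end{equation*}
By Lemma \ref{lem_fs_inf_d}, the same sum computed in the metric $FS({\rm Hilb}_k(h^L))^k$ equals $1$; comparing the two expressions in the frame $e$ yields $|e(x)|^2_{FS({\rm Hilb}_k(h^L))} = \phi(x) \cdot B_k(x)^{-1/k}$. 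Thus uniform convergence of $FS({\rm Hilb}_k(h^L))^{1/k}$ to $h^L$ is equivalent to the uniform convergence $\frac{1}{k}\log B_k \to 0$.

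The upper bound $\limsup_k \sup_x B_k(x)^{1/k} \leq 1$ follows at once from the extremal characterisation $\sup_x B_k(x) = \sup_{s \neq 0} \|s\|_{L^\infty_k(X, h^L)}^2 / \|s\|_{L^2_k(X, h^L)}^2$, which by Theorem \ref{thm_bbw} and Lemma \ref{lem_bm_rest} is bounded by $e^{\epsilon k}$ for any $\epsilon > 0$ and $k$ sufficiently large. For the lower bound $\liminf_k \inf_x B_k(x)^{1/k} \geq 1$, I would apply the uniform Ohsawa--Takegoshi extension theorem (Theorem \ref{thm_ot_asymp}) to the zero-dimensional closed submanifold $Y = \{x_0\}$ of $X$: Lemma \ref{lem_delta_psh} with $m = 0$ still produces a quasi-psh function $\delta_Y$ with log canonical singularities at the point, so for any $v \in L^k_{x_0}$ one obtains an extension $\tilde v \in H^0(X, L^k)$ with $\tilde v(x_0) = v$ and $\|\tilde v\|^2_{L^2_k(X, h^L, \omega)} \leq C |v|^2_{(h^L)^k}$, where $C > 0$ is independent of $k$ and, by compactness of $X$, can be taken uniform in $x_0$. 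This gives $B_k(x_0) \geq 1/C$ and hence $B_k(x_0)^{1/k} \geq C^{-1/k} \to 1$ uniformly, completing the proof when combined with the upper bound.

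The main obstacle is checking that Theorem \ref{thm_ot_asymp} applies cleanly to a single point, since its statement is phrased for positive-dimensional submanifolds. One must verify that the residual factor $\mu_Y[\psi]$ from (\ref{eq_mu_ind}) remains a finite positive quantity, depending continuously on $x_0$, and that the positivity hypothesis of Theorem \ref{thm_ot_expl_const} holds with the point weight for $k$ large and $\alpha \in [1,2]$. If the $m = 0$ case is not directly covered, the lower bound can instead be obtained by the equivalent Hörmander $L^2$-technique: solving $\dbar u = \dbar(\chi \cdot v)$ with $\chi$ a cutoff localised near $x_0$ and the singular weight $2n \log \dist_X(\cdot, x_0)$, which gives the same uniform control.
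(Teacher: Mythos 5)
Your overall approach mirrors the paper's proof: the reduction via Lemma \ref{lem_fs_inf_d} to the Bergman function $B_k$ (which equals $\rho_k^2$ in the paper's notation), the upper bound from Bernstein--Markov, and the lower bound from Ohsawa--Takegoshi applied at a point. But there is a genuine gap in your lower bound.

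Theorem \ref{thm_ot_asymp} (hence your extension $\tilde v$) gives the estimate with respect to the \emph{fixed Kähler} norm $\| \cdot \|_{L^2_k(X, h^L, \omega)}$. However, $B_k(x_0) = \sup_s |s(x_0)|^2 / \|s\|^2_{L^2_k(X, h^L)}$, where the denominator is the $L^2$-norm with respect to the \emph{Monge--Ampère} measure $\frac{1}{n!}c_1(L, h^L)^n$ (that is precisely what ${\rm{Hilb}}_k(h^L)$ means for psh $h^L$). When $h^L$ is merely continuous psh, the Monge--Ampère measure can be very far from the smooth volume form $dV_{X,\omega}$, so $\|\tilde v\|^2_{L^2_k(X, h^L, \omega)} \leq C|v|^2$ does \emph{not} directly yield $B_k(x_0) \geq 1/C$. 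To close the gap you must insert a comparison of the two $L^2$-norms; the paper does this via the estimate
\begin{equation*}
	\| \cdot \|_{L^2_k(X, h^L, \omega)} \geq \exp(-\epsilon k) \cdot \| \cdot \|_{L^2_k(X, h^L)},
\end{equation*}
which is (\ref{eq_rho_bnd2}) and follows by combining the $L^\infty$-bound $\| \cdot \|_{L^2_k(X, h^L)} \leq \sqrt{\tfrac{1}{n!}\int c_1(L)^n}\cdot\| \cdot \|_{L^\infty_k(X, h^L)}$ with the Bernstein--Markov property for the smooth volume form $(h^L, \omega)$ from Proposition \ref{prop_bm}. With this inserted you get $B_k(x_0) \geq C^{-1}\exp(-2\epsilon k)$ and hence $B_k(x_0)^{1/k} \to 1$ uniformly, as desired. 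Your concern about the $m=0$ case of Theorem \ref{thm_ot_asymp} is legitimate but is handled by the paper itself: Lemma \ref{lem_ot_loc} is stated as a trivial consequence of Theorem \ref{thm_ot_asymp} at $Y = \{x_0\}$, and Lemma \ref{lem_delta_psh} does apply with $m = 0$.
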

	\begin{rem}\label{rem_tian}
		Theorem \ref{thm_quant_hilb_conv} generalizes the result of Tian \cite{TianBerg}, who proved the above statement for smooth positive metric $h^L$ on $L$.
		Moreover, he proved in this case that $FS({\rm{Hilb}}_k(h^L))^{\frac{1}{k}}$ converges to $h^L$ in $\ccal^2$-sense instead of $\ccal^0$.
		Later this was refined by Zelditch \cite{ZeldBerg}, who proved that the convergence is $\ccal^{\infty}$.
		For other related works, see Bouche \cite{Bouche}, Catlin \cite{Caltin}, Lu \cite{LuBergman}, Wang \cite{WangBergmKern} and Dai-Liu-Ma \cite{DaiLiuMa}.
	\end{rem}
	\par 
	For the proof of Theorem \ref{thm_quant_hilb_conv} and later on, we will need the following statement, which is a trivial consequence of Theorem \ref{thm_ot_asymp}.
	\begin{lem}\label{lem_ot_loc}
		Assume that the measure $\mu$ is bounded from above by a smooth volume form on $X$.
		Then for any continuous psh metric $h^L$ on an ample line bundle $L$, there are $C > 0$, $p_1 \in \nat$, such that for any $k \geq p_1$, $x \in X$, there is $f \in H^0(X, L^k)$, verifying 
		\begin{equation}
			|f(x)| \geq C \| f \|_{L^2_k(X, h^L, \mu)},
		\end{equation}
		where the norm of $f(x)$ is taken with respect to $h^L$.
	\end{lem}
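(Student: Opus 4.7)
The plan is to apply the Ohsawa-Takegoshi extension Theorem \ref{thm_ot_asymp} to the zero-dimensional submanifold $Y := \{x\}$, exploiting the fact that the constants appearing there can be chosen uniformly in $x \in X$.

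More precisely, fix a Kähler form $\omega$ on $X$ (which exists since $L$ is ample) and fix $x \in X$, $k \in \nat$. Given a unit vector $v \in L^k_x$ (with respect to $h^L$), regard $v$ as a holomorphic section of $L^k|_{\{x\}}$; the $L^2$-norm of $v$ on the single point $\{x\}$ (with the induced measure from $\omega$) differs from $|v|_{h^L}$ only by a universal constant coming from the normalization in \eqref{eq_mu_ind}. Apply Theorem \ref{thm_ot_asymp} for $Y := \{x\}$ to produce an extension $\tilde{f} \in H^0(X, L^k)$ with $\tilde{f}(x) = v$ and
$$
\| \tilde{f} \|_{L^2_k(X, h^L, \omega)} \leq C_0,
$$
for some constant $C_0 > 0$ (absorbing the normalization from \eqref{eq_mu_ind}).

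The main point requiring attention is the uniformity of $C_0$ and of the threshold $p_0$ as $x$ varies in $X$. Inspecting the proof of Theorem \ref{thm_ot_asymp}, the constant depends on $Y$ only through the choice of a radius $r > 0$ in Lemma \ref{lem_delta_psh} for which $\delta_Y$ is well-defined and non-positive, and through the integer $p_0$ ensuring positivity of the current $p_0 \cdot c_1(L, h^L_0) - c_1(K_X, h^{K_X}_\omega) + \frac{\imun \alpha}{2\pi}\partial \bar\partial \delta_Y$ for $\alpha \in [1,2]$. As $x$ runs over the compact manifold $X$, the function $\delta_{\{x\}}$ depends continuously on $x$, so by compactness $r$ and $p_0$ (and hence the final constant) may be chosen uniformly. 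This is the only slightly non-routine step.

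Finally, since by assumption $\mu$ is dominated from above by a smooth volume form, there exists $C' > 0$ with $\mu \leq C' \cdot \frac{1}{n!}\omega^n$ on $X$; consequently for every holomorphic section $g$ one has $\|g\|_{L^2_k(X, h^L, \mu)} \leq \sqrt{C'} \cdot \|g\|_{L^2_k(X, h^L, \omega)}$. Combining this with the extension estimate gives
$$
|\tilde{f}(x)|_{h^L} = 1 \geq \frac{1}{C_0} \|\tilde{f}\|_{L^2_k(X, h^L, \omega)} \geq \frac{1}{C_0 \sqrt{C'}} \|\tilde{f}\|_{L^2_k(X, h^L, \mu)},
$$
which is the desired inequality with $C := 1/(C_0 \sqrt{C'})$ and $p_1 := p_0$. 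The whole argument is indeed a direct application of Theorem \ref{thm_ot_asymp}, the only substantive point being the compactness-based uniformity of the constants.
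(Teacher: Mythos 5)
Your proposal is correct and follows exactly the route the paper intends (the paper itself offers no proof, merely noting that the lemma is a ``trivial consequence of Theorem~\ref{thm_ot_asymp}''): take $Y=\{x\}$ in Theorem~\ref{thm_ot_asymp}, note the constant can be made uniform in $x$ by compactness of $X$, and conclude by comparing $\mu$ with $\frac{1}{n!}\omega^n$. One cosmetic remark: no normalization constant from \eqref{eq_mu_ind} is actually needed, because the right-hand side of \eqref{eq_ot_asymp} already involves the Riemannian $L^2$-norm on $Y$ (which for the zero-dimensional manifold $\{x\}$ is exactly $|v|_{h^L}$); the factor from \eqref{eq_mu_ind} is internal to the proof of Theorem~\ref{thm_ot_asymp} and already absorbed into its constant $C$.
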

	\begin{proof}[Proof of Theorem \ref{thm_quant_hilb_conv}.]
		An easy verification, following from Lemma \ref{lem_fs_inf_d}, shows that
		\begin{equation}\label{eq_fs_div_metr}
			\frac{FS({\rm{Hilb}}_k(h^L))}{(h^L)^k}
			=
			\rho_k,
		\end{equation}
		where the function $\rho_k : X \to [0, + \infty[$ is defined as follows 
		\begin{equation}
			\rho_k(x) := \sup \frac{|s(x)|}{\| s \|_{L^2_k(X, h^L)}}.
		\end{equation}
		By Theorem \ref{thm_bbw} and Proposition \ref{prop_bm}, we see that for any $\epsilon > 0$, there is $p_1 \in \nat$, such that for any $k \geq p_1$, we have
		\begin{equation}\label{eq_rho_bnd}
			\rho_k(x) \leq \exp(\epsilon k).
		\end{equation}
		Now, a combination of Theorem \ref{thm_bbw} and (\ref{eq_est_derham}) implies that for any $\epsilon > 0$, there is $p_1 \in \nat$, such that for any $k \geq p_1$, we have
		\begin{equation}\label{eq_rho_bnd2}
			\| \cdot \|_{L^2_k(X, h^L, \mu)}
			\geq
			\exp(- \epsilon k)
			\cdot
			\| \cdot \|_{L^2_k(X, h^L)}.
		\end{equation}	 
		From Lemma  \ref{lem_ot_loc} and (\ref{eq_rho_bnd2}), we conclude that for any $\epsilon > 0$, there is $p_1 \in \nat$, such that for any $k \geq p_1$, we have
		\begin{equation}\label{eq_rho_bnd3}
			\rho_k(x) \geq \exp(-\epsilon k).
		\end{equation}
		From (\ref{eq_fs_div_metr}), (\ref{eq_rho_bnd}) and (\ref{eq_rho_bnd3}), we deduce Theorem \ref{thm_quant_hilb_conv}. 
	\end{proof}

\section{Synergy of metric and algebraic structures on section ring}\label{sect_quot_mn}
	The main goal of this section is to study the relation between metric and algebraic structures of the section ring. 
	In particular, we prove Theorem \ref{thm_as_isom}.
	\par 
	More precisely, this section is organized as follows.
	In Section \ref{sect_mult_res}, we make a connection between the restriction map and the multiplication morphism and prove Theorem \ref{thm_as_isom} modulo some technical statements.
	Those statements will be then established in Sections \ref{sect_prod_norms}, \ref{sect_mult_def} and \ref{sect_as_opt_ext_ban}.

\subsection{Multiplication map as a restriction morphism, a proof of Theorem \ref{thm_as_isom}}\label{sect_mult_res}
	The main goal of this section is to give a proof of Theorem \ref{thm_as_isom} modulo some technical results, which will be later established in this article.
	Our main idea is to interpret the multiplication map of the section ring (\ref{eq_mult_map}) as an instance of the restriction morphism.
	This will allow us to use some of the methods developed in \cite{FinOTAs}, \cite{FinToeplImm}, \cite{FinOTRed} to prove Theorem \ref{thm_as_isom}.
	\par 
	More precisely, we conserve the notation from Section \ref{sect_intro}.
	Let us consider the product manifold $X \times  X$ and the diagonal submanifold in it, given by $\{(x, x) : x \in X \} =: \Delta \hookrightarrow X \times X$.
	Clearly, we have a natural isomorphism
	\begin{equation}\label{eq_nat_isom12}
		\Delta \to X, \qquad (x, x) \mapsto x.
	\end{equation}
	Also, under the isomorphism (\ref{eq_nat_isom12}), we have
	\begin{equation}\label{eq_nat_isom2}
		L^k \boxtimes L^l|_{\Delta} \to L^{k + l}.
	\end{equation}		
	Künneth theorem allows us to interpret the tensor product of cohomologies as the cohomology of the product manifold. In other words, it provides us with the following isomorphism
	\begin{equation}\label{eq_nat_isom1}
		H^0(X \times  X, L^k \boxtimes L^l) \to H^0(X, L^k) \otimes H^0(X, L^l).
	\end{equation}
	\par 
	A basic observation of this section is that those  isomorphisms can be put into the following commutative diagram 
	\begin{equation}\label{eq_comm_diag}
		\begin{CD}
			H^0(X \times  X, L^k \boxtimes L^l)  @> {\rm{Res}}_{\Delta} >> H^0(\Delta, L^k \boxtimes L^l|_{\Delta}) 
			\\
			@VV {} V @VV {} V
			\\
			H^0(X, L^k) \otimes H^0(X, L^l)  @> {\rm{Mult}}_{k, l} >> H^0(X, L^{k + l}).
		\end{CD}
	\end{equation}
	A similar observation was already used by Demailly-Ein-Lazarsfeld \cite[proof of Theorem 2.3]{DemEinLaz} in the proof of subadditivity theorem for multiplier ideals.
	\par 
	We will now explain how this observation allows us to establish Theorem \ref{thm_as_isom}.
	Before that, let us use it to prove the following well-known result.
	\begin{prop}\label{prop_mult_surj}
		There is $p_0 \in \nat$, such that for any $r \in \nat^*$, $k; k_1, \ldots, k_r \in \nat^*$, $k_1 + \cdots + k_r = k$,  $k_1, \cdots, k_r \geq p_0$, the map ${\rm{Mult}}_{k_1, \cdots, k_r}$ is surjetive.
	\end{prop}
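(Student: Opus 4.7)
The plan is to exploit the commutative diagram (\ref{eq_comm_diag}), which reinterprets the multiplication map as a restriction morphism, and then invoke the Ohsawa-Takegoshi extension theorem on the product manifold $X \times X$.

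First I treat the case $r = 2$. In (\ref{eq_comm_diag}) both vertical arrows are isomorphisms---the left one by the Künneth formula (\ref{eq_nat_isom1}), the right one through the identifications (\ref{eq_nat_isom12}) and (\ref{eq_nat_isom2}). Hence the surjectivity of ${\rm{Mult}}_{k_1, k_2}$ is equivalent to the surjectivity of the restriction morphism
\begin{equation*}
	{\rm{Res}}_{\Delta} : H^0(X \times X, L^{k_1} \boxtimes L^{k_2}) \to H^0(\Delta, L^{k_1} \boxtimes L^{k_2}|_{\Delta}).
\end{equation*}
To prove the latter for $k_1, k_2 \geq p_0$, I apply Theorem \ref{thm_ot_expl_const} to the pair $(X \times X, \Delta)$, with the line bundle $L^{k_1} \boxtimes L^{k_2}$ (after absorbing the anticanonical twist $K_{X \times X}^{-1}$ by diminishing the powers by a fixed amount, exactly as in the proof of Theorem \ref{thm_ot_asymp}), equipped with the product of a smooth positive metric $h^L$ with itself. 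The submanifold $\Delta$ is cut out as the zero scheme of the multiplier ideal of the quasi-psh function $\delta_{\Delta}$ supplied by Lemma \ref{lem_delta_psh}, which has neat analytic log canonical singularities. The curvature of $(h^L)^{k_1} \boxtimes (h^L)^{k_2}$ equals $k_1 \pi_1^* \omega + k_2 \pi_2^* \omega$, and this dominates a fixed positive form linearly in $\min(k_1, k_2)$; hence for $p_0$ sufficiently large the positivity condition (\ref{eq_cond_ot11}) holds uniformly for every $\alpha \in [1, 2]$ and every $k_1, k_2 \geq p_0$. Theorem \ref{thm_ot_expl_const} then produces a holomorphic extension to $X \times X$ of any given section on $\Delta$, which is exactly the required surjectivity.

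The general case $r \geq 3$ follows by induction: writing
\begin{equation*}
	{\rm{Mult}}_{k_1, \ldots, k_r} = {\rm{Mult}}_{k_1, \, k_2 + \cdots + k_r} \circ \big( {\rm{id}}_{H^0(X, L^{k_1})} \otimes {\rm{Mult}}_{k_2, \ldots, k_r} \big),
\end{equation*}
both factors are surjective by the case $r = 2$ and the inductive hypothesis, since $k_2 + \cdots + k_r \geq (r-1) p_0 \geq p_0$.

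The only substantive step is the extension argument, where one has to verify the curvature condition (\ref{eq_cond_ot11}) uniformly in $(k_1, k_2)$; this is a one-line linear estimate using that $\imun \partial \overline{\partial} \delta_{\Delta}$ is bounded below and that the curvature of the product metric grows linearly in $\min(k_1, k_2)$, so it is the mildest part of the argument. Everything else is formal manipulation of the diagram (\ref{eq_comm_diag}).
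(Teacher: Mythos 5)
Your proof is correct, and it takes a route that is adjacent to but genuinely different from the paper's. You share the key reduction: the commutative diagram~(\ref{eq_comm_diag}) is used to recast surjectivity of ${\rm{Mult}}_{k_1, k_2}$ as surjectivity of $\res_{\Delta}$ on $X \times X$, and both arguments rely on the quasi-psh function with neat analytic log-canonical singularities along $\Delta$ from Lemma~\ref{lem_delta_psh}. From that point on, however, the paper takes the cohomological path: it writes the short exact sequence of sheaves cutting out $\Delta$, passes to the long exact sequence in cohomology, and kills the obstruction $H^1(X \times X, L^{k_1} \boxtimes L^{k_2} \otimes \mathcal{J}_{\Delta})$ by Nadel vanishing. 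You instead invoke the $L^2$ extension theorem of Ohsawa--Takegoshi (Theorem~\ref{thm_ot_expl_const}) directly on the pair $(X \times X, \Delta)$, after absorbing $K_{X \times X}$ into a fixed shift of powers in the style of Theorem~\ref{thm_ot_asymp}, and you verify the curvature hypothesis~(\ref{eq_cond_ot11}) using that $k_1 \pi_1^* \omega + k_2 \pi_2^* \omega \geq \min(k_1, k_2)(\pi_1^* \omega + \pi_2^* \omega)$. Both are $L^2$-methods arguments and both are correct. The trade-off: the paper's cohomological version is softer and shorter; your version via extension also hands you a uniform $L^2$ bound on the extension, which is exactly the content of the $\boxtimes$-version of Theorem~\ref{thm_ot_asymp} that the paper later invokes (without proof) in Lemma~\ref{lem_hilb_low_bnd}. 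You also spell out the induction for $r \geq 3$, which the paper dismisses with the word ``clearly''; your factorization of ${\rm{Mult}}_{k_1, \ldots, k_r}$ is the right justification.
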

	\begin{proof}
		Clearly, it is enough to establish the result for $r = 2$.
		We borrow the notation from Section \ref{sect_ohs}.
		We have a short exact sequence of sheaves
		\begin{equation}
			0 
			\to 
			\mathscr{O}_{X \times X}(L^k \boxtimes L^l) \otimes \mathcal{J}_{\Delta}
			\to
			\mathscr{O}_{X \times X}(L^k \boxtimes L^l)
			\xrightarrow[]{{\rm{Res}}_{\Delta}}
			\mathscr{O}_{\Delta}((L^k \boxtimes L^l)|_{\Delta})
			\to
			0.
		\end{equation}
		The associated long exact sequence in cohomology gives us
		\begin{multline}\label{eq_long_exact}
			\cdots
			\to 
			H^0(X \times X, L^k \boxtimes L^l) 
			\xrightarrow[]{{\rm{Res}}_{\Delta}}
			H^0(\Delta, (L^k \boxtimes L^l)|_{\Delta}) 
			\\
			\to
			H^1(X \times X, L^k \boxtimes L^l \otimes \mathcal{J}_{\Delta}) 
			\to
			\cdots.
		\end{multline}
		\par 
		Clearly, from (\ref{eq_comm_diag}), to prove the surjectivity of ${\rm{Mult}}_{k, l}$, it is enough to establish the surjectivity of ${\rm{Res}}_{\Delta}$.
		From (\ref{eq_long_exact}), we see that for the surjectivity of ${\rm{Res}}_{\Delta}$ it is enough to establish 
		\begin{equation}\label{eq_van_coh}
			H^1(X \times X, L^k \boxtimes L^l \otimes \mathcal{J}_{\Delta}) = \{ 0 \},
		\end{equation}
		which follows directly from Nadel vanishing theorem \cite[Theorem 5.11]{DemBookAnMet} and Lemma \ref{lem_delta_psh}.
	\end{proof}
	\par 
	Next we will explain that (\ref{eq_comm_diag}) allows us to use our previous ideas from \cite{FinOTAs}, \cite{FinToeplImm} and \cite{FinOTRed}, studying the general restriction maps, to prove Theorem \ref{thm_as_isom}.
	We decompose our proof into a series of more tractable statements.
	By the isomorphism (\ref{eq_nat_isom1}), we can endow the right-hand side of (\ref{eq_nat_isom1}) with the natural norms $\| \cdot \|_{L^q_{k, l}(X \times  X, h^L \boxtimes h^L)}$, $q = 1, 2, \infty$, constructed by the restriction of the corresponding $L^q$-norms associated to the symplectic volume form of $\omega_{X \times X}$ to the vector space of holomorphic sections.
	Those norms will be of utmost importance.
	\par
	By using the interpretation (\ref{eq_comm_diag}) of the multiplication map as an instance of restriction morphism and relying on some ideas from the proof of the semiclassical trace theorem from \cite[\S 4.3]{FinOTAs}, we establish in Section \ref{sect_mult_def} the following result.
	\begin{thm}\label{thm_mult_well_def}
		There are $C > 0$, $p_1 \in \nat$, such that for any $k, l \geq p_1$, $f \in H^0(X, L^k)$, $g \in H^0(X, L^l)$, we have
		\begin{equation}\label{eq_mult_well_def}
		\begin{aligned}
			&			
			\big \| 
				f \cdot g
			\big \|_{{\rm{Hilb}}_{k + l}(h^L)}
			\leq
			\Big(
				1 + C  \Big( \frac{1}{k} + \frac{1}{l} \Big)
			\Big)
			\cdot
			\Big( \frac{k \cdot l}{k + l} \Big)^{\frac{n}{2}}  
			\cdot
			\big \| 
				f
			\big \|_{{\rm{Hilb}}_k(h^L)}
			\cdot
			\big \| 
				g
			\big \|_{{\rm{Hilb}}_l(h^L)},
			\\
			&
			\big \| 
				f \cdot g
			\big \|_{{\rm{Ban}}_{k + l}^1(h^L)}
			\leq
			\Big(
				1 + C  \Big( \frac{1}{k} + \frac{1}{l} \Big)
			\Big)
			\cdot
			\Big( \frac{k \cdot l}{k + l} \Big)^{n}
			\cdot
			4^n
			\cdot
			\big \| 
				f
			\big \|_{{\rm{Ban}}_k^{1}(h^L)}
			\cdot
			\big \| 
				g
			\big \|_{{\rm{Ban}}_l^{1}(h^L)}.
		\end{aligned}
		\end{equation}
		More precisely, we have the following estimates for operator norms
		\begin{equation}\label{eq_mult_well_def2}
		\begin{aligned}
			&
			\big\|
				{\rm{Mult}}_{k, l} 
			\big\|_{L^2_{k, l}(X \times  X, h^L \boxtimes h^L) \to L^2_{k + l}(X, h^L)}
			\leq
			\Big(
				1 + C  \Big( \frac{1}{k} + \frac{1}{l} \Big)
			\Big)
			\cdot
			\Big( \frac{k \cdot l}{k + l} \Big)^{\frac{n}{2}},
			\\
			&
			\big\|
				{\rm{Mult}}_{k, l} 
			\big\|_{L^1_{k, l}(X \times  X, h^L \boxtimes h^L) \to L^1_{k + l}(X, h^L)}
			\leq
			\Big(
				1 + C  \Big( \frac{1}{k} + \frac{1}{l} \Big)
			\Big)
			\cdot
			\Big( \frac{k \cdot l}{k + l} \Big)^{n}
			\cdot
			4^n,
		\end{aligned}
		\end{equation}
		where $\|
				\cdot
			\|_{L^q_{k, l}(X \times  X, h^L \boxtimes h^L) \to L^q_{k + l}(X, h^L)}$, $q = 1, 2, \infty$, is the operator norm of an operator from $H^0(X, L^k) \otimes H^0(X, L^l)$ to $H^0(X, L^{k + l})$, calculated when the domain is endowed with the norm $\| \cdot \|_{L^q_{k, l}(X \times  X, h^L \boxtimes h^L)}$ and the image is endowed with the norm $\| \cdot \|_{L^q_{k + l}(X, h^L)}$.
	\end{thm}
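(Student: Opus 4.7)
Via the commutative diagram (\ref{eq_comm_diag}), the multiplication map ${\rm{Mult}}_{k, l}$ factors as the restriction morphism ${\rm{Res}}_{\Delta} : H^0(X \times X, L^k \boxtimes L^l) \to H^0(\Delta, L^k \boxtimes L^l|_{\Delta})$ followed by the identification $L^k \boxtimes L^l|_{\Delta} \cong L^{k+l}$ under $\Delta \cong X$. My plan is to bound the operator norm of ${\rm{Res}}_{\Delta}$ with respect to the norms $\|\cdot\|_{L^q_{k,l}(X \times X, h^L \boxtimes h^L)}$ on the source and $\|\cdot\|_{L^q_{k+l}(X, h^L)}$ on the target for $q = 1, 2$, and then convert back to ${\rm{Mult}}_{k, l}$, accounting for the fixed normalization factor coming from the identity $\omega_{X \times X}|_{\Delta} = 2 \omega$, which means that the volume form on $\Delta$ induced by $\omega_{X \times X}$ equals $2^n dv_X$ under $\Delta \cong X$.

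For the $L^2$ bound I would adapt the semiclassical trace argument of \cite[\S 4.3]{FinOTAs} to the two-parameter asymmetric setting. In normal coordinates $(x, y)$ on $X \times X$ centered at a diagonal point $(x_0, x_0)$, trivialize $L$ so that the combined weight of $L^k \boxtimes L^l$ equals $k |x|^2 + l |y|^2$ to leading order in $|x| + |y|$. The linear change of variables
\begin{equation*}
 u = \frac{k x + l y}{k + l}, \qquad v = x - y
\end{equation*}
has unit Jacobian and diagonalizes the quadratic form as $(k + l) |u|^2 + \tfrac{k \cdot l}{k + l} |v|^2$, exhibiting $(k + l)$ and $\tfrac{k \cdot l}{k + l}$ as the tangential and transverse semiclassical parameters respectively. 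A Bergman reproducing estimate in the Gaussian model with transverse weight $\tfrac{k \cdot l}{k + l} |v|^2$ then yields the pointwise bound
\begin{equation*}
 |F(u, 0)|^2 \leq \Big( 1 + O \Big( \tfrac{1}{k} + \tfrac{1}{l} \Big) \Big) \cdot \Big( \tfrac{k \cdot l}{\pi (k + l)} \Big)^n \int |F(u, v)|^2 e^{- \frac{k \cdot l}{k + l} |v|^2} dv.
\end{equation*}
Integrating over $u$, summing over a partition of unity on $\Delta$, and disposing of non-local contributions by the standard Bergman kernel off-diagonal decay from \cite{FinToeplImm} gives the bound
\begin{equation*}
 \|F|_{\Delta}\|^2_{L^2(\Delta, 2^n dv_X)} \leq \Big( 1 + O \Big( \tfrac{1}{k} + \tfrac{1}{l} \Big) \Big) \cdot 2^n \Big( \tfrac{k \cdot l}{k + l} \Big)^n \|F\|^2_{L^2_{k, l}(X \times X, h^L \boxtimes h^L)},
\end{equation*}
which combined with the normalization of the first step yields the first estimate of (\ref{eq_mult_well_def2}).

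For the $L^1$ case the strategy is parallel but uses an $L^1$-type reproducing estimate on the transverse Gaussian model. Since the $L^1$ reproducing kernel for a Gaussian space is itself a Gaussian of doubled variance (needed to secure integrability of the kernel), one loses a factor of $2^n$ at the pointwise estimation step and a further $2^n$ from the volume-form conversion, producing the universal constant $4^n$ in the second estimate of (\ref{eq_mult_well_def2}). Once the operator norm bounds (\ref{eq_mult_well_def2}) are in place, the decomposable inequalities (\ref{eq_mult_well_def}) follow by applying them to $F = f \boxtimes g$, using the multiplicative identities $\|f \boxtimes g\|_{L^2_{k,l}(X \times X)} = \|f\|_{L^2_k} \cdot \|g\|_{L^2_l}$ and $\|f \boxtimes g\|_{L^1_{k,l}(X \times X)} = \|f\|_{L^1_k} \cdot \|g\|_{L^1_l}$, both of which follow by Fubini.

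The main obstacle will be the asymmetric two-parameter nature of the analysis: the semiclassical trace results of \cite{FinOTAs} were formulated for a single semiclassical parameter $k$, while here the remainders must be controlled uniformly in $k$ and $l$ independently, with constants that do not blow up as the ratio $k/l$ becomes very small or very large. The change of variables above cleanly separates the tangential scale $(k + l)$ from the transverse scale $\tfrac{k \cdot l}{k + l}$, after which one-parameter Bergman techniques apply to each factor; absorbing the remainders into the $1 + O(1/k + 1/l)$ error requires a uniform off-diagonal decay estimate on the $(k, l)$-Bergman kernel of $X \times X$ that is insensitive to this ratio.
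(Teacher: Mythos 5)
Your proposal is correct in spirit and delivers the right asymptotic constants, but the technical route differs noticeably from the paper's, particularly for the $L^2$ estimate. The paper does not argue via a pointwise reproducing inequality followed by a partition of unity; instead it introduces the \emph{multiplicative defect} operator $A_{k,l} \in \enmr{H^0(X, L^{k+l})}$ characterized by $(\res_\Delta \circ B_{k,l}^{X\times X})^* = {\rm{Dec}}_{k,l} \circ A_{k,l}$ (Theorem \ref{thm_mult_def}), uses the identity ${\rm{Mult}}_{k,l} \circ ({\rm{Mult}}_{k,l})^* = A_{k,l}$, and proves $\| A_{k,l} - (\tfrac{kl}{k+l})^n \,{\rm{Id}} \| = O\big((\tfrac{1}{k} + \tfrac{1}{l})(\tfrac{kl}{k+l})^n\big)$ by computing the Schwartz kernel of $A_{k,l}$ from the product Bergman kernels $B_k^X \cdot B_l^X$ and the multiplicative identity $\mathscr{P}_n(\sqrt{k}Z, \sqrt{k}Z')\,\mathscr{P}_n(\sqrt{l}Z, \sqrt{l}Z') = \mathscr{P}_n(\sqrt{k+l}Z, \sqrt{k+l}Z')$. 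This routes all the error control through a single Young-type estimate on the kernel of $A_{k,l} - (\tfrac{kl}{k+l})^n B_{k+l}^X$, so no partition of unity or uniform-in-$(k,l)$ patching is needed; it also yields sharpness of the $L^2$ bound for free, which your one-sided submean-value estimate would not. Your change of variables $u = (kx+ly)/(k+l)$, $v = x-y$ with unit Jacobian and the resulting quadratic-form decomposition $(k+l)|u|^2 + \tfrac{kl}{k+l}|v|^2$ is the same algebraic heart as the paper's $\mathscr{P}_n$-identity, just packaged at the level of a pointwise estimate rather than an operator identity; both work, but the paper's packaging also serves it later (Section \ref{sect_as_opt_ext_ban}). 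For the $L^1$ bound, the paper writes ${\rm{Mult}}_{k,l} = \res_\Delta \circ (B_k^X \otimes B_l^X)$ and applies Young's inequality to the kernel $B_k^X(x,x_0)B_l^X(x,x_1)$, obtaining $\sup_{(x_0,x_1)} \int_X |B_k^X(x,x_0)B_l^X(x,x_1)|\,dv_X(x) \leq 2^n (\tfrac{kl}{\max\{k,l\}})^n(1+O(\cdot)) \leq 4^n(\tfrac{kl}{k+l})^n(1+O(\cdot))$; here $2^n$ is the Bargmann $L^1$-projection constant and the remaining $2^n$ comes from the crude inequality $\max\{k,l\} \geq (k+l)/2$, \emph{not} from a volume-form conversion as your accounting suggests. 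Your $4^n$ lands in the right place, but the bookkeeping should be fixed if you intend to sharpen the constant, which the paper notes is suboptimal anyway.
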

	\begin{rem}\label{rem_mult_well_def}
		a) As the $L^{\infty}$-norm of a product of two sections is no bigger than the product of the $L^{\infty}$-norms of the sections, we trivially have the identity $\big\|
				{\rm{Mult}}_{k, l} 
			\big\|_{L^{\infty}_{k, l}(X \times  X, h^L \boxtimes h^L) \to L^{\infty}_{k + l}(X, h^L)} \leq 1$.
		\par 
		b) The constant $4^n$ in the second estimates from (\ref{eq_mult_well_def}) and (\ref{eq_mult_well_def2}) is not optimal. It is possible to improve it through our methods, but as this improvement will not affect qualitatively the final result, we decided to omit it for brevity.
		\par 
		c) In the proof we show that the first estimate from (\ref{eq_mult_well_def2}) is sharp.
	\end{rem}
	\par
	By using again the interpretation of the multiplication map as a version of the restriction morphism, cf. (\ref{eq_comm_diag}), and establishing a version of asymptotic Ohsawa-Takegoshi theorem, following the methods from \cite{FinOTAs}, \cite{FinToeplImm}, we prove in Section \ref{sect_as_opt_ext_ban} the following result.
	\begin{thm}\label{thm_mult_surj}
		There are $C > 0$, $p_0 \in \nat$, such that for any $k, l \geq p_0$, $f \in H^0(X, L^{k + l})$, there is $h \in H^0(X, L^k) \otimes H^0(X, L^l)$, verifying  ${\rm{Mult}}_{k, l}(h) = f$ and
		\begin{equation}\label{eq_mult_surj}
		\begin{aligned}
			&
			\big \| 
				h
			\big \|_{L^1_{k, l}(X \times  X,  h^L \boxtimes h^L)}
			\leq
			\Big(
				1 + C  \Big( \frac{1}{k} + \frac{1}{l} \Big)
			\Big)
			\cdot
			\Big( \frac{k + l}{k \cdot l} \Big)^{n}  
			\cdot
			2^n
			\cdot
			\big \| 
				f
			\big \|_{{\rm{Ban}}_{k + l}^{1}(h^L)},
			\\
			&
			\big \| 
				h
			\big \|_{L^2_{k, l}(X \times  X,  h^L \boxtimes h^L)}
			\leq
			\Big(
				1 + C  \Big( \frac{1}{k} + \frac{1}{l} \Big)
			\Big)
			\cdot
			\Big( \frac{k + l}{k \cdot l} \Big)^{\frac{n}{2}}  
			\cdot
			\big \| 
				f
			\big \|_{{\rm{Hilb}}_{k + l}(h^L)},
			\\
			&
			\big \| 
				h
			\big \|_{L^{\infty}_{k, l}(X \times  X,  h^L \boxtimes h^L)}
			\leq
			\Big(
				1 + C  \Big( \frac{1}{k} + \frac{1}{l} \Big)
			\Big)
			\cdot
			\big \| 
				f
			\big \|_{{\rm{Ban}}_{k + l}^{\infty}(h^L)}.
		\end{aligned}
		\end{equation}
	\end{thm}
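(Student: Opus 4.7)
The plan is to interpret the multiplication map ${\rm Mult}_{k,l}$ as a restriction morphism via the commutative diagram (\ref{eq_comm_diag}) and reduce the theorem to a semiclassical Ohsawa-Takegoshi-type extension theorem for the pair $(X \times X, \Delta)$ in all three norms. Given $f \in H^0(X, L^{k+l})$, the isomorphisms (\ref{eq_nat_isom12})--(\ref{eq_nat_isom2}) identify $f$ with a section $\tilde f \in H^0(\Delta, (L^k \boxtimes L^l)|_\Delta)$; via (\ref{eq_nat_isom1}), finding $h$ with ${\rm Mult}_{k,l}(h) = f$ obeying (\ref{eq_mult_surj}) amounts to constructing a holomorphic extension $\tilde h \in H^0(X \times X, L^k \boxtimes L^l)$ of $\tilde f$ whose $L^q_{k,l}$-norm is appropriately controlled for $q = 1, 2, \infty$.

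The key geometric input is the analysis of $\Delta \subset X \times X$ with respect to the Kähler form $k \pi_1^*\omega + l \pi_2^*\omega$ induced by $(L^k \boxtimes L^l, (h^L)^k \boxtimes (h^L)^l)$. A direct computation at $(x,x) \in \Delta$ shows that vectors normal to $\Delta$, parametrized symmetrically by $u = v - w$ subject to $kv + lw = 0$ with $(v, w) \in T_x X \oplus T_x X$, carry the transverse metric $\frac{kl}{k+l}\,\omega_x$. The harmonic mean $\frac{kl}{k+l}$ is therefore the semiclassical parameter governing the normal direction, which together with the codimension $n$ of $\Delta$ explains the factors $\big(\frac{k+l}{kl}\big)^{n/2}$ and $\big(\frac{k+l}{kl}\big)^{n}$ in the $L^2$ and $L^1$ estimates. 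For the $L^2$-case I would follow the scheme of \cite{FinOTAs}: build a local extension $\tilde h_{\rm loc}$ in a tubular neighborhood of $\Delta$ using normal coordinates and a Gaussian profile in the transverse directions (of width dictated by the above scale), cut off with a bump function to obtain a smooth global section $\tilde h_1$, and correct it to a holomorphic section via $\bar\partial \xi = \bar\partial \tilde h_1$ using Hörmander's $L^2$-estimates twisted by the quasi-psh function $\delta_\Delta$ from Lemma \ref{lem_delta_psh}. Carefully tracking the constants, as in \cite{FinOTAs}, would then yield the leading factor $\big(\frac{k+l}{kl}\big)^{n/2}$ with error $1 + C(1/k + 1/l)$.

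For the $L^\infty$-case, the Gaussian profile defining $\tilde h_{\rm loc}$ carries total $L^\infty$-mass close to $1$, and the $\bar\partial$-correction is subleading pointwise by combining the $L^2$-estimate of the correction with Bernstein-Markov (Theorem \ref{thm_bbw}) and a mean-value inequality; this gives $\|\tilde h\|_{L^\infty_{k,l}} \leq (1 + C(1/k+1/l)) \|\tilde f\|_{L^\infty}$. The $L^1$-case is analogous: the $L^1$-mass of the Gaussian profile is comparable to $\big(\frac{k+l}{kl}\big)^{n}$, the combinatorial choices in the normalization optimize to the factor $2^n$, and the $\bar\partial$-correction is controlled in $L^1$ by Cauchy-Schwarz together with the $L^2$-bound. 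The main obstacle will be tracking the sharp asymptotic constants with error $1 + C(1/k + 1/l)$, especially in the $L^\infty$-case where the leading constant equals $1$: standard extension theorems typically lose a polynomial factor in $k$, and achieving the sharp constant requires delicately exploiting the Gaussian localization of the local extension together with $L^2$-to-pointwise estimates on the $\bar\partial$-correction, as in \cite{FinToeplImm}.
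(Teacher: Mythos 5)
Your proposal takes a genuinely different route from the paper's, so let me compare them, and also flag one concrete gap.

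The paper does not $\bar\partial$-correct an approximate extension. For the $L^2$-estimate it is purely operator-theoretic: the optimal decomposition map ${\rm Dec}_{k,l}$ (the minimal-$L^2$-norm preimage under ${\rm Mult}_{k,l}$) satisfies $({\rm Dec}_{k,l})^*\circ{\rm Dec}_{k,l}=((A_{k,l})^*)^{-1}$, where $A_{k,l}$ is the multiplicative defect of Theorem \ref{thm_mult_def}; the asymptotic operator-norm estimate on $A_{k,l}^{-1}$ from Theorem \ref{thm_akl_comp} then immediately yields the $L^2$-bound with constant $(\tfrac{k+l}{kl})^{n/2}(1+O(1/k+1/l))$. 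For $L^1$ and $L^\infty$ the paper again takes $h := {\rm Dec}_{k,l}(f)$ and shows in Theorem \ref{thm_high_term_ext} that the Schwartz kernel of ${\rm Dec}_{k,l}$ agrees, up to an $O(1/k+1/l)$-error, with the explicit Gaussian model ${\rm Dec}_{k,l}^0$ of (\ref{eq_ext0_op}); this uses exponential decay (Lemma \ref{lem_ext_exp_dc}) and near-diagonal expansions (Lemmas \ref{lem_dec_asmp}, \ref{lem_dec_asmp2}) derived from the off-diagonal Bergman kernel asymptotics of Dai--Liu--Ma and Ma--Marinescu, together with the algebraic identity ${\rm Dec}_{k,l}=(\res_\Delta\circ B_{k,l}^{X\times X})^*\circ A_{k,l}^{-1}$ of (\ref{eq_dec_form_inv_a}). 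Your geometric intuition — normal bundle of $\Delta$ with respect to $k\pi_1^*\omega+l\pi_2^*\omega$ carrying the harmonic-mean scale $\tfrac{kl}{k+l}$, hence the factors $(\tfrac{k+l}{kl})^{n/2}$ and $(\tfrac{k+l}{kl})^n$ — is exactly right and matches the paper's $\omega_{k,l}$-geometry; and your Gaussian profile is precisely ${\rm Dec}_{k,l}^0$. But the path from there to holomorphicity is what differs: the paper never solves a $\bar\partial$-equation.

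The concrete gap in your $L^\infty$-argument: you propose to bound the $\bar\partial$-correction $\xi$ pointwise "by combining the $L^2$-estimate of the correction with Bernstein--Markov and a mean-value inequality." This does not work as stated, because $\xi$ is not holomorphic (it solves $\bar\partial\xi=\bar\partial\tilde h_1\neq 0$), so neither Bernstein--Markov nor the sub-mean-value property for holomorphic sections applies to $\xi$. Moreover, Bernstein--Markov only controls $L^2$ vs.\ $L^\infty$ up to a factor $e^{\epsilon k}$, which is not compatible with the sharp constant $1+O(1/k+1/l)$ you need. To salvage your approach one would instead have to compare the holomorphic section $\tilde h=\tilde h_1-\xi$ with the Bergman projection $B\tilde h_1$ (both holomorphic, and $\|\tilde h-B\tilde h_1\|_{L^2}\le 2\|\xi\|_{L^2}$ is exponentially small because the $\bar\partial$-error is supported where the Gaussian weight is $\le e^{-c\,kl/(k+l)\,r_\perp^2}$), apply the Bergman kernel diagonal bound to $\tilde h-B\tilde h_1$, and then separately compute the $L^\infty$-norm of $B\tilde h_1 = B\circ{\rm Dec}^0_{k,l} f$. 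That last step, however, is a near-diagonal kernel computation essentially identical to the paper's Lemma \ref{lem_dec_asmp2} followed by convolution with the Bergman kernel — so the $\bar\partial$-route, made rigorous in the $L^\infty$-case, ends up reconstructing the Schwartz kernel analysis that the paper performs directly for ${\rm Dec}_{k,l}$. One last remark: you cite \cite{FinOTAs} as the source of the H\"ormander-style $\bar\partial$-correction, but \cite{FinOTAs}, like the present paper, uses the optimal extension operator and Bergman kernel asymptotics rather than $\bar\partial$-estimates; the classical Ohsawa--Takegoshi with explicit constant (Theorem \ref{thm_ot_expl_const}) does use $\bar\partial$, but only delivers a multiplicative constant, not the asymptotically sharp $1+O(1/k+1/l)$.
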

	\begin{rem}
		From Theorem \ref{thm_mult_well_def}, we see that the second and the third estimates in (\ref{eq_mult_surj}) are optimal.
		Through some additional work, one can see that the first estimate is optimal as well. 
	\end{rem}
	\par 
	From Theorems \ref{thm_mult_well_def} and \ref{thm_mult_surj}, a natural question arises if it is possible to interpret the norms $L^q_{k, l}(X \times  X,  h^L \boxtimes h^L)$, $k, l \in \nat$, $q = 1, 2, \infty$, in terms of the corresponding norms $L^q_{k}(X, h^L)$, $k \in \nat$. 
	The following result, which we establish in Section \ref{sect_prod_norms}, shows that this is indeed the case.
	\begin{thm}\label{thm_compar_tens_prod}
		The following estimate between the norms on $H^0(X, L^k) \otimes H^0(X, L^l)$ holds
		\begin{equation}\label{eq_compar_tens_prod111}
			\frac{1}{4^n}
			\cdot
			{\rm{Ban}}_k^{1}(h^L) \otimes_{\pi} {\rm{Ban}}_l^{1}(h^L)
			\leq
			\| \cdot \|_{L^1_{k, l}(X \times  X, h^L \boxtimes h^L)}
			\leq
			{\rm{Ban}}_k^{1}(h^L) \otimes_{\pi} {\rm{Ban}}_l^{1}(h^L).
		\end{equation}
		Also, the following identity holds
		\begin{equation}\label{eq_compar_tens_prod1112}
			\| \cdot \|_{L^{\infty}_{k, l}(X \times  X, h^L \boxtimes h^L)}
			=
			{\rm{Ban}}_k^{\infty}(h^L) \otimes_{\epsilon} {\rm{Ban}}_l^{\infty}(h^L).
		\end{equation}
	\end{thm}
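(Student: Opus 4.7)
The plan is to establish the two statements separately, starting with the cleaner $L^\infty$ identity.

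For the injective identity \eqref{eq_compar_tens_prod1112}, I would use standard duality in both directions. The bound $\|f\|_{L^\infty(X \times X)} \leq \|f\|_{\otimes_\epsilon}$ follows by testing against tensor products of pointwise evaluations: for any $(x_0, y_0) \in X \times X$ and unit vectors $\xi \in L^{-k}_{x_0}$, $\eta \in L^{-l}_{y_0}$, the functionals $\phi_\xi(s) := \langle \xi, s(x_0)\rangle$ and $\psi_\eta(t) := \langle \eta, t(y_0)\rangle$ have $L^\infty$-dual norms $\leq 1$, and $(\phi_\xi \otimes \psi_\eta)(f) = \langle \xi \otimes \eta, f(x_0, y_0)\rangle$ optimizes to $|f(x_0, y_0)|_{h^k \otimes h^l}$ over $\xi, \eta$. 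For the reverse inequality, any unit $\phi \in (H^0(X, L^k))^*$ extends via Hahn-Banach to a norm-preserving functional on the space of continuous sections $C(X, L^k)$, and by the vector-valued Riesz representation is represented as integration against an $L^{-k}$-valued Borel measure of total variation $\leq 1$; similarly for $\psi$. Fubini then gives $|(\phi \otimes \psi)(f)| \leq \|f\|_{L^\infty(X \times X)}$, proving $\|f\|_{\otimes_\epsilon} \leq \|f\|_{L^\infty}$.

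For the projective estimate \eqref{eq_compar_tens_prod111}, the upper bound $\|f\|_{L^1(X \times X)} \leq \|f\|_{\otimes_\pi}$ is immediate from Fubini and the pointwise estimate $|f(x, y)| \leq \sum |s_i(x)||t_i(y)|$ for any decomposition $f = \sum s_i \otimes t_i$. The reverse direction $\|f\|_{\otimes_\pi} \leq 4^n \|f\|_{L^1(X \times X)}$ is the main content, and the plan is to combine two ingredients: the classical isometric isomorphism $L^1(X, L^k) \otimes_\pi L^1(X, L^l) \cong L^1(X \times X, L^k \boxtimes L^l)$ (obtained from the scalar-valued version by local trivialization of the line bundles), together with the $L^1$-boundedness of the Bergman projection $P_k : L^1(X, L^k) \to H^0(X, L^k)$ with operator norm controlled by $\approx 2^n$. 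Given these, for any $\epsilon > 0$ one decomposes $f$, viewed as an element of $L^1(X \times X, L^k \boxtimes L^l)$, as $f = \sum u_i \otimes v_i$ (with merely $L^1$ factors) satisfying $\sum \|u_i\|_{L^1} \|v_i\|_{L^1} \leq \|f\|_{L^1(X \times X)} + \epsilon$. Since $f$ is holomorphic in both variables, $(P_k \otimes P_l)(f) = f$, yielding the holomorphic decomposition $f = \sum P_k(u_i) \otimes P_l(v_i)$ in $H^0(X, L^k) \otimes H^0(X, L^l)$ with $\sum \|P_k u_i\|_{L^1} \|P_l v_i\|_{L^1} \leq 4^n (\|f\|_{L^1(X \times X)} + \epsilon)$; letting $\epsilon \to 0$ concludes the argument.

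The technical crux is thus the Bergman-kernel bound $\sup_y \int_X |K_k(x, y)|_{h^k \otimes h^{-k}} dv(x) \leq 2^n$, which translates into the required $L^1 \to L^1$ operator bound for $P_k$. The plan is to extract this from the standard near-diagonal Gaussian asymptotic of the Bergman kernel (valid for smooth positive $h^L$): to leading order, $|K_k(x, y)|_{h^k \otimes h^{-k}} \sim (k/\pi)^n \exp(-k d(x, y)^2/2)$, and a Gaussian integration in normal coordinates gives $\int_{\comp^n} e^{-k|z|^2/2}\, dv(z) = (2\pi/k)^n$, so that the combined factor is exactly $2^n$ in the $k \to \infty$ limit. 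Lower-order corrections from the full asymptotic expansion and from the far-diagonal regime contribute $o(1)$ errors, which can be absorbed into the non-optimal constant $4^n$, as noted in the remarks following Theorem \ref{thm_as_isom} and Theorem \ref{thm_mult_well_def}.
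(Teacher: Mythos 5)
Your argument is correct, and for the crux of the $L^1$ lower bound you take a genuinely different route from the paper. The paper obtains $\frac{1}{4^n}\,{\rm{Ban}}^1_k\otimes_\pi{\rm{Ban}}^1_l\leq\|\cdot\|_{L^1_{k,l}}$ by first proving the injective identity (\ref{eq_compar_tens_prod1112}), dualizing it via Lemma \ref{lem_dual_proj_inj}, and then inserting Theorem \ref{thm_compar_tens_prod2}, which compares $({\rm{Ban}}^\infty_k)^*$ and ${\rm{Ban}}^1_k$ on each factor and on the product; that last comparison is where the Bergman projection bound $\|B^X_k\|_{L^1\to L^1}\leq 2^n(1+C/k)$ of Proposition \ref{prop_norm_berg_proj} enters, essentially through a $g_\alpha := B_k^X(|f|^\alpha f)$ duality trick. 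You instead work directly on the projective side: use the isometry $L^1(X)\hat{\otimes}_\pi L^1(X)\cong L^1(X\times X)$ to pick a near-optimal $L^1$-decomposition of $f$, apply $B_k^X\otimes B_l^X$ to each factor (legitimate because the completed projective tensor product carries the product operator norm, and $(B^X_k\otimes B^X_l)f=f$ since $f$ is in the Bergman space of $X\times X$ by (\ref{eq_bergm_prod})), and read off the bound from the same Proposition \ref{prop_norm_berg_proj}. This avoids the detour through $({\rm{Ban}}^\infty)^*$ and Lemma \ref{lem_dual_proj_inj} and is arguably the more transparent argument; what the paper's route buys in exchange is Theorem \ref{thm_compar_tens_prod2} as a standalone duality statement (with its own interest, cf.\ the comparison to the Bargmann-space result of Gryc--Kemp). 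For the $L^\infty$ identity and the $L^1$ upper bound your reasoning unfolds the content of Lemmas \ref{lem_quot_proj} and \ref{lem_inj_proj_expl} by hand (isometric embedding into $C^0$, Hahn--Banach plus vector-valued Riesz, pointwise Fubini) rather than citing them, but it is the same mechanism. One small consistency caveat applies equally to both proofs: the $(1+C/k)$ correction in Proposition \ref{prop_norm_berg_proj} means the constant actually produced is $4^n(1+C/k)(1+C/l)$ rather than a clean $4^n$, valid for $k,l\geq p_0$; you flag this, and the paper absorbs it implicitly into the ``non-optimal constant'' discussion.
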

	\begin{rem}\label{rem_compar_tens_prod}
		Clearly, the following identity 
		\begin{equation}
			\scal{f_1 \boxtimes g_1}{f_2 \boxtimes g_2}_{L^2_{k, l}(X \times  X, h^L \boxtimes h^L)} = \scal{f_1}{f_2}_{L^2_k(X, h^L)} \cdot \scal{g_1}{g_2}_{L^2_l(X, h^L)},
		\end{equation}
		shows that ${\rm{Hilb}}_k(h^L) \otimes {\rm{Hilb}}_l(h^L)$ coincides with the norm $L^2_{k, l}(X \times  X,  h^L \boxtimes h^L)$.
	\end{rem}
	\begin{proof}[Proof of Theorem \ref{thm_as_isom}]
		Once we unravel the definition of the quotient norm, we conclude directly from Theorems \ref{thm_mult_well_def} and \ref{thm_mult_surj} that there are $C > 0$, $p_1 \in \nat$, such that for any $k, l \geq p_1$, we have
		\begin{equation}\label{eq_as_isom1121}
		\begin{aligned}
			\frac{1}{4^n}
			\cdot
			\Big( 1 - C  \Big( \frac{1}{k} + \frac{1}{l} \Big)\Big)
			\leq 
			& 
			\frac{[\| \cdot \|_{L^1_{k, l}(X \times  X,  h^L \boxtimes h^L)}]}{{\rm{Ban}}_{k + l}^1(h^L)} 
			\cdot
			\Big( \frac{k \cdot l}{k + l} \Big)^{n} 
			\cdot
			\frac{1}{2^n}
			\\
			& \qquad \qquad \qquad \qquad \qquad \qquad
			\leq 
			\Big( 
			1 + C  \Big( \frac{1}{k} + \frac{1}{l} \Big)
			\Big),
			\\
			1 - C  \Big( \frac{1}{k} + \frac{1}{l} \Big)
			\leq 
			&
			\frac{[\| \cdot \|_{L^2_{k, l}(X \times  X,  h^L \boxtimes h^L)}]}{{\rm{Hilb}}_{k + l}(h^L)} 
			\cdot
			\Big( \frac{k \cdot l}{k + l} \Big)^{\frac{n}{2}}  
			 \leq 
			1 + C  \Big( \frac{1}{k} + \frac{1}{l} \Big),
			\\
			1 - C  \Big( \frac{1}{k} + \frac{1}{l} \Big)
			\leq 
			& \frac{[\| \cdot \|_{L^{\infty}_{k, l}(X \times  X,  h^L \boxtimes h^L)}]}{{\rm{Ban}}_{k + l}^{\infty}(h^L)} 
			 \leq 
			1 + C  \Big( \frac{1}{k} + \frac{1}{l} \Big).
		\end{aligned}
		\end{equation}
		The result now follows from Theorem \ref{thm_compar_tens_prod} and (\ref{eq_as_isom1121}).
	\end{proof}
	
\subsection{Norms on the product manifold, injective and projective tensor norms}\label{sect_prod_norms}
	The main goal of this section is to study the functional-analytic properties of the normed spaces  $(H^0(X, L^k), {\rm{Ban}}_{k}^{1}(h^L))$, $(H^0(X, L^k), {\rm{Ban}}_{k}^{\infty}(h^L))$.
	From this analysis, we establish Theorem \ref{thm_compar_tens_prod}.
	\par 
	Our main idea of the proof of Theorem \ref{thm_compar_tens_prod} is to first use the general theory of injective tensor products to settle down the corresponding part of Theorem \ref{thm_compar_tens_prod}.
	Then we will obtain the version for the projective tensor products from the already established version for the injective tensor products and a careful study of duality on the normed spaces $(H^0(X, L^k), {\rm{Ban}}_{k}^{1}(h^L))$, $(H^0(X, L^k), {\rm{Ban}}_{k}^{\infty}(h^L))$.
	More precisely, the main result of this section goes as follows.
	\begin{thm}\label{thm_compar_tens_prod2}
		There are constants $c > 0$, $p_0 \in \nat$, such that for any $k \geq p_0$, the following estimate between the norms on $H^0(X, L^k)$ holds
		\begin{equation}\label{eq_compar_tens_prod}
			\frac{1}{2^n}
			\cdot
			\Big(
				1 - \frac{C}{k}
			\Big)
			\cdot
			{\rm{Ban}}_k^{1}(h^L)
			\leq
			({\rm{Ban}}_k^{\infty}(h^L))^*
			\leq
			{\rm{Ban}}_k^{1}(h^L),
		\end{equation}
		where we implicitly identified  $H^0(X, L^k)$ and  $H^0(X, L^k)^*$ using the $L^2$-scalar product.
		Similarly, for the norms over $H^0(X \times X, L^k \boxtimes L^l)$, under a similar identification, we have the following comparison
		\begin{equation}\label{eq_compar_tens_prod2}
			\frac{1}{4^n}
			\cdot
			\Big(
				1 - C  \Big( \frac{1}{k} + \frac{1}{l} \Big)
			\Big)
			\cdot
			\| \cdot \|_{L^1_{k, l}(X \times X, h^L \boxtimes h^L)}
			\leq
			\| \cdot \|_{L^{\infty}_{k, l}(X \times X, h^L \boxtimes h^L)}^*
			\leq
			\| \cdot \|_{L^1_{k, l}(X \times X, h^L \boxtimes h^L)}.
		\end{equation}
	\end{thm}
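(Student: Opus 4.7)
\emph{The plan is as follows.} The upper bound $({\rm{Ban}}_k^{\infty}(h^L))^* \leq {\rm{Ban}}_k^{1}(h^L)$ is the easy direction. Identifying $f \in H^0(X, L^k)$ with the functional $g \mapsto \scal{g}{f}_{L^2}$, the pointwise Cauchy--Schwarz inequality $|\scal{g(x)}{f(x)}_h| \leq |g(x)|_h |f(x)|_h$ integrates to
\begin{equation*}
	| \scal{g}{f}_{L^2} | \leq \| g \|_{L^\infty_k(X, h^L)} \cdot \| f \|_{L^1_k(X, h^L)},
\end{equation*}
so taking the supremum over $g$ with $\| g \|_{L^\infty} \leq 1$ yields the claim. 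The same argument applies verbatim on $X \times X$, giving the right-hand inequality in (\ref{eq_compar_tens_prod2}).

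\emph{For the nontrivial lower bound}, the idea is to test $f$ against a rescaled Bergman projection of its pointwise phase. Set $\tilde{g}(x) := f(x)/|f(x)|_h$ where $f(x) \neq 0$ and $\tilde{g}(x) := 0$ otherwise; then $\tilde{g}$ is a bounded measurable section of $L^k$ with $\| \tilde{g} \|_{L^\infty} \leq 1$ and $\scal{f}{\tilde{g}}_{L^2} = \| f \|_{L^1}$. Let $P_k : L^2(X, L^k) \to H^0(X, L^k)$ be the Bergman projection, and put $g := P_k \tilde{g}$. Since $P_k$ is self-adjoint and fixes $f$, we have $\scal{f}{g}_{L^2} = \scal{P_k f}{\tilde{g}}_{L^2} = \| f \|_{L^1}$. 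Writing $g$ via the Bergman reproducing kernel $K_k$ yields the pointwise bound $|g(x)|_h \leq \int_X |K_k(x,y)|_h \, dv_X(y)$, and rescaling $g$ to have $L^\infty$-norm at most one gives
\begin{equation*}
	({\rm{Ban}}_k^{\infty}(h^L))^*(f) \geq \frac{\| f \|_{L^1}}{\displaystyle\sup_{x \in X} \int_X |K_k(x, y)|_h \, dv_X(y)}.
\end{equation*}

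\emph{The main technical point} is therefore the uniform asymptotic
\begin{equation*}
	\sup_{x \in X} \int_X |K_k(x, y)|_h \, dv_X(y) \leq 2^n \Big( 1 + \frac{C}{k} \Big).
\end{equation*}
The constant $2^n$ is sharp, and is readily verified in the model Bargmann--Fock space $(\comp^n, e^{-k \pi |z|^2})$: there $K_k(z, w) = k^n e^{k \pi z \cdot \bar{w}}$, so $|K_k(z,w)|_h = k^n e^{-k \pi |z - w|^2/2}$, whose Gaussian integral over $w$ equals exactly $2^n$. To upgrade this model computation to a uniform estimate on $X$ with $O(1/k)$ error, I would combine the near-diagonal semiclassical expansion of the Bergman kernel (à la Dai--Liu--Ma, Ma--Marinescu) with the standard Agmon-type exponential off-diagonal decay on geodesic scale $\sim 1/\sqrt{k}$, tools the author has already deployed in \cite{FinOTAs}, \cite{FinToeplImm}, \cite{FinOTRed}. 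This off-diagonal Bergman kernel estimate is the only place where nontrivial analytic work enters the proof.

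\emph{The product version} (\ref{eq_compar_tens_prod2}) is then automatic: the Bergman kernel of $(X \times X, L^k \boxtimes L^l)$ with respect to the product metric $h^L \boxtimes h^L$ is $K_k \boxtimes K_l$, so the analogous supremum factorizes as
\begin{equation*}
	\Big( \sup_x \int_X |K_k|_h \, dv_X \Big) \cdot \Big( \sup_{x'} \int_X |K_l|_h \, dv_X \Big) \leq 4^n \Big(1 + C \Big( \tfrac{1}{k} + \tfrac{1}{l} \Big)\Big),
\end{equation*}
yielding the factor $4^n$ with no new ideas needed beyond the single-factor estimate.
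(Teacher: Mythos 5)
Your argument is correct, and it proves the statement by a route that is recognizably parallel to the paper's but not identical. Both proofs hinge on the same underlying analytic fact — the bound $\sup_{x}\int_{X}|B_k^X(x,y)|\,dv_X(y)\leq 2^n(1+C/k)$, which is the paper's (\ref{eq_bound_int_berg}) (equivalently the $L^p\to L^p$ operator norm bound of Proposition \ref{prop_norm_berg_proj}) — and both obtain a near-extremal holomorphic test section by applying the Bergman projection to a natural non-holomorphic one. Where you differ from the paper: the paper establishes the equivalent dual estimate $\frac{1}{2^n}(1-C/k)\,{\rm Ban}^\infty_k\leq ({\rm Ban}^1_k)^*\leq {\rm Ban}^\infty_k$, using the family of test sections $g_\alpha:=B_k^X(|f|^\alpha f)$ together with the $L^1\to L^1$ bound on $B_k^X$ and a limit $\alpha\to\infty$ to recover $\|f\|_{L^\infty}$ from the ratio of moments $\int|f|^{2+\alpha}/\int|f|^{1+\alpha}$. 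You instead attack the stated inequality directly and test $f$ against the single section $g:=B_k^X(f/|f|_h)$, for which $\langle f,g\rangle_{L^2}=\|f\|_{L^1}$ holds exactly since $B_k^X$ is self-adjoint and fixes $f$; the required $\|g\|_{L^\infty}\leq 2^n(1+C/k)$ is then precisely the same kernel estimate. Your version is a bit cleaner (no $\alpha\to\infty$ limit, no passage through biduality) while resting on the same hard input; in that sense it buys economy of exposition at no analytic cost. The reduction of the $X\times X$ case to the product of the two single-factor bounds via $B_{k,l}^{X\times X}=B_k^X\otimes B_l^X$ is also exactly the mechanism the paper uses, cf.\ (\ref{eq_bergm_prod}).

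One small thing to note if you write this up in full: $\tilde{g}=f/|f|_h$ is only a bounded measurable section, not continuous (at the zero set of $f$), but it lies in $L^2$ on the compact $X$ so $B_k^X\tilde{g}$ is well defined, and $g\neq 0$ because $\langle f,g\rangle_{L^2}=\|f\|_{L^1}>0$ for $f\neq 0$; both points are harmless but worth a sentence.
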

	\begin{rem}\label{rem_dual_barg}
		We remark that on the left-hand side of (\ref{eq_compar_tens_prod}), the constant $\frac{1}{2^n}$ coincides with the analogous constant obtained in the study of dual of $L^p$-Bargmann space by Gryc-Kemp \cite[Theorem 1.2]{DualBarg}, cf. also  Janson-Peetre-Rochberg \cite{HankFock} and Zhu \cite{ZhuFock} for related results.
		It seems, however, that the constant is not optimal, see \cite[Theorem 1.5 and discussion before]{DualBargSharp}.
	\end{rem}
	Before establishing Theorem \ref{thm_compar_tens_prod2}, let us see how it implies Theorem \ref{thm_compar_tens_prod}.
	\begin{proof}[Proof of Theorem \ref{thm_compar_tens_prod}.]
		Let us first establish the result for the ${\rm{Ban}}^{\infty}$ norm.
		Remark that we have the following isometric embedding 
		\begin{equation}\label{eq_h0_isom_emb}
			(H^0(X, L^k), {\rm{Ban}}_k^{\infty}(h^l)) \hookrightarrow (\ccal^0(X, L^k), \| \cdot \|_{L^{\infty}_k(X, h^L)}).
		\end{equation}
		The statement (\ref{eq_compar_tens_prod1112}) now follows from Lemmas \ref{lem_quot_proj} and \ref{lem_inj_proj_expl}. 
		\par 
		Let us establish the upper bound of (\ref{eq_compar_tens_prod111}).
		For any $f \in H^0(X, L^k)$, $g \in H^0(X, L^l)$, we have
		\begin{equation}
			\| f \otimes g \|_{L^1_{k, l}(X \times  X, h^L \boxtimes h^L)}
			=
			\| f \|_{L^1_{k}(X, h^L)}
			\cdot
			\| g \|_{L^1_{l}(X, h^L)}.
		\end{equation}
		By the triangle inequality and the definition of the projective tensor norm, the above inequality implies the upper bound of (\ref{eq_compar_tens_prod111}).
		\par 
		Now, by taking the dual of (\ref{eq_compar_tens_prod1112}) and using Lemma \ref{lem_dual_proj_inj}, we obtain
		\begin{equation}\label{eq_compar_tens_prod111232}
			\| \cdot \|_{L^{\infty}_{k, l}(X \times  X, h^L \boxtimes h^L)}^*
			=
			{\rm{Ban}}_k^{\infty}(h^L)^* \otimes_{\pi} {\rm{Ban}}_l^{\infty}(h^L)^*.
		\end{equation}
		The lower bound of (\ref{eq_compar_tens_prod111}) now follows directly from Theorem \ref{thm_compar_tens_prod2}, monotonicity of the projective tensor norm and (\ref{eq_compar_tens_prod111232}).
	\end{proof}
	To establish Theorem \ref{thm_compar_tens_prod2}, we need to recall some basic properties of the \textit{Bergman projector} $B_{k}^{X}$, which is the orthogonal projection from $L^2_k(X, h^L)$ to $H^0(X, L^k)$.
	Recall that the \textit{Bergman kernel} is defined as the Schwartz kernel, $B_k^X(x_1, x_2) \in L^k_{x_1} \otimes (L^k_{x_2})^*$, $x_1, x_2 \in X$, of the Bergman projection $B_k^X$, evaluated with respect to the Riemannian volume form $dv_X$, i.e. 
	\begin{equation}
		(B_k^X s) (x_1) = \int_X B_k^X(x_1, x_2) \cdot s(x_2) dv_X(x_2), \qquad s \in L^2_k(X, h^L).
	\end{equation}
	The first result we will need concerns the study of the Bergman kernel for distant parameters. It says that the Bergman kernel has an exponential decay.
	\begin{thm}[{Ma-Marinescu \cite[Theorem 1]{MaMarOffDiag}}]\label{thm_bk_off_diag}
		There are $c, C > 0$, $p_1 \in \nat^*$ such that for any $k \geq p_1$, $x_1, x_2 \in X$, we have
		\begin{equation}\label{eq_bk_off_diag}
			\Big|  B_k^X(x_1, x_2) \Big| \leq C k^n \cdot \exp \big(- c \sqrt{k} \cdot \dist(x_1, x_2) \big).
		\end{equation}
	\end{thm}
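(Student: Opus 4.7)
The plan is to exploit the spectral gap of the Kodaira Laplacian and the finite propagation speed of the wave equation on the total space $X$. First I would set $D_k := \sqrt{2}(\dbar + \dbardual)$ acting on $\Omega^{0,\bullet}(X, L^k)$ and recall that, by the Bochner--Kodaira--Nakano formula applied to the positive line bundle $(L, h^L)$ together with Kodaira vanishing, for $k$ large enough the spectrum satisfies ${\rm{Spec}}(D_k^2) \subset \{0\} \cup [c_0 k, \infty)$ for some $c_0 > 0$, with kernel canonically identified with $H^0(X, L^k)$. In particular the Bergman projector $B_k^X$ coincides with the spectral projector $\mathbf{1}_{\{0\}}(D_k)$, so for any even Schwartz function $\phi$ with $\phi(0) = 1$ we may write $B_k^X = \phi(D_k/\sqrt{k}) - R_k$, where $R_k := (\phi - \mathbf{1}_{\{0\}})(D_k/\sqrt{k})$ is supported on the high part of the spectrum and is uniformly bounded by $\sup_{|\lambda| \geq \sqrt{c_0}} |\phi(\lambda)|$.

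Second, I would choose $\phi$ to be an even Schwartz function that is analytic in a horizontal strip $\{|\Im z| < a\}$ for some $a > 0$, equals $1$ at the origin, and decays rapidly on the real line (a suitable Gaussian or its truncation suffices). By the Paley--Wiener theorem, its Fourier transform satisfies $|\hat\phi(\xi)| \leq C e^{-a|\xi|}$. I then expand via spectral calculus
\begin{equation*}
\phi(D_k/\sqrt{k}) = \frac{1}{\sqrt{2\pi}} \int_{\real} \hat\phi(\xi) \cos\bigl(\xi D_k/\sqrt{k}\bigr) \, d\xi.
\end{equation*}
By the classical finite propagation speed of the wave operator $\cos(t D_k)$ on the complete manifold $X$, the Schwartz kernel of $\cos(\xi D_k/\sqrt{k})$ evaluated at $(x_1, x_2)$ vanishes for $|\xi|/\sqrt{k} < \dist(x_1, x_2)$. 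Substituting yields, for the off-diagonal kernel, the bound
\begin{equation*}
\bigl|\phi(D_k/\sqrt{k})(x_1, x_2)\bigr| \leq C k^n \int_{\sqrt{k}\dist(x_1, x_2)}^{\infty} e^{-a|\xi|}\, d\xi \leq C' k^n e^{-a\sqrt{k}\dist(x_1, x_2)},
\end{equation*}
where the $k^n$ factor is the standard normalization coming from the on-diagonal asymptotics of the Bergman kernel. The residual term $R_k$ is, by the spectral gap, bounded in operator norm by $\sup_{|\lambda| \geq \sqrt{c_0}} |\phi(\lambda)| = O(e^{-c'\sqrt{k}})$ for a Gaussian-type $\phi$, hence its contribution is absorbed into the same exponential.

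The main technical obstacle is the passage from the $L^2$ spectral identity above to a genuine pointwise bound on the Schwartz kernel $B_k^X(x_1, x_2)$ with the correct $k^n$ normalization. This is typically handled by a rescaling argument in a normal coordinate chart of radius $1/\sqrt{k}$ around each point, combined with elliptic regularity and Sobolev embedding applied to $D_k^2 = 2\Box_k$, with all constants tracked semi-classically. A secondary delicate point is ensuring uniformity of the finite propagation speed constants over the compact manifold $X$; this reduces to a standard energy estimate for the wave equation with principal symbol independent of $k$. Once these ingredients are assembled, the bound (\ref{eq_bk_off_diag}) follows, and the argument gives slightly more, namely a full pointwise asymptotic expansion of $B_k^X$ off the diagonal, in the spirit of Dai--Liu--Ma \cite{DaiLiuMa}.
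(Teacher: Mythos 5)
The paper does not prove this theorem; it is cited verbatim from Ma--Marinescu \cite{MaMarOffDiag}, so there is no ``paper's proof'' to compare against. Your proposed route (spectral gap plus finite propagation speed) is indeed the correct paradigm, and it is essentially the one used in the cited reference and in Dai--Liu--Ma. However, as written your argument contains a genuine error at the step controlling the residual term.

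The problem is the claim that $\sup_{|\lambda|\geq\sqrt{c_0}}|\phi(\lambda)| = O(e^{-c'\sqrt{k}})$. After you rescale to $D_k/\sqrt{k}$, the spectral gap ${\rm{Spec}}(D_k^2)\subset\{0\}\cup[c_0 k,\infty)$ translates into ${\rm{Spec}}(D_k/\sqrt{k})\subset\{0\}\cup\{|\mu|\geq\sqrt{c_0}\}$, a threshold that is \emph{independent of} $k$. Hence $\sup_{|\lambda|\geq\sqrt{c_0}}|\phi(\lambda)|$ is a fixed positive constant (for a Gaussian $\phi$, it equals $e^{-c_0/2}$), and the operator norm of $R_k=\phi(D_k/\sqrt{k})-B_k^X$ is only $O(1)$, not exponentially small in $k$. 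Moreover, even if you had an operator-norm bound, it would not suffice: in the region $\dist(x_1,x_2)>\epsilon$ the finite-propagation piece $\phi(D_k/\sqrt{k})(x_1,x_2)$ is small, so $R_k(x_1,x_2)\approx -B_k^X(x_1,x_2)$ there, and a pointwise bound on $R_k$ in that region is exactly the statement you are trying to prove. The decomposition is circular as it stands.

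The standard way to repair this, and what underlies the actual proof, is to make the spectral-calculus function depend on the off-diagonal distance. For instance, take $\phi$ even with $\hat{\phi}\in C_c^\infty(-1,1)$, $\phi(0)=1$, $|\phi|\leq 1$, and set $\eta:=\sup_{|\mu|\geq\sqrt{c_0}}|\phi(\mu)|<1$. Then $\phi^j$ still has $\phi^j(0)=1$, its Fourier transform is supported in $(-j,j)$, and $\sup_{|\mu|\geq\sqrt{c_0}}|\phi^j(\mu)|\leq\eta^j$. Applying the decomposition with $j\sim\sqrt{k}\,\dist(x_1,x_2)$ makes the finite-propagation piece vanish at $(x_1,x_2)$ while the residual decays like $\eta^{\sqrt{k}\,\dist}=e^{-c\sqrt{k}\,\dist}$; the passage from operator norm to pointwise kernel bound is then carried out by the semiclassical Sobolev embedding you already mention, tracking powers of $(1+D_k^2/k)$. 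Alternatively, the same dichotomy can be resolved by using the heat semigroup $e^{-\zeta D_k^2/k}$ and optimizing over the time parameter $\zeta$, but a uniform choice of $\phi$ with no auxiliary parameter, as in your writeup, cannot produce the decaying residual.
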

	\par 
	The next result we will need is about the near-diagonal behavior of the Bergman kernel.
	To state it, we will need to fix some further notation.
	\par 
	We will first introduce a special coordinate system in a neighborhood of a given point.
	Let us fix $x_0 \in X$ and an orthonormal frame $(e_1, \ldots, e_{2n})$ of $(T_{x_0}X, g^{TX}_{x_0})$.
	We identify $Z \in \real^{2n}$ with an element from $T_{x_0}X$ using this frame and define the exponential coordinates $\phi_{x_0}^X : \real^{2n} \to X$, $x_0 \in X$, as follows
	\begin{equation}\label{eq_phi_defn}
		\phi_{x_0}^{X}(Z) := \exp^{X}_{x_0}(Z).
	\end{equation}
	\par 
	Let us now construct an orthonormal trivialization of $(L, h^L)$ in a neighborhood of $x_0$.
	We fix an orthonormal vector $l_0  \in L_{x_0}$, and define the local orthonormal frame $\tilde{l}'_0$ of $L$ around $x_0$ by the parallel transport of $l_0$ with respect to the Chern connection $\nabla^L$, done along the curve $\phi_{x_0}^X(tZ)$, $t \in [0, 1]$, $Z \in T_{x_0}X$, $|Z| < r_X$, where $r_X$ is the infimum of the injectivity radius of $X$.
	Now, for $Z, Z' \in \real^{2n}$, $|Z|, |Z'| \leq \epsilon$, we will denote by $B_k^X \big(\phi_{x_0}(Z), \phi_{x_0}(Z') \big) \in \real$ the Bergman kernel, evaluated at $(\phi_{x_0}(Z), \phi_{x_0}(Z'))$, and viewed as a real number using the frame $\tilde{l}'_0$.
	\par 
	Define the function $\mathscr{P}_{n}$, cf. \cite[(4.1.84)]{MaHol}, as follows
	\begin{equation}\label{eq_berg_k_expl}
		\mathscr{P}_n(Z, Z') = \exp \Big(
			-\frac{\pi}{2} \sum_{i = 1}^{n} \big( 
				|z_i|^2 + |z'_i|^2 - 2 z_i \overline{z}'_i
			\big)
		\Big), \quad \text{for } Z, Z' \in \comp^n.
	\end{equation}	
	From \cite[\S 4.1.6]{MaHol}, $\mathscr{P}_n$ is the Schwartz kernel of the orthogonal projection from the space of $L^2$-integrable functions with respect to the Gaussian measure onto the Bargmann space.
	\par 
	Next result shows that the behavior of the Bergman kernel on any manifold $X$ for close parameters is governed by the function $\mathscr{P}_{n}$.
	\begin{thm}\label{thm_berg_off_diag}
		There are $\epsilon, c, C > 0$, $p_1 \in \nat^*$, such that for any $x_0 \in X$, $k \geq p_1$, $Z, Z' \in \real^{2n}$, $|Z|, |Z'| \leq \epsilon$, the following estimate holds
			\begin{multline}\label{eq_berg_off_diag}
				\bigg| 
						\frac{1}{k^n} B_k^X \big(\phi_{x_0}^X(Z), \phi_{x_0}^X(Z') \big)
						-					
						\mathscr{P}_{n}(\sqrt{k} Z, \sqrt{k} Z')
				\bigg|
				\\
				\leq
				C k^{-1}
				\Big(1 + \sqrt{k}|Z| + \sqrt{k} |Z'| \Big)^{3n + 2}
				\exp\Big(- c \sqrt{k} |Z - Z'| \Big).
			\end{multline}
	\end{thm}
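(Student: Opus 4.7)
The plan is to follow the rescaling philosophy of Dai--Liu--Ma and Ma--Marinescu. The starting point is that in the regime $|Z - Z'| \geq \epsilon'/\sqrt{k}$ with $\epsilon'$ large (eventually chosen depending on $\epsilon$), Theorem \ref{thm_bk_off_diag} already yields an exponential decay far stronger than what is needed, so one may restrict attention to the genuinely near-diagonal regime. Working in the exponential coordinates $\phi_{x_0}^X$ and the parallel frame $\tilde l'_0$ for $L$, I would introduce the rescaled variables $z = \sqrt{k} Z$, $z' = \sqrt{k} Z'$ and rescale sections by $k^{-n/2}$ so that the $L^2$-identifications match under rescaling.

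Next, I would compute the rescaled Kodaira Laplacian $\mathcal{L}_k := \frac{1}{k} \Box_k$ in these rescaled coordinates. Taylor-expanding the Riemannian metric $g^{TX}$, the complex structure, and the connection coefficients of $(L, h^L)$ at $x_0$, one obtains a formal series
\begin{equation*}
\mathcal{L}_k = \mathcal{L}_0 + \sum_{r \geq 1} k^{-r/2}\, \mathcal{O}_r,
\end{equation*}
where $\mathcal{L}_0$ is the model harmonic-oscillator-type operator on $\comp^n$ determined by the flat data $R^L_{x_0}$, and each $\mathcal{O}_r$ is a differential operator with polynomial coefficients in $z$ of controlled degree. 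The kernel of $\mathcal{L}_0$ is the Bargmann--Fock space, and the Schwartz kernel of the orthogonal projector onto it is exactly $\mathscr{P}_n(z, z')$ of (\ref{eq_berg_k_expl}).

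Having the model in place, I would exploit the uniform spectral gap of $\Box_k$ above $0$ (which persists for $k$ large by Kodaira--Nakano positivity) to represent the Bergman projector as a Cauchy contour integral
\begin{equation*}
B_k^X = \frac{1}{2\pi i} \oint_{|\lambda| = \delta} (\lambda - \Box_k)^{-1} d\lambda
\end{equation*}
with $\delta > 0$ fixed and small. After rescaling, this becomes a contour integral of $(\lambda - \mathcal{L}_k)^{-1}$. Expanding the resolvent as a Neumann series in $k^{-1/2}$ around $(\lambda - \mathcal{L}_0)^{-1}$ and integrating produces a formal expansion whose leading Schwartz kernel is precisely $k^{-n} \mathscr{P}_n(\sqrt{k}Z, \sqrt{k}Z')$, with the next correction starting at order $k^{-1/2}$, and in fact at $k^{-1}$ after orthogonality cancellations on the model side.

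The main obstacle is to upgrade this formal expansion into the explicit pointwise bound (\ref{eq_berg_off_diag}) with the polynomial-times-exponential error. For this, I would work in Gaussian-weighted Sobolev spaces on $\comp^n$, in which the model resolvent $(\lambda - \mathcal{L}_0)^{-1}$ has a Schwartz kernel with Gaussian off-diagonal decay. Iterating this decay through the Neumann series by commutator (Agmon-type) arguments produces the factor $\exp(-c\sqrt{k}|Z - Z'|)$, while the polynomial factor $(1+\sqrt{k}|Z|+\sqrt{k}|Z'|)^{3n+2}$ keeps track of the polynomial coefficients of the $\mathcal{O}_r$ and of the fixed number of derivatives needed in the Sobolev embedding to pass from $L^2$- to pointwise bounds. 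Patching the resulting local estimate with the far-diagonal bound from Theorem \ref{thm_bk_off_diag} by a cutoff at scale $1/\sqrt{k}$ then gives (\ref{eq_berg_off_diag}).
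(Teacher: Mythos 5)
Your plan is to re-derive from scratch the near-diagonal asymptotics of the Bergman kernel via localization, rescaling, the model operator $\mathcal{L}_0$, spectral gap, and a resolvent expansion. This is the standard Dai--Liu--Ma/Ma--Marinescu machinery. The paper instead treats Theorem~\ref{thm_berg_off_diag} as a one-paragraph citation: it invokes the first two terms of the Dai--Liu--Ma expansion and Ma--Marinescu's result \cite[Remark 4.1.26]{MaHol} that the $k^{-1/2}$ coefficient vanishes, plus a remark (via \cite[(1.2.19)]{MaHol}) that the $\kappa$ normalization function cannot affect the first two terms. So the content is identical; you are choosing to re-prove the cited results rather than cite them.

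There is, however, a genuine gap precisely at the point on which everything hinges. You assert that the correction to $\mathscr{P}_n$ ``starts at order $k^{-1/2}$, and in fact at $k^{-1}$ after orthogonality cancellations on the model side.'' This vanishing is \emph{not} a formal orthogonality statement and does not follow from any general feature of the Neumann series: the first-order operator $\mathcal{O}_1$ arising from the Taylor expansion is odd in $z$, and one must explicitly compute the kernel of $P_0\, \mathcal{O}_1\, (\mathcal{L}_0^{-1})^{\perp}\, \mathcal{O}_1\, P_0$ and of $P_0\, \mathcal{O}_1\, P_0$, show the latter vanishes, and track what the former contributes. This is a substantive computation (done in \cite[Theorem 4.1.21 and Remark 4.1.26]{MaHol}), and it is exactly what separates the error bound $O(k^{-1})$ claimed in (\ref{eq_berg_off_diag}) from the weaker $O(k^{-1/2})$ that your sketch actually delivers. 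Without doing or citing this computation, your argument establishes the expansion only to the coarser order.

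A second, smaller issue: you do not account for the $\kappa$ function relating the Lebesgue measure in the coordinates $Z$ to the Riemannian volume form $dv_X$, which modifies the relation between the rescaled kernel you compute and the Schwartz kernel $B_k^X$ defined with respect to $dv_X$. The paper explicitly notes why $\kappa$ cannot affect the two-term expansion (\cite[(1.2.19)]{MaHol}). Your sketch is silent on this normalization, so even after fixing the cancellation issue the constants would need to be re-checked against the convention (\ref{eq_berg_k_expl}).

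Finally, the polynomial exponent $3n+2$ in (\ref{eq_berg_off_diag}) is inherited from the precise form of the remainder in the Dai--Liu--Ma expansion; your sketch produces ``some'' polynomial growth from Sobolev embedding and the polynomial coefficients of the $\mathcal{O}_r$, but you give no accounting that would pin down a concrete exponent. That is acceptable if one is willing to take $\epsilon$ small (so that only a bounded range of $\sqrt{k}|Z|$ near the diagonal matters), but it should be said explicitly rather than left implicit.
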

	\begin{proof}
		The result follows from the first two terms of the asymptotic expansion of Dai-Liu-Ma \cite{DaiLiuMa} and a result of Ma-Marinescu \cite[Remark 4.1.26]{MaHol}, which states that the second term of the asymptotic expansion vanishes.
		The $\kappa$ function, see \cite[(4.1.28)]{MaHol} for a definition, doesn't appear in our expansion since our expansion only uses the two first terms of the expansion of Dai-Liu-Ma, and according to \cite[(1.2.19)]{MaHol}, the appearance of the $\kappa$-term doesn't affect them.
	\end{proof}
	We are now ready to draw the first consequence of those statements.
	\begin{prop}\label{prop_norm_berg_proj}
		For $k \in \nat$ big enough, the operator $B_k^X$ extends using the Bergman kernel as a bounded operator to the space of $L^p$-integrable sections of $L^k$, for any $p \in [1, + \infty]$. 
		Moreover, there is $C > 0$, such that for any $k \in \nat^*$, $p \in [1, + \infty]$, the following bound holds
		\begin{equation}\label{eq_norm_berg_proj}
			\big\|
				B_k^X
			\big\|_{L^p_k(X, h^L) \to L^p_k(X, h^L)}
			\leq
			2^n \Big(1 + \frac{C}{k} \Big),
		\end{equation}
		where $\| \cdot \|_{L^p_k(X, h^L) \to L^p_k(X, h^L)}$ is the operator norm of an operator from the space of $L^p$-integrable sections of $L^k$ to itself.
	\end{prop}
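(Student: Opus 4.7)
The plan is to derive the $L^p$ bound by interpolation from the endpoint cases $p = 1$ and $p = \infty$, both of which reduce to a single integral estimate on the absolute value of the Bergman kernel. Since $B_k^X$ is a self-adjoint orthogonal projection on $L^2$, its Schwartz kernel satisfies $|B_k^X(x,y)|_h = |B_k^X(y,x)|_h$ (pointwise, with respect to $h^L$), so the $L^1 \to L^1$ and $L^\infty \to L^\infty$ operator norms of the integral operator defined by the kernel coincide. The formal extension of $B_k^X$ from $L^2$ to any $L^p_k(X, h^L)$ is by direct integration against the kernel, and by Schur's test---equivalently, by Riesz--Thorin interpolation between $p = 1$ and $p = \infty$---the operator norm on every $L^p_k$ is bounded by
\[
M_k := \sup_{x \in X} \int_X |B_k^X(x, y)|_h \, dv_X(y).
\]
It thus suffices to prove $M_k \leq 2^n(1 + C/k)$.

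Next, I would fix $x_0 \in X$ and $\epsilon > 0$ as provided by Theorem \ref{thm_berg_off_diag}, and split the integral according to $X = U \sqcup (X \setminus U)$ with $U := \phi_{x_0}^X(B(0, \epsilon))$. Theorem \ref{thm_bk_off_diag} bounds the contribution of $X \setminus U$ by $C k^n \mathrm{vol}(X) \exp(-c\sqrt{k}\,\epsilon)$, which is super-polynomially small in $k$ and therefore negligible for our purposes.

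For the near-diagonal piece, apply Theorem \ref{thm_berg_off_diag} with $Z = 0$ and $Z' \in B(0, \epsilon)$, then change variables to $W = \sqrt{k}\,Z'$ (so the Jacobian absorbs the $k^n$ prefactor exactly). In the chosen exponential coordinates one has $dv_X(\phi_{x_0}^X(Z')) = (1 + O(|Z'|^2))\, dZ'$, and from the explicit formula \eqref{eq_berg_k_expl} one obtains $|\mathscr{P}_n(0, W)| = \exp(-\pi |W|^2/2)$. Since
\[
\int_{\real^{2n}} \exp\Big(-\frac{\pi}{2}|W|^2\Big)\, dW = 2^n,
\]
the leading term of $M_k$ contributes $2^n + O(1/k)$: the Gaussian tail outside $\{|W| \leq \sqrt{k}\,\epsilon\}$ is super-polynomially small, and the $O(|W|^2/k)$ correction coming from the Riemannian volume form integrates against the Gaussian to $O(1/k)$. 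The remainder in Theorem \ref{thm_berg_off_diag}, after the same change of variables, contributes at most $C k^{-1} \int_{\real^{2n}} (1 + |W|)^{3n+2} \exp(-c|W|)\, dW = O(k^{-1})$. Summing the three pieces yields $M_k \leq 2^n(1 + C/k)$, which combined with the first step proves \eqref{eq_norm_berg_proj}.

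The principal bookkeeping obstacle is to keep the leading constant exactly equal to $2^n$, rather than $2^n(1 + O(1))$. This requires verifying that the $O(|Z'|^2)$ correction in $dv_X$ produces a strictly subleading contribution after integration against a Gaussian concentrated at scale $1/\sqrt{k}$; this is routine since odd-order Taylor coefficients of $dv_X$ at $x_0$ vanish in normal coordinates and the quadratic term has second moment $O(1/k)$ against the rescaled Gaussian. Everything else in the proof is standard estimate chasing between the off-diagonal decay of Theorem \ref{thm_bk_off_diag} and the near-diagonal expansion of Theorem \ref{thm_berg_off_diag}.
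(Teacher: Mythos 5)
Your proof is correct and takes essentially the same approach as the paper: you reduce to the single integral bound $\sup_x \int_X |B_k^X(x,y)|\,dv_X(y) \leq 2^n(1 + C/k)$ using the kernel symmetry coming from self-adjointness of the Bergman projector, then pass to $L^p$ by Schur/Riesz--Thorin (the paper invokes Young's inequality for integral operators, which is the same estimate). The integral bound itself is obtained exactly as in the paper, by splitting into a near-diagonal piece controlled by Theorem \ref{thm_berg_off_diag} (whose Gaussian model integrates to $2^n$) and an off-diagonal piece controlled by Theorem \ref{thm_bk_off_diag}; you simply write out the change-of-variables and volume-form bookkeeping that the paper leaves implicit.
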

	\begin{rem}
		Similar results were obtained for Bargmann space, see \cite{HankFock}.
		In fact, according to \cite[Theorem 1.1]{DualBarg}, the constant $2^n$ in this case is sharp for $p = 1$.
	\end{rem}
	\begin{proof}
		From Theorems \ref{thm_bk_off_diag}, \ref{thm_berg_off_diag} and (\ref{eq_exp_int}), we conclude that there is $C > 0$, such that for any $x_0 \in X$, for $k \geq p_1$, we have
		\begin{equation}\label{eq_bound_int_berg}
		\begin{aligned}
			&
			\int_{X} 
				\big| B_k^X(x_0, x) \big| dv_X(x) 
			\leq 
			2^n \Big(1 + \frac{C}{k} \Big), 
			\\
			&
			\int_{X} 
				\big| B_k^X(x, x_0) \big| dv_X(x) 
			\leq 
			2^n \Big(1 + \frac{C}{k} \Big).
		\end{aligned}
		\end{equation}
		Directly from (\ref{eq_bound_int_berg}) and Young's inequality for integral operators, cf. \cite[Theorem 0.3.1]{SoggBook} applied for $q = p$, $r = 1$ in the notations of \cite{SoggBook}, we deduce (\ref{eq_norm_berg_proj}).
	\end{proof}
	\begin{proof}[Proof of Theorem \ref{thm_compar_tens_prod2}.]
		Instead of (\ref{eq_compar_tens_prod}), let us establish the following inequalities
		\begin{equation}\label{eq_compar_tens_prod_22}
			\frac{1}{2^n}
			{\rm{Ban}}_k^{\infty}(h^L)
			\leq
			({\rm{Ban}}_k^{1}(h^L))^*
			\leq
			{\rm{Ban}}_l^{\infty}(h^L).
		\end{equation}
		By taking duals, and using the fact that any norm on a finitely dimensional vector space is equal to its double dual, we see that (\ref{eq_compar_tens_prod_22}) is equivalent to (\ref{eq_compar_tens_prod}).
		\par 
		\begin{sloppypar}
		First of all, by the definition of the dual norm, for any $f \in H^0(X, L^k)$, we have
		\begin{equation}\label{eq_compar_tens_prod_2232}
			\|
				f
			\|_{L^1_k(X, h^L)}^*
			=
			\sup_{g \in H^0(X, L^k)} \frac{|\scal{f}{g}_{L^2_k(X, h^L)}|}{\| g \|_{L^{1}_k(X, h^L)}}.
		\end{equation}
		From this, the fact that $H^0(X, L^k)$ is a subspace of $L^1(X, L^k)$ and the well-known fact that the dual of $(L^1(X, L^k), \| \cdot \|_{L^1_k(X, h^L)})$ coincides with $(L^{\infty}(X, L^k), \| \cdot \|_{L^{\infty}_k(X, h^L)})$, we deduce the right estimate from (\ref{eq_compar_tens_prod_22}).
		\end{sloppypar}
		\par 
		Now, for a fixed $\alpha > 0$ and $f \in H^0(X, L^k)$, consider an element $g_{\alpha} := B_k^X (|f|^{\alpha} \cdot f) \in H^0(X, L^k)$.
		Then according to Proposition \ref{prop_norm_berg_proj}, we have 
		\begin{equation}\label{eq_g_al_nm1}
			\|
				g_{\alpha}
			\|_{L^{1}_k(X, h^L)}
			\leq
			2^n
			\Big(
				1 + \frac{C}{k}
			\Big)
			\cdot
			\int_X
			|f|^{1 + \alpha}(x) dv_X(x).
		\end{equation}
		Remark also that we have
		\begin{equation}\label{eq_g_al_nm2}
			\scal{f}{g_{\alpha}}_{L^2_k(X, h^L)}
			=
			\scal{f}{|f|^{\alpha} \cdot f}_{L^2_k(X, h^L)}
			=
			\int_X
			|f|^{2 + \alpha}(x) dv_X(x).
		\end{equation}
		In particular, from (\ref{eq_g_al_nm2}), we see that $g_{\alpha} \neq 0$.
		Recall, finally, that for any continuous non-zero function $h : X \to \real$, we have
		\begin{equation}\label{eq_g_al_nm3}
			\lim_{\alpha \to +\infty}
			\frac{\int_X
			|h|^{2 + \alpha}(x) dv_X(x)}
			{\int_X
			|h|^{1 + \alpha}(x) dv_X(x)}
			=
			\sup_{x \in X} |h|(x).
		\end{equation}
		The left estimate from (\ref{eq_compar_tens_prod_22}) now follows from (\ref{eq_g_al_nm1}), (\ref{eq_g_al_nm2}) and (\ref{eq_g_al_nm3}) by putting $g := g_{\alpha}$ in (\ref{eq_compar_tens_prod_2232}) and letting $\alpha \to \infty$.
		\par 
		The second estimate from Theorem \ref{thm_compar_tens_prod2} is obtained in an analogous way.
	\end{proof}
	
\subsection{Bounds on products of holomorphic sections, a proof of Theorem \ref{thm_mult_well_def}}\label{sect_mult_def}
	The main goal of this section is to give a bound on the norm of a product of two holomorphic sections in terms of their norms. In particular, we prove Theorem \ref{thm_mult_well_def}.
	The main caveat of the proof is an introduction of a sequence of operators, called multiplicative defect.
	They would allow us to extend our methods from \cite{FinOTAs}, \cite{FinToeplImm}, \cite{FinOTRed} in the current setting.
	\par 
	Before all, let us prove that Theorem \ref{thm_mult_well_def} holds for $L^1$-norm.
	For this we will rely solely on the results on the study of the asymptotic behavior of the Bergman kernel from Section \ref{sect_prod_norms}.
	\begin{sloppypar}
	\begin{proof}[Proof of Theorem \ref{thm_mult_well_def} for the $L^1$-norm]
		Let us first recall that the Bergman projector $B_{k, l}^{X \times X}$ is defined as the orthogonal projection from $L^2(X \times X, L^k \boxtimes L^l)$ to $H^0(X \times X, L^k \boxtimes L^l)$.
		Clearly, under the identification
		\begin{equation}\label{eq_l2_times_otimes}
			L^2(X \times X, L^k \boxtimes L^l)
			\to
			L^2(X, L^k)
			\otimes
			L^2(X, L^l),
		\end{equation}
		where $\otimes$ means here the tensor product of Hilbert spaces, and the identification (\ref{eq_nat_isom1}), we can write 
		\begin{equation}\label{eq_bergm_prod}
			B_{k, l}^{X \times X} = B_{k}^{X} \otimes B_{l}^{X}.
		\end{equation}
		\par
		We extend ${\rm{Mult}}_{k, l}$ to $L^2(X, L^k) \otimes L^2(X, L^l)$ by precomposing it with the Bergman projection.
		Then, under the isomorphisms from (\ref{eq_comm_diag}) and (\ref{eq_l2_times_otimes}), we have the following identity
		\begin{equation}\label{eq_mult_bergm_relat}
			{\rm{Mult}}_{k, l} = \res_{\Delta} \circ B_{k, l}^{X \times X}.
		\end{equation}
		\par 
		Similarly to (\ref{eq_bound_int_berg}), from (\ref{eq_mult_bergm_relat}), we conclude that there are $C > 0$, $p_1 \in \nat$, such that for any $x_0, x_1 \in X$, for $k, l \geq p_1$, we have
		\begin{equation}\label{eq_bound_int_mult}
		\begin{aligned}
			&
			\int_{X} 
				\Big| {\rm{Mult}}_{k, l} \big(x, (x_0, x_1) \big) \Big|  dv_X(x) 
			\leq 
			2^n  \Big( \frac{k \cdot l}{\max \{ k, l \}} \Big)^n \cdot \Big(1 + \frac{C}{k} + \frac{C}{l} \Big), 
			\\
			&
			\int_{X \times X} 
				\Big| {\rm{Mult}}_{k, l} \big(x_0, (x, x') \big) \Big| dv_X(x)dv_X(x') 
			\leq 
			4^n \Big(1 + \frac{C}{k} + \frac{C}{l} \Big).
		\end{aligned}
		\end{equation}
		Directly from (\ref{eq_bound_int_mult}) and Young's inequality for integral operators, cf. \cite[Theorem 0.3.1]{SoggBook} applied for $q = p = 1$, $r = 1$ in the notations of \cite{SoggBook}, we deduce the second inequality from (\ref{eq_mult_well_def2}).
		\par 
		Let us now show that the first part of the statement for $L^1$-norm follows from the second one.
		Remark that for any $f \in H^0(X, L^k)$, $g \in H^0(X, L^l)$, we have
		\begin{equation}\label{eq_prod_l2_nm}
			\big \| 
				f \otimes g
			\big \|_{L^1_{k, l}(X \times X, h^L \boxtimes h^L)}
			=
			\big \| 
				f
			\big \|_{L^1_k(X, h^L)}
			\cdot
			\big \| 
				g
			\big \|_{L^1_l(X, h^L)}.
		\end{equation}
		By the definition of the operator norm and (\ref{eq_prod_l2_nm}), we deduce that
		\begin{equation}\label{eq_prod_l2_nm333}
			\big \| 
				f \cdot g
			\big \|_{L^1_{k + l}(X, h^L)}
			\leq
			\|
				{\rm{Mult}}_{k, l}
			\|_{L^1_{k, l}(X \times  X, h^L \boxtimes h^L) \to L^1_{k + l}(X, h^L)}
			\cdot
			\big \| 
				f \otimes g
			\big \|_{L^1_{k, l}(X \times X, h^L \boxtimes h^L)}.
		\end{equation}
		We conclude by (\ref{eq_mult_well_def2}), (\ref{eq_prod_l2_nm}) and (\ref{eq_prod_l2_nm333}).
	\end{proof}
	\end{sloppypar}
	\par 
	Now, to prove Theorem \ref{thm_mult_well_def} for the $L^2$-norm, more work has to be done.
	This is due to the fact that the bound provided by the Young's inequality will no longer coincide asymptotically with the optimal bound.
	To remedy this problem, using the surjectivity statement from Proposition \ref{prop_mult_surj}, we define the \textit{optimal decomposition map} 
	\begin{equation}\label{eq_opt_dec}
		{\rm{Dec}}_{k, l} : H^0(X, L^{k + l})
		\to
		H^0(X, L^k) \otimes H^0(X, L^l),
	\end{equation}
	as follows.
	We let ${\rm{Dec}}_{k, l}(f) = h$ if $h$ satisfies ${\rm{Mult}}_{k, l}(h) = f$, and $h$ has the minimal $L^2$-norm among all $h \in H^0(X, L^k) \otimes H^0(X, L^l)$, verifying such an identity (the uniqueness of such $h$ follows from the non-degeneracy of the $L^2$-Hermitian product).
	Clearly, from the commutative square (\ref{eq_comm_diag}), we see that the optimal decomposition map is just a version of the optimal extension map, studied in \cite{FinOTAs}, associated for the pair $(X \times X, \Delta)$. 
	The difference between our situation here from \cite{FinOTAs} is that there we consider only the tensor powers and not $\boxtimes$ products.
	This being said, most of our techniques from \cite{FinOTRed} transfer in the current setting almost directly.
	\par 
	We will now use the methods of \cite{FinOTRed} to study ${\rm{Dec}}_{k, l}$.
	Our first goal is to prove the existence of a sequence of operators, relating the restriction maps with optimal decomposition maps.
	\begin{thm}\label{thm_mult_def}
		There is $p_1 \in \nat^*$, such that for any $k, l \geq p_1$, there is a unique operator $A_{k, l} \in \enmr{H^{0}(X, L^{k + l})}$, verifying under the isomorphisms (\ref{eq_nat_isom12}) and (\ref{eq_nat_isom2}) the following identity
		\begin{equation}\label{eq_resp_ap_lem}
			(\res_{\Delta} \circ B_{k, l}^{X \times X})^{*} = {\rm{Dec}}_{k, l} \circ A_{k, l}.
		\end{equation}
	\end{thm}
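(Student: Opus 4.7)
The plan is to reduce Theorem \ref{thm_mult_def} to a linear-algebra observation about sections and adjoints of a surjective linear map between finite-dimensional Hilbert spaces. The only nontrivial geometric input needed is the surjectivity of the multiplication map, which is supplied by Proposition \ref{prop_mult_surj}, together with the identification $\mathrm{Mult}_{k,l} = \mathrm{Res}_\Delta \circ B^{X \times X}_{k,l}$ from (\ref{eq_mult_bergm_relat}). So the proof should involve essentially no new analysis.

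First, I fix $p_1 \geq p_0$ from Proposition \ref{prop_mult_surj}, so that for all $k, l \geq p_1$ the map $\mathrm{Mult}_{k,l}$ is surjective. Consequently $\mathrm{Dec}_{k,l}$ is well-defined, and by construction it is an $L^2$-isometric isomorphism from $H^0(X, L^{k+l})$ onto the orthogonal complement $\ker(\mathrm{Mult}_{k,l})^{\perp} \subset H^0(X, L^k) \otimes H^0(X, L^l)$; moreover $\mathrm{Mult}_{k,l} \circ \mathrm{Dec}_{k,l} = \mathrm{id}$, and $\mathrm{Dec}_{k,l} \circ \mathrm{Mult}_{k,l}$ is the orthogonal projector onto $\ker(\mathrm{Mult}_{k,l})^{\perp}$.

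Second, I observe that the image of $\mathrm{Mult}_{k,l}^* = (\mathrm{Res}_\Delta \circ B^{X \times X}_{k,l})^*$ lies inside $\ker(\mathrm{Mult}_{k,l})^{\perp}$, since for any $h \in \ker(\mathrm{Mult}_{k,l})$ and any $f \in H^0(X, L^{k+l})$ one has $\langle \mathrm{Mult}_{k,l}^* f, h \rangle = \langle f, \mathrm{Mult}_{k,l}(h) \rangle = 0$. Since $\mathrm{Dec}_{k,l}$ maps $H^0(X, L^{k+l})$ bijectively onto $\ker(\mathrm{Mult}_{k,l})^{\perp}$, the map $\mathrm{Dec}_{k,l}^{-1} : \ker(\mathrm{Mult}_{k,l})^{\perp} \to H^0(X, L^{k+l})$ exists, and I can simply \emph{define}
\begin{equation*}
    A_{k,l} := \mathrm{Dec}_{k,l}^{-1} \circ \mathrm{Mult}_{k,l}^* \in \enmr{H^0(X, L^{k+l})},
\end{equation*}
which yields (\ref{eq_resp_ap_lem}) tautologically. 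Equivalently, composing both sides of the desired identity on the left by $\mathrm{Mult}_{k,l}$ and using $\mathrm{Mult}_{k,l} \circ \mathrm{Dec}_{k,l} = \mathrm{id}$, one finds that necessarily $A_{k,l} = \mathrm{Mult}_{k,l} \circ \mathrm{Mult}_{k,l}^*$, and one then checks directly that this choice satisfies (\ref{eq_resp_ap_lem}) by using that $\mathrm{Dec}_{k,l} \circ \mathrm{Mult}_{k,l}$ acts as the identity on the image of $\mathrm{Mult}_{k,l}^*$.

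Third, uniqueness is immediate: if $A_{k,l}$ and $A'_{k,l}$ both satisfy (\ref{eq_resp_ap_lem}), then $\mathrm{Dec}_{k,l} \circ (A_{k,l} - A'_{k,l}) = 0$, and injectivity of $\mathrm{Dec}_{k,l}$ forces $A_{k,l} = A'_{k,l}$. There is no genuine obstacle; the only step to be careful about is verifying that the identifications (\ref{eq_nat_isom12}) and (\ref{eq_nat_isom2}) together with the commutative diagram (\ref{eq_comm_diag}) make sense of the formula $\mathrm{Mult}_{k,l} = \mathrm{Res}_\Delta \circ B^{X \times X}_{k,l}$ as an operator between the right Hilbert spaces, so that taking adjoints is meaningful. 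Once this bookkeeping is set, Theorem \ref{thm_mult_def} follows at once.
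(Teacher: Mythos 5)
Your proof is correct and follows essentially the same route as the paper: one shows (using Proposition \ref{prop_mult_surj}, i.e.\ surjectivity of $\mathrm{Mult}_{k,l}$) that $\mathrm{Dec}_{k,l}$ is injective with image $\ker(\mathrm{Mult}_{k,l})^{\perp}$ and that $\mathrm{Mult}_{k,l}^*$ has the same image, so $A_{k,l}:=\mathrm{Dec}_{k,l}^{-1}\circ\mathrm{Mult}_{k,l}^*=\mathrm{Mult}_{k,l}\circ\mathrm{Mult}_{k,l}^*$ is the unique solution.
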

	\begin{rem}
		The sequence of operators $A_{k, l}$, will be later called \textit{multiplicative defect}.
		An analogous sequence of operators for more general immersions (and tensor powers of the line bundles instead of $\boxtimes$-products) was introduced in \cite[Theorem 4.3]{FinToeplImm} in their relation with the problem of transitivity of optimal holomorphic extensions.
		It was later studied in \cite[Theorem 4.1]{FinToeplImm} in a more general situation of holomorphic jets along submanifold, and used as a tool to extend our previous work \cite{FinOTAs} on the semiclassical extension theorem for holomorphic jets instead of sections.
	\end{rem}
	\begin{proof}
		It follows from the equality of kernels and images of $(\res_{\Delta} \circ B_{k, l}^{X \times X})^{*}$ and ${\rm{Dec}}_{k, l}$, which relies on Proposition \ref{prop_mult_surj}.
		See \cite[Theorem 4.3]{FinToeplImm} or  \cite[Theorem 4.1]{FinOTRed} for details.		
	\end{proof}
	Now, we will show that, $A_{k, l}$ coincides asymptotically with the identity operator up to a multiplication by an explicit constant.
	At least for $k = l$, this is quite an expected result.
	\par 
	In fact, from the commutative diagram (\ref{eq_comm_diag}) and (\ref{eq_resp_ap_lem}), the operator $A_{k, l}$ relates the extension and the adjoint of the restriction operators. 
	But from the calculations in \cite[(3.10)]{FinToeplImm} or \cite[(3.15)]{FinOTRed}, we know that those operators coincide (in a non-asymptotic sense) for the model Bargmann space.
	Moreover, by the results of \cite[Theorem 1.6]{FinOTAs} and \cite[Lemma 5.9]{FinOTRed}, we know that the extension and restriction operators for general manifolds coincide asymptotically with those model operators after an appropriate rescaling. 
	So over general manifolds the extension and the adjoint of the restriction operators should asymptotically coincide as well.
	\par 
	The following result shows that despite the fact that we do not posses a general version of extension theorem, which would cover not only the tensor powers, but also line bundles of the form $L^k \boxtimes L^l$, the property of $A_{k, l}$, described above, persists.
	\begin{thm}\label{thm_akl_comp}
		There are $p_1 \in \nat^*$, $C > 0$, such that for any $k, l \geq p_1$, we have
		\begin{equation}\label{eq_akl_comp}
			\Big\|
			A_{k, l}
			-
			\Big( \frac{k \cdot l}{k + l} \Big)^{n}
			\cdot
			{\rm{Id}}_{H^0(X, L^{k + l})}
			\Big\|
			\leq
			C
			\cdot
			\Big(
				\frac{1}{k} + \frac{1}{l}
			\Big)
			\cdot
			\Big( \frac{k \cdot l}{k + l} \Big)^n,
		\end{equation}
		where the operator norm of the operators on $H^0(X, L^{k + l})$ is taken with respect to ${\rm{Hilb}}_{k + l}(h^L)$.
	\end{thm}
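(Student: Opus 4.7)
The plan is to express $A_{k,l}$ as an explicit integral operator and compare its Schwartz kernel with a rescaled Bergman kernel on $H^0(X, L^{k+l})$.

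First, I would simplify (\ref{eq_resp_ap_lem}). Since ${\rm{Dec}}_{k,l}$ is by definition a right inverse of the multiplication map, which under (\ref{eq_comm_diag}) corresponds to $\res_{\Delta}$, we have $\res_{\Delta} \circ {\rm{Dec}}_{k,l} = {\rm{Id}}$ on $H^0(X, L^{k+l})$. Setting $R := \res_{\Delta} \circ B^{X \times X}_{k,l}$, applying $R$ to both sides of (\ref{eq_resp_ap_lem}) and using that $B^{X \times X}_{k,l}$ acts as the identity on $\mathrm{Im}\,{\rm{Dec}}_{k,l} \subset H^0(X \times X, L^k \boxtimes L^l)$, we get $A_{k,l} = R R^*$. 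Using (\ref{eq_bergm_prod}) and the fact that $R^*(f)$ already lies in the holomorphic subspace, a short calculation gives that for $f \in H^0(X, L^{k+l})$, under the identifications (\ref{eq_nat_isom12})–(\ref{eq_nat_isom2}),
\begin{equation*}
A_{k,l} f(x'') \;=\; \int_X B^X_k(x'', x') \cdot B^X_l(x'', x') \cdot f(x') \, dv_X(x').
\end{equation*}

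Second, the scalar $(kl/(k+l))^n$ is dictated by a pure model calculation. By (\ref{eq_berg_k_expl}), the Bargmann reproducing kernel satisfies $\mathscr{P}_n(0, \sqrt{k}Z) \cdot \mathscr{P}_n(0, \sqrt{l}Z) = \mathscr{P}_n(0, \sqrt{k+l}Z)$, so the model product is $k^n l^n \mathscr{P}_n(0, \sqrt{k+l}Z) = \bigl(\tfrac{kl}{k+l}\bigr)^n \cdot (k+l)^n \mathscr{P}_n(0, \sqrt{k+l}Z)$, which is exactly $\bigl(\tfrac{kl}{k+l}\bigr)^n$ times the model kernel of $B^X_{k+l}$. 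Since $B^X_{k+l}$ acts as the identity on $H^0(X, L^{k+l})$, this is precisely the scalar appearing in (\ref{eq_akl_comp}).

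Third, to turn this into an operator norm bound, I would apply Theorem \ref{thm_berg_off_diag} to expand each of $B^X_k(x'',x')$, $B^X_l(x'',x')$ and $B^X_{k+l}(x'',x')$ in normal coordinates $x' = \phi_{x''}^X(Z)$ with $|Z| \le \epsilon$. Multiplying out the expansions, the main terms cancel against $\bigl(\tfrac{kl}{k+l}\bigr)^n B^X_{k+l}$ by Step 2. After changes of variables $W = \sqrt{k}Z$ or $W = \sqrt{l}Z$, the resulting error integrates to
\begin{equation*}
\sup_{x''} \int_X \Bigl| B^X_k(x'',x') B^X_l(x'',x') - \bigl(\tfrac{kl}{k+l}\bigr)^n B^X_{k+l}(x'',x') \Bigr| \, dv_X(x') \;\leq\; C \Bigl(\tfrac{1}{k} + \tfrac{1}{l}\Bigr) \Bigl(\tfrac{kl}{k+l}\Bigr)^n,
\end{equation*}
and the off-diagonal regime $|Z| > \epsilon$ is negligible by Theorem \ref{thm_bk_off_diag}. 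The Hermitian symmetry of the kernel gives the symmetric bound in $x'$, so by the Schur test (exactly as in Proposition \ref{prop_norm_berg_proj}) the same bound controls the $L^2 \to L^2$ operator norm, and \emph{a fortiori} the operator norm on $H^0(X, L^{k+l})$, yielding (\ref{eq_akl_comp}).

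The main obstacle is the bookkeeping in the last step: the kernels $B^X_k$ and $B^X_l$ concentrate on the different scales $1/\sqrt{k}$ and $1/\sqrt{l}$, and the polynomial prefactor $(1+\sqrt{k}|Z|+\sqrt{l}|Z|)^{3n+2}$ in the remainder of Theorem \ref{thm_berg_off_diag} must be balanced against the Gaussian decay so that the final estimate is symmetric in $k$ and $l$. Splitting the analysis according to whether $k \le l$ or $l \le k$ and choosing the change of variable accordingly produces the clean factor $(1/k+1/l)(kl/(k+l))^n$ without logarithmic losses.
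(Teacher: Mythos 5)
Your proposal is correct and follows essentially the same route as the paper. You derive the explicit Schwartz kernel $A_{k,l}(x'',x') = B^X_k(x'',x')\,B^X_l(x'',x')$ from (\ref{eq_resp_ap_lem}) (the paper packages this as (\ref{eq_akl_form}) and Lemma \ref{lem_mult_def_exp_dec_diag}), exploit the multiplicativity $\mathscr{P}_n(\sqrt{k}\cdot,\sqrt{k}\cdot)\,\mathscr{P}_n(\sqrt{l}\cdot,\sqrt{l}\cdot)=\mathscr{P}_n(\sqrt{k+l}\cdot,\sqrt{k+l}\cdot)$ to match the leading terms with $(kl/(k+l))^n B^X_{k+l}$, and then bound the remainder with the off-diagonal decay and a Schur/Young $L^1$-kernel estimate, exactly as in (\ref{eq_bound_int_berg2}).
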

	In the proof of Theorem \ref{thm_akl_comp}, the following result will be of utmost importance.
	To state it, we extend the domain of $A_{k, l}$ to $L^2_{k + l}(X, h^L)$ by precomposing it with the Bergman projection.
	Then $A_{k, l}$ is a smoothing operator and hence has a smooth Schwartz kernel, $A_{k, l}(x_1, x_2)$, $x_1, x_2 \in X$,  with respect to $dv_X$.
	Next lemma provides a comprehensive study of this Schwartz kernel.
	\begin{lem}\label{lem_mult_def_exp_dec_diag}
		There are $c, C > 0$, $p_1 \in \nat^*$ such that for any $k, l \geq p_1$, $x_1, x_2 \in X$, we have
		\begin{equation}\label{eq_akl_off_diag}
			\Big|  A_{k, l}(x_1, x_2) \Big| \leq C (k \cdot l)^{n} \cdot \exp \big(- c ( \sqrt{k} + \sqrt{l} ) \cdot \dist(x_1, x_2) \big).
		\end{equation}
		\par 
		Moreover, there are $\epsilon, c, C > 0$, $p_1 \in \nat^*$, such that for any $x_0 \in X$, $k, l \geq p_1$, $Z, Z' \in \real^{2n}$, $|Z|, |Z'| \leq \epsilon$, we have
		\begin{multline}\label{eq_akl_off_diag2}
			\bigg| 
					A_{k, l} \big(\phi_{x_0}^X(Z), \phi_{x_0}^X(Z') \big)
					-	
					\Big( \frac{k \cdot l}{k + l} \Big)^{n}  	
					B_{k + l}^X \big(\phi_{x_0}^X(Z), \phi_{x_0}^X(Z') \big)
			\bigg|
			\\
			\leq
			C (k \cdot l)^n \cdot \Big( \frac{1}{k} + \frac{1}{l} \Big) \cdot
			\Big(1 + \sqrt{k + l}|Z| + \sqrt{k + l} |Z'| \Big)^{6n + 4}
			\cdot
			\\
			\cdot
			\exp\Big(- c (\sqrt{k} +  \sqrt{l}) \cdot  |Z - Z'| \Big).
		\end{multline}
	\end{lem}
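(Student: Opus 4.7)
The plan is to obtain an explicit product formula for the Schwartz kernel of $A_{k,l}$ in terms of the Bergman kernels of $L^k$ and $L^l$ on $X$, and then to read off both estimates directly from the Bergman kernel asymptotics recalled in Theorems \ref{thm_bk_off_diag} and \ref{thm_berg_off_diag}. The key reduction will be the clean operator factorization
$$A_{k,l} = {\rm{Mult}}_{k,l} \circ ({\rm{Mult}}_{k,l})^*,$$
which one obtains by composing the identity of Theorem \ref{thm_mult_def} with ${\rm{Mult}}_{k,l}$ on the left and using ${\rm{Mult}}_{k,l} \circ {\rm{Dec}}_{k,l} = {\rm{Id}}_{H^0(X,L^{k+l})}$, the defining property of ${\rm{Dec}}_{k,l}$, valid for $k,l$ large enough by Proposition \ref{prop_mult_surj}.

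Next, combining this factorization with (\ref{eq_mult_bergm_relat}) and (\ref{eq_bergm_prod}), one reads off that the Schwartz kernel of ${\rm{Mult}}_{k,l}$ (extended to $L^2$) is $B_k^X(x,y_1) \cdot B_l^X(x,y_2)$, valued in $L^{k+l}_x \otimes (L^k_{y_1})^* \otimes (L^l_{y_2})^*$ through $L^k \otimes L^l \simeq L^{k+l}$. Composing this with its adjoint and then invoking the reproducing property $\int_X B_k^X(x_1,y) B_k^X(y,x_2) \, dv_X(y) = B_k^X(x_1,x_2)$ in each of the two intermediate $y$-integrals then yields
$$A_{k,l}(x_1,x_2) = B_k^X(x_1,x_2) \cdot B_l^X(x_1,x_2),$$
viewed in $L^{k+l}_{x_1} \otimes (L^{k+l}_{x_2})^*$.

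From this point on the two estimates are essentially routine. The global off-diagonal bound (\ref{eq_akl_off_diag}) is immediate by applying Theorem \ref{thm_bk_off_diag} to each factor. For the near-diagonal estimate (\ref{eq_akl_off_diag2}), I would plug the expansion of Theorem \ref{thm_berg_off_diag} into each of $B_k^X$, $B_l^X$, and $B_{k+l}^X$. The crucial algebraic observation is the semigroup-type identity
$$\mathscr{P}_n(\sqrt{k} Z, \sqrt{k} Z') \cdot \mathscr{P}_n(\sqrt{l} Z, \sqrt{l} Z') = \mathscr{P}_n(\sqrt{k+l} Z, \sqrt{k+l} Z'),$$
which is immediate from (\ref{eq_berg_k_expl}) because the exponent there is linear in the spectral parameter. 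This identity ensures that the leading term of $B_k^X \cdot B_l^X$ equals $(kl)^n \mathscr{P}_n(\sqrt{k+l}Z, \sqrt{k+l}Z')$, which is precisely $(kl/(k+l))^n$ times the leading term of $B_{k+l}^X$. The four remaining contributions (the two ``leading times error'' cross-terms in $B_k^X \cdot B_l^X$, the ``error times error'' term, and the remainder of $B_{k+l}^X$ scaled by $(kl/(k+l))^n$) will be bounded using only the pointwise estimate $|\mathscr{P}_n(\sqrt{k}Z,\sqrt{k}Z')| = \exp(-\tfrac{\pi k}{2}|Z-Z'|^2) \leq C e^{-c\sqrt{k}|Z-Z'|}$, together with the elementary inequalities $1/(k+l) \leq 1/k + 1/l$, $\sqrt{k+l} \geq \tfrac{1}{2}(\sqrt{k}+\sqrt{l})$, and the polynomial domination $(1+\sqrt{k}|Z|+\sqrt{k}|Z'|)^{3n+2}(1+\sqrt{l}|Z|+\sqrt{l}|Z'|)^{3n+2} \leq (1+\sqrt{k+l}|Z|+\sqrt{k+l}|Z'|)^{6n+4}$.

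The main substantive step is the operator factorization in the first paragraph; once $A_{k,l}$ is recognized as ${\rm{Mult}}_{k,l}\,({\rm{Mult}}_{k,l})^*$ and its Schwartz kernel is thereby collapsed to the pointwise product $B_k^X(x_1,x_2) \cdot B_l^X(x_1,x_2)$, the lemma reduces to a purely Bergman-theoretic computation whose asymptotic content is inherited directly from the known single-manifold expansions of Ma--Marinescu and Dai--Liu--Ma.
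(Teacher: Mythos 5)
Your proposal is correct and follows essentially the same route as the paper: the paper's proof rests on the identity $A_{k,l}=\res_{\Delta}\circ(\res_{\Delta}\circ B_{k,l}^{X\times X})^{*}$ from (\ref{eq_resp_ap_lem}), which (after the reproducing property) yields exactly your pointwise kernel identity $A_{k,l}(x_1,x_2)=B_k^X(x_1,x_2)\cdot B_l^X(x_1,x_2)$; your factorization $A_{k,l}={\rm{Mult}}_{k,l}\circ({\rm{Mult}}_{k,l})^{*}$ is the same statement, and indeed appears as (\ref{eq_comp_ext_oper}) elsewhere in the paper. The remainder of your argument—plugging in Theorems \ref{thm_bk_off_diag} and \ref{thm_berg_off_diag} together with the semigroup identity for $\mathscr{P}_n$, which is precisely the paper's (\ref{eq_pn_mult_prop11})—matches the paper's proof, with the error bookkeeping you sketch being the content the paper leaves implicit.
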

	\begin{proof}
		First of all, from (\ref{eq_resp_ap_lem}), we see that the following identity holds
		\begin{equation}\label{eq_akl_form}
			A_{k, l}
			=
			\res_{\Delta} \circ (\res_{\Delta} \circ B_{k, l}^{X \times X})^{*}.
		\end{equation}
		Remark also the trivial fact that the embedding $\Delta \hookrightarrow X \times X$ is totally geodesic once we endow $X \times X$ with the product metric.
		From this, we deduce that 
		\begin{equation}\label{eq_dist_equiv}
			d_{X \times X}(z, z') = d_{\Delta}(z, z'), \qquad \text{for } z, z' \in \Delta,
		\end{equation}
		where the metric on $\Delta$ is taken with respect to the Kähler form $\omega_{1, 1}$.
		Also, for any $x_0 \in X$, $Z \in \real^{2n}$, $|Z| < r_X$, we have
		\begin{equation}\label{eq_phi_equiv}
			\phi_{x_0 \times x_0}^{X \times X}(Z, Z)
			=
			(\phi_{x_0}^{X}(Z), \phi_{x_0}^{X}(Z)).
		\end{equation}
		From Theorem \ref{thm_bk_off_diag}, (\ref{eq_bergm_prod}), (\ref{eq_akl_form}) and (\ref{eq_dist_equiv}), we deduce (\ref{eq_akl_off_diag}).
		\par 
		Let us now prove (\ref{eq_akl_off_diag2}). 
		From Theorem \ref{thm_berg_off_diag}, (\ref{eq_bergm_prod}), (\ref{eq_akl_form}) and (\ref{eq_phi_equiv}), we see that there are $\epsilon, c, C > 0$, $p_1 \in \nat^*$, such that for any $x_0 \in X$, $k, l \geq p_1$, $Z, Z' \in \real^{2n}$, $|Z|, |Z'| \leq \epsilon$, the following estimate holds
		\begin{multline}\label{eq_akl_off_diag1111}
			\bigg| 
					A_{k, l} \big(\phi_{x_0}^X(Z), \phi_{x_0}^X(Z') \big)
					-					
					(k \cdot l)^n \cdot \mathscr{P}_{n}(\sqrt{k} Z, \sqrt{k} Z')
					\cdot \mathscr{P}_{n}(\sqrt{l} Z, \sqrt{l} Z')
			\bigg|
			\\
			\leq
			C \Big( \frac{1}{k} + \frac{1}{l} \Big) \cdot (k \cdot l)^n \cdot
			\Big(1 + \sqrt{k + l}|Z| + \sqrt{k + l} |Z'| \Big)^{6n + 4}
			\cdot
			\\
			\cdot
			\exp\Big(- c  ( \sqrt{k} +  \sqrt{l}) \cdot  |Z - Z'| \Big).
		\end{multline}
		We now remark that from (\ref{eq_berg_k_expl}), we have
		\begin{equation}\label{eq_pn_mult_prop11}
			\mathscr{P}_{n}(\sqrt{k} Z, \sqrt{k} Z') \cdot \mathscr{P}_{n}(\sqrt{l} Z, \sqrt{l} Z')
			=
			\mathscr{P}_{n}(\sqrt{k + l} Z, \sqrt{k + l} Z').
		\end{equation}
		From Theorem \ref{thm_berg_off_diag}, (\ref{eq_akl_off_diag1111}) and (\ref{eq_pn_mult_prop11}), we conclude that (\ref{eq_akl_off_diag2}) holds.
	\end{proof}
	\begin{proof}[Proof of Theorem \ref{thm_akl_comp}]
		Remark first, cf. \cite[Proposition 3.2]{FinOTAs}, that there is $C > 0$, such that for any $x_0 \in X$, we have the following bound
		\begin{equation}\label{eq_exp_int}
			\int_{X} \exp \big(- c (\sqrt{k} +  \sqrt{l}) \cdot \dist(x_0, x) \big) dv_{X}(x) < \frac{C}{(k + l)^{n}}.
		\end{equation}
		\par 
		From Theorem \ref{thm_bk_off_diag}, (\ref{eq_akl_off_diag2}) and (\ref{eq_exp_int}), we conclude that there are $C > 0$, $p_1 \in \nat^*$, such that for any $x_0 \in X$, for $k, l \geq p_1$, we have
		\begin{equation}\label{eq_bound_int_berg2}
		\begin{aligned}
			&
			\int_{X} 
			\Big| \Big(
				A_{k, l} - \Big( \frac{k \cdot l}{k + l} \Big)^{n} 	
				B_{k + l}^X 
			\Big) (x_0, x) \Big|  dv_X(x) 
			\leq 
			C \Big( \frac{1}{k} + \frac{1}{l} \Big) \cdot \Big( \frac{k \cdot l}{k + l} \Big)^n, 
			\\
			&
			\int_{X} 
			\Big| \Big(
				A_{k, l} - \Big( \frac{k \cdot l}{k + l} \Big)^{n} 	 	
				B_{k + l}^X 
			\Big) (x, x_0)\Big|  dv_X(x) 
			\leq 
			C \Big( \frac{1}{k} + \frac{1}{l} \Big) \cdot \Big( \frac{k \cdot l}{k + l} \Big)^n.
		\end{aligned}
		\end{equation}
		Directly from (\ref{eq_bound_int_berg2}) and Young's inequality for integral operators, cf. \cite[Theorem 0.3.1]{SoggBook}, applied for $p, q = 2$, $r = 1$ in the notations of \cite{SoggBook}, we deduce the result.
	\end{proof}
	We are now finally ready to finish the proof of the main result of this section.
	\par 
	\begin{proof}[Proof of Theorem \ref{thm_mult_well_def} for the $L^2$-norm]
		In this proof all the operator norms are taken with respect to the $L^2$-norm.
		First of all, similarly to the proof of Theorem \ref{thm_mult_well_def} for the $L^1$-norm, it suffices to establish the second part of the statement. We will concentrate on this from now on.
		Directly from Theorem \ref{thm_akl_comp}, we see that there are $p_1 \in \nat^*$, $C > 0$, such that for any $k, l \geq p_1$, we have
		\begin{equation}\label{eq_akp_norm}
			\Big|
				\big\|
				A_{k, l} 
				\big\|
				-
				\Big( \frac{k \cdot l}{k + l} \Big)^{n}
			\Big|
			\leq
			C \Big( \frac{1}{k} + \frac{1}{l} \Big) \cdot \Big( \frac{k \cdot l}{k + l} \Big)^n.
		\end{equation}
		From Theorem \ref{thm_mult_def}, under the isomorphisms from (\ref{eq_comm_diag}), we have the following identity
		\begin{equation}\label{eq_comp_ext_oper}
			{\rm{Mult}}_{k, l} \circ ({\rm{Mult}}_{k, l})^{*} = A_{k, l}.
		\end{equation}
		Clearly, we have $ \| {\rm{Mult}}_{k, l} \circ ({\rm{Mult}}_{k, l})^{*}\| = \|  {\rm{Mult}}_{k, l} \|^2$.
		The first estimate from (\ref{eq_mult_well_def2}) now follows from this observation, (\ref{eq_akp_norm}) and (\ref{eq_comp_ext_oper}).
	\end{proof}

	\subsection{Asymptotics of optimal decomposition map, a proof of Theorem \ref{thm_mult_surj}}\label{sect_as_opt_ext_ban}
	The main goal of this section is to study more precisely the asymptotics of optimal decomposition map, introduced in (\ref{eq_opt_dec}).
	Through this study we will establish Theorem \ref{thm_mult_surj}.
	\par 
	We will begin by showing that for the $L^2$-norm, Theorem \ref{thm_mult_surj} already follows from the analysis on the multiplicative defect from Section \ref{sect_mult_def}.
	\begin{proof}[Proof of Theorem \ref{thm_mult_surj} for the $L^2$-norm]
		Directly from Theorem \ref{thm_akl_comp}, we see that there is $p_1 \in \nat^*$, such that $A_{k, l}$ is invertible on $H^0(X, L^{k + l})$ for any $k, l \geq p_1$.
		We, moreover, deduce that there are $p_1 \in \nat^*$, $C > 0$, such that for any $k, l \geq p_1$, the following inequality holds
		\begin{equation}\label{eq_akp_norm2}
			\Big|
				\big\|
				A_{k, l}^{-1}
				\big\|
				-
				\Big( \frac{k + l}{k \cdot l} \Big)^{n}
			\Big|
			\leq
			C \Big( \frac{1}{k} + \frac{1}{l} \Big) \cdot \Big( \frac{k + l}{k \cdot l} \Big)^n,
		\end{equation}
		where the operator norm here (as well as below in this proof) is taken with respect to the $L^2$-norm.
		From Theorem \ref{thm_mult_def}, under the isomorphisms from (\ref{eq_comm_diag}), we have the following identity
		\begin{equation}\label{eq_comp_ext_oper2}
			({\rm{Dec}}_{k, l})^* \circ {\rm{Dec}}_{k, l} = \big( (A_{k, l})^* \big)^{-1}.
		\end{equation}
		Clearly, we have $ \| ({\rm{Dec}}_{k, l})^* \circ {\rm{Dec}}_{k, l}  \| = \| {\rm{Dec}}_{k, l}  \|^2$.
		Theorem \ref{thm_mult_surj} for the $L^2$-norm now follows from this observation, (\ref{eq_akp_norm2}) and (\ref{eq_comp_ext_oper2}).
	\end{proof}
	\par 
	To establish Theorem \ref{thm_mult_surj} for $L^1$ and $L^{\infty}$-norms, we have to do more refined analysis.
	This is due to the fact that $L^1$ and $L^{\infty}$-norms are not Hermitian ones, and so the above argument using the dual mappings wouldn't work.
	\par 
	To overcome this, we develop analysis similar in spirit to \cite[proof of Theorem 1]{FinOTRed}, yet technically a bit more involved.
	In particular, we will give an asymptotic formula for the optimal decomposition map (and not only for its norm or the norm of the inverse as it was done in proof of Theorems \ref{thm_mult_well_def} and \ref{thm_mult_surj} for the $L^2$-norm).
	To describe this asymptotics, we introduce some further notation to study the geometry of the diagonal embedding.
	\par 
	For $\alpha, \beta \geq 0$, not vanishing at the same time, let us consider the orthogonal complement $N_{\alpha, \beta}$ of $T \Delta$ in $T(X \times X)$ with respect to the non-negative $(1, 1)$-form
	\begin{equation}
		\omega_{\alpha, \beta} := \alpha \pi_1^* \omega + \beta \pi_2^* \omega
	\end{equation}
	on $X \times X$, where $\pi_1, \pi_2$ are the projections from $X \times X$ to the first and the second factor respectively.
	Clearly, $N_{\alpha, \beta}$ is well-defined even if either $\alpha$ or $\beta$ is equal to $0$, as $\omega_{\alpha, \beta}$ is always positive-definite along $T \Delta$ as long as $\alpha, \beta$ do not vanish at the same time. Moreover, $N_{\alpha, \beta}$ has constant dimension along $\Delta$ and hence forms a (smooth) vector bundle over $\Delta$.
	\par 
	For $y = (x_0, x_0) \in \Delta$, $Z_N \in N_{\alpha, \beta}$, let $\real \ni t \mapsto \exp_y^{X \times X}(tZ_N) \in X$ be the geodesic in $X \times X$ in direction $Z_N$. The geodesic is taken with respect to the Kähler form $\omega_{1, 1}$.
	In other words, for $Z_N = (Z_1, Z_2)$, we have $\exp_y^{X \times X}(Z_N) = (\exp_{x_0}^X(Z_1), \exp_{x_0}^X(Z_2))$.
	\par 
	Clearly, for a certain $r_{\perp} > 0$, the above exponential map induces a diffeomorphism of $r_{\perp}$-neighborhood of the zero section in $N_{\alpha, \beta}$ (with respect to the norm $\| \cdot \|$ induced by $\omega_{1, 1}$) with a tubular neighborhood $U_{\alpha, \beta}$ of $\Delta$ in $X \times X$.
	Such $r_{\perp}$ can be chosen independently of $\alpha, \beta \geq 0$ since $N_{\alpha, \beta}$ can be seen as a piecewise smooth vector bundle over $\Delta \times \Delta_1$, where $\Delta_1 := \{ (\gamma, \delta) \in \real^2 : |\gamma| + |\delta| = 1 \}$ and the projection over $\Delta_1$ is induced by $(\alpha, \beta) \mapsto \frac{1}{|\alpha| + |\beta|} (\alpha, \beta)$.
	\par 
	We denote by $\pi_{\alpha, \beta} : U_{\alpha, \beta} \to \Delta$ the natural projection $(y, Z_N) \mapsto y$. 
	Over $U_{\alpha, \beta}$, we identify $\pi_i^* L$, $i = 1, 2$, to $\pi_{\alpha, \beta}^* (\pi_i^* L|_{\Delta})$ by the parallel transport with respect to the respective Chern connections along the geodesic $[0, 1] \ni t \mapsto (y,t  Z_N) \in X$, $|Z_N| < r_{\perp}$, previously described. 
	This gives us a local trivialization in normal directions of the vector bundle $L^k \boxtimes L^l$ in the neighborhood of $\Delta$.
	\par 
	\begin{sloppypar}
	Using the above identifications, we define the operator ${\rm{Dec}}_{k, l}^0 : L^2(X, L^{k + l}) \to L^2(X \times X, L^k \boxtimes L^l)$ as follows.
	For $g \in L^2(X, L^{k + l})$, we let $({\rm{Dec}}_{k, l}^0 g)(x) = 0$, $x \notin U_{k, l}$, and in $U_{k, l}$, we put
	\begin{equation}\label{eq_ext0_op}
		({\rm{Dec}}_{k, l}^{0} g)(y, Z_N) = (B_{k + l}^X g)(y) \exp \Big(- \frac{\pi}{2} |Z_N|_{k, l}^2 \Big) \rho \Big(  \frac{|Z_N|}{r_{\perp}} \Big),
	\end{equation}
	where $| \cdot |_{\alpha, \beta}$ is the norm induced on $N_{\alpha, \beta}$ by the Kähler form $\omega_{\alpha, \beta}$ and $\rho : \real_{+} \to [0, 1]$ satisfies
	\begin{equation}\label{defn_rho_fun}
		\rho(x) =
		\begin{cases}
			1, \quad \text{for $x < \frac{1}{4}$},\\
			0, \quad \text{for $x > \frac{1}{2}$}.
		\end{cases}
	\end{equation}
	Now, for $g \in H^0(X, L^{k + l})$, the section ${\rm{Dec}}_{k, l}^{0} g$ satisfies $({\rm{Dec}}_{k, l}^{0} g)|_{\Delta} = g$, but ${\rm{Dec}}_{k, l}^{0} g$ is not holomorphic over $X \times X$ in general.
	The main result of this sections, nevertheless, says that this operator approximates well the optimal decomposition map.
	\end{sloppypar}
	\begin{thm}\label{thm_high_term_ext}
		There are $C > 0$, $p_1 \in \nat^*$, such that for any $k, l \geq p_1$, we have 
		\begin{equation}\label{eq_ext_as}
		\begin{aligned}
			& \Big\| {\rm{Dec}}_{k, l} - {\rm{Dec}}_{k, l}^{0} \Big\|_{L^1_{k + l}(X, h^L) \to L^1_{k, l}(X \times  X, h^L \boxtimes h^L)} \leq  C  \Big( \frac{1}{k} + \frac{1}{l} \Big) \cdot \Big(\frac{k + l}{k \cdot l}\Big)^n,
			\\
			& \Big\| {\rm{Dec}}_{k, l} - {\rm{Dec}}_{k, l}^{0} \Big\|_{L^{\infty}_{k + l}(X, h^L) \to L^{\infty}_{k, l}(X \times  X, h^L \boxtimes h^L)} \leq C  \Big( \frac{1}{k} + \frac{1}{l} \Big).
		\end{aligned}
		\end{equation}
	\end{thm}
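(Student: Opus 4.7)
The plan is to obtain sharp pointwise Schwartz kernel bounds on $R_{k,l} := {\rm{Dec}}_{k,l} - {\rm{Dec}}_{k,l}^0$ and convert them to operator norm estimates via Young's inequality for integral operators, as in the $L^1$-portion of the proof of Theorem \ref{thm_mult_well_def}. From Theorem \ref{thm_mult_def} and (\ref{eq_bergm_prod}), one can write ${\rm{Dec}}_{k,l} = (\res_\Delta \circ B_{k,l}^{X\times X})^* \circ A_{k,l}^{-1}$, whose Schwartz kernel at $((x_1,x_2), y) \in (X \times X) \times X$ is an integral involving $B_k^X(x_1, \cdot) \cdot B_l^X(x_2, \cdot)$ and the kernel of $A_{k,l}^{-1}$. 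By Theorem \ref{thm_akl_comp}, the operator $A_{k,l}^{-1}$ differs from $((k+l)/(kl))^n \cdot B_{k+l}^X$ by an error of relative size $1/k + 1/l$, with Schwartz kernel controlled analogously to Lemma \ref{lem_mult_def_exp_dec_diag}.

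The heart of the argument is the identification of ${\rm{Dec}}_{k,l}^0$ as the correct near-diagonal model for ${\rm{Dec}}_{k,l}$. In coordinates $(x_1, x_2) = (\phi_{x_0}^X(Z_1), \phi_{x_0}^X(Z_2))$ near a diagonal point and with the $\omega_{k,l}$-orthogonal decomposition $(Z_1, Z_2) = (Z_T, Z_T) + (v, -\tfrac{k}{l}v)$, replacing $B_k^X$ and $B_l^X$ by their leading-order models $k^n \mathscr{P}_n$ and $l^n \mathscr{P}_n$ from Theorem \ref{thm_berg_off_diag} and performing a direct computation from the explicit Gaussian form (\ref{eq_berg_k_expl}) (a generalization of the multiplicativity (\ref{eq_pn_mult_prop11})) yields
\begin{equation*}
\tfrac{(k+l)^n}{(kl)^n} \cdot k^n l^n \mathscr{P}_n\bigl(\sqrt{k} Z_1, \sqrt{k} Z_y\bigr) \mathscr{P}_n\bigl(\sqrt{l} Z_2, \sqrt{l} Z_y\bigr) = (k+l)^n \mathscr{P}_n\bigl(\sqrt{k+l} Z_T, \sqrt{k+l} Z_y\bigr) \exp\bigl(-\tfrac{\pi}{2} |Z_N|_{k,l}^2\bigr),
\end{equation*}
with $Z_N = (v, -\tfrac{k}{l}v)$. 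Recognizing $(k+l)^n \mathscr{P}_n(\sqrt{k+l} Z_T, \sqrt{k+l} Z_y)$ as the leading-order term of $B_{k+l}^X(y_\Delta, y)$, where $y_\Delta = \phi_{x_0}^X(Z_T)$ is the $\omega_{k,l}$-orthogonal projection of $(x_1, x_2)$ onto $\Delta$, the right-hand side is exactly the kernel of ${\rm{Dec}}_{k,l}^0$ from (\ref{eq_ext0_op}).

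The error $R_{k,l}$ then splits into three controllable contributions: (a) the remainders in Theorem \ref{thm_berg_off_diag} applied to $B_k^X$ and $B_l^X$ separately; (b) the error in replacing $A_{k,l}^{-1}$ by its leading constant, estimated via Theorem \ref{thm_akl_comp} and the off-diagonal decay of Theorem \ref{thm_bk_off_diag}; and (c) the region outside $U_{k,l}$, where ${\rm{Dec}}_{k,l}^0$ vanishes while ${\rm{Dec}}_{k,l}$ is exponentially small by Theorem \ref{thm_bk_off_diag}. Each gives a pointwise bound $|R_{k,l}((x_1,x_2), y)| \leq C(1/k + 1/l)(k+l)^n Q \exp(-c(\sqrt{k}+\sqrt{l}) \dist)$, with $Q$ a polynomial in the rescaled coordinates. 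Applying Young's inequality as in (\ref{eq_bound_int_berg2}) and integrating in $y$ (using (\ref{eq_exp_int}) with $k+l$ in place of $k$, producing a factor $(k+l)^{-n}$) yields the $L^\infty$ bound $C(1/k + 1/l)$. Integrating in $(x_1, x_2)$ via the change of variables $(Z_1, Z_2) \leftrightarrow (Z_T, v)$ with Jacobian $((k+l)/l)^{2n}$ and combining with the Gaussian scales $1/\sqrt{k+l}$ in $Z_T$ and $\sqrt{l/(k(k+l))}$ in $v$ produces a net factor $((k+l)/(kl))^n$, yielding the $L^1$ bound. The main technical obstacle is the careful bookkeeping of polynomial prefactors and anisotropic Gaussian scales across the tangent and $N_{k,l}$-normal directions, which is what ultimately produces the sharp constants in (\ref{eq_ext_as}).
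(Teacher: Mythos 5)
Your proposal is correct and takes essentially the same route as the paper. You use the factorization ${\rm{Dec}}_{k,l} = (\res_\Delta \circ B_{k,l}^{X\times X})^* \circ A_{k,l}^{-1}$ from (\ref{eq_dec_form_inv_a}), the off-diagonal and near-diagonal Bergman kernel estimates (Theorems \ref{thm_bk_off_diag}, \ref{thm_berg_off_diag}), the multiplicative defect analysis of Theorem \ref{thm_akl_comp} and Lemma \ref{lem_mult_def_exp_dec_diag}, the Gaussian identity (\ref{eq_pn_hor_vert_dec}) under the tangential/$N_{k,l}$-normal splitting (your parametrization $Z_N=(v,-\tfrac{k}{l}v)$ coincides with the paper's after a linear change of variable), and a Young-type integration in local coordinates — the paper simply packages these steps into Lemmas \ref{lem_ext_exp_dc}, \ref{lem_dec_asmp}, \ref{lem_dec_asmp2} and the operator-norm Lemma \ref{lem_norm_r_bnd_gen}, whereas you carry out the same kernel estimates and integrations inline. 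Your arithmetic of Jacobian and Gaussian scales reproduces the paper's factor $(\tfrac{k+l}{kl})^n$ for $L^1 \to L^1$ and $1$ for $L^\infty \to L^\infty$, both modulated by the amplitude $(1/k+1/l)(k+l)^n$, so the bounds match.
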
 
	\begin{rem}
		For the $L^2$-norm, an analogous result can be proved by similar methods. 
		It would give us an alternative proof of Theorem \ref{thm_mult_surj} for the $L^2$-norm.
	\end{rem}
	Before establishing Theorem \ref{thm_high_term_ext}, let us see how useful it is in proving Theorem \ref{thm_mult_surj}.
	\begin{proof}[Proof of Theorem \ref{thm_mult_surj} for $L^1$ and $L^{\infty}$-norms]
	\begin{sloppypar}
		Directly from (\ref{eq_ext0_op}), the fact that $({\rm{Dec}}_{k, l}^{0} g)|_{\Delta} = g$ and $\exp(-x) \leq 1$ for $x \geq 0$, we see that for any $g \in H^0(X, L^{k + l})$, we have
		\begin{equation}\label{eq_inf_exxt_triv_bnd}
			\big\| {\rm{Dec}}_{k, l}^{0} g \big\|_{L^{\infty}_{k, l}(X \times  X, h^L \boxtimes h^L)}
			=
			\big\| g \big\|_{L^{\infty}_{k + l}(X, h^L)}.
		\end{equation}
		Then from the second bound of (\ref{eq_ext_as}) and (\ref{eq_inf_exxt_triv_bnd}), we establish that Theorem \ref{thm_mult_surj} holds for the $L^{\infty}$-norm, and one can moreover take $h := {\rm{Dec}}_{k, l} f$ in the notations of Theorem \ref{thm_mult_surj}.
	\end{sloppypar}
		\par 
		Let us now treat the $L^1$-norm.
		Remark first that the volume form $dv_{k, l}$ on $X \times X$ associated to $\omega_{k, l}$ relates to $dv_{1, 1}$ as follows
		\begin{equation}
			dv_{k, l}
			=
			(k \cdot l)^n \cdot dv_{1, 1}.
		\end{equation}
		Remark also that the restriction of $\omega_{k, l}$ to $\Delta$ coincides with $(k + l) \cdot \omega$ under the isomorphism (\ref{eq_nat_isom12}).
		Finally, we see that for the Lebesgue measure $d_{k, l} Z_N$ on $N_{k, l, y}$, calculated with respect to the norm on $N_{k, l}$ induced by $\omega_{k, l}$, we have
		\begin{equation}\label{eq_inf_exxt_triv_bnd2}
			\int_{Z_N \in N_{k, l, y}} \exp \Big(- \frac{\pi}{2} |Z_N|_{k, l}^2 \Big) d_{k, l}Z_N
			=
			2^n.
		\end{equation}
		Directly from those remarks, we see that there are $p_1 \in \nat$, $C > 0$, such that for any $k, l \geq p_1$, $g \in H^0(X, L^{k + l})$, we have
		\begin{multline}\label{eq_inf_exxt_triv_bnd3}
			\Big(
				1 - C \Big( \frac{1}{k} + \frac{1}{l} \Big)
			\Big)
			\cdot
			2^n \cdot \Big( \frac{k + l}{k \cdot l} \Big)^n
			\cdot
			\big\| g \big\|_{L^1_{k + l}(X, h^L)}
			\leq
			\big\| {\rm{Dec}}_{k, l}^{0} g \big\|_{L^1_{k, l}(X \times  X, h^L \boxtimes h^L)}
			\\
			\leq
			\Big(
				1 + C \Big( \frac{1}{k} + \frac{1}{l} \Big)
			\Big)
			\cdot
			2^n \cdot \Big( \frac{k + l}{k \cdot l} \Big)^n
			\cdot
			\big\| g \big\|_{L^1_{k + l}(X, h^L)}.
		\end{multline}
		We conclude by the first inequality from Theorem \ref{thm_high_term_ext} and (\ref{eq_inf_exxt_triv_bnd3}) that Theorem \ref{thm_mult_surj} holds for the $L^1$-norm, and one can moreover take $h := {\rm{Dec}}_{k, l} f$ in the notations of Theorem \ref{thm_mult_surj}.
	\end{proof}
	To establish Theorem \ref{thm_high_term_ext}, we proceed in several steps.
	First, we show that the Schwartz kernels of both operators, ${\rm{Dec}}_{k, l}$ and ${\rm{Dec}}_{k, l}^0$, have exponential decay.
	Second, we study the near-diagonal behavior of the Schwartz kernels of those operators and infer that they have the same principal asymptotic terms. 
	This would imply Theorem \ref{thm_high_term_ext} by some standard techniques.
	\par 
	More precisely, in the first step we prove the following exponential decay estimates.
	\begin{lem}\label{lem_ext_exp_dc}
		There are $c, C > 0$, $p_1 \in \nat^*$, such that for any $k, l \geq p_1$, $x_0, x_1, x_2 \in X$, the following estimate holds
		\begin{equation}\label{eq_ext_exp_dc}
			\Big| R_{k, l} \big( (x_1, x_2), x_0 \big) \Big| 
			\leq C (k + l)^n  \cdot \exp \Big(- c \cdot \big( \sqrt{k} \dist(x_1, x_0) + \sqrt{l} \dist(x_2, x_0) \big) \Big),
		\end{equation}
		where $R_{k, l}$ is ether ${\rm{Dec}}_{k, l}$ or ${\rm{Dec}}_{k, l}^0$.
	\end{lem}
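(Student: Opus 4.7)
The plan is to treat the two kernels separately, since ${\rm{Dec}}_{k,l}^{0}$ is given by an explicit formula while ${\rm{Dec}}_{k,l}$ is only accessible through the identity (\ref{eq_resp_ap_lem}) relating it to the Bergman projector on the product manifold. In both cases the exponential decay is ultimately inherited from Theorem \ref{thm_bk_off_diag}; the two separate rates $\sqrt{k}\dist(x_1, x_0)$ and $\sqrt{l}\dist(x_2, x_0)$ reflect the fact that the sections live on the two factors with their respective powers, and one should think of the product manifold as carrying the line bundle $L^{k}\boxtimes L^{l}$ whose natural off-diagonal decay rate is precisely $\sqrt{k}$ in the first variable and $\sqrt{l}$ in the second.

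For ${\rm{Dec}}_{k,l}^{0}$, I would work directly from (\ref{eq_ext0_op}). The kernel vanishes off the tubular neighbourhood; for $(x_1, x_2) = \exp_{(y,y)}^{X \times X}(Z_{N})$ with $Z_{N}\in N_{k,l,y}$ it equals $B_{k+l}^{X}(y, x_0)$ times the Gaussian cutoff. Using the orthogonality description of $N_{k,l}$ one parametrises $Z_{N}=(lw,-kw)$ for $w\in T_{y}X$, so that $\dist(x_1,y)=l|w|$, $\dist(x_2,y)=k|w|$, and $|Z_{N}|_{k,l}^{2}=kl(k+l)|w|^{2}$. Feeding Theorem \ref{thm_bk_off_diag} into the $B_{k+l}^{X}(y, x_0)$ factor, combining with the Gaussian factor, and using the triangle inequalities $\dist(x_{i},x_{0})\leq\dist(y,x_{0})+\dist(x_{i},y)$ together with an AM-GM absorption of the linear-in-$|w|$ cross terms into the quadratic Gaussian (noting $\sqrt{k+l}\asymp\sqrt{k}+\sqrt{l}$), delivers the claimed bound for ${\rm{Dec}}_{k,l}^{0}$.

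For ${\rm{Dec}}_{k,l}$, I would rewrite (\ref{eq_resp_ap_lem}) using the invertibility of $A_{k,l}$ furnished by Theorem \ref{thm_akl_comp} as ${\rm{Dec}}_{k,l}=(\res_{\Delta}\circ B_{k,l}^{X\times X})^{*}\circ A_{k,l}^{-1}$. By (\ref{eq_bergm_prod}) the Schwartz kernel of $(\res_{\Delta}\circ B_{k,l}^{X\times X})^{*}$ is essentially the pointwise product $\overline{B_{k}^{X}(x_0, x_1)}\cdot\overline{B_{l}^{X}(x_0, x_2)}$, which satisfies exactly the sought exponential decay by Theorem \ref{thm_bk_off_diag} but with an excessive prefactor $(kl)^{n}$. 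The prefactor $((kl)/(k+l))^{-n}$ generated by $A_{k,l}^{-1}$ is precisely what converts this into the target prefactor $(k+l)^{n}$.

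The main obstacle is therefore to promote $A_{k,l}^{-1}$ from an operator bound to a pointwise Schwartz kernel bound with off-diagonal exponential decay. I would write $A_{k,l}=((kl)/(k+l))^{n}(\mathrm{Id}+E_{k,l})$ where $\|E_{k,l}\|\leq C(1/k+1/l)\leq 1/2$ for $k,l$ large, and expand $A_{k,l}^{-1}$ as a Neumann series. Lemma \ref{lem_mult_def_exp_dec_diag} supplies the exponential decay of the Schwartz kernel of $E_{k,l}$ itself; a standard convolution argument shows that each iterate $E_{k,l}^{j}$ has Schwartz kernel with exponential decay in $\sqrt{k+l}\dist$ at a rate degraded by at most a harmonic-series factor (hence bounded below uniformly in $j$), while the summability of $(1/2)^{j}$ ensures convergence of the series in this exponentially weighted norm. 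Composing with $(\res_{\Delta}\circ B_{k,l}^{X\times X})^{*}$ and using once more the triangle inequalities together with $\sqrt{k+l}\asymp\sqrt{k}+\sqrt{l}$ to convert decay in $\sqrt{k}\dist(x_1,y)+\sqrt{l}\dist(x_2,y)+\sqrt{k+l}\dist(y,x_0)$ into decay in $\sqrt{k}\dist(x_1,x_0)+\sqrt{l}\dist(x_2,x_0)$, yields (\ref{eq_ext_exp_dc}) for ${\rm{Dec}}_{k,l}$.
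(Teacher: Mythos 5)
Your proposal follows essentially the same route as the paper: for ${\rm{Dec}}_{k,l}$ both use the factorization ${\rm{Dec}}_{k,l}=(\res_{\Delta}\circ B_{k,l}^{X\times X})^{*}\circ A_{k,l}^{-1}$, the off-diagonal Bergman estimate of Theorem \ref{thm_bk_off_diag}, the kernel estimates of Lemma \ref{lem_mult_def_exp_dec_diag}, and the standard composition/inversion calculus for exponentially localized kernels (which the paper cites from earlier works and you reconstruct via a Neumann series); for ${\rm{Dec}}_{k,l}^0$ both read the decay directly from (\ref{eq_ext0_op}) and Theorem \ref{thm_bk_off_diag}. The only cosmetic difference is that the paper phrases the ${\rm{Dec}}_{k,l}^0$ bound through the rescaled metric $\dist_{k,l}$ and the elementary inequality $2\dist_{k,l}((x_1,x_2),x_0)\geq\sqrt{k}\dist(x_1,x_0)+\sqrt{l}\dist(x_2,x_0)$, whereas you carry out the equivalent pointwise computation via the parametrization $Z_N=(lw,-kw)$ together with the triangle inequality and an AM--GM absorption of the linear term into the Gaussian.
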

	\begin{proof}
		Directly from Theorem \ref{thm_akl_comp}, we see that there is $p_1 \in \nat^*$, such that $A_{k, l}$ is invertible on $H^0(X, L^{k + l})$ for any $k, l \geq p_1$.
		From this and Theorem \ref{thm_mult_def}, we conclude that
		\begin{equation}\label{eq_dec_form_inv_a}
			{\rm{Dec}}_{k, l}
			=
			(\res_{\Delta} \circ B_{k, l}^{X \times X})^{*} \circ A_{k, l}^{-1}.
		\end{equation}
		From Theorem \ref{thm_bk_off_diag}, Lemma \ref{lem_mult_def_exp_dec_diag} and the usual laws of composition and inversion of operators with exponential decay, cf. \cite[Lemma 3.1]{FinOTAs}, \cite[Lemma 4.5]{FinToeplImm}, we deduce (\ref{eq_ext_exp_dc}) for $R_{k, l} = {\rm{Dec}}_{k, l}$.
		\par 
		Directly from Theorem \ref{thm_bk_off_diag} and (\ref{eq_ext0_op}), we see that 
		\begin{equation}\label{eq_dec_triv_exp_kl}
			\Big| {\rm{Dec}}_{k, l}^0 \big( (x_1, x_2), x_0 \big) \Big| 
			\\
			\leq C (k + l)^n \cdot \exp \Big(- c \cdot \dist_{k, l} \big((x_1, x_2), x_0 \big) \Big),
		\end{equation}
		where $\dist_{k, l}((x_1, x_2), x_0)$ is the distance between points $(x_1, x_2)$ and $(x_0, x_0)$ in $X \times X$ with respect to the Kähler form $\omega_{k, l}$.
		An easy verification shows that
		\begin{equation}
			2 \dist_{k, l}((x_1, x_2), x_0)
			\geq
			\sqrt{k} \dist(x_1, x_0) + \sqrt{l} \dist(x_2, x_0).
		\end{equation}
		This with (\ref{eq_dec_triv_exp_kl}) clearly finish the proof.
	\end{proof}
	\par
	Theorem \ref{lem_ext_exp_dc} implies that to understand fully the asymptotics of the Schwartz kernel of the optimal decomposition map, it suffices to do so in a neighborhood of a fixed point $(x_0, x_0) \in \Delta \subset X \times X$.
	Our next result calculates the asymptotics of this Schwartz kernel.
	\par 
	To state this result precisely, we need to introduce a version of \textit{Fermi coordinate system} in $X \times X$ around $\Delta$.
	We fix $k, l \in \nat^*$ and $y_0 \in X$.
	Let $P^N_{k, l} : T(X \times X)|_{\Delta} \to N_{k, l}$, be the orthogonal projection.
	For $Z = (Z_H, Z_N)$, $Z_H \in T_{y_0} X$, $Z_N \in N_{k, l, y_0}$, $|Z_H| \leq r_X$, $|Z_N| \leq r_{\perp}$, we define a coordinate system $\psi_{y_0}^{k, l} : B_0^{T_{y_0} X}(r_X) \times B_0^{N_{k, l, y_0}}(r_{\perp}) \to X \times X$ by 
	\begin{equation}\label{eq_defn_fermi}
		\psi_{y_0}^{k, l}(Z_H, Z_N) := \exp_{\exp_{y_0}^{X}(Z_H)}^{X \times X}(Z_N(Z_H)),
	\end{equation}
	where $Z_N(Z_H)$ is the parallel transport of $Z_N \in N_{k, l, y_0}$ along the path $(\exp_{y_0}^{X}(t Z_H), \exp_{y_0}^{X}(t Z_H))$, $t = [0, 1]$, with respect to the connection $\nabla^{N}_{k, l} := P^N_{k, l} \nabla^{T(X \times X)} P^N_{k, l}$ on $N_{k, l}$, and the exponential map on $X \times X$ is taken with respect to the Kähler form $\omega_{X \times X}$.
	We will implicitly fix an orthonormal basis in $T_{y_0} X$ and trivialize $N_{k, l, y_0}$ using it.
	In this way, we may view $\psi_{y_0}^{k, l}$ as a map defined on an open neighborhood of $0$ in $\real^n \times \real^n$.
	\par 
	Let us now fix an element $f_0 \in L_{y_0}$ of unit norm.
	We define the local orthonormal frame $\tilde{f}$ of $L$ around $y_0$ by the parallel transport of $f$ with respect to $\nabla^L$, done first along $\psi^{k, l}_{y_0}(t Z_H, 0)$, $t \in [0, 1]$, and then along $\psi^{k, l}_{y_0}(Z_H, tZ_N)$, $t \in [0, 1]$, $Z_H \in T_{y_0} X$, $Z_N \in N_{k, l, y_0}$, $|Z_H| < r_X$, $|Z_N| < r_{\perp}$.
	\par 
	Recall that the exponential coordinates $\phi^X$ and the frame $\tilde{f}'$ of $L$ in a neighborhood of $y_0 \in X$ were defined in (\ref{eq_phi_defn}).
	For $g \in \ccal^{\infty}(X \times X, L^k \boxtimes L^l)$, by an abuse of notation, we write $g(\phi_{y_0}^X(Z), \phi_{y_0}^X(Z')) \in L_{y_0}^{k+l}$, $Z, Z' \in Z_H \in T_{y_0} X$, $|Z|, |Z'| \leq R$, for coordinates of $g$ in the frame $\tilde{f}'$, identified with an element in $L_{y_0}^{k+l}$ using the frame $f$.
	Similarly, we denote by $g(\psi_{y_0}^{k, l}(Z_H, Z_N)) \in L_{y_0}^{k+l}$ the coordinates in the frame $\tilde{f}$, identified with an element from $L_{y_0}^{k+l}$.
	\par 
	Our first result gives a near-diagonal expansion of the optimal decomposition map.
	\begin{lem}\label{lem_dec_asmp}
		There are $\epsilon, c, C > 0$, $p_1 \in \nat^*$, such that for any $y_0 \in X$, $k, l \geq p_1$, $Z_0, Z_1, Z_2 \in T_{y_0}X$, $|Z_0|, |Z_1|, |Z_2| \leq \epsilon$, we have
		\begin{multline}\label{eq_dec_asmp}
			\bigg| 
					\frac{1}{(k + l)^n} {\rm{Dec}}_{k, l} \big((\phi_{y_0}^X(Z_1), \phi_{y_0}^X(Z_2)), \phi_{y_0}^X(Z_0) \big)
					-
					\mathscr{P}_n \big(\sqrt{k} Z_1, \sqrt{k} Z_0 \big)
					\cdot
					\mathscr{P}_n \big(\sqrt{l} Z_2, \sqrt{l} Z_0 \big)
			\bigg|
			\\
			\leq
			C \Big( 
			\frac{1}{k} + \frac{1}{l}
			\Big)
			\cdot
			\Big(1 + \sqrt{k}(|Z_1| + |Z_0|) +  \sqrt{l}(|Z_2| + |Z_0|) \Big)^{12n + 8}
			\cdot
			\\
			\cdot
			\exp\Big(- c \big( \sqrt{k} |Z_1 - Z_0| + \sqrt{l} |Z_2 - Z_0|\big) \Big),
		\end{multline}
		where $\mathscr{P}_n$ was defined in (\ref{eq_berg_k_expl}).
 	\end{lem}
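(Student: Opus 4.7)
The plan is to derive the expansion by composing the asymptotics of the constituent operators in the factorization ${\rm{Dec}}_{k,l} = (\res_{\Delta} \circ B_{k,l}^{X \times X})^* \circ A_{k,l}^{-1}$ obtained in (\ref{eq_dec_form_inv_a}). Since $B_{k,l}^{X \times X} = B_k^X \otimes B_l^X$ by (\ref{eq_bergm_prod}) and Bergman projectors are self-adjoint, the Schwartz kernel of $(\res_{\Delta} \circ B_{k,l}^{X \times X})^*$ at $((x_1, x_2), y) \in (X \times X) \times X$ equals $B_k^X(x_1, y) \cdot B_l^X(x_2, y)$ under the identification $\Delta \cong X$ from (\ref{eq_nat_isom12}). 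Consequently, the Schwartz kernel of ${\rm{Dec}}_{k,l}$ at $((x_1, x_2), x_0)$ admits the representation
\[
{\rm{Dec}}_{k,l}((x_1, x_2), x_0) = \int_X B_k^X(x_1, y) \cdot B_l^X(x_2, y) \cdot A_{k,l}^{-1}(y, x_0) \, dv_X(y).
\]

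I would next invert $A_{k,l}$ asymptotically. Combining Theorem \ref{thm_akl_comp} with Lemma \ref{lem_mult_def_exp_dec_diag} yields a splitting $A_{k,l} = ((kl)/(k+l))^n B_{k+l}^X + E_{k,l}$, where $E_{k,l}$ has exponential decay at rate $\sqrt{k} + \sqrt{l}$, polynomial weight of degree $6n+4$, and a prefactor of size $(1/k + 1/l) \cdot ((kl)/(k+l))^n$. A Neumann-series inversion, together with the standard composition/inversion rules for operators with exponential decay from \cite[Lemma 3.1]{FinOTAs} and \cite[Lemma 4.5]{FinToeplImm}, produces
\[
A_{k,l}^{-1} = ((k+l)/(kl))^n B_{k+l}^X + \tilde{R}_{k,l},
\]
where $\tilde{R}_{k,l}$ enjoys analogous bounds with an improved prefactor of order $((k+l)/(kl))^n \cdot (1/k + 1/l)$.

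Substituting this decomposition into the integral formula and retaining the leading part leaves $((k+l)/(kl))^n \int_X B_k^X(x_1, y) \, B_l^X(x_2, y) \, B_{k+l}^X(y, x_0) \, dv_X(y)$. I would use the off-diagonal decay of Theorem \ref{thm_bk_off_diag} to restrict the integration to a small neighborhood of $y_0$ and the near-diagonal expansion of Theorem \ref{thm_berg_off_diag} to replace each Bergman kernel by its model $\mathscr{P}_n$. After passing to normal coordinates $Z_y = W/\sqrt{k+l}$, the dominant Gaussian integral reduces via the explicit reproducing-type identity
\begin{multline*}
\int_{\real^{2n}} \mathscr{P}_n(\sqrt{k}Z_1, \sqrt{k}W) \, \mathscr{P}_n(\sqrt{l}Z_2, \sqrt{l}W) \, \mathscr{P}_n(\sqrt{k+l}W, \sqrt{k+l}Z_0) \, dW \\ = (k+l)^{-n} \cdot \mathscr{P}_n(\sqrt{k}Z_1, \sqrt{k}Z_0) \cdot \mathscr{P}_n(\sqrt{l}Z_2, \sqrt{l}Z_0),
\end{multline*}
which I would verify directly by completing the square in the quadratic Gaussian exponent. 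Combined with the prefactor $((k+l)/(kl))^n \cdot (kl)^n \cdot (k+l)^n = (k+l)^{2n}$ coming from the three Bergman normalizations, this reproduces the leading term $(k+l)^n \cdot \mathscr{P}_n(\sqrt{k}Z_1, \sqrt{k}Z_0) \cdot \mathscr{P}_n(\sqrt{l}Z_2, \sqrt{l}Z_0)$ claimed in (\ref{eq_dec_asmp}).

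The main obstacle will be to control the error terms so as to preserve simultaneously the sharp exponential decay rate $\sqrt{k}|Z_1 - Z_0| + \sqrt{l}|Z_2 - Z_0|$ and the precise polynomial weight of degree $12n+8$. The sharp rate does not follow from treating $B_{k+l}^X$ in isolation, which yields only decay of rate $\sqrt{k+l}$; to recover it I would split the integration domain into subregions according to which of $|Z_1 - Z_y|$, $|Z_2 - Z_y|$, $|Z_y - Z_0|$ dominates, and in each subregion redistribute the exponential decay between the three factors using the Gaussian behavior of $\mathscr{P}_n$ and the triangle inequality. The polynomial weight $12n+8 = (3n+2) + (3n+2) + (6n+4)$ compounds multiplicatively across the convolution from the near-diagonal expansion errors of $B_k^X$, $B_l^X$ and from the error $\tilde{R}_{k,l}$ of $A_{k,l}^{-1}$; absorbing the resulting polynomial prefactors at the cost of an arbitrarily small weakening of the constant $c$ in the exponential, in the spirit of the arguments in \cite{FinOTAs}, \cite{FinToeplImm}, \cite{FinOTRed}, then yields the claimed bound.
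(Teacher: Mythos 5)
Your proof follows the same route as the paper's (the paper condenses it to a citation to the composition rules of \cite[Lemma 3.5]{FinOTAs} applied to the factorization (\ref{eq_dec_form_inv_a}) and the kernel asymptotics of Theorems \ref{thm_bk_off_diag}, \ref{thm_berg_off_diag} and Lemma \ref{lem_mult_def_exp_dec_diag}). The reproducing-type Gaussian identity you state, verifiable by completing the square as you indicate, is exactly what matches the leading model terms after composition, and your accounting of the polynomial weight $12n+8$ and of the normalization factors is correct.
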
	
 	\begin{proof}
 		It follows directly from Theorems \ref{thm_bk_off_diag}, \ref{thm_berg_off_diag}, Lemma \ref{lem_mult_def_exp_dec_diag}, (\ref{eq_dec_form_inv_a}) and the usual composition rules for the operators with Taylor-type expansions and exponential decay of the Schwartz kernel, cf. \cite[Lemma 3.5]{FinOTAs}.
 	\end{proof}
 	We will now state the analogous result for the trivial decomposition map.
 	\begin{lem}\label{lem_dec_asmp2}
		There are $\epsilon, c, C > 0$, $p_1 \in \nat^*$, such that for any $y_0 \in X$, $k, l \geq p_1$, $Z_H, Z_0 \in T_{y_0}X$, $Z_N \in N_{k, l, y_0}$, $|Z_H|, |Z_N|, |Z_0| \leq \epsilon$, we have
		\begin{multline}\label{eq_ext_as_expk}
			\bigg| 
					\frac{1}{(k + l)^n} {\rm{Dec}}_{k, l}^0 \big(\psi_{y_0}^{k, l}(Z_H, Z_N), \phi_{y_0}^X(Z_0) \big)
					-
					\mathscr{P}_n \big(\sqrt{k + l} Z_H, \sqrt{k + l} Z_0 \big)
					\cdot
					\\
					\cdot
					\exp\Big( - \frac{\pi}{2} |Z_N|^2_{k, l} \Big) 
			\bigg|
			\leq
			C \Big( 
			\frac{1}{k} + \frac{1}{l}
			\Big)
			\cdot
			\Big(1 + \sqrt{k + l}(|Z_H| + |Z_0|)\Big)^{3n + 2}
			\cdot
			\\
			\cdot
			\exp\Big(- c \cdot \sqrt{k + l} \cdot |Z_H - Z_0| - c \cdot \frac{kl}{k + l} \cdot |Z_N|^2 \Big).
		\end{multline}
 	\end{lem}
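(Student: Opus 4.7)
The plan is to derive Lemma \ref{lem_dec_asmp2} directly from the defining formula (\ref{eq_ext0_op}) and the near-diagonal expansion of the Bergman kernel stated in Theorem \ref{thm_berg_off_diag}. By construction, with respect to the volume form $dv_X$ on the source factor, the Schwartz kernel of ${\rm{Dec}}_{k, l}^{0}$ at the point $(\psi_{y_0}^{k, l}(Z_H, Z_N), \phi_{y_0}^X(Z_0))$ factors as
$$
B_{k + l}^{X}\bigl(\phi_{y_0}^{X}(Z_H), \phi_{y_0}^{X}(Z_0)\bigr) \cdot \exp\bigl(-\tfrac{\pi}{2} |Z_N|_{k, l}^2\bigr) \cdot \rho\bigl(|Z_N|/r_{\perp}\bigr),
$$
where we used the identification (\ref{eq_nat_isom12}) to write the basepoint $\pi_{k,l}(\psi_{y_0}^{k, l}(Z_H, Z_N)) \in \Delta$ as $\phi_{y_0}^X(Z_H) \in X$, and the identification (\ref{eq_nat_isom2}) to view the evaluation of the $L^k \boxtimes L^l$-valued Bergman kernel on $\Delta$ as an $L^{k + l}$-valued kernel.

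The first step is to shrink $\epsilon$ so that $\epsilon < r_{\perp}/4$; then for $|Z_N| \leq \epsilon$, the cutoff $\rho(|Z_N|/r_{\perp})$ equals $1$ identically and disappears from the kernel. The second step is to apply Theorem \ref{thm_berg_off_diag} with parameter $k + l$ to the factor $B_{k + l}^{X}(\phi_{y_0}^X(Z_H), \phi_{y_0}^X(Z_0))$, which produces the claimed principal term $(k + l)^n \cdot \mathscr{P}_n(\sqrt{k + l} Z_H, \sqrt{k + l} Z_0)$ together with an error controlled by $C(k + l)^{-1}(1 + \sqrt{k + l}(|Z_H| + |Z_0|))^{3n + 2} \exp(-c\sqrt{k + l}|Z_H - Z_0|)$. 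Since $(k + l)^{-1} \leq \frac{1}{k} + \frac{1}{l}$, this matches the target error bound in the $Z_H$-directions.

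The third step is to multiply through by the Gaussian factor $\exp(-\frac{\pi}{2}|Z_N|_{k, l}^2)$, which is bounded by $1$, so it does not degrade the error. The remaining point is the normal decay factor in (\ref{eq_ext_as_expk}), namely $\exp(-c \cdot \frac{kl}{k + l}|Z_N|^2)$. This follows from the elementary inequality $|Z_N|_{k, l}^2 \geq c' \cdot \frac{kl}{k + l}|Z_N|^2$ for some $c' > 0$ independent of $(k, l)$: indeed, $N_{k, l} \subset T(X \times X)|_{\Delta}$ is the subbundle $\{(v, -\tfrac{k}{l}v) : v \in T\Delta\}$, so the ratio $|Z_N|_{k, l}^2/|Z_N|^2$ is explicitly $\frac{kl(k + l)}{k^2 + l^2}$, which is bounded below by $\frac{kl}{k + l}$ up to a universal constant. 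Thus one uses part of $\exp(-\tfrac{\pi}{2}|Z_N|_{k, l}^2)$ to produce the exponential decay in $Z_N$ stated in (\ref{eq_ext_as_expk}), and keeps the rest alongside the principal term.

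The only nontrivial verification is the compatibility of the two trivializations of $L^k \boxtimes L^l$ near $y_0$: the frame used implicitly in the expression $B_{k + l}^{X}(\phi_{y_0}^{X}(Z_H), \phi_{y_0}^{X}(Z_0))$ (the frame $\tilde{f}'$ along geodesic coordinates on $X$) must coincide with the restriction to $\{Z_N = 0\}$ of the Fermi frame $\tilde{f}$ on $X \times X$. This is tautological from the construction: the Fermi frame is obtained by first parallel transporting $f_0$ along the diagonal path $\psi_{y_0}^{k, l}(tZ_H, 0) = (\phi_{y_0}^X(tZ_H), \phi_{y_0}^X(tZ_H))$, and under the identification (\ref{eq_nat_isom2}) the two parallel transports of $L^k \boxtimes L^l|_{\Delta}$ and of $L^{k + l}$ along $\phi_{y_0}^X(tZ_H)$ agree. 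I expect this bookkeeping of frames to be the only subtle point; the rest is a mechanical combination of (\ref{eq_ext0_op}) with Theorem \ref{thm_berg_off_diag}.
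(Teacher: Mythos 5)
Your proof is correct and follows the same route as the paper, whose own proof is the one-line remark that Lemma \ref{lem_dec_asmp2} follows directly from Theorem \ref{thm_berg_off_diag} and (\ref{eq_ext0_op}); you have simply supplied the bookkeeping that "directly" elides. In particular, the factorization of the Schwartz kernel, the application of Theorem \ref{thm_berg_off_diag} at parameter $k+l$, the inequality $\frac{\pi}{2}|Z_N|_{k,l}^2 = \frac{\pi}{2}\frac{kl(k+l)}{k^2+l^2}|Z_N|^2 \geq \frac{\pi}{2}\frac{kl}{k+l}|Z_N|^2$ (in fact with constant $1$, no universal constant needed since $(k+l)^2 \geq k^2+l^2$), and the frame-compatibility check along $\Delta$ are all exactly right.
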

 	\begin{proof}
 		The proof follows directly from Theorem \ref{thm_berg_off_diag} and (\ref{eq_ext0_op}).
 	\end{proof}
 	We, finally need the following general result for the estimation of operator's norms.
 	Let assume that a sequence of operators $R_{k, l} : L^2(X, L^{k + l}) \to L^2(X \times X, L^k \boxtimes L^l)$, $k, l \in \nat$, is such that its Schwartz kernel satisfies both the estimate (\ref{eq_ext_exp_dc}) and the estimate below.
 	There are $\epsilon, c, C, Q > 0$, $p_1 \in \nat^*$, such that for any $y_0 \in X$, $k, l \geq p_1$, $Z_0, Z_1, Z_2 \in T_{x_0}X$, $|Z_0|, |Z_1|, |Z_2| \leq \epsilon$, we have
	\begin{multline}\label{eq_dec_asmp}
		\bigg| 
				\frac{1}{(k + l)^n} R_{k, l} \big((\phi_{y_0}^X(Z_1), \phi_{y_0}^X(Z_2)), \phi_{y_0}^X(Z_0) \big)
		\bigg|
		\\
		\leq
		C \Big( 
		\frac{1}{k} + \frac{1}{l}
		\Big)
		\cdot
		\Big(1 + \sqrt{k}(|Z_1| + |Z_0|) +  \sqrt{l}(|Z_2| + |Z_0|) \Big)^{Q}
		\cdot
		\\
		\cdot
		\exp\Big(- c \big( \sqrt{k} |Z_1 - Z_0| + \sqrt{l} |Z_2 - Z_0|\big) \Big).
	\end{multline}
 	\begin{lem}\label{lem_norm_r_bnd_gen}
 		There is $p_1 \in \nat$, such that for $k, l \geq p_1$, the operator $R_{k, l}$ extends using Schwartz kernel as an operator $R_{k, l} : L^q(X, L^{k + l}) \to L^q(X \times X, L^k \boxtimes L^l)$ for $q = 1, \infty$, and we have
 		\begin{equation}\label{eq_rest_term_sch_bnd}
 		\begin{aligned}
 			&
 			\big\| R_{k, l} \big\|_{L^1_{k + l}(X, h^L) \to L^1_{k, l}(X \times X, h^L \boxtimes h^L)} \leq
 			C 
 			\Big(
 				\frac{1}{k} + \frac{1}{l}
 			\Big)
 			\cdot
 			\Big(
 				\frac{k + l}{k \cdot l}
 			\Big)^n,
 			\\
 			&
 			\big\| R_{k, l} \big\|_{L^{\infty}_{k + l}(X, h^L) \to L^{\infty}_{k, l}(X \times X, h^L \boxtimes h^L)} \leq
 			C \Big(
 				\frac{1}{k} + \frac{1}{l}
 			\Big)
 			.
 		\end{aligned}
 		\end{equation}
 	\end{lem}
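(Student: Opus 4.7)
The approach is to apply Young's inequality for integral operators (the Schur test), exactly as in the proof of Proposition~\ref{prop_norm_berg_proj}: this reduces the $L^1\to L^1$ bound to estimating $\sup_{x_0\in X}\int_{X\times X}|R_{k,l}((x_1,x_2),x_0)|\,dv(x_1,x_2)$ and the $L^\infty\to L^\infty$ bound to $\sup_{(x_1,x_2)}\int_X|R_{k,l}((x_1,x_2),x_0)|\,dv(x_0)$, the pointwise norms being taken with respect to $h^L$ and $h^L\boxtimes h^L$. Each of these integrals is then cut into a near-diagonal piece, where the three points $x_0,x_1,x_2$ fit inside a common $\epsilon$-ball (with $\epsilon$ as in the hypothesis (\ref{eq_dec_asmp})), and its complement. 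On the complement, at least one of the distances $d(x_1,x_0)$, $d(x_2,x_0)$ is bounded below by $\epsilon/2$, so the hypothesis (\ref{eq_ext_exp_dc}) combined with (\ref{eq_exp_int}) produces a contribution of order $(k+l)^n\exp(-c\epsilon\min(\sqrt{k},\sqrt{l}))$ up to polynomial factors, which is superpolynomially smaller than the target $\tfrac{1}{k}+\tfrac{1}{l}$ and may be discarded.

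On the near-diagonal piece, the plan is to pass to the exponential chart $\phi_{y_0}^X$ of a suitably chosen reference point $y_0$ and integrate the bound (\ref{eq_dec_asmp}) directly. For the $L^1$ estimate I take $y_0=x_0$, so that $Z_0=0$; the exponential in (\ref{eq_dec_asmp}) then factorizes as $\exp(-c\sqrt{k}|Z_1|)\exp(-c\sqrt{l}|Z_2|)$, the polynomial prefactor is absorbed into the exponentials up to replacing $c$ by a smaller constant via the standard inequality $(1+a)^Q e^{-ca}\le C_Q$, and the two resulting Gaussian integrals, as computed in (\ref{eq_exp_int}), yield the factor $C/(k^n l^n)$. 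Multiplied by the kernel prefactor $(k+l)^n(\tfrac{1}{k}+\tfrac{1}{l})$, this gives precisely the advertised bound $C(\tfrac{1}{k}+\tfrac{1}{l})((k+l)/(kl))^n$. For the $L^\infty$ estimate I take $y_0=x_1$ (so $Z_1=0$) and integrate only in $Z_0$; dominating the exponential $\exp(-c(\sqrt{k}|Z_0|+\sqrt{l}|Z_2-Z_0|))$ by either of its two summands gives $\int \exp\le C/\max(k,l)^n$, and after multiplication by the kernel prefactor one obtains a factor $((k+l)/\max(k,l))^n\le 2^n$, leading to the required bound $C\cdot 2^n(\tfrac{1}{k}+\tfrac{1}{l})$.

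I expect no deep obstacle; the argument is essentially a routine adaptation of the Gaussian bookkeeping already carried out in Theorem~\ref{thm_akl_comp} and Proposition~\ref{prop_norm_berg_proj}. The only delicate point is the absorption of the polynomial factor in the $L^\infty$ case, because the polynomial in (\ref{eq_dec_asmp}) mixes the terms $\sqrt{k}|Z_0|$, $\sqrt{l}|Z_0|$ and $\sqrt{l}|Z_2|$ while the exponential decomposes into a $\sqrt{k}|Z_0|$-piece and a $\sqrt{l}|Z_2-Z_0|$-piece. This is handled by the triangle-inequality rewrites $\sqrt{l}|Z_0|\le\sqrt{l}|Z_2|+\sqrt{l}|Z_2-Z_0|$ and $\sqrt{l}|Z_2|\le\sqrt{l}|Z_0|+\sqrt{l}|Z_2-Z_0|$, which together with the factorisation $(1+a+b)^Q\le(1+a)^Q(1+b)^Q$ for $a,b\ge 0$ recast the polynomial as a product of quantities each of which is matched by one piece of the exponential, after which the standard absorption closes the estimate.
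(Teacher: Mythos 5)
Your proposal is correct and follows essentially the same route as the paper's proof: reduce to the Schur/Young test, discard the far-diagonal contribution via the exponential decay (\ref{eq_ext_exp_dc}), and estimate the near-diagonal piece in geodesic coordinates by Gaussian integrals exactly as in (\ref{eq_norm_r_bnd_gen1})--(\ref{eq_norm_r_bnd_gen2}). The only cosmetic difference is that in the $L^\infty$ case the paper bounds $\sqrt{k}|Z_1-Z_0|+\sqrt{l}|Z_2-Z_0|$ from below by $(\sqrt{k}+\sqrt{l})$ times the distance to the centroid, yielding a factor $(k+l)^{-n}$, whereas you drop the smaller summand to get $\max(k,l)^{-n}$; both are equivalent up to the harmless $2^n$ you already absorb into the constant.
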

 	\begin{proof}
 		First of all, a calculation of Gaussian integrals shows that there is $C > 0$, such that for any $k, l \in \nat^*$, $f \in L^1(\comp^n)$, we have 
 		\begin{multline}\label{eq_norm_r_bnd_gen1}
 			\int_{Z_2 \in \comp^n} \int_{Z_1 \in \comp^n} \int_{Z_0 \in \comp^n} 
 			\exp\Big(- c \big( \sqrt{k} |Z_1 - Z_0| + \sqrt{l} |Z_2 - Z_0|\big) \Big)
 			\cdot
 			|f(Z_0)|
 			dZ_0 dZ_1 dZ_2
 			\\
 			\leq
 			\frac{C}{(k \cdot l)^n}
 			\| f \|_{L^1}.
 		\end{multline}
 		Now, remark that the triangle inequality implies
 		\begin{equation}
 			\sqrt{k} |Z_1 - Z_0| + \sqrt{l} |Z_2 - Z_0| 
 			\geq 
 			\big( \sqrt{k} + \sqrt{l} \big) 
 			\cdot 
 			\Big|
 			\frac{\sqrt{k} Z_1 + \sqrt{l} Z_2}{\sqrt{k} + \sqrt{l}}
 			- 
 			Z_0
 			\Big|.
 		\end{equation}
 		This with a calculation of Gaussian integrals shows that there is $C > 0$, such that for any $k, l \in \nat^*$, $g \in L^{\infty}(\comp^n)$, we have 
 		\begin{multline}\label{eq_norm_r_bnd_gen2}
 			\sup_{Z_2 \in \comp^n} \sup_{Z_1 \in \comp^n} \int_{Z_0 \in \comp^n} 
 			\exp\Big(- c \big( \sqrt{k} |Z_1 - Z_0| + \sqrt{l} |Z_2 - Z_0|\big) \Big)
 			\cdot
 			|g(Z_0)|
 			dZ_0
 			\\
 			\leq
 			\frac{C}{(k + l)^n}
 			\cdot
 			\| g \|_{L^{\infty}}.
 		\end{multline}
 		Exponential decay assumption on the kernel of $R_{k, l}$ implies that only local contributions of the Schwartz kernel of $R_{k, l}$ matter in the study of the $L^1$ and $L^{\infty}$-operator norms of $R_{k, l} f$ for $f \in H^0(X, L^{k + l})$.
 		According to (\ref{eq_rest_term_sch_bnd}), estimates (\ref{eq_norm_r_bnd_gen1}) and (\ref{eq_norm_r_bnd_gen2}), done in local coordinates, show that $R_{k, l}$ extends to an operator $R_{k, l} : L^q(X, L^{k + l}) \to L^q(X \times X, L^k \boxtimes L^l)$ for $q = 1, \infty$, and the announced bounds on the resulting norms hold.
 	\end{proof}
 	\begin{proof}[Proof of Theorem \ref{thm_high_term_ext}]
 		First of all, let us denote by $R_{k, l} : L^2(X, L^{k + l}) \to L^2(X \times X, L^k \boxtimes L^l)$ the operator, given by the difference
 		\begin{equation}
 			R_{k, l} := {\rm{Dec}}_{k, l} - {\rm{Dec}}_{k, l}^0.
 		\end{equation}
 		Then Lemma \ref{lem_ext_exp_dc} implies that the Schwartz kernel of $R_{k, l}$ has exponential decay as in (\ref{eq_ext_exp_dc}).
 		\par 
 		Now, for a fixed $x_0 \in X$ and arbitrary $Z_1, Z_2 \in T_{x_0}X$, we decompose the vector $(Z_1, Z_2) \in T_{x_0 \times x_0}(X \times X)$ into tangential and normal part
 		\begin{equation}
 			(Z_1, Z_2)
 			=
 			Z_H + Z_N,
 		\end{equation}
 		where $Z_H = (Z_H^0, Z_H^0) \in T \Delta$ and $Z_N \in N_{k, l}$ for a given $k, l \in \nat^*$.
 		Clearly, we then have
 		\begin{equation}
 			Z_H^0 = \frac{k Z_1 + l Z_2}{k + l},
 			\qquad
 			Z_N = \Big( \frac{l ( Z_1 - Z_2)}{k + l}, \frac{k ( Z_2 - Z_1)}{k + l} \Big).
 		\end{equation}
 		An easy calculation, using the formula for $\mathscr{P}_n$ and the above formulas for $Z_H^0, Z_N$ shows that
 		\begin{multline}\label{eq_pn_hor_vert_dec}
 			\mathscr{P}_n \big(\sqrt{k} Z_1, \sqrt{k} Z_0 \big)
			\cdot
			\mathscr{P}_n \big(\sqrt{l} Z_2, \sqrt{l} Z_0 \big)
			\\
			=
			\mathscr{P}_n \big(\sqrt{k + l} Z_H^0, \sqrt{k + l} Z_0 \big)
			\cdot
			\exp\Big( - \frac{\pi}{2} |Z_N|^2_{k, l} \Big).
 		\end{multline}
 		Also, up to the second order, the following identity holds 
 		\begin{equation}
 			\psi_{y_0}^{k, l}(Z_H^0, Z_N)
 			=
 			(\phi_{y_0}^X(Z_1), \phi_{y_0}^X(Z_2)).
 		\end{equation}
 		Hence, by (\ref{eq_pn_hor_vert_dec}) and Lemmas \ref{lem_ext_exp_dc}, \ref{lem_dec_asmp}, \ref{lem_dec_asmp2}, we see that all the assumptions of Lemma \ref{lem_norm_r_bnd_gen} are satisfied for the operator $R_{k, l}$.
 		An application of Lemma \ref{lem_norm_r_bnd_gen}, hence, finishes the proof.
 	\end{proof}

\section{The set of multiplicatively generated norms}\label{sect_mult_gen_mn}
	The main goal of this section is to study asymptotically the set of multiplicatively generated norms on the section ring.
	In particular, we establish Theorems \ref{thm_mult_gen}, \ref{thm_hilb_leaf0}, \ref{thm_hilb_leaf} and \ref{thm_pj_st_ref}.
	The main tool in this study is the Fubini-Study operator from Section \ref{sect_fs_psh}.
	\par The section is organized as follows.
	In Section \ref{sect_mult_gen_psh}, we establish that plurisubharmonic metrics give rise to multiplicatively generated norms. 
 	More precisely, we show Theorem \ref{thm_mult_gen}.
 	\par 
 	In Section \ref{sect_hilb}, we study the metric properties of the maps ${\rm{Ban}}^1$, ${\rm{Hilb}}$, ${\rm{Ban}}^{\infty}$.
	More precisely, we prove Theorem \ref{thm_hilb_leaf}.
	We will also explain how this result falls into a more general picture of approximations of $\mathcal{H}^L$ by the space of norms on $H^0(X, L^k)$, as $k \to \infty$.
	This section is essentially independent from the rest of the text, except for Section \ref{sect_ph_st}.
	\par 
	In Section \ref{sect_conv}, we study the convergence properties of Fubini-Study operator for multiplicatively generated norms and establish the following result.
	\begin{thm}\label{thm_conv_mg}
		For any graded multiplicatively generated norm $N:= \sum_{k = 1}^{\infty} N_k$ on $R(X, L)$, the sequence of metrics $FS(N_k)^{\frac{1}{k}}$ converges uniformly, as $k \to \infty$, to a (continuous psh) metric on $L$, which we denote by $FS(N) \in \mathcal{H}^L$.
	\end{thm}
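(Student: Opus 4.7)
The plan is to show that the Fubini--Study potentials $\phi_k(x) := \log\bigl( FS(N_k)(x) / (h^L)^k(x) \bigr)$ satisfy an asymptotic additivity estimate $\|\phi_{k+l} - \phi_k - \phi_l\|_\infty \leq 2\bigl(f(k) + f(l) + f(k+l)\bigr)$ with $f(k) = o(k)$, and then to invoke the uniform Fekete-type convergence lemma developed in Appendix \ref{sect_unif_conv} to conclude that $\phi_k/k$ converges uniformly on $X$. This reduces the theorem to two essentially computational facts linking $FS$ to the multiplication map.

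The first step is an exact multiplicativity identity
$$
FS\bigl([N_k \otimes N_l]\bigr) \;=\; FS(N_k) \cdot FS(N_l),
$$
as metrics on $L^{k+l}$, where the bracket denotes the quotient norm along $\mathrm{Mult}_{k,l}$. By Lemma \ref{lem_fs_inf_d} this is equivalent to the Bergman-function identity $\rho^{[N_k \otimes N_l]}(x) = \rho^{N_k}(x) \cdot \rho^{N_l}(x)$, with $\rho^N(x) := \sum_i |s_i(x)|^2_{(h^L)^{\deg}}$ for an $N$-orthonormal basis $(s_i)$. I would prove it by a short orthogonal-basis argument: splitting $H^0(X, L^k) \otimes H^0(X, L^l) = K^{\perp} \oplus K$ with $K = \ker \mathrm{Mult}_{k,l}$ and picking an orthonormal basis adapted to this decomposition, the $K$-sections vanish on $\Delta$, while the $K^{\perp}$-sections descend via $\mathrm{Mult}_{k,l}$ to a $[N_k \otimes N_l]$-orthonormal basis of $H^0(X, L^{k+l})$; the sum of squared pointwise norms on $X \times X$ factors as $\rho^{N_k}(x) \rho^{N_l}(y)$, and evaluating at $(x,x) \in \Delta$ yields the identity.

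The second step is the Lipschitz estimate $\|\log \rho^N - \log \rho^{N'}\|_\infty \leq 2 \, d_\infty(N, N')$, which is immediate from the variational expression $\rho^N(x) = \sup_{s \neq 0} |s(x)|^2_{(h^L)^k} / \|s\|_N^2$. Applying it with $N = N_{k+l}$, $N' = [N_k \otimes N_l]$, combining with the multiplicativity identity and Definition \ref{defn_mult_gen} in the case $r = 2$, yields the announced almost-additivity of $(\phi_k)$. Invoking Appendix \ref{sect_unif_conv} then gives uniform convergence of $\phi_k/k$ to some $\phi_\infty \in \mathscr{C}^0(X)$, hence uniform convergence $FS(N_k)^{1/k} = e^{\phi_k/k} h^L \to e^{\phi_\infty} h^L =: FS(N)$. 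The main subtlety will lie in ensuring that this Fekete passage is \emph{uniform} in $x \in X$; the crucial input is that the defect function $f(k)$ in Definition \ref{defn_mult_gen} depends only on $k$ and not on the point, so that an iterative subdivision argument (writing $k = jk_0 + r$ and iterating the two-sided subadditivity) produces a $\|\cdot\|_\infty$-Cauchy sequence.

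Finally, $FS(N)$ is continuous as a uniform limit of continuous metrics, and it is plurisubharmonic because each $FS(N_k)^{1/k}$ satisfies $c_1(L, FS(N_k)^{1/k}) = \tfrac{1}{k} \mathrm{Kod}_k^* \omega_{FS} \geq 0$ as a positive multiple of the pullback of the Fubini--Study form on $\mathbb{P}(H^0(X, L^k)^*)$, and plurisubharmonicity of local weights is preserved under uniform convergence.
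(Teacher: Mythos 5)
Your proof is correct and, interestingly, takes a more economical route than the paper's. Both arguments turn on the same two pillars: the exact Segre-type multiplicativity $FS([N_k\otimes N_l]) = FS(N_k)\cdot FS(N_l)$ (the paper's Lemma \ref{lem_segre}; your orthonormal-basis argument via $K\oplus K^\perp$ with $K = \ker\mathrm{Mult}_{k,l}$ is a valid alternative proof of it), and the Lipschitz bound $d_\infty(FS(N),FS(N'))\le d_\infty(N,N')$ (the paper's Lemma \ref{lem_bnd_FS}, which you phrase through the squared Bergman density, whence your factor $2$). The difference is in the choice of reference. You set $\phi_k := \log\bigl(FS(N_k)/(h^L)^k\bigr)$ against a fixed smooth positive $h^L$ and feed $\phi_k$ straight into the Fekete--Dini machinery of Appendix \ref{sect_unif_conv}; the paper instead studies $g_k := \log\bigl(FS(\mathrm{Hilb}_k(h^L))/FS(N_k)\bigr)$, which requires knowing that $\mathrm{Hilb}(h^L)$ is itself multiplicatively generated (Lemma \ref{lem_mult_gen_sm}, a consequence of Theorem \ref{thm_as_isom}), and then identifies the limit via Tian's theorem $FS(\mathrm{Hilb}_k(h^L))^{1/k}\to h^L$. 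Your version sidesteps both Theorem \ref{thm_as_isom} and Tian's theorem, which is a genuine simplification for this particular statement.

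One point to sharpen: you invoke ``Definition \ref{defn_mult_gen} in the case $r=2$'' and then speak of ``iterating the two-sided subadditivity.'' Iterating the $r=2$ bound alone does \emph{not} give the conclusion: for $k = jk_0$ the accumulated error is $\sum_{i=1}^{j-1} f(ik_0) + O(jf(k_0))$, and the first sum need not be $o(k)$ when $f(k)=o(k)$ merely. What you actually need, and what Definition \ref{defn_mult_gen} supplies, is the full $r$-ary almost-additivity $\|\phi_k - \sum_i\phi_{k_i}\|_\infty \le \sum_i f(k_i) + f(k)$ for \emph{all} $r$; decomposing $k = jk_0 + s$ in a single stroke then gives error $jf(k_0) + f(s) + f(k) = jf(k_0) + o(k)$, which is precisely the hypothesis of Lemma \ref{lemma_chen} and Lemma \ref{lem_sub_super_unif}. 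With this clarification the argument is complete.
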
 
	Theorem \ref{thm_conv_mg} motivates the following natural question: does the convergence of the Fubini-Study operator of a graded norm of the section ring determines the equivalence class of the norm?
	The answer is negative, see Section \ref{sect_ex_mg_norm} for an explicit example.
	\par 
	Nevertheless, the following result from Section \ref{sect_fin_ress_pf} shows that the answer becomes positive if we restrict our attention to a special class of graded norms.
	\begin{thm}\label{thm_tame_conv}
		Assume that for a graded norm $N:= \sum_{k = 1}^{\infty} N_k$ on $R(X, L)$, the sequence of metrics $FS(N_k)^{\frac{1}{k}}$ converges uniformly, as $k \to \infty$, to a (continuous psh) metric $FS(N)$ on $L$, and there is $p_0 \in \nat$ and a function $f : \nat_{\geq p_0} \to \real$, verifying $f(k) = o(k)$, as $k \to \infty$, such that for any $r \in \nat^*$, $k; k_1, \ldots, k_r \geq p_0$, $k_1 + \cdots + k_r = k$, under the map (\ref{eq_mult_map}), we have the following bound
		\begin{equation}\label{eq_half_mult_gen}
			N_k
			\leq
			\big[ 
			 N_{k_1} \otimes \cdots \otimes N_{k_r}
			 \big]
			 \cdot
			 \exp \Big( f(k_1) + \cdots + f(k_r) + f(k) \Big).
		\end{equation}
		Then the following equivalence of graded norms on $R(X, L)$ holds $N \sim {\rm{Hilb}}(FS(N))$.
 	\end{thm}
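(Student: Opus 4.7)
The plan is to establish the two-sided asymptotic equivalence $\tfrac{1}{k} d_{\infty}(N_k, {\rm{Hilb}}_k(FS(N))) \to 0$ by proving each inequality separately. The direction $[{\rm{Hilb}}_k(FS(N))] \leq N_k \cdot e^{o(k)}$ follows readily from the hypotheses: Lemma~\ref{lem_fs_inf_d} gives the pointwise bound $|s(x)|^2_{FS(N_k)} \leq N_k(s)^2$ for every $s \in H^0(X, L^k)$, and the uniform convergence $FS(N_k)^{1/k} \to FS(N)$ promotes this to $\|s\|_{L^{\infty}_k(X, FS(N))} \leq N_k(s) \cdot e^{o(k)}$. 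Corollary~\ref{cor_ident_maps}, which identifies ${\rm{Hilb}}(FS(N))$ with ${\rm{Ban}}^{\infty}(FS(N))$ up to graded equivalence, then yields the claimed bound.

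For the reverse inequality $N_k \leq {\rm{Hilb}}_k(FS(N)) \cdot e^{o(k)}$, I plan to combine the half-sub-multiplicativity hypothesis (\ref{eq_half_mult_gen}) with Theorem~\ref{thm_mult_gen}, which guarantees that ${\rm{Hilb}}(FS(N))$ is itself multiplicatively generated. Let $A_p \in \enmr{H^0(X, L^p)}$ be the positive self-adjoint operator representing $N_p$ with respect to ${\rm{Hilb}}_p(FS(N))$, so that $N_p^2(\cdot) = \langle A_p \cdot, \cdot\rangle_{{\rm{Hilb}}_p(FS(N))}$, and set $C_p := \sqrt{\lambda_{\max}(A_p)} = \sup_s N_p(s)/{\rm{Hilb}}_p(FS(N))(s)$. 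For any decomposition $k = k_1 + \cdots + k_r$ with $k_i \geq p_0$, the hypothesis gives $N_k \leq [N_{k_1} \otimes \cdots \otimes N_{k_r}] \cdot e^{o(k)}$; the spectral comparison for Hermitian tensor products yields
\begin{equation*}
[N_{k_1} \otimes \cdots \otimes N_{k_r}] \leq \Big( \prod_{i=1}^{r} C_{k_i} \Big) \cdot [{\rm{Hilb}}_{k_1}(FS(N)) \otimes \cdots \otimes {\rm{Hilb}}_{k_r}(FS(N))],
\end{equation*}
and Theorem~\ref{thm_mult_gen} dominates the right-hand side by ${\rm{Hilb}}_k(FS(N)) \cdot e^{o(k)}$. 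Writing $a_k := \log C_k$, this provides the approximate sub-additivity $a_k \leq a_{k_1} + \cdots + a_{k_r} + o(k)$, complemented by $a_k \geq -o(k)$ from the first direction. By the almost-subadditive lemma developed in Appendix~\ref{sect_unif_conv}, $a_k/k$ admits a limit $L \geq 0$, and the proof reduces to showing $L = 0$.

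The main obstacle is this last equality. Iterating sub-additivity along decompositions $k = r p_0$ gives $a_k/k \leq a_{p_0}/p_0 + o(1)$ as $r \to \infty$, so it suffices to find, for every $\varepsilon > 0$, some $p_0$ with $a_{p_0}/p_0 < \varepsilon$. The difficulty is that Fubini--Study convergence alone controls only the Bergman function of $N_p$ (yielding $c_p \geq e^{-o(p)}$, and hence the first direction, but \emph{not} an upper bound on $\lambda_{\max}(A_p)$). To force $L = 0$, I intend to exploit the concrete optimal extension furnished by Theorem~\ref{thm_ot_asymp} applied to $(X \times X, \Delta)$ with the continuous psh metric $FS(N) \boxtimes FS(N)$: for any $s \in H^0(X, L^{p_0})$ this constructs $t$ with ${\rm{Mult}}(t) = s$ and $\|t\|_{{\rm{Hilb}}_{k_1}(FS(N)) \otimes {\rm{Hilb}}_{k_2}(FS(N))} \leq {\rm{Hilb}}_{p_0}(FS(N))(s) \cdot e^{o(p_0)}$. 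The key refinement is to bound $\|t\|_{N_{k_1} \otimes N_{k_2}}$ for this specific extension without paying the crude $C_{k_1} C_{k_2}$ factor, by combining the near-diagonal Gaussian profile of the optimal decomposition described in Theorems~\ref{thm_mult_surj} and~\ref{thm_high_term_ext} with the pointwise Fubini--Study estimate of Lemma~\ref{lem_fs_inf_d}: factor sections whose pointwise norms are controlled by $FS(N_{k_i})$ can be dominated by the $L^{\infty}$ norm with respect to $FS(N)$, which in turn is equivalent to ${\rm{Hilb}}(FS(N))$ via Corollary~\ref{cor_ident_maps}. Making this quantitative analysis work, so that the naive spectral blow-up is replaced by a subexponential correction, is the technical crux of the proof.
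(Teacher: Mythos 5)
Your proof of the easy direction, ${\rm{Hilb}}_k(FS(N)) \leq N_k \cdot e^{o(k)}$, is correct and matches the paper's argument (Lemma~\ref{lem_fs_inf_d} plus Corollary~\ref{cor_ident_maps}). Your reduction of the hard direction to showing that the limit $L = \lim_k \frac{1}{k}\log\lambda_{\max}(A_k)$ vanishes, via the almost-subadditivity machinery of Appendix~\ref{sect_unif_conv}, is also legitimate as a reformulation: the sub-multiplicativity hypothesis does give $a_k \leq \sum a_{k_i} + o(k)$ by the spectral tensor comparison, and the easy direction does give $a_k \geq -o(k)$, so the limit $L \geq 0$ exists. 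The problem is that you have reformulated the theorem, not proved it, and your sketch of how to force $L = 0$ does not work.

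The specific failure is in the last step. You propose to apply Theorem~\ref{thm_ot_asymp} (or the refined optimal-decomposition asymptotics of Theorems~\ref{thm_mult_surj}, \ref{thm_high_term_ext}) for the pair $(X\times X,\Delta)$ and the metric $FS(N)\boxtimes FS(N)$ to produce, for $s \in H^0(X, L^{k})$, a tensor $t$ with ${\rm{Mult}}(t)=s$ and good ${\rm{Hilb}}(FS(N))$-control, and then to bound $N_{k_1}\otimes N_{k_2}(t)$ ``without paying the $C_{k_1}C_{k_2}$ factor.'' But your only handle on $N_{k_i}$ is the Fubini--Study convergence, which is one-sided: it gives $\|\cdot\|_{L^\infty(FS(N))}\leq N_{k_i}\cdot e^{o(k_i)}$ (Lemma~\ref{lem_fs_inf_d}), i.e.\ lower control on $N_{k_i}$, whereas to bound $N_{k_1}\otimes N_{k_2}(t)$ you need upper control. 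That upper control, ``$N_{k_i} \leq {\rm{Hilb}}_{k_i}(FS(N)) e^{o(k_i)}$,'' is precisely the statement you are trying to establish, so the argument is circular. The paper avoids this by never attempting to bound $N_m$ against ${\rm{Hilb}}_m(FS(N))$ for any fixed $m$; instead it fixes $k$, considers the \emph{tensor-power} graded norm $r \mapsto [N_k^{\otimes r}]$ (a single Hermitian norm $N_k$ of which nothing is assumed), and proves in Theorem~\ref{thm_induc} that this graded norm is equivalent to ${\rm{Hilb}}(FS(N_k)^{1/k})$. That result is obtained from an explicit computation on projective space plus the semiclassical Ohsawa--Takegoshi theorem \ref{thm_semicalss_ot} applied along the Kodaira embedding ${\rm{Kod}}_k$ (note: not the diagonal embedding you use) via the commutative triangle~(\ref{eq_kod_map_comm_d}). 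Once Theorem~\ref{thm_induc} is in hand, one combines it with the Fubini--Study convergence, Theorem~\ref{thm_hilb_leaf}, and (\ref{eq_half_mult_gen}) along decompositions $rk+l$ to conclude. Your proposal would need to import this tensor-power/Kodaira-embedding idea (or something equivalent) to close the gap; the $(X\times X,\Delta)$ extension alone cannot do it.
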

 	Clearly, Theorems \ref{thm_conv_mg} and \ref{thm_tame_conv} imply Theorem \ref{thm_hilb_leaf0}.
 	We then show in Section \ref{sect_ph_st}, that from Theorem \ref{thm_tame_conv} and some elementary facts from the theory of interpolation of Hilbert spaces, one can establish Theorem \ref{thm_pj_st_ref}.

\subsection{Multiplicatively generated norms from plurisubharmonic metrics}\label{sect_mult_gen_psh}
	The main goal of this section is to establish that plurisubharmonic metrics give rise to multiplicatively generated norms through the Hilbert map.
	In other words, we prove Theorem \ref{thm_mult_gen}.
	\par 
	The proof of this result is decomposed into two parts: establishing the lower bound and the upper bound.
	For the upper bound, the following simple observation is crucial.
	\begin{lem}\label{lem_ban_inf_upp_bnd}
		For any $h^L \in \mathcal{H}^L$, $r \in \nat^*$, $k; k_1, \ldots, k_r \geq p_0$, $k_1 + \cdots + k_r = k$, under the map (\ref{eq_mult_map}), the following inequality holds
		\begin{equation}
			{\rm{Ban}}_k^{\infty}(h^L) \leq \Big[
				{\rm{Ban}}_{k_1}^{\infty}(h^L)
				\otimes_{\epsilon}
				\cdots
				\otimes_{\epsilon}
				{\rm{Ban}}_{k_r}^{\infty}(h^L)
			\Big].
		\end{equation}
	\end{lem}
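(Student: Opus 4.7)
The plan is to unravel the definition of the quotient norm and then interpret the injective tensor product via its standard isometric realization as a space of continuous sections, under which the multiplication map becomes the restriction to the diagonal $\Delta \subset X^r$.

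By definition (\ref{eq_defn_quot_norm}) of the quotient norm, the claimed inequality is equivalent to the statement that for every $g \in H^0(X, L^{k_1}) \otimes \cdots \otimes H^0(X, L^{k_r})$ one has
\begin{equation*}
	\big\| {\rm{Mult}}_{k_1, \ldots, k_r}(g) \big\|_{L^{\infty}_{k}(X, h^L)}
	\;\leq\;
	\big\| g \big\|_{{\rm{Ban}}_{k_1}^{\infty}(h^L) \otimes_{\epsilon} \cdots \otimes_{\epsilon} {\rm{Ban}}_{k_r}^{\infty}(h^L)}.
\end{equation*}
First I would record that, since $h^L$ is continuous, the inclusion $(H^0(X, L^{k_i}), {\rm{Ban}}_{k_i}^{\infty}(h^L)) \hookrightarrow (\ccal^0(X, L^{k_i}), \| \cdot \|_{L^{\infty}_{k_i}(X, h^L)})$ is an isometric embedding.

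Next, I invoke the two basic functorial properties of the injective tensor norm recalled in Appendix \ref{app_norms}: namely, (i) it is functorial with respect to isometric inclusions (this is the reason for the name \emph{injective}), and (ii) for continuous sections of vector bundles over compact spaces one has the canonical isometric identification $\ccal^0(X, L^{k_1}) \otimes_{\epsilon} \cdots \otimes_{\epsilon} \ccal^0(X, L^{k_r}) \simeq \ccal^0(X^r, L^{k_1} \boxtimes \cdots \boxtimes L^{k_r})$, cf. Lemma \ref{lem_inj_proj_expl}. Combining (i) and (ii), the space $H^0(X, L^{k_1}) \otimes_{\epsilon} \cdots \otimes_{\epsilon} H^0(X, L^{k_r})$ isometrically embeds into $(\ccal^0(X^r, L^{k_1} \boxtimes \cdots \boxtimes L^{k_r}), \|\cdot\|_{\infty})$. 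Under this embedding, a decomposable tensor $f_1 \otimes \cdots \otimes f_r$ maps to the external section $(x_1, \ldots, x_r) \mapsto f_1(x_1) \cdots f_r(x_r)$, so the multiplication map ${\rm{Mult}}_{k_1, \ldots, k_r}$ is precisely the restriction to the diagonal $\Delta \simeq X$ of $X^r$, in perfect analogy with the diagram (\ref{eq_comm_diag}) for $r=2$.

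The desired inequality is then immediate, since the sup norm of the restriction of any continuous section to a closed subset is at most the sup norm of the section: $\|g|_{\Delta}\|_{\infty} \leq \|g\|_{L^{\infty}(X^r)}$. This finishes the proof. No genuine obstacle is present; the only hypothesis needed on $h^L$ is continuity (in particular, plurisubharmonicity is not used), and the argument reduces entirely to the abstract functorial properties of the injective tensor norm together with the elementary fact that restriction to a subset decreases sup norms.
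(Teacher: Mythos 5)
Your argument is correct and follows essentially the same route as the paper: the paper packages the isometric identification of the injective tensor product with the $L^{\infty}$-norm on the product manifold into Theorem \ref{thm_compar_tens_prod} (whose proof uses precisely (\ref{eq_h0_isom_emb}), Lemma \ref{lem_quot_proj} and Lemma \ref{lem_inj_proj_expl}), then concludes via (\ref{eq_comm_diag}) and the fact that restriction decreases sup-norms. You simply unpack that identification directly for the $r$-fold case rather than citing the theorem, and your observation that only continuity of $h^L$ is needed is accurate.
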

	\begin{proof}
		From Theorem \ref{thm_compar_tens_prod}, we see that under the Künneth isomorphism
		\begin{equation}
			H^0(X \times \cdots \times X, L^{k_1} \boxtimes \cdots \boxtimes L^{k_r})
			\to
			H^0(X, L^{k_1})
			\otimes
			\cdots
			\otimes 
			H^0(X, L^{k_r}),
		\end{equation}
		the norm ${\rm{Ban}}_{k_1}^{\infty}(h^L)
				\otimes_{\epsilon}
				\cdots
				\otimes_{\epsilon}
				{\rm{Ban}}_{k_r}^{\infty}(h^L)$
		gets pulled-back to the $L^{\infty}$-norm on $H^0(X \times \cdots \times X, L^{k_1} \boxtimes \cdots \boxtimes L^{k_r})$.
		The result now follows directly from this, the fact that multiplication operator can be seen as the the restriction operator, cf. (\ref{eq_comm_diag}), and the trivial fact that the $L^{\infty}$-norm of the restriction of a given section is always no bigger than the $L^{\infty}$-norm of the section.
	\end{proof}
	For the lower bound, we will need the following slightly more complicated result.
	\begin{lem}\label{lem_hilb_low_bnd}
		Assume that the measure $\mu$ on $X$ can be majorized from below and above by some strictly positive volume forms on $X$. 
		Then there are $C > 0$, $p_0 \in \nat$, such that for any $h^L \in \mathcal{H}^L$, $r \in \nat^*$, $k; k_1, \ldots, k_r \geq p_0$, $k_1 + \cdots + k_r = k$, under the map (\ref{eq_mult_map}), the following inequality holds
		\begin{equation}
			\Big[
				{\rm{Hilb}}_{k_1}(h^L, \mu)
				\otimes
				\cdots
				\otimes
				{\rm{Hilb}}_{k_r}(h^L, \mu)
			\Big] \leq C^r \cdot {\rm{Hilb}}_k(h^L, \mu).
		\end{equation}
	\end{lem}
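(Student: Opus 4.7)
The plan is to interpret $\bigl[{\rm{Hilb}}_{k_1}(h^L, \mu) \otimes \cdots \otimes {\rm{Hilb}}_{k_r}(h^L, \mu)\bigr]$ as an infimum of $L^2$-norms on the product manifold $X^r$, and then to reduce the general $r$ to the case $r = 2$ by an induction that crucially exploits the Hilbert structure. By iterating the Künneth isomorphism (\ref{eq_nat_isom1}), the Hermitian tensor product of the ${\rm{Hilb}}_{k_i}(h^L, \mu)$ is isometric to the $L^2$-norm on $H^0(X^r, L^{k_1} \boxtimes \cdots \boxtimes L^{k_r})$ with respect to the product measure $\mu^{\otimes r}$, and under this identification ${\rm{Mult}}_{k_1, \ldots, k_r}$ becomes restriction to the diagonal $\Delta \hookrightarrow X^r$. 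Since $\mu$ is two-sided comparable to the Kähler volume form $dv_X$ (losing only a factor $(c_2/c_1)^{r/2}$ that is absorbed into $C^r$), the desired inequality reduces to the claim that every $f \in H^0(X, L^k)$ admits a holomorphic extension $\tilde f \in H^0(X^r, L^{k_1} \boxtimes \cdots \boxtimes L^{k_r})$ satisfying $\|\tilde f\|_{L^2(X^r, dv_{X^r})} \leq C^r \|f\|_{L^2(X, dv_X)}$.

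To set up the induction I would factor ${\rm{Mult}}_{k_1, \ldots, k_r} = {\rm{Mult}}_{k_1, k_{(2)}} \circ ({\rm{id}} \otimes {\rm{Mult}}_{k_2, \ldots, k_r})$ with $k_{(2)} = k_2 + \cdots + k_r$. Since every norm at play is Hermitian, quotient norms commute with tensoring by a Hilbert factor: for a surjective linear map $\nu : W_1 \to W_2$ of finite-dimensional Hilbert spaces, the orthogonal decomposition $W_1 = \ker(\nu) \oplus \ker(\nu)^\perp$ identifies $W_2$ endowed with its quotient norm isometrically with $\ker(\nu)^\perp$, so for any Hilbert space $V$ the quotient of the Hermitian tensor norm on $V \otimes W_1$ under ${\rm{id}} \otimes \nu$ coincides with the Hermitian tensor norm of $N_V$ with the quotient norm on $W_2$. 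Applied to $\nu = {\rm{Mult}}_{k_2, \ldots, k_r}$, this gives
\[
\bigl[ {\rm{Hilb}}_{k_1} \otimes \cdots \otimes {\rm{Hilb}}_{k_r} \bigr]
\;=\;
\bigl[ {\rm{Hilb}}_{k_1} \otimes \bigl[ {\rm{Hilb}}_{k_2} \otimes \cdots \otimes {\rm{Hilb}}_{k_r} \bigr] \bigr],
\]
the outer bracket on the right being taken with respect to ${\rm{Mult}}_{k_1, k_{(2)}}$. The induction hypothesis bounds the inner bracket by $C^{r-1} \cdot {\rm{Hilb}}_{k_{(2)}}$, and then the $r = 2$ base case $[{\rm{Hilb}}_{k_1} \otimes {\rm{Hilb}}_{k_{(2)}}] \leq C \cdot {\rm{Hilb}}_k$ (applicable since $k_1, k_{(2)} \geq p_0$) yields the announced bound $C^r \cdot {\rm{Hilb}}_k$.

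The substantive content is therefore the $r = 2$ base case, a bi-graded version of Theorem \ref{thm_ot_asymp} for the pair $(X \times X, \Delta)$ and the line bundle $L^{k_1} \boxtimes L^{k_2}$. Following the strategy of the proof of Theorem \ref{thm_ot_asymp}, I would apply Theorem \ref{thm_ot_expl_const} with the quasi-psh function $\delta_\Delta$ furnished by Lemma \ref{lem_delta_psh} to the line bundle $L^{k_1 - p_0} \boxtimes L^{k_2 - p_0} \otimes (L \boxtimes L)^{p_0} \otimes K_{X \times X}^{-1}$, equipped with the metric $(h^L)^{k_1 - p_0} \boxtimes (h^L)^{k_2 - p_0} \otimes (h^L_0 \boxtimes h^L_0)^{p_0} \otimes (h^{K_{X \times X}}_{\omega_{X \times X}})^{-1}$, where $h^L_0$ is a fixed smooth strictly positive metric on $L$. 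The main obstacle is verifying the positivity condition (\ref{eq_cond_ot11}) uniformly in $(k_1, k_2)$: this holds once $p_0$ is chosen large enough depending only on $(X, L, h^L_0, \omega)$, because the positive contribution $p_0 \epsilon\, \omega_{X \times X}$ coming from $h^L_0$ then dominates both $-c_1(K_{X \times X}, h^{K}_{\omega_{X \times X}})$ and the bounded-below contribution of $\partial \bar{\partial} \delta_\Delta$. Converting the adjusted metric back to $(h^L)^{k_1} \boxtimes (h^L)^{k_2}$ costs only a fixed factor $M^{2p_0}$ (with $M$ bounding $h^L_0/h^L$), producing a base-case constant $C$ that is independent of $(k_1, k_2)$.
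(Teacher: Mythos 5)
Your proposal is correct and follows essentially the same approach as the paper. You reduce to $r = 2$ by the transitivity of quotient Hermitian tensor norms (the paper invokes Corollary \ref{cor_quot_ass}, which you re-derive directly via the orthogonal-complement description of Hilbert-space quotients), and you identify the base case as the $L^{k_1} \boxtimes L^{k_2}$ analogue of Theorem \ref{thm_ot_asymp} for the pair $(X \times X, \Delta)$ — this is exactly the statement the paper describes as "analogous to Theorem \ref{thm_ot_asymp} and left to the interested reader," and your treatment via Theorem \ref{thm_ot_expl_const} with the twist $L^{k_1-p_0} \boxtimes L^{k_2-p_0} \otimes (L \boxtimes L)^{p_0} \otimes K_{X\times X}^{-1}$ and metric built from $h^L_0$ is the correct one. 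Two cosmetic remarks: your induction accumulates one spare factor of $C$ (the natural inductive bound is $C^{r-1}$, with $r = 1$ as the trivial base), which of course does not affect the statement; and the comparison constant $M$ bounding $h^L_0/h^L$ depends on $h^L$, so the constant you obtain is not uniform over $h^L \in \mathcal{H}^L$ — this matches the paper's argument (which shares the same dependence) and is harmless for the paper's applications, where $h^L$ is fixed, but it means the phrasing "for any $h^L \in \mathcal{H}^L$" in the lemma statement should be read as quantifying over a fixed $h^L$ rather than asserting uniformity.
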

	\begin{proof}
		Clearly, by Corollary \ref{cor_quot_ass}, it is enough to establish this result for $r = 2$, as for other $r$ it would follow by induction.
		Once the definition of the quotient norm is unraveled, it becomes clear that it suffices to prove that for any $f \in H^0(X, L^{k_1 + k_2})$, there is a tensor $h \in H^0(X, L^{k_1}) \otimes H^0(X, L^{k_2})$, verifying ${\rm{Mult}}_{k_1, k_2}(h) = f$, such that
		\begin{equation}
			\| h \|_{L^2_{k_1}(X, h^L) \otimes L^2_{k_2}(X, h^L)} \leq C \| f \|_{L^2_{k_1 + k_2}(X, h^L)}.
		\end{equation}
		This, however, follows directly from the interpretation of the multiplication map of the section ring as an instance of the restriction map, see (\ref{eq_comm_diag}), and a statement similar to Theorem \ref{thm_ot_asymp}, where instead of $L^k$, we consider $L^{k_1} \boxtimes L^{k_2}$.
		The proof of the latter statement is done analogously to the proof of Theorem \ref{thm_ot_asymp}, and is left to the interested reader.
	\end{proof}
	\begin{proof}[Proof of Theorem \ref{thm_mult_gen}.]
		From Corollary \ref{cor_ident_maps}, Lemma \ref{lem_ban_inf_upp_bnd}, (\ref{eq_est_derham}) and the fact that the injective tensor norm is majorized from above by the Hermitian tensor norm, cf. Lemma \ref{lem_tens_reas}, we conclude that there is a function $f : \nat \to \real$, $f(k) = o(k)$, such that in the notations of Lemma \ref{lem_hilb_low_bnd}, we have
		\begin{equation}\label{eq_mult_gen1}
			{\rm{Hilb}}_k(h^L) \leq \Big[
				{\rm{Hilb}}_{k_1}(h^L)
				\otimes
				\cdots
				\otimes
				{\rm{Hilb}}_{k_r}(h^L)
			\Big]
			\cdot
			\exp \Big( f(k_1) + \cdots + f(k_r) \Big).
		\end{equation}
		Similarly, from Lemma \ref{lem_hilb_low_bnd} and the second part of Corollary \ref{cor_ident_maps}, we conclude that there is a function $g : \nat \to \real$, $g(k) = o(k)$, such that in the notations of Lemma \ref{lem_hilb_low_bnd} we have
		\begin{equation}\label{eq_mult_gen2}
			{\rm{Hilb}}_k(h^L) \geq \Big[
				{\rm{Hilb}}_{k_1}(h^L)
				\otimes
				\cdots
				\otimes
				{\rm{Hilb}}_{k_r}(h^L)
			\Big]
			\cdot
			\exp \Big( g(k_1) + \cdots + g(k_r) + g(k) \Big).
		\end{equation}
		We deduce Theorem \ref{thm_mult_gen} from (\ref{eq_mult_gen1}) and (\ref{eq_mult_gen2}).
	\end{proof}

\subsection{Metric properties of the Hilbert map and Mabuchi supremum geometry}\label{sect_hilb}
	The main goal of this section is to study the asymptotic metric properties of the map ${\rm{Hilb}}$.
	More precisely, we compare the norms ${\rm{Hilb}}_k(h^L_0), {\rm{Hilb}}_k(h^L_1)$ for $h^L_0, h^L_1 \in \mathcal{H}^L$,  as $k \to \infty$.
	\begin{proof}[Proof of Theorem \ref{thm_hilb_leaf}.]
		By Corollary \ref{cor_ident_maps}, we can study the distance between the images of the map ${\rm{Ban}}^{\infty}$ instead of ${\rm{Hilb}}^{\infty}$.
		Trivially, for any metrics $h^L_0$, $h^L_1$ on $L$, we obtain that 
		\begin{equation}\label{eq_isemd_aux_0}
			d_{\infty}({\rm{Ban}}^{\infty}(h^L_0), {\rm{Ban}}^{\infty}(h^L_1))
			\leq
			d_{\infty}( h^L_0, h^L_1).
		\end{equation}
		\par 
		Let us now establish the opposite bound.
		Our idea is to construct, using the Ohsawa-Takegoshi extension theorem, a holomorphic section of $L^k$, with the mass concentrated in a neighborhood of a fixed point.
		More precisely, by Theorem \ref{thm_bbw}, Proposition \ref{prop_bm} and Lemmas \ref{lem_bm_rest}, \ref{lem_ot_loc}, we conclude that for any $\epsilon > 0$, there is $p_1 \in \nat$, such that for any $k \geq p_1$, $x \in X$, there is $f \in H^0(X, L^k)$, verifying the following inequality
		\begin{equation}\label{eq_isemd_aux_1}
			|f(x)|_1 \geq \exp(- \epsilon k) \| f \|_{L^{\infty}_k(X, h^L_1)},
		\end{equation}
		where $| \cdot |_i$, $i = 1, 2$, are the pointwise norms induced by $h^L_i$.
		Remark, however, that 
		\begin{equation}\label{eq_isemd_aux_2}
			\| f \|_{L^{\infty}_k(X, h^L_0)}
			\geq
			|f(x)|_0,
		\end{equation}
		and we clearly have 
		\begin{equation}\label{eq_isemd_aux_3}
			|f(x)|_0
			=
			\Big(\frac{h^L_0}{h^L_1} \Big)^{\frac{k}{2}} (x)
			\cdot
			|f(x)|_1.
		\end{equation}	 
		From (\ref{eq_isemd_aux_1}), (\ref{eq_isemd_aux_2}) and (\ref{eq_isemd_aux_3}), we conclude that for any $\epsilon > 0$, there is $p_1 \in \nat$, such that for any $k \geq p_1$, $x \in X$, there is $f \in H^0(X, L^k)$, such that we have 
		\begin{equation}\label{eq_isemd_aux_33}
			\| f \|_{L^{\infty}_k(X, h^L_0)} \geq \exp(- \epsilon k) \cdot \Big(\frac{h^L_0}{h^L_1} \Big)^{\frac{k}{2}} (x) \cdot \| f \|_{L^{\infty}_k(X, h^L_1)}.
		\end{equation}		
		Moreover, one can interchange the roles of $h^L_0, h^L_1$ in (\ref{eq_isemd_aux_33}).		
		In particular, we obtain that
		\begin{equation}\label{eq_isemd_aux_4}
			d_{\infty}({\rm{Ban}}^{\infty}(h^L_0), {\rm{Ban}}^{\infty}(h^L_1))
			\geq
			d_{\infty}( h^L_0, h^L_1).
		\end{equation}
		From (\ref{eq_isemd_aux_0}) and (\ref{eq_isemd_aux_4}), we deduce Theorem \ref{thm_hilb_leaf}.
	\end{proof}
	\par 
	Theorem \ref{thm_hilb_leaf} goes in line with the general philosophy that the geometry of $\mathcal{H}^L$ can be approximated by the geometry of the space of norms on $H^0(X, L^k)$, as $k \to \infty$, see Donaldson \cite{DonaldSymSp}.
	To explain this precisely, we recall the basics of the geometry on the space of Kähler potentials.
	To make our presentation compatible with the notations introduced previously, we will only work with Kähler potentials in the integral class $c_1(L)$, but everything we say below can be trivially extended to the case of a general (non-integral) Kähler class.
	\par 
	First of all, remark that the tangent space $T \mathcal{H}^L$ of continuous psh metrics $\mathcal{H}^L$ on $L$ can be seen as a subbundle of the trivial vector bundle 
	\begin{equation}\label{eq_tan_kah_pot}
		T \mathcal{H}^L \subset \mathcal{H}^L \times \ccal^0(X),
	\end{equation}
	as follows.
	Let $h_t^L$, $t \in [0, 1]$, be a $\mathcal{C}^1$-path in $\mathcal{H}^L$. 
	Then $\dot h_t^L :=  (h_t^L)^{-1} \frac{d}{dt} h_t^L \in \ccal^0(X)$ gives the needed tangent vector in the identification.
	\par 
	For $p \in [1, + \infty]$, let us define the $L^p$-norm $\| \cdot \|_p$ for $u \in T \mathcal{H}^L$ as follows
	\begin{equation}
		\| u \|_p
		=
		\Big(
		\frac{1}{\int c_1(L)^n}
		\int_X | v |^p(x) \cdot c_1(L, h^L)^n \Big)^{\frac{1}{p}},
	\end{equation}
	where $u = (h^L, v)$ in the identification (\ref{eq_tan_kah_pot}) and $c_1(L, h^L)^n$ is the measure constructed from the procedure of Bedford-Taylor, see (\ref{eq_chern_class_defn}).
	\par 
	In particular, in the most relevant case for this article, $p = + \infty$, we have
	\begin{equation}
		\| u \|_{\infty}
		=
		{\rm{ess \, sup}} |v|,
	\end{equation}
	where the essential supremum over $X$ is taken with respect to the measure $c_1(L, h^L)^n$.
	\par 
	For $p = 2$, this (Hermitian) norm in the context of Kähler potentials was defined by Mabuchi. 
	For other $p$, it was studied by Darvas in \cite{DarvasFinEnerg}.
	\par 
	We define the $L^{\infty}$-length ${\rm{len}}(l)$, of a piece-wise $\ccal^1$-curve $l(t) := h^L_t \in \mathcal{H}^L$, $t \in [0, 1]$, as 
	\begin{equation}
		{\rm{len}}(l) := \frac{1}{2} \int_0^1 \big\| \dot h_t^L \big\|_{\infty} dt.
	\end{equation}
	The normalizing factor $\frac{1}{2}$, usually omitted in this setting, is here to make things compatible with our previous normalizations, adapted to the study of norms and not metrics.
	We define the supremum ($L^{\infty}$)-pseudodistance $\dist(h^L_0, h^L_1)$ between $h^L_0, h^L_1 \in \mathcal{H}^L$ as follows
	\begin{equation}\label{eq_dist_len_min}
		\dist(h^L_0, h^L_1)
		=
		\inf {\rm{len}}(l),
	\end{equation}
	where the infimum is taken among all possible piece-wise $\ccal^{1}$-paths $l$ from $h^L_0$ to $h^L_1$.
	\par 
	The fact that $\dist$ is actually a distance (i.e. separates distinct points) is a nontrivial result in this infinite dimensional setting.
	Since $L^{\infty}$-distance introduced above obviously dominates $L^p$-distances for any $p \in [1, \infty[$, for smooth endpoints, the fact that $\dist$ is actually a distance follows from the work of Chen \cite{ChenGeodMab}, who proved that the $L^2$-pseudodistance is actually a distance. For general endpoints it follows from Darvas \cite{DarvasFinEnerg}, who refined this statement for $p = 1$.
	\par 
	Our interest in this metric comes from the following result.
	\begin{thm}\label{thm_dist_equiv}
		The distance $\dist$ from (\ref{eq_dist_len_min}), and the $d_{\infty}$ from (\ref{dist_smooth}), coincide.
	\end{thm}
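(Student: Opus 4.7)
We establish the two inequalities $\dist \leq d_\infty$ and $\dist \geq d_\infty$ separately. For the upper bound I take the linear interpolation $h^L_t := (h^L_0)^{1-t}(h^L_1)^t$, which stays in $\mathcal{H}^L$ by convexity of psh weights. A direct computation gives $\dot h^L_t = \log(h^L_1/h^L_0)$, a continuous function on $X$ independent of $t$. Since the essential supremum of a continuous function against any Borel measure is bounded above by its supremum,
\begin{equation*}
    \|\dot h^L_t\|_{\infty} \leq \sup_{x\in X} \bigl|\log(h^L_1/h^L_0)(x)\bigr| = 2 d_{\infty}(h^L_0, h^L_1),
\end{equation*}
so ${\rm len}(l) \leq d_{\infty}(h^L_0, h^L_1)$, whence $\dist(h^L_0, h^L_1) \leq d_{\infty}(h^L_0, h^L_1)$.

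For the opposite inequality, the strategy is to quantize via the Hilbert map and appeal to Theorem \ref{thm_hilb_leaf}. Fix any piecewise $\mathcal{C}^1$-path $l(t) = h^L_t$ from $h^L_0$ to $h^L_1$ and any large $k$. The induced curve $l_k(t) := {\rm{Hilb}}_k(h^L_t)$ lives in the finite-dimensional symmetric space $\mathscr{N}_k$ of Hermitian norms on $H^0(X, L^k)$; its natural operator-norm Finsler length is classically known to coincide with the Goldman-Iwahori distance $d_{\infty}$, so $d_{\infty}({\rm{Hilb}}_k(h^L_0), {\rm{Hilb}}_k(h^L_1)) \leq {\rm len}(l_k)$. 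Writing $\phi_t := -\log h^L_t$ and $\omega_t := c_1(L, h^L_t)$ and differentiating $\|f\|^2_{L^2_k(X, h^L_t)} = \int_X |f|^2 e^{-k\phi_t}\, \omega_t^n/n!$ in $t$, one extracts a dominant $O(k)$ term
\begin{equation*}
    \frac{d}{dt}\log \|f\|^2_{L^2_k(X, h^L_t)} = k\cdot \frac{\int_X |f|^2 \, \dot h^L_t \, e^{-k\phi_t}\, \omega_t^n}{\int_X |f|^2 e^{-k\phi_t}\, \omega_t^n} + O(1),
\end{equation*}
the $O(1)$ error being uniform in $f$ and coming from the derivative of $\omega_t^n/n!$. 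The leading term is $k$ times a $\omega_t^n$-absolutely-continuous weighted average of $\dot h^L_t$, hence of absolute value at most $k \|\dot h^L_t\|_\infty$. Taking the supremum over $f$ gives ${\rm len}(l_k) \leq k\cdot {\rm len}(l) + O(1)$, and combining with Theorem \ref{thm_hilb_leaf} (which via \eqref{dist_gold_iwah} gives $\limsup_k \frac{1}{k} d_{\infty}({\rm{Hilb}}_k(h^L_0), {\rm{Hilb}}_k(h^L_1)) = d_{\infty}(h^L_0, h^L_1)$), dividing by $k$ and passing to the limit yields $d_{\infty}(h^L_0, h^L_1) \leq {\rm len}(l)$. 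Infimising over $l$ closes the inequality.

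The main obstacle is the infinitesimal length comparison $\|\dot l_k(t)\|_{\mathrm{op}} \leq k\|\dot h^L_t\|_\infty + O(1)$. Its subtlety lies in the apparent mismatch between the essential supremum against the time-varying, possibly degenerate, Bedford-Taylor measure $\omega_t^n$ that defines $\|\dot h^L_t\|_\infty$, and the pointwise operator-norm structure of the finite-dimensional space $\mathscr{N}_k$. The reconciliation is precisely what the Hilbertian quantization supplies for free: the differentiation of an $L^2_k(X, h^L_t)$ norm naturally produces probability measures absolutely continuous with respect to $\omega_t^n$, and weighted averages against such measures are trivially majorised by the $\omega_t^n$-essential supremum. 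The only remaining technicality is to control the subleading contribution from $\frac{d}{dt}\omega_t^n$ uniformly in $f$, which — although $\omega_t^n$ is only Bedford-Taylor defined — follows from standard integration-by-parts on the holomorphic $f$ together with continuity of the Monge-Ampère operator along the $\mathcal{C}^1$-family $h^L_t$.
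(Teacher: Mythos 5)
Your upper bound $\dist \leq d_{\infty}$ via the segment $h^L_t = (h^L_0)^{1-t}(h^L_1)^t$ is correct and pleasingly short. The lower bound, however, contains a genuine gap at the step you yourself flag as the "main obstacle." First, for a general piecewise $\mathcal{C}^1$ path $t \mapsto h^L_t$ in $\mathcal{H}^L$, the Bedford--Taylor measure $\omega_t^n$ is only known to depend weakly continuously on $t$; it is not differentiable in $t$, so the quantity $\frac{d}{dt}\omega_t^n$ that your computation requires is not even defined. Second, even when everything is smooth and strictly positive, your "$O(1)$ error uniform in $f$" is controlled not by anything first-order in $\dot h^L_t$ but by $\sup_X \big|\mathrm{Tr}_{\omega_t}(\imath \partial\bar\partial \dot h^L_t)\big|$ — a second-order quantity in the velocity. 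For an arbitrary competitor $l$ in the infimum defining $\dist$, the tangent vector $\dot h^L_t$ is merely continuous on $X$ and $\omega_t$ may degenerate, so this constant is generically infinite. The assertion that "continuity of the Monge--Ampère operator" provides the bound confuses continuity with differentiability, and it is precisely the failure of the latter that makes this route break down. (Restricting the infimum in $\dist$ to smooth positive families would rescue the computation, but then you would still owe a regularization lemma showing that this restricted infimum coincides with $\dist$; that step is comparable in difficulty to the theorem itself.)

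The paper's argument avoids all of this by never attempting to estimate the Finsler length of an arbitrary competitor directly. Instead it isolates the unique weak Mabuchi geodesic between the endpoints and invokes three results tailored to that special path: Lempert's least-action theorem for convex, rearrangement-invariant Lagrangians (showing the weak geodesic realises the infimum), Darvas's a priori bound $\sup_X|\dot h^L_t| \leq 2 d_{\infty}(h^L_0, h^L_1)$ along weak geodesics, and Berndtsson's conservation law $\int f(\dot h^L_{t_1})\,\omega_{t_1}^n = \int f(\dot h^L_{t_2})\,\omega_{t_2}^n$ (which handles the reconciliation between essential supremum against $\omega_t^n$ and pointwise supremum without ever touching $\frac{d}{dt}\omega_t^n$). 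The general case then follows by uniform Fubini--Study approximation. If you wish to keep a quantization-based lower bound, one workable route is to differentiate $\log\|f\|^2_{L^2}$ with a fixed smooth background volume form rather than $\omega_t^n$ — this kills the second term entirely — but the price is that you control $\sup_X|\dot h^L_t|$ rather than the $\omega_t^n$-essential supremum, and closing that gap is exactly the content of the conservation-law argument the paper uses.
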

	\begin{rem}
		If we combine Theorems \ref{thm_hilb_leaf} and \ref{thm_dist_equiv}, we see that the limit of the Goldman-Iwahori distances on the space of norms on $H^0(X, L^k)$ coincides with the $L^{\infty}$-Mabuchi distance on $\mathcal{H}^L$.
		It complements the results of Chen-Sun \cite{ChenSunQuant}, who proved a similar result relating the distance induced by the Hilbert-Shmidt norm on the space of norms on $H^0(X, L^k)$ and the Mabuchi distance on $\mathcal{H}^L$, Berndtsson \cite{BerndtProb} who generalized the last statement for the $L^p$-distances with smooth positive exteremities, $1 \leq p < + \infty$, and Darvas-Lu-Rubinstein \cite{DarvLuRub}, who generalized the statement of Berndtsson for paths with non-smooth extremities.
	\end{rem}
	The proof of this result is decomposed into a number of statements, which essentially follow from the works of Berndtsson, Darvas and Lempert.
	To explain it in details, we need to recall the notion of weak geodesics.
	Let $h^L_0, h^L_1$ be two smooth positive metrics on $L$.
	A geodesic equation (with respect to $L^2$-metric) for a smooth curve $h^L_t$ connecting $h^L_0$ and $h^L_1$ was found by Mabuchi \cite{Mabuchi}.
	Later, Semmes \cite{Semmes} and Donaldson \cite{DonaldSymSp} independently discovered that this equation can be understood as a complex Monge-Ampère equation.
	\par 
	More precisely, they proved that if $h^L_t$, $t \in [0, 1]$, is a geodesic segment in the sense of Mabuchi, then for a Hermitian metric $\tilde{h}^L$ on $L$, over $X \times S_{1, 2}$, $S_{1, 2} := \{ s \in \comp : 1 \leq \Re s \leq 2 \}$, defined as $\tilde{h}^L|_{X \times u} := h^L_{\Re u - 1}$, $u \in S_{1, 2}$, the following identity over $X \times S_{1, 2}$ holds
	\begin{equation}\label{eq_geod_mab}
		c_1(L, \tilde{h}^L)^{n + 1} = 0.
	\end{equation}
	Analogously to the smooth setting, a segment $h^L_t \in \mathcal{H}^L$, $t \in [0, 1]$, is called \textit{weak geodesic} if the complexification of it, $\tilde{h}^L$, constructed as above, satisfies (\ref{eq_geod_mab}) in the Bedford-Taylor sense.
	\par 
	The existence and uniqueness of weak geodesics $h^L_t \in \mathcal{H}^L$, $t \in [0, 1]$, with smooth positive extremities $h^L_0$, $h^L_1$, was established by Chen \cite{ChenGeodMab}.
	Chen, moreover, proved that in the above notations, $c_1(L, \tilde{h}^L)$ is a bounded $(1, 1)$-form.
	By standard regularity results, it implies that the path $h^L_t$ viewed as a metric over $X \times [0, 1]$ is $\ccal^{1, \alpha}$, for any $0 \leq \alpha < 1$.
	From the general theory of Dirichlet problem for complex Monge-Ampère equations of Bedford-Taylor \cite[Theorem D]{BedTayDirProb}, weak geodesics exist more generally for $h^L_0, h^L_1 \in \mathcal{H}^L$.
	Those geodesics are moreover unique by the “minimum principle", see \cite[Theorem A]{BedTayDirProb}.
	The following result explains their relevance for us.
	\begin{lem}\label{lem_lemp}
		Let $h^L_t \in \mathcal{H}^L$, $t \in [0, 1]$, be a weak geodesic.
		Then $\dist(h^L_0, h^L_1) = \frac{1}{2} \| \dot h_0^L \|_{\infty}$.
	\end{lem}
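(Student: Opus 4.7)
The plan is to prove the equality by establishing the two inequalities separately.

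\textbf{Lower bound.} First, I would show that $d_{\infty}(h^L_0, h^L_1) \leq \dist(h^L_0, h^L_1)$ for \emph{any} two metrics in $\mathcal{H}^L$. Let $l(t) = k_t$, $t \in [0,1]$, be any piecewise $\mathcal{C}^1$ path from $h^L_0$ to $h^L_1$. For each fixed $x \in X$, the function $t \mapsto \log k_t(x)$ is piecewise $\mathcal{C}^1$, and
\begin{equation*}
\log \frac{h^L_1(x)}{h^L_0(x)} = \int_0^1 \dot k_t(x) \, dt.
\end{equation*}
Hence $| \log ( h^L_1 / h^L_0 )(x) | \leq \int_0^1 \| \dot k_t \|_{\infty} \, dt = 2\, {\rm{len}}(l)$. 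Taking the supremum over $x \in X$, dividing by $2$, and then infimizing over admissible paths gives $d_{\infty}(h^L_0, h^L_1) \leq \dist(h^L_0, h^L_1)$.

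\textbf{Upper bound along the weak geodesic.} Next I would use the weak geodesic $h^L_t$ itself as a test path, which requires showing that ${\rm{len}}(h^L_{\bullet}) = \frac{1}{2} \| \dot h^L_0 \|_{\infty}$. The starting point is that the defining equation $c_1(L, \tilde h^L)^{n+1} = 0$ for the complexification $\tilde h^L$ forces, for every fixed $x \in X$, the function $t \mapsto -\log h^L_t(x)$ to be convex on $[0,1]$, as a restriction to a horizontal segment of a bounded psh function on $X \times S_{1,2}$. Consequently $\dot h^L_t(x) = \frac{d}{dt} \log h^L_t(x)$ is monotonically decreasing in $t$, and integrating yields
\begin{equation*}
\dot h^L_0(x) \;\geq\; \log \frac{h^L_1(x)}{h^L_0(x)} \;\geq\; \dot h^L_1(x)
\end{equation*}
pointwise on $X$. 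Taking suprema, this already gives $\|\dot h^L_0\|_{\infty} \geq 2 d_{\infty}(h^L_0, h^L_1)$, which combined with the lower bound produces one of the inequalities. To finish, I would invoke the envelope characterization of the weak geodesic (Chen, Berndtsson, Darvas, Lempert) to show the reverse estimate $\|\dot h^L_t\|_{\infty} \leq 2 d_{\infty}(h^L_0, h^L_1)$ for every $t$; a standard way is to compare $\phi_t = -\tfrac12 \log(h^L_t/h^L_0)$ from above and below by the affine paths $t \mapsto t \sup |\phi_1| + (1-t)\cdot 0$ and its counterpart on the other side, using the maximum principle for the Dirichlet problem on $X \times S_{1,2}$.

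\textbf{Main obstacle.} The nontrivial step is proving that $\|\dot h^L_t\|_{\infty}$ is actually constant in $t$ along the weak geodesic (and, in particular, that the one-sided derivative $\dot h^L_0$ even makes sense as an $L^\infty$ function). For endpoints in $\mathcal{H}^L$, the weak geodesic is typically only $\mathcal{C}^{1,\alpha}$ in the complexified variable (Chen), so one must argue through the Perron envelope: the sub- and super-solution bounds above identify the extremal $L^{\infty}$-slope of the envelope with the boundary oscillation $\sup |\log(h^L_0/h^L_1)|$, and convexity in $t$ forces this slope to be attained uniformly. A minor additional point is that the weak geodesic itself need not be piecewise $\mathcal{C}^1$ in the sense of~(\ref{eq_dist_len_min}); this is handled by mollifying in $t$ and using Chen's $\mathcal{C}^{1,\alpha}$ bound to pass to the limit, so that ${\rm{len}}(h^L_{\bullet}) = \tfrac12 \|\dot h^L_0\|_{\infty}$ is realized by approximating admissible paths. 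Combining the two bounds yields $\dist(h^L_0, h^L_1) = \tfrac12 \|\dot h^L_0\|_{\infty}$.
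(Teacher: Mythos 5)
Your proposal and the paper's proof take genuinely different routes: the paper's proof of this lemma is a one-line citation to Lempert's least-action theorem, which states that weak geodesics minimize the length for any Lagrangian that is convex and invariant under strict rearrangements (and $\| \cdot \|_{\infty}$ is such); all the regularity and approximation issues are absorbed into that reference. Your plan is to re-derive the result from scratch, and while the strategy is sensible, it has a concrete gap. From the convexity of $t \mapsto -\log h^L_t(x)$ you correctly obtain the pointwise chain $\dot h^L_1(x) \leq \log(h^L_1/h^L_0)(x) \leq \dot h^L_0(x)$, but this alone does \emph{not} give $\|\dot h^L_0\|_{\infty} \geq 2 d_{\infty}(h^L_0, h^L_1)$. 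Taking suprema only yields $\sup_x \dot h^L_0(x) \geq \sup_x \log(h^L_1/h^L_0)(x)$, while for the negative part the pointwise estimate goes the \emph{wrong} way ($\inf \dot h^L_0 \geq \inf \log(h^L_1/h^L_0)$, so $-\inf \dot h^L_0 \leq -\inf\log(h^L_1/h^L_0)$); the lower bound on the negative part is controlled by $\dot h^L_1$, not $\dot h^L_0$. What you actually get is $\max\{\|\dot h^L_0\|_{\infty}, \|\dot h^L_1\|_{\infty}\} \geq 2 d_{\infty}$, and upgrading this to a bound on $\|\dot h^L_0\|_{\infty}$ alone already presupposes the constancy of $t \mapsto \|\dot h^L_t\|_{\infty}$ that you correctly flag as the main obstacle; so the "already gives" is circular.

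There is also a regularity issue in your final paragraph: Chen's $\mathcal{C}^{1,\alpha}$ regularity for the complexified geodesic holds only for smooth positive extremities. For general $h^L_0, h^L_1 \in \mathcal{H}^L$ the weak geodesic need only be continuous, so $\dot h^L_0$ is not a priori defined, and mollifying the geodesic in $t$ does not keep you inside $\mathcal{H}^L$ (the psh constraint is not preserved by averaging a non-affine family). The standard workaround — which the paper uses in the proof of Theorem \ref{thm_dist_equiv} — is to approximate the endpoints by smooth positive metrics (e.g.\ via $FS({\rm Hilb}_k(h^L))$, Theorem \ref{thm_quant_hilb_conv}), prove the statement for the approximating smooth geodesics using Darvas's gradient estimate and Berndtsson's conservation law (this is Lemma \ref{lem_weak_geod_dist}), and pass to the limit. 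Your sketch, carried out fully, would end up reproving both Lemma \ref{lem_lemp} and Lemma \ref{lem_weak_geod_dist} at once, which is a much longer road than the direct citation the paper takes.
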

	\begin{proof}
		It is a direct consequence of Lempert \cite[Theorem 8.1]{LempLeastAction}.
		In fact, in the notations of Lempert, the Lagrangian $\| \cdot \|_{\infty}$ is convex and invariant under strict rearrangements, see \cite[Definition 2.1]{LempLeastAction} for necessary definitions.
		Hence, \cite[Theorem 8.1]{LempLeastAction} applies.
	\end{proof}
	The next result we need relates the right-hand side from Lemma \ref{lem_lemp} with weak geodesics.
	\begin{lem}\label{lem_weak_geod_dist}
		Let $h^L_t \in \mathcal{H}^L$, $t \in [0, 1]$, be a weak geodesic with smooth positive extremities $h^L_0$, $h^L_1$.
		Then the essential supremums of the velocities of $h^L_t$ are constant in $t \in [0, 1]$, and, despite the fact that $h^L_t$ are not necessarily strictly positive, they coincide with the supremums.
		More precisely, for any $t_0, t_1 \in [0, 1]$, we have
		\begin{equation}
			\| \dot h_{t_0}^L \|_{\infty}
			=
			\sup |\dot h_{t_0}^L|
			=
			\sup |\dot h_{t_1}^L|
			=
			\| \dot h_{t_1}^L \|_{\infty}
			=
			2 \cdot d_{\infty}(h^L_0, h^L_1).
		\end{equation}
	\end{lem}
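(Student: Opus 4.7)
My plan is to reduce everything to pointwise convexity combined with four explicit linear barriers for the degenerate Monge--Amp\`ere equation on $X \times S_{1,2}$, and then extract the essential-supremum equality from a constant-speed property of $L^\infty$-geodesics.

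First I would write $h^L_t = e^{-\psi_t} h^L_0$, so that $\psi_0 \equiv 0$, $\dot h^L_t = -\dot\psi_t$, and set $M_1 := \sup_X \psi_1$, $M_0 := -\inf_X \psi_1$, noting $\max(M_0, M_1) = \sup_X|\psi_1 - \psi_0| = 2 d_\infty(h^L_0, h^L_1)$. Chen's regularity theorem makes $\psi_t(x)$ of class $\ccal^{1, \alpha}$ in $(x, t)$, so $\dot \psi_t$ is continuous on $X \times [0,1]$ and the statement reduces to the corresponding claim for $\dot\psi_t$.

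The key observation is that any function linear in $t$ (i.e.\ in $\Re s$) solves the homogeneous Monge--Amp\`ere equation $(\omega + dd^c \cdot)^{n+1} = 0$ on $X \times S_{1,2}$, since the pullback of $\omega$ is purely horizontal and therefore $\omega^{n+1} = 0$ on the $(n+1)$-dimensional total space. I would apply this to the four $\omega$-psh functions $tM_1$, $-tM_0$, $\psi_1 + (1-t)M_0$, and $\psi_1 - (1-t)M_1$; each has the correct comparison with $\tilde\psi$ at $t = 0, 1$, so the Bedford--Taylor comparison principle \cite{BedTayDirProb} yields
\begin{equation}
\max\bigl(-tM_0,\ \psi_1 - (1-t)M_1\bigr)
\leq
\psi_t(x)
\leq
\min\bigl(tM_1,\ \psi_1 + (1-t)M_0\bigr).
\end{equation}
Taking suprema and infima over $x \in X$ gives $\sup_X \psi_t = tM_1$ and $\inf_X \psi_t = -tM_0$. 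Convexity of $t \mapsto \psi_t(x)$ (from $S^1$-invariance and plurisubharmonicity of $\tilde\psi$) together with the four barriers then forces $-M_0 \leq \dot\psi_t(x) \leq M_1$ everywhere. Conversely, at any point $x_t^+ \in X$ with $\psi_t(x_t^+) = tM_1$, the inequalities $\psi_{t \pm \epsilon}(x_t^+) \leq (t \pm \epsilon) M_1$ squeeze $\dot\psi_t(x_t^+) = M_1$, and analogously $\dot\psi_t(x_t^-) = -M_0$ at a $\inf$-point. This already gives $\sup_X |\dot\psi_t| = \max(M_0, M_1)$, attained, and constant in $t$.

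The hard part will be the passage from the pointwise supremum to the essential supremum with respect to the possibly degenerate Bedford--Taylor measure $c_1(L, h^L_t)^n$; this is precisely what the parenthetical ``despite the fact that $h^L_t$ are not necessarily strictly positive'' is flagging. At $t = 0, 1$ the equality $\|\dot h^L_t\|_\infty = \sup_X |\dot h^L_t|$ is immediate since $h^L_0, h^L_1$ are smooth positive and their curvatures have full support. For $t \in (0, 1)$ I would invoke a constant-speed principle: combining Lempert's least-action identity of Lemma \ref{lem_lemp} applied to both subarcs $[0, t]$ and $[t, 1]$ (after linear reparametrization) with subadditivity of $\dist$ forces $\|\dot h^L_t\|_\infty \geq \|\dot h^L_0\|_\infty$; since $\|\dot h^L_0\|_\infty = \sup_X|\dot h^L_0| = \max(M_0, M_1)$ from the endpoint case and $\sup_X |\dot h^L_t| = \max(M_0, M_1)$ from the previous paragraph, the trivial bound $\|\dot h^L_t\|_\infty \leq \sup_X|\dot h^L_t|$ closes the chain. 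The only technicality is that Lempert's theorem as cited requires smooth positive endpoints while $h^L_t$ need not be strictly positive, so I would either appeal to Darvas's extension \cite{DarvasFinEnerg} of the least-action principle to arbitrary elements of $\mathcal{H}^L$, or, alternatively, observe that minimization of the $L^\infty$-length gives $\int_0^1 \tfrac{1}{2}\|\dot h^L_t\|_\infty dt = \dist(h^L_0, h^L_1) = \tfrac{1}{2}\max(M_0, M_1)$, which combined with the uniform bound $\|\dot h^L_t\|_\infty \leq \max(M_0, M_1)$ and continuity of $\dot\psi_t$ yields the claim a.e.\ and then everywhere.
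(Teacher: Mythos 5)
Your proof is correct, but it takes a genuinely different route from the paper. Where you derive the pointwise gradient bound $-M_0 \le \dot\psi_t \le M_1$ from scratch via four explicit linear barriers, convexity of $t \mapsto \psi_t(x)$, and the Bedford--Taylor comparison principle, the paper simply quotes Darvas's a~priori estimate \cite[Theorem 3.4]{DarvWeakGeod} to get $\sup_X|\dot h^L_t| \le 2 d_\infty(h^L_0, h^L_1)$, and then obtains equality by integrating $\dot h^L_t$ over $[0,1]$; your barrier-plus-squeeze argument is a self-contained rederivation of that same fact. The more substantive divergence is in the essential-supremum step: the paper invokes Berndtsson's conservation law $\int_X f(\dot h^L_{t_1}) c_1(L,h^L_{t_1})^n = \int_X f(\dot h^L_{t_2}) c_1(L,h^L_{t_2})^n$ to show $\|\dot h^L_t\|_\infty$ is constant in $t$ and then identifies it with the pointwise sup at $t=0$; you instead apply Lempert's identity (Lemma \ref{lem_lemp}) to the two reparametrized subarcs $[0,t]$, $[t,1]$ and close the chain by the triangle inequality and the trivial bound $\|\dot h^L_t\|_\infty \le \sup_X|\dot h^L_t|$. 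Both closures are valid. One remark: your worry that Lemma \ref{lem_lemp} ``requires smooth positive endpoints'' is unfounded---as stated and cited in the paper, Lempert's least-action identity applies to weak geodesics with arbitrary endpoints in $\mathcal{H}^L$, so the first closure you propose is already rigorous and the alternative via length-minimization is unnecessary (and, as you yourself note, runs into the issue that continuity of $t \mapsto \|\dot h^L_t\|_\infty$ is not a~priori clear).
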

	\begin{proof}
		First of all, by an estimate of Darvas \cite[Theorem 3.4]{DarvWeakGeod}, for any $t \in [0, 1]$, we have
		\begin{equation}\label{eq_dar_00}
			2 \cdot d_{\infty}(h^L_0, h^L_1) \geq \sup |\dot h_t^L|.
		\end{equation}
		\par 
		Now, since the path $h^L_t$ is $\ccal^{1}$, for any $x \in X$, we have 
		\begin{equation}
			\log \frac{h^L_1(x)}{h^L_0(x)}
			=
			\int_0^1 \dot h_t^L(x).
		\end{equation}
		In particular, we obtain that
		\begin{equation}\label{eq_dar_0}
			\Big| \log \frac{h^L_1(x)}{h^L_0(x)} \Big|
			\leq
			\sup_{t \in [0, 1]} |\dot h_t^L(x)|.
		\end{equation}
		By combining (\ref{eq_dar_00}) with (\ref{eq_dar_0}) and using the fact that $\sup_{x \in X} |\dot h_t^L(x)|$ is continuous for $t \in [0, 1]$, we obtain that for any $t \in [0, 1]$, we have
		\begin{equation}\label{eq_dar_1}
			2 \cdot d_{\infty}(h^L_0, h^L_1) = \sup_{x \in X} |\dot h_t^L(x)|.
		\end{equation}
		\par 
		On the other hand, since $h^L_0$ is smooth and positive, $c_1(L, h^L_0)^n$ is a smooth (strictly) positive volume form.
		Hence the essential supremum of $|\dot h_0^L|$ coincides with the supremum.
		In other words, we have the following identity
		\begin{equation}\label{eq_dar_2}
			\sup_{x \in X} |\dot h_0^L(x)|
			=
			\| \dot h_0^L \|_{\infty}.
		\end{equation}
		Recall, however, that by a result of Berndtsson \cite[Proposition 2.2]{BerndtProb}, for any function $f \in \ccal^0(\real)$, and any $t_1, t_2 \in [0, 1]$, the following conservation law holds
		\begin{equation}
			\int_X f(\dot h_{t_1}^L) c_1(L, h_{t_1}^L)^n
			=
			\int_X f(\dot h_{t_2}^L) c_1(L, h_{t_2}^L)^n.
		\end{equation}
		In particular, by choosing a sequence of functions $f$ appropriately, we obtain that 
		\begin{equation}\label{eq_dar_3}
			\| \dot h_{t_1}^L \|_{\infty}
			=
			\| \dot h_{t_2}^L \|_{\infty}.
		\end{equation}
		A combination of (\ref{eq_dar_1}), (\ref{eq_dar_2}) and (\ref{eq_dar_3}) finishes the proof.
	\end{proof}
	\begin{proof}[Proof of Theorem \ref{thm_dist_equiv}]
		A combination of Lemmas \ref{lem_lemp} and \ref{lem_weak_geod_dist} gives the result for smooth positive extremities.
		Remark, however, that any $h^L \in \mathcal{H}^L$ can be approximated uniformly by smooth positive metrics $h^L_k$, $k \in \nat$.
		To see this, consider the approximation $h^L_k := FS({\rm{Hilb}}_k(h^L))$, and use Theorem \ref{thm_quant_hilb_conv}.
		The result now follows by the fact that Theorem \ref{thm_dist_equiv} holds for $h^L_k$, by approximation and triangle inequality.
	\end{proof}

\subsection{Fubini-Study operator on graded multiplicatively generated norms}\label{sect_conv}
	The main goal of this section is to study the convergence of the Fubini-Study operator associated to graded pieces of multiplicatively generated norms, i.e. to prove Theorem \ref{thm_conv_mg}.
	\par 
	The proof of this result decomposes into a series of small statements.
	Our first lemma, which is a generalization of Theorem \ref{thm_as_isom}, was the starting point for the formulation of Definition \ref{defn_mult_gen}.
	\begin{lem}\label{lem_mult_gen_sm}
		For any smooth positive metric $h^L$ on $L$, there are $p_0 \in \nat$, $C > 0$, such that for any $r \in \nat^*$, $k; k_1, \ldots, k_r \geq p_0$, $k_1 + \cdots + k_r = k$, under the map (\ref{eq_mult_map}), we have
		\begin{multline}\label{eq_mult_gen222}
			\prod_{i = 1}^{r} \Big(1 - \frac{C}{k_i}\Big)
			\leq 
			\frac{[{\rm{Hilb}}_{k_1}(h^L) \otimes \cdots \otimes {\rm{Hilb}}_{k_r}(h^L)]}{{\rm{Hilb}}_{k}(h^L)} 
			\cdot
			\Big( \frac{k_1 \cdots k_r}{k} \Big)^{\frac{n}{2}}  
			\leq 
			\prod_{i = 1}^{r} \Big(1 + \frac{C}{k_i}\Big).
		\end{multline}
	\end{lem}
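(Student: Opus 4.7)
The natural plan is induction on $r$, with Theorem \ref{thm_as_isom} supplying the base case $r = 2$ (and $r = 1$ being trivial). For the inductive step, I would factor the multiplication map as the composition
\begin{equation*}
H^0(X, L^{k_1}) \otimes \cdots \otimes H^0(X, L^{k_r})
\xrightarrow{{\rm{Mult}}_{k_1, \ldots, k_{r-1}} \otimes {\rm{Id}}}
H^0(X, L^{m}) \otimes H^0(X, L^{k_r})
\xrightarrow{{\rm{Mult}}_{m, k_r}}
H^0(X, L^k),
\end{equation*}
where $m := k_1 + \cdots + k_{r-1}$, apply the inductive hypothesis to the first map, and apply Theorem \ref{thm_as_isom} to the second.

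To make this work on the level of norms, I need two routine facts about Hermitian norms, both presumably in Appendix \ref{app_norms}. First, the quotient norm is associative under composition of surjections (the quotient of a quotient is the quotient through the composite). Second, if $q : V \to W$ is a surjection, $N_V$ is a Hermitian norm on $V$ with induced quotient norm $[N_V]$ on $W$, and $N_3$ is a Hermitian norm on a third space $V_3$, then under $q \otimes {\rm{Id}}_{V_3}$ the quotient norm induced by $N_V \otimes N_3$ on $W \otimes V_3$ coincides with $[N_V] \otimes N_3$. This is the standard Hilbert-space fact that $(\ker q)^{\perp} \otimes V_3$ is the orthogonal complement of $\ker(q \otimes {\rm{Id}}_{V_3})$ and that $q$ is an isometry on $(\ker q)^{\perp}$. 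Together these two facts identify the quotient of $[{\rm{Hilb}}_{k_1}(h^L) \otimes \cdots \otimes {\rm{Hilb}}_{k_r}(h^L)]$ through ${\rm{Mult}}_{k_1, \ldots, k_r}$ with the two-step quotient obtained via the intermediate space above.

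Granted that, combining the inductive hypothesis with Theorem \ref{thm_as_isom} and using the telescoping identity
\begin{equation*}
\Big( \frac{k_1 \cdots k_{r-1}}{m} \Big)^{n/2} \cdot \Big( \frac{m \cdot k_r}{k} \Big)^{n/2} = \Big( \frac{k_1 \cdots k_r}{k} \Big)^{n/2}
\end{equation*}
immediately produces the correct scaling factor $(k_1 \cdots k_r / k)^{n/2}$. For the multiplicative error, the bound from Theorem \ref{thm_as_isom} contributes a factor $(1 \pm C(1/m + 1/k_r))$; absorbing $1/m \leq 1/k_j$ for any $j \leq r - 1$ allows this to be merged into the product $\prod_{i=1}^{r}(1 \pm C'/k_i)$ after slightly enlarging the constant, giving the claimed two-sided estimate.

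I do not expect any real obstacle beyond the interplay of quotient and Hermitian tensor norms described in the second paragraph; the rest is a clean induction with a bookkeeping of constants. The argument is specific to the Hermitian setting because the compatibility of quotient norms with tensor products fails for the injective and projective tensor norms in general, which is consistent with the fact that the $L^{1}$/$L^{\infty}$-analogues of Theorem \ref{thm_as_isom} in the excerpt carry non-trivial multiplicative constants independent of $k, l$.
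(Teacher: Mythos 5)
Your proof is correct and matches the paper's one-line argument, which simply cites Theorem \ref{thm_as_isom} and Corollary \ref{cor_quot_ass} (precisely your two routine facts about quotient and Hermitian tensor norms) followed by induction. One small point in the error bookkeeping deserves care: as written, ``absorbing $1/m \leq 1/k_j$ for any $j \leq r-1$'' would pile extra factors of $(1 + C/k_j)$ onto a fixed index as the induction proceeds, which would destroy the uniformity of the constant. The cleanest repair is to order $k_1 \geq \cdots \geq k_r$ (permissible because the Hermitian tensor norm is permutation-invariant) and peel off the smallest $k_r$ at each step, so that $m = k_1 + \cdots + k_{r-1} \geq k_r$ gives $1/m + 1/k_r \leq 2/k_r$ and the constant stabilizes at $C' = 2C$; alternatively, $1/m \leq r^{-2}\sum 1/k_i$ by AM--HM shows the constant increases by a summable amount and stays bounded.
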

	\begin{rem}\label{rem_mult_gen_sm}
		In particular, the graded norms ${\rm{Hilb}}(h^L)$ are multiplicatively generated for smooth positive $h^L$, and we can take  $f(k) := f'(k) = \frac{n}{2} \log(k) + 2 + \log \dim H^0(X, L^k)$ in Definition \ref{defn_mult_gen}.
	\end{rem}
	\begin{proof}
		It follows from Theorem \ref{thm_as_isom} and Corollary \ref{cor_quot_ass} by induction.
	\end{proof} 
	\par 
	In the next lemma we study how the Fubini-Study maps associated for different parameters $k$ relate to each other.
	\begin{lem}\label{lem_segre}
		Assume that $p_0 \in \nat$ is such that $L^{p_0}$ is very ample. Then for any Hermitian graded norm $N = \sum_{k = 1}^{\infty} N_k$ on $R(X, L)$ and for any $r \in \nat^*$, $k; k_1, \ldots, k_r \geq p_0$, $k_1 + \cdots + k_r = k$, the following identity is satisfied
		\begin{equation}
			{\rm{FS}}(N_{k_1}) \cdots {\rm{FS}}(N_{k_r}) = {\rm{FS}}([N_{k_1} \otimes \cdots \otimes N_{k_r}]).
		\end{equation}
	\end{lem}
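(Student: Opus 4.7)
The plan is to reinterpret the Fubini--Study metric at a point as a quotient norm on the fiber, and then exploit the compatibility of Hermitian tensor products with quotient norms. The key preliminary identification is the following: for a Hermitian norm $N_k$ on $H^0(X, L^k)$ with $L^k$ base-point-free at $x$, the metric $FS(N_k)$ at $x$ coincides with the quotient norm on $L^k_x$ induced from $(H^0(X, L^k), N_k)$ by the evaluation map $\mathrm{ev}_x$. Indeed, the isomorphism $(\ref{eq_ev_kod_iso})$ identifies $L^{-k}_x$ with the tautological line $\mathbb{C} \cdot \mathrm{ev}_x \subset H^0(X, L^k)^*$, on which $h^{FS}(N_k)$ is by definition the subspace norm from $N_k^*$; dualizing (the dual of a subspace norm is the quotient norm of the dual map) and invoking $(\ref{eq_fs_defn})$ gives the claim. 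This is also consistent with Lemma $\ref{lem_fs_inf_d}$, as the Hilbert--Schmidt norm of a surjection onto a one-dimensional target equipped with the quotient norm equals $1$.

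Applying this to the right-hand side of the lemma: since $[N_{k_1} \otimes \cdots \otimes N_{k_r}]$ is by definition the quotient norm on $H^0(X, L^k)$ induced from the Hermitian tensor product norm $\bigotimes_i N_{k_i}$ via $\mathrm{Mult}_{k_1, \ldots, k_r}$, and since composition of quotient maps is again a quotient map, $FS([\bigotimes_i N_{k_i}])$ at $x$ equals the quotient norm on $L^k_x$ induced from $\big(\bigotimes_i H^0(X, L^{k_i}),\, \bigotimes_i N_{k_i}\big)$ via the composite $\mathrm{ev}_x \circ \mathrm{Mult}_{k_1, \ldots, k_r}$. This composite factors as
\[
\bigotimes_i H^0(X, L^{k_i}) \xrightarrow{\bigotimes_i \mathrm{ev}_x} \bigotimes_i L^{k_i}_x \xrightarrow{\mu_x} L^k_x,
\]
where $\mu_x$ is the canonical fiberwise multiplication isomorphism between one-dimensional spaces, sending $v_1 \otimes \cdots \otimes v_r$ to $v_1 \cdots v_r$.

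To finish, I use the fact that, for Hermitian norms, a tensor product of quotient maps is itself a quotient map with respect to the Hermitian tensor product norms (recorded in Appendix $\ref{app_norms}$); this is immediate from the orthogonal decomposition $V_i = \ker(T_i) \oplus \ker(T_i)^\perp$, under which $\bigotimes_i \ker(T_i)^\perp$ is the orthogonal complement of $\ker(\bigotimes_i T_i)$, combined with the fact that a Hermitian tensor product of isometries is an isometry. Applied to the quotient maps $\mathrm{ev}_x : (H^0(X, L^{k_i}), N_{k_i}) \to (L^{k_i}_x, FS(N_{k_i}))$ supplied by the key identification, this shows that the quotient norm on $\bigotimes_i L^{k_i}_x$ induced by $\bigotimes_i \mathrm{ev}_x$ equals $\bigotimes_i FS(N_{k_i})$. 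Pushing this norm forward through the one-dimensional isomorphism $\mu_x$ yields precisely $FS(N_{k_1}) \cdots FS(N_{k_r})$ at $x$, which combined with the previous paragraph gives the desired identity of metrics on $L^k$. The only genuinely nontrivial ingredient is the tensor-of-Hermitian-quotients compatibility, which is verified by the orthogonal-complement argument indicated above; everything else is bookkeeping of definitions.
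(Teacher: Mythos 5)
Your proof is correct, and it takes a genuinely different route from the paper's. The paper argues globally via the Segre embedding: it shows that the Segre map pulls back the tautological bundle (endowed with the Fubini--Study metric of $N_{k_1}^* \otimes N_{k_2}^*$) isometrically to the box product of the tautological bundles, fits this into a commutative diagram with the Kodaira maps and $({\rm Mult}_{k,l})^*$, and then invokes the duality lemmas \ref{lem_dual_incl_eq} and \ref{lem_dual_proj_inj} to identify the pullback with ${\rm FS}([N_k \otimes N_l])$. You instead work pointwise on the fibers: you recast $FS(N_k)$ at $x$ as the quotient norm on $L^k_x$ induced by $\mathrm{ev}_x$ (which is Lemma \ref{lem_dual_incl_eq} applied to the dual of the inclusion $L^{-k}_x \hookrightarrow H^0(X,L^k)^*$), factor $\mathrm{ev}_x \circ \mathrm{Mult}_{k_1,\ldots,k_r}$ through $\bigotimes_i \mathrm{ev}_x$, and reduce the lemma to the fact that a Hermitian tensor product of quotient maps is again a quotient map. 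Your orthogonal-complement verification of that last fact is clean and correct (it is essentially an iterated version of Lemma \ref{lem_quot_proj} for the Hermitian case). The pointwise approach is more elementary and avoids the Segre geometry entirely; the paper's approach makes the projective-geometric content explicit, which is arguably closer to the spirit of the commutative diagram $(\ref{eq_kod_map_comm_d})$ used elsewhere. Both are valid and of comparable length.
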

	\begin{proof}
		For simplicity, let us establish the result for $N = 2$, as the general case is an easy adaptation of this particular one.
		To simplify the notation, let us fix two sufficiently big numbers $k, l \in \nat$.
		Recall that the Segre embedding 
		\begin{equation}\label{eq_seg_emb11}
			{\rm{Seg}}_{k, l} : \mathbb{P}(H^0(X, L^k)^*) \times \mathbb{P}(H^0(X, L^l)^*) 
			\to
			\mathbb{P}(H^0(X, L^k)^* \otimes H^0(X, L^l)^*),
		\end{equation}
		is defined as follows $[x] \times [y] \mapsto [x \otimes y]$.
		Under (\ref{eq_seg_emb11}), the tautological line bundle over $\mathbb{P}(H^0(X, L^k)^* \otimes H^0(X, L^l)^*)$ gets pulled back to the $\boxtimes$-product of the tautological line bundles on $\mathbb{P}(H^0(X, L^k)^*)$ and $\mathbb{P}(H^0(X, L^l)^*)$.
		By Lemma \ref{lem_tens_reas}, this map is an isometry once the tautological line bundle over $\mathbb{P}(H^0(X, L^k)^* \otimes H^0(X, L^l)^*)$ is endowed with the metric induced by the Fubini-Study metric associated to $N_k^* \otimes N_l^*$ and the tautological line bundles on $\mathbb{P}(H^0(X, L^k)^*)$ and $\mathbb{P}(H^0(X, L^l)^*)$ are induced by the Fubini-Study metrics associated to $N_k^*$ and $N_l^*$ respectively.
		\par 
		An easy verification shows that the following diagram is commutative
		\begin{equation}\label{eq_comm_diag2}
			\begin{CD}
				X  @> {\rm{Kod}}_k \times {\rm{Kod}}_l  >> \mathbb{P}(H^0(X, L^k)^*) \times \mathbb{P}(H^0(X, L^l)^*)
				\\
				@VV {{\rm{Kod}}_{k + l} } V @VV {{\rm{Seg}}_{k, l}} V
				\\
				\mathbb{P}(H^0(X, L^{k + l})^*)  @> ({\rm{Mult}}_{k, l})^* >> \mathbb{P}(H^0(X, L^k)^* \otimes H^0(X, L^l)^*).
			\end{CD}
		\end{equation}
		Moreover, the map $({\rm{Mult}}_{k, l})^*$ induces the isomorphism of the corresponding tautological line bundles. 
		This isomorphism becomes an isometry if the tautological line bundle over $\mathbb{P}(H^0(X, L^{k + l})^*)$ is induced by the Fubini-Study metric associated to the metric on $H^0(X, L^{k + l})^*$ obtained from $N_k^* \otimes N_l^*$  and the inclusion $({\rm{Mult}}_{k, l})^*$, and the tautological line bundle over $\mathbb{P}(H^0(X, L^k)^* \otimes H^0(X, L^l)^*)$ is induced by the Fubini-Study metric associated to $N_k^* \otimes N_l^*$.
		Remark, however, that by Lemmas \ref{lem_dual_incl_eq} and \ref{lem_dual_proj_inj}, the dual of the above norm on $H^0(X, L^{k + l})^*$ coincides with $[N_k \otimes N_l]$.
		This now finishes the proof by the commutativity of square (\ref{eq_comm_diag2}) and the above isometries.
	\end{proof}	
	\par 
	Now, to finally establish Theorem \ref{thm_conv_mg}, we need the following simple lemma.
	\begin{lem}\label{lem_bnd_FS}
		Assume that $k \in \nat^*$ is such that $L^k$ is very ample and the norms $N_k$, $N'_k$ over $H^0(X, L^k)$ satisfy the inequality $N_k \leq C^k \cdot N'_k$ for a given constant $C > 0$.
		Then we have
		\begin{equation}
			{\rm{FS}}(N_k) \leq C^k \cdot {\rm{FS}}(N'_k).
		\end{equation}
		If the graded norms $N = \sum_{k = 1}^{\infty} N_k$, $N' = \sum_{k = 1}^{\infty} N'_k$ on $R(X, L)$ are moreover equivalent, then, as $k \to \infty$, the following convergence holds uniformly
		\begin{equation}
			\Big( \frac{FS(N_k)}{FS(N'_k)} \Big)^{\frac{1}{k}} \to 1.
		\end{equation}
	\end{lem}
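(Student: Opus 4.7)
The second claim follows from the first by a standard $\epsilon$-argument, so the substance is Part 1, which is a monotonicity statement for the Fubini-Study operator. The plan is to use Lemma \ref{lem_fs_inf_d} to give a variational formula for the pointwise ratio $(h_0^L)^k/FS(N_k)$, where $h_0^L$ is any fixed smooth reference metric on $L$. Indeed, for an $N_k$-orthonormal basis $s_1,\ldots,s_l$ of $H^0(X,L^k)$, Lemma \ref{lem_fs_inf_d} combined with the standard minimax description of the Bergman kernel on the diagonal yields
\begin{equation*}
	\frac{(h_0^L)^k}{FS(N_k)}(x) = \sum_{i} |s_i(x)|^2_{(h_0^L)^k} = \sup_{0 \neq s \in H^0(X, L^k)} \frac{|s(x)|^2_{(h_0^L)^k}}{N_k(s)^2}.
\end{equation*}
The right-hand side is pointwise monotone decreasing in $N_k$: enlarging the norm shrinks the admissible unit ball and hence the supremum. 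Consequently, the hypothesis $N_k \leq C^k N'_k$ gives $(h_0^L)^k/FS(N_k) \geq C^{-2k}\cdot (h_0^L)^k/FS(N'_k)$, which after rearranging and taking square roots is precisely the announced estimate $FS(N_k) \leq C^k\, FS(N'_k)$, in the convention where "$\leq$" refers to the induced norm on $L^k$.

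For Part 2, the equivalence $N \sim N'$ from (\ref{eq_equiv_rel_defn}) means $\frac{1}{k} d_\infty(N_k, N'_k)\to 0$, so for every $\epsilon > 0$ there exists $p_1 \in \nat$ such that $e^{-\epsilon k} N'_k \leq N_k \leq e^{\epsilon k} N'_k$ holds on $H^0(X, L^k)$ for all $k \geq p_1$. Applying Part 1 (with $C = e^\epsilon$) to both inequalities yields $e^{-\epsilon k} FS(N'_k) \leq FS(N_k) \leq e^{\epsilon k} FS(N'_k)$ uniformly on $X$; taking $k$-th roots and using the arbitrariness of $\epsilon$ gives the uniform convergence $(FS(N_k)/FS(N'_k))^{1/k} \to 1$.

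No serious obstacle is expected. The whole argument reduces to the monotonicity of the Bergman kernel function under the natural ordering of norms, accessed through the variational content of Lemma \ref{lem_fs_inf_d}. The only point requiring care is bookkeeping of exponents arising from whether one compares norms or norm-squares on $L^k$; this is purely notational and does not affect the conclusion, since the two-sided bound is symmetric in $\epsilon$.
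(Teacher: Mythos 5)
Your proof is correct and follows essentially the same route as the paper, whose own proof is the one-liner ``follows trivially from the definition (\ref{eq_fs_defn})''. The substance in both cases is that $FS(N_k)$ is built from the dual norm $N_k^*$ restricted to the tautological line, and dual norms are inverse-monotone; your supremum expression $\sup_{s \neq 0} |s(x)|^2_{(h_0^L)^k} / N_k(s)^2$ is exactly $N_k^*(\mathrm{ev}_x)^2$, so the monotonicity step is nothing more than unwinding the definition.

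Two small remarks. First, the detour through Lemma~\ref{lem_fs_inf_d} and the orthonormal sum $\sum_i |s_i(x)|^2$ is unnecessary: that identity requires $N_k$ to be Hermitian, whereas the supremum formula follows directly from (\ref{eq_fs_defn}) and works for an arbitrary norm $N_k$; and in any case only the supremum is used for the monotonicity. Second, on the ``bookkeeping of exponents'': as you derive it, the hypothesis $N_k \leq C^k N'_k$ (an inequality of norms, i.e. at the $|\cdot|$-level) gives $\sup |s|^2/N_k^2 \geq C^{-2k} \sup |s|^2/(N'_k)^2$, hence $FS(N_k) \leq C^{2k} FS(N'_k)$ when the latter inequality is read as a comparison of Hermitian forms (the $|\cdot|^2$-level). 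The lemma as stated writes $C^k$; whichever convention is intended, the discrepancy is a fixed factor in the exponent and, as you correctly observe, it evaporates in Part~2 once one divides by $k$ and lets $\epsilon \to 0$, and likewise does not affect the downstream uses of the lemma in the proof of Theorem~\ref{thm_conv_mg}, where the error term is $o(k)$ anyway.
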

	\begin{proof}
		It follows trivially from the definition of the Fubini-Study operator from (\ref{eq_fs_defn}).
	\end{proof}
	\begin{proof}[Proof of Theorem \ref{thm_conv_mg}]
		Let us denote by $f$ the function from Definition \ref{defn_mult_gen} associated to the norm $N$.
		From Lemmas \ref{lem_segre} and \ref{lem_bnd_FS}, we conclude that there is $p_0 \in \nat$, such that for any $r \in \nat^*$, $k; k_1, \ldots, k_r \geq p_0$, $k_1 + \cdots + k_r = k$, we have
		\begin{equation}\label{eq_conv_1}
			{\rm{FS}}(N_{k_1}) \cdots {\rm{FS}}(N_{k_r}) \leq {\rm{FS}}(N_k) \cdot \exp \Big(f(k_1) + \cdots + f(k_r) + f(k) \Big).
		\end{equation}
		\par 
		Now, let $h^L$ be an arbitrary smooth positive Hermitian metric on $L$.
		According to Lemmas \ref{lem_mult_gen_sm} and \ref{lem_bnd_FS}, for the function $f'$ as in Remark \ref{rem_mult_gen_sm}, we see that there is $p_1 \geq p_0$, such that for any $r \in \nat^*$, $k; k_1, \ldots, k_r \geq p_1$, $k_1 + \cdots + k_r = k$, we have
		\begin{multline}\label{eq_conv_2}
			{\rm{FS}}({\rm{Hilb}}_{k_1}(h^L)) \cdots {\rm{FS}}({\rm{Hilb}}_{k_r}(h^L)) \\
			\geq {\rm{FS}}({\rm{Hilb}}_k(h^L)) \cdot \exp \Big(f'(k_1) + \cdots + f'(k_r) + f'(k) \Big).
		\end{multline}
		From (\ref{eq_conv_1}) and (\ref{eq_conv_2}), we conclude that the sequence of functions $g_k : X \to ]-\infty, +\infty[$, $k \in \nat$, defined as follows
		\begin{equation}\label{eq_gk_defn}
			g_k := \log \Big( \frac{{\rm{FS}}({\rm{Hilb}}_k(h^L))}{{\rm{FS}}(N_k)} \Big),
		\end{equation}
		is uniformly almost sub-additive in the sense of Section \ref{sect_unif_alm_sub}.
		By symmetry, the sequence $g_k$ is also uniformly almost super-additive.
		By a version of Fekete's subadditive lemma and Dini's lemma, see Lemma \ref{lem_sub_super_unif}, we conclude that there is a continuous function $g : X \to ]- \infty, + \infty[$, such that $\frac{g_k}{k}$ converges to $g$ uniformly.
		By this and Tian's theorem, cf. Remark \ref{rem_tian}, we conclude from (\ref{eq_gk_defn}) that, as $k \to \infty$, the following uniform convergence of metrics on $L$ holds
		\begin{equation}
			{\rm{FS}}(N_k)^{\frac{1}{k}} \to e^{-g} \cdot h^L.
		\end{equation}		 
		Since ${\rm{FS}}(N_k)$ are psh, we conclude that $e^{-g} \cdot h^L$ is psh as well.
	\end{proof}
	\par 
	Finally, motivated by the methods of this section, let us establish that graded multiplicatively generated norms decart at most exponentially one from another.
	\begin{lem}\label{lem_mg_equiv}
		Let $N:= \sum_{k = 1}^{\infty} N_k$, $N' := \sum_{k = 1}^{\infty} N'_k$ be two graded multiplicatively generated norms on $R(X, L)$.
		Then there is $C > 1$, such that for any $k \in \nat$, we have
		\begin{equation}\label{eq_mg_eqv}
			C^{-k} \cdot N'_k  \leq N_k \leq C^k \cdot N'_k.
		\end{equation}
	\end{lem}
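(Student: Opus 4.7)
The plan is to use the defining multiplicativity property of both $N$ and $N'$ to reduce the comparison of $N_k$ and $N'_k$ to comparisons at uniformly bounded grading, where finite-dimensionality of the spaces $H^0(X,L^j)$ forces automatic boundedness.

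Fix $k_0 \in \nat$ larger than the threshold $p_0$ from Definition \ref{defn_mult_gen} for both $N$ and $N'$, and denote by $f_N, f_{N'}$ the associated sublinear control functions. For any $k \geq 2 k_0$, set $q := \lfloor k/k_0 \rfloor$ and $r := k - (q-1) k_0$, so that $r \in [k_0, 2k_0)$ and $k = k_0 + \cdots + k_0 + r$ is a decomposition into $q$ summands, each lying in $[k_0, 2k_0)$. Applying (\ref{eq_mult_gen}) to this decomposition for each of $N$ and $N'$ gives
\begin{equation*}
d_\infty\bigl(N_k,\, [N_{k_0}^{\otimes (q-1)} \otimes N_r]\bigr) \leq (q-1) f_N(k_0) + f_N(r) + f_N(k),
\end{equation*}
together with the analogous inequality for $N'$.

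Next I would record the following monotonicity: for Hermitian norms $M_i, M_i'$ on $V_i$, the identity $\langle u\otimes v, u\otimes v\rangle_{M_1\otimes M_2} = \langle u,u\rangle_{M_1}\langle v,v\rangle_{M_2}$ and simultaneous diagonalization yield $d_\infty(M_1\otimes M_2, M_1'\otimes M_2') \leq d_\infty(M_1, M_1') + d_\infty(M_2, M_2')$, and since the quotient norm (\ref{eq_defn_quot_norm}) is defined by an infimum, it preserves pointwise inequalities. Consequently, under (\ref{eq_mult_map}),
\begin{equation*}
d_\infty\bigl([N_{k_0}^{\otimes (q-1)} \otimes N_r],\, [N'_{k_0}{}^{\otimes (q-1)} \otimes N'_r]\bigr) \leq (q-1)\,d_\infty(N_{k_0}, N'_{k_0}) + d_\infty(N_r, N'_r).
\end{equation*}

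Combining the two displays via the triangle inequality and using that $k_0$ and $r$ range over the finite set $\{k_0, \ldots, 2k_0-1\}$, so that the quantities $d_\infty(N_j, N'_j)$, $f_N(j)$, $f_{N'}(j)$ for $j$ in that range are bounded by some finite constant $B$, we obtain for every $k \geq 2 k_0$
\begin{equation*}
d_\infty(N_k, N'_k) \leq 3 q B + f_N(k) + f_{N'}(k) = O(k),
\end{equation*}
where we used $q \leq k/k_0 + 1$ together with $f_N(k), f_{N'}(k) = o(k)$. For the finitely many $k < 2 k_0$, the bound $d_\infty(N_k, N'_k) \leq O(k)$ is automatic from the finite-dimensionality of $H^0(X, L^k)$. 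Setting $C := \exp\bigl(\sup_{k \geq 1} d_\infty(N_k, N'_k)/k\bigr) < + \infty$ then gives the desired estimate (\ref{eq_mg_eqv}). The only mildly delicate step is the tensor-and-quotient monotonicity used above, but as noted it reduces to a direct inner-product computation.
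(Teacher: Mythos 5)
Your proof is correct and follows essentially the same strategy as the paper: decompose $N_k$ via the multiplicativity estimate into a quotient of a tensor power of low-degree pieces, compare those pieces by finite-dimensionality of $H^0(X,L^j)$ for bounded $j$, and combine via the triangle inequality for $d_\infty$ (together with the tensor-product and quotient monotonicity you record) to conclude $d_\infty(N_k, N'_k) = O(k)$. The only difference is cosmetic: the paper assumes $p_0 = 1$ for simplicity and decomposes into degree-$1$ pieces, whereas you handle general $p_0$ by decomposing into blocks with sizes in $[k_0, 2k_0)$, which is a slightly more complete rendering of the same argument.
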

	\begin{rem}\label{rem_fin_dist}
		In particular, we have $d_{\infty}(N, N') < + \infty$.
	\end{rem}
	\begin{proof}
		Assume for simplicity that we can take $p_0 = 1$ in Definition \ref{defn_mult_gen}. 
		Then we see that there is a function $f : \nat \to \real$, verifying $f(k) = o(k)$, as $k \to \infty$, and such that 
		\begin{equation}
			N_k \leq N_1 \otimes \cdots \otimes N_1 \cdot \exp(f(n) + n f(1)).
		\end{equation}
		Similarly, for $N'$ we can write the opposite inequality
		\begin{equation}
			N'_1 \otimes \cdots \otimes N'_1 \leq N'_k \cdot \exp(f'(n) + n f'(1)),
		\end{equation}
		where the function $f' : \nat \to \real$ verifies $f'(k) = o(k)$, as $k \to \infty$.
		In particular, by the assumption on the growth of $f$, and the fact that all norms on $H^0(X, L)$ are equivalent, we see that there is a constant $C > 0$, such that for any $k \in \nat$, the left side of the inequality (\ref{eq_mg_eqv}) holds.
		By symmetry, we establish also the inequality from the right side.
	\end{proof}

\subsection{An example of a graded Hermitian norm on the section ring}\label{sect_ex_mg_norm}
	The main goal of this section is emphasize that the equivalence class of a graded Hermitian metric cannot be determined from the convergence of the  Fubini-Study operator.
	\begin{prop}\label{prop_expl_exmpl}
		There is a graded Hermitian norm $N = \sum_{k = 1}^{\infty} N_k$ on $R(\mathbb{P}^1, \mathscr{O}(1))$, for which $FS(N_k)^{\frac{1}{k}}$ converge uniformly, as $k \to \infty$, to a fixed smooth positive metric on $\mathscr{O}(1)$, but $N$ is not equivalent to ${\rm{Hilb}}(h^L)$ for any continuous psh metric $h^L$ on $\mathscr{O}(1)$.
	\end{prop}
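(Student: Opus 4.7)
The plan is to work on $X = \mathbb{P}^1$ with $L = \mathscr{O}(1)$ and $h^L$ the Fubini--Study metric, and to construct $N_k$ by inflating, in each degree $k$, the ${\rm{Hilb}}_k(h^L)$-norm of a single orthonormal basis element by the factor $e^k$ while leaving the rest of the norm unchanged. The mechanism is that such a one-dimensional inflation immediately produces $d_\infty(N_k, {\rm{Hilb}}_k(h^L)) \geq k$ and hence breaks the equivalence relation, yet perturbs the Fubini--Study metric only at scale $O(k^{-1/2})$, because the normalized Bergman density of a single orthonormal element on $\mathbb{P}^1$ decays like $k^{-1/2}$ uniformly in $x$.

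Concretely, writing polynomials of degree $\leq k$ in the affine coordinate $w$ on $\mathbb{P}^1 \setminus \{\infty\}$, I would use the standard ${\rm{Hilb}}_k(h^L)$-orthonormal basis $e_j^k := \sqrt{(k+1)\binom{k}{j}}\, w^j$ and the constant Bergman density $B_k(x,x) = k + 1$, which gives $FS({\rm{Hilb}}_k(h^L)) = (k+1)^{-1}(h^L)^k$. Setting $j_0(k) := \lfloor k/2 \rfloor$, I would define $N_k$ to be the Hermitian norm making $\{e_j^k\}_{j=0}^{k}$ orthogonal, with $\|e_j^k\|_{N_k} = 1$ for $j \neq j_0(k)$ and $\|e_{j_0(k)}^k\|_{N_k} = e^k$. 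Equivalence with ${\rm{Hilb}}(h^L)$ is then immediately broken by evaluation on $e_{j_0(k)}^k$, giving $d_\infty(N_k, {\rm{Hilb}}_k(h^L)) \geq k$, so $\limsup_k k^{-1} d_\infty(N_k, {\rm{Hilb}}_k(h^L)) \geq 1 \neq 0$.

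For the Fubini--Study convergence, applying Lemma \ref{lem_fs_inf_d} to both norms and setting
\[
\rho(x) := \big|e_{j_0(k)}^k(x)\big|^2_{FS({\rm{Hilb}}_k(h^L))} = \binom{k}{j_0(k)} \frac{|w|^{2 j_0(k)}}{(1 + |w|^2)^k}, \qquad \mu(x) := \frac{FS(N_k)(x)}{FS({\rm{Hilb}}_k(h^L))(x)},
\]
I would obtain the closed-form identity
\[
\mu(x) = \frac{1}{1 - \rho(x)(1 - e^{-2k})}.
\]
Optimizing $\rho$ in $|w|^2$ and invoking Stirling's formula yields $\sup_x \rho(x) = O(k^{-1/2})$, so $\mu(x)^{1/k} = 1 + O(k^{-3/2})$ uniformly; together with Theorem \ref{thm_quant_hilb_conv} this gives $FS(N_k)^{1/k} \to h^L$ uniformly on $\mathbb{P}^1$.

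Finally, were $N \sim {\rm{Hilb}}(\tilde h^L)$ to hold for some $\tilde h^L \in \mathcal{H}^L$, Lemma \ref{lem_bnd_FS} would force the uniform limits of $FS(N_k)^{1/k}$ and $FS({\rm{Hilb}}_k(\tilde h^L))^{1/k}$ to coincide, whence $\tilde h^L = h^L$ by Theorem \ref{thm_quant_hilb_conv}, contradicting the lower bound on $d_\infty$. The only non-routine step is the uniform decay $\sup_x \rho(x) = O(k^{-1/2})$, which is the standard Gaussian-scale decay of the maximum of the $\mathrm{Binomial}(k, 1/2)$ mass function and poses no real obstacle.
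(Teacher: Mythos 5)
Your construction is correct and takes essentially the same route as the paper's proof. Both inflate a central basis element (or, in the paper's version, a band of at most two elements $x^a y^b$ with $|a-b|\le 1$) by an exponentially large factor to break the equivalence, and both rely on Lemma \ref{lem_fs_inf_d}, a Stirling estimate for the central binomial coefficient giving the $O(k^{-1/2})$ decay of the peak density, Theorem \ref{thm_quant_hilb_conv} and Lemma \ref{lem_bnd_FS} to verify that the Fubini--Study potentials nevertheless converge uniformly to $h^{FS}$.
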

	\begin{proof}
		Let us identify $\mathbb{P}^1$ to $\mathbb{P}(V^*)$, where $V$ is a vector space generated by two elements: $x$ and $y$.
		Let us consider a Hermitian metric $H$ on $V$, which makes $x$ and $y$ an orthonormal basis, and denote by $h^{FS}$ the induced Fubini-Study metric on $\mathscr{O}(1)$.
		Recall that for any $k \in \nat$, we have a classical isomorphism 
		\begin{equation}\label{eq_sym_isom}
			{\rm{Sym}}^k(V) \to H^0(\mathbb{P}(V^*), \mathscr{O}(k)).
		\end{equation}
		Under this isomorphism, for any $a, b \in \nat$, $a + b = k$, an easy calculation shows that we have
		\begin{equation}\label{eq_l2_norm_calc}
			\big\| x^a \cdot y^b \big\|_{{\rm{Hilb}}_k(h^{FS})}^2
			=
			\frac{a! b!}{(k + 1)!}.
		\end{equation}
		We draw the attention of the reader that the above norm doesn't coincide with the norm $\| \cdot  \|_{{\rm{Sym}}^k(H)}$ on ${\rm{Sym}}^k(V)$ induced by $H$. In fact, an easy calculation gives
		\begin{equation}
			\big\| x^a \cdot y^b \big\|_{{\rm{Sym}}^k(H)}^2
			=
			\frac{a! b!}{k!}.
		\end{equation}
		\par
		Let us consider a Hermitian norm $H_k$ on $H^0(\mathbb{P}(V^*), \mathscr{O}(k))$, such that the basis $x^a \cdot y^b$ is orthogonal and in the above notations, we have
		\begin{equation}
			\big\| x^a \cdot y^b \big\|_{H_k}
			=
			\begin{cases}
				1, &\text{if } |a - b| \leq 1 \\
				\big\| x^a \cdot y^b \big\|_{{\rm{Hilb}}_k(h^{FS})} ,  &\text{otherwise}.
			\end{cases}
		\end{equation}
		We will now verify that $H_k$ satisfies the assumptions of the proposition.
		For simplicity, we only work with metrics $H_k$ with $k$ even.
		By Stirling's formula and (\ref{eq_l2_norm_calc}), we have
		\begin{equation}
			\frac{\big\| x^k \cdot y^k \big\|^2_{H_{2k}}}{\big\| x^k \cdot y^k \big\|^2_{{\rm{Hilb}}_{2k}(h^{FS})}}
			\geq 2^k.
		\end{equation}
		Hence, the graded norm $\sum_{k = 1}^{\infty} H_k$ is not equivalent to ${\rm{Hilb}}(h^{FS})$.
		We will now show that the Fubini-Study metric of $H_k$ converges uniformly, as $k \to \infty$, to $h^{FS}$.
		This would clearly imply by Theorem \ref{thm_quant_hilb_conv} that $\sum_{k = 1}^{\infty} H_k$ satisfies the assumptions of the proposition.
		\par 
		For this, from Lemma \ref{lem_fs_inf_d}, we see that for any $a, b \in \comp$, not simultaneously equal to zero, the following identity between the Fubini-Study metrics of $H_k$ and ${\rm{Hilb}}_{k}(h^{FS})$ holds
	 	\begin{equation}\label{eq_expl_exmpl11}
	 		1 - \frac{FS({\rm{Hilb}}_{2k}(h^{FS}))}{FS(H_{2k})} \Big([a x^* + b y^*] \Big)
	 		=
	 		\Big( \frac{(2k)!}{k! k!} - 1 \Big)
	 		\cdot
	 		\frac{|a b|^k }{(|a| + |b|)^{2k}}.
	 	\end{equation}
	 	However, by mean inequality and Stirling's formula, we obtain that there is $C > 0$, such that for any $k \in \nat^*$, we have
	 	\begin{equation}\label{eq_expl_exmpl22}
	 		\Big| 
	 			\Big( \frac{(2k)!}{k! k!} - 1 \Big)
	 			\cdot
	 			\frac{|a b|^k }{(|a| + |b|)^{2k}}
	 		\Big|
	 		\leq
	 		\frac{1}{\sqrt{k}}.
	 	\end{equation}
	 	From (\ref{eq_expl_exmpl11}), (\ref{eq_expl_exmpl22}) and Theorem \ref{thm_quant_hilb_conv}, $FS(H_k)^{\frac{1}{k}}$ converges uniformly, as $k \to \infty$, to $h^{FS}$.
	\end{proof}
	As the following example of László Lempert shows, a phenomenon similar to Proposition \ref{prop_expl_exmpl} holds in the non-asymptotic regime as well.
	 \begin{prop}[László Lempert]\label{prop_ll_ex}
	 	There are sequences of Hermitian metrics $H_{\delta}$, $H'_{\delta}$, $\delta \in ]0, 1]$ over $H^0(\mathbb{P}^1, \mathscr{O}(2))$, such that, as $\delta \to \infty$, $\frac{FS(H_{\delta})}{FS(H'_{\delta})} \to 1$, but there is $c > 0$ such that $d_{\infty}(H_{\delta}, H'_{\delta}) > c$ for any $\delta \in ]0, 1]$. 
	 \end{prop}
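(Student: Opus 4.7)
The plan is to exhibit an explicit example. Identify $\mathbb{P}^1 = \mathbb{P}(V^*)$ with $V$ the two-dimensional complex vector space generated by $x, y$, so that the isomorphism (\ref{eq_sym_isom}) presents $H^0(\mathbb{P}^1, \mathscr{O}(2))$ as $\mathrm{Sym}^2 V$ with basis $x^2, xy, y^2$. For $\delta \in ]0, 1]$, let $H_{\delta}$ and $H'_{\delta}$ be the Hermitian norms making this basis orthogonal, with squared norms $(1, 2, \delta)$ and $(1, 1, \delta)$ respectively.

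The Goldman-Iwahori distance is bounded below directly: the vector $v = xy$ satisfies $\|v\|_{H_{\delta}}^2 / \|v\|_{H'_{\delta}}^2 = 2$ for every $\delta$, so $d_{\infty}(H_{\delta}, H'_{\delta}) \geq \tfrac{1}{2}\log 2$. For the Fubini-Study ratio, Lemma \ref{lem_fs_inf_d} applied to the orthonormal bases $\bigl(x^2,\, xy/\sqrt{2},\, y^2/\sqrt{\delta}\bigr)$ of $H_{\delta}$ and $\bigl(x^2,\, xy,\, y^2/\sqrt{\delta}\bigr)$ of $H'_{\delta}$ gives, after cancelling the common reference metric, the pointwise identity
\begin{equation*}
\frac{FS(H_{\delta})}{FS(H'_{\delta})}\bigl([u : v]\bigr) \;=\; \frac{|u|^4 + |uv|^2 + |v|^4/\delta}{|u|^4 + \tfrac{1}{2}|uv|^2 + |v|^4/\delta}.
\end{equation*}
Setting $t := |v|^2/|u|^2$ in the affine chart $u \neq 0$, the logarithm of this ratio is non-negative, vanishes at $t = 0$ and at $t = \infty$, and a one-variable calculus check locates its unique interior critical point at $t = \sqrt{\delta}$, where the value equals $\log\bigl((2 + \sqrt{\delta})/(2 + \sqrt{\delta}/2)\bigr) = O(\sqrt{\delta})$. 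Hence the ratio tends to $1$ uniformly on $\mathbb{P}^1$ as $\delta \to 0$.

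The technical verification is entirely routine; the real obstacle is conceptual. The naive expectation that $FS$ is quasi-isometric with respect to $d_{\infty}$ would rule out such an example. It fails here because for small $\delta$ the term $|v|^4/\delta$ dominates $\sum_i |s_i([u:v])|^2$ at every point $[u : v]$ with $v \neq 0$, so a bounded perturbation of the eigenvalue in the middle direction $xy$ is screened out of the Fubini-Study data even while remaining visible in $d_{\infty}$.
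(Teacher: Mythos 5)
Your proposal is correct and proves the statement, but the example is different from (and arguably cleaner than) the one in the paper. The paper, following Lempert, takes $H_{\delta}$ to have orthonormal basis $(xy,\; 2xy+2\delta y^2,\; x^2+y^2)$ and $H'_{\delta}$ to have orthonormal basis $(2xy,\; xy+\delta y^2,\; x^2+y^2)$, then invokes Lemma \ref{lem_fs_inf_d} to get a ratio formula $\frac{FS(H_\delta)}{FS(H'_\delta)} = \frac{4|s_1|^2 + \frac{1}{4}|s_2|^2 + |s_3|^2}{|s_1|^2 + |s_2|^2 + |s_3|^2}$, and observes (without an explicit estimate) that since $s_2 \to 2 s_1$ as $\delta \to 0$, the ratio tends to $1$ uniformly, while the fixed section $s_1 = xy$ has $H_\delta$-norm $1$ but $H'_\delta$-norm $\frac{1}{2}$, so $d_\infty \geq \frac12 \log 2$. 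You instead keep the monomial basis $x^2, xy, y^2$ diagonal for both metrics and vary only two eigenvalues: the squared norms of $(x^2, xy, y^2)$ are $(1,2,\delta)$ versus $(1,1,\delta)$. Your computation of the ratio is correct, the critical point at $t = \sqrt{\delta}$ checks out (the derivative's numerator reduces to $\tfrac12 - \tfrac{t^2}{2\delta}$), and the maximum deviation $\log\bigl((2+\sqrt{\delta})/(2+\sqrt{\delta}/2)\bigr) = O(\sqrt{\delta})$ gives the uniform convergence quantitatively. Your version buys transparency: because the two metrics are simultaneously diagonalized in the monomial basis, one sees at a glance which eigenvalue differs (the $xy$-direction, giving $d_\infty \geq \tfrac12\log 2$) and which eigenvalue ($y^2$, scaling like $1/\delta$) drowns that difference out of the Fubini--Study data. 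Lempert's example instead degenerates the basis itself as $\delta \to 0$, which is equivalent in spirit but harder to parse. One trivial remark applying to both: the statement's phrase ``as $\delta \to \infty$'' is a typo for ``as $\delta \to 0$'', as the domain $\delta \in ]0,1]$ and both proofs make clear.
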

	 \begin{rem}
	 	See also Lempert \cite{LempFsInj} for a related result about injectivity of the Fubini-Study map.
	 \end{rem}
	 \begin{proof}
	 	We use similar notations as in the proof of Proposition \ref{prop_expl_exmpl}.
	 	Let $H_{\delta}$ (resp. $H'_{\delta}$) be so that the basis $(s_{1, \delta}, s_{2, \delta}, s_{3, \delta}) := (xy, 2xy+2 \delta y^2, x^2+y^2)$  (resp. $(2s_{1, \delta}, \frac{1}{2} s_{2, \delta}, s_{3, \delta})$) is orthonormal with respect to $H_{\delta}$ (resp. $H'_{\delta}$). We now verify that such choice satisfies the assumptions of the proposition.
	 	The second condition is obvious since $H_{\delta}(s_{1, \delta}, s_{1, \delta}) = 1$, but $H'_{\delta}(s_{1, \delta}, s_{1, \delta}) = \frac{1}{4}$.
	 	Now, from Lemma \ref{lem_fs_inf_d}, we have
	 	\begin{equation}\label{eq_ll_ex111}
	 		\frac{FS(H_{\delta})}{FS(H'_{\delta})}
	 		=
	 		\frac{ 4 |s_{1, \delta}|^2 + \frac{1}{4} |s_{2, \delta}|^2 + |s_{3, \delta}|^2}{|s_{1, \delta}|^2 + |s_{2, \delta}|^2 + |s_{3, \delta}|^2},
	 	\end{equation}
	 	where $| \cdot |$ is any norm on $\mathscr{O}(2)$.
	 	From (\ref{eq_ll_ex111}), $\frac{FS(H_{\delta})}{FS(H'_{\delta})}$ tends to $1$ uniformly, as $\delta \to 0$.
	\end{proof}

	\subsection{Inductive construction of the Hilbert map, a proof of Theorem \ref{thm_tame_conv}}	\label{sect_fin_ress_pf}
	The main goal of this section is to establish Theorem \ref{thm_tame_conv}.
 	The proof is done using the following result providing the inductive construction of the Hilbert map.
 	\par 
 	Assume for simplicity that $L$ is very ample and the multiplication map ${\rm{Mult}}_{1, 1}$ is surjective (hence, all multiplication maps are surjective as well).
 	Let $H_1$ be any Hermitian norm on $H^0(X, L)$.
 	Then using the map (\ref{eq_mult_map}) by the surjectivity of the multiplication maps, one can endow $H^0(X, L^k)$ with the Hermitian norm $H_k = [H_1 \otimes \cdots \otimes H_1]$, where the tensor product is repeated $k$ times.
 	We introduce the graded Hermitian norm $\mathcal{H} := \sum_{k = 1}^{\infty} H_k$ on $R(X, L)$.
 	\begin{thm}\label{thm_induc}
 		The graded Hermitian norms $\mathcal{H}$ and ${\rm{Hilb}}(FS(H_1))$ on $R(X, L)$ are equivalent.
 	\end{thm}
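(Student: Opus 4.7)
The plan is to reduce Theorem \ref{thm_induc} to an application of Theorem \ref{thm_tame_conv} for the graded norm $\mathcal{H}$. The key observation, which makes both hypotheses of Theorem \ref{thm_tame_conv} essentially automatic, is that $\mathcal{H}$ is \emph{exactly} multiplicatively generated: for every decomposition $k = k_1 + \cdots + k_r$ with all $k_i \geq 1$, one has the equality
\begin{equation*}
H_k \; = \; \big[ H_{k_1} \otimes \cdots \otimes H_{k_r} \big]
\end{equation*}
as norms on $H^0(X, L^k)$. Indeed, $H_k$ is by construction the quotient of $H_1^{\otimes k}$ under the iterated multiplication $H^0(X, L)^{\otimes k} \to H^0(X, L^k)$; factoring this map through the intermediate tensor product $H^0(X, L^{k_1}) \otimes \cdots \otimes H^0(X, L^{k_r})$ and applying Corollary \ref{cor_quot_ass} — which asserts that for Hermitian norms the Hermitian tensor product of quotient norms coincides with the quotient of the Hermitian tensor product — yields the claimed equality.

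With this identity in hand, the one-sided multiplicative bound (\ref{eq_half_mult_gen}) holds trivially with defect function $f \equiv 0$. For the Fubini--Study convergence hypothesis of Theorem \ref{thm_tame_conv}, I apply Lemma \ref{lem_segre} with $r = k$ and $k_1 = \cdots = k_r = 1$ (permissible since $L$ is assumed very ample) to obtain
\begin{equation*}
FS(H_k) \; = \; FS(H_1) \cdots FS(H_1) \; = \; FS(H_1)^{k},
\end{equation*}
so that the sequence $FS(H_k)^{1/k}$ is identically equal to $FS(H_1)$, a smooth positive (and hence continuous psh) metric on $L$. In particular, $FS(\mathcal{H}) = FS(H_1)$, and the uniform convergence required by Theorem \ref{thm_tame_conv} is tautological.

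Both hypotheses of Theorem \ref{thm_tame_conv} being verified for $N = \mathcal{H}$, we conclude $\mathcal{H} \sim {\rm{Hilb}}(FS(\mathcal{H})) = {\rm{Hilb}}(FS(H_1))$, which is the desired statement. The only step with any content is the exact multiplicativity $H_k = [H_{k_1} \otimes \cdots \otimes H_{k_r}]$, whose sole analytic input is the commutation of the Hermitian quotient operation with the Hermitian tensor product; no genuine obstacle is anticipated, as the entire argument is a formal assembly of Theorem \ref{thm_tame_conv}, Lemma \ref{lem_segre}, and the tensor-norm material of the appendix.
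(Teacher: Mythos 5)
The proposal is logically circular. You reduce Theorem~\ref{thm_induc} to an application of Theorem~\ref{thm_tame_conv}, but the paper's proof of Theorem~\ref{thm_tame_conv} is itself a consequence of Theorem~\ref{thm_induc} (see the proof in Section~\ref{sect_fin_ress_pf}, which begins ``Proof of Theorem~\ref{thm_tame_conv} assuming Theorem~\ref{thm_induc}'', and is invoked repeatedly inside via equation~(\ref{eq_pf_fina_thm_4})). The author flags exactly this trap in Remark~c) following the theorem statement: $\mathcal{H}$ is multiplicatively generated by Corollary~\ref{cor_quot_ass} and so Theorem~\ref{thm_induc} is formally a special case of Theorem~\ref{thm_tame_conv} via Lemma~\ref{lem_segre} --- but one cannot use this as a proof since the dependency runs the other way. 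Your intermediate claims are all sound (the exact multiplicativity $H_k = [H_{k_1}\otimes\cdots\otimes H_{k_r}]$ via Corollary~\ref{cor_quot_ass}, the identity $FS(H_k) = FS(H_1)^k$ via Lemma~\ref{lem_segre}, the vanishing defect function), but they establish only that Theorem~\ref{thm_induc} \emph{would follow} from Theorem~\ref{thm_tame_conv}, which is not a proof of either.

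What is genuinely needed here is a proof from scratch, and that is what the paper supplies. The paper's argument has two non-formal ingredients your reduction omits. First, an explicit computation on $\mathbb{P}(V^*)$: under the identification $\mathrm{Sym}^k(V) \cong H^0(\mathbb{P}(V^*),\mathscr{O}(k))$, the quotient norm $[H_1\otimes\cdots\otimes H_1]$ equals $\mathrm{Sym}^k(H_1)$, and by~(\ref{eq_induc_1}) this differs from $\mathrm{Hilb}_k(FS(H_1))$ only by the factor $\sqrt{k!/(k+n-1)!}$, which is subexponential --- this is the base case that grounds the whole theorem. Second, the commutative factorization~(\ref{eq_kod_map_comm_d}) of $\mathrm{Mult}_{1,\cdots,1}$ through $\mathrm{Sym}$ followed by the restriction $\res_{\mathrm{Kod},k}$, together with the semiclassical Ohsawa--Takegoshi theorem (Theorem~\ref{thm_semicalss_ot} or Corollary~\ref{cor_comp_res_1}), which transports the projective-space result along the Kodaira embedding to a general $(X,L)$. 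These are the analytic steps that actually establish the theorem; your proposal reproduces the easy formal scaffolding around them without engaging either.
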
 
 	\begin{rem}
 		a) The Fubini-Study operators for both of those norms coincide by Theorem \ref{thm_quant_hilb_conv} and Lemma \ref{lem_segre}.
 		It doesn't, however, imply the equivalence of the graded norms by Section \ref{sect_ex_mg_norm}.
 		\par 
 		b) 
 		It is tempting to think that Theorem \ref{thm_induc} follows from Lemma \ref{lem_mult_gen_sm}.
 		This is, however, not the case.
 		Not only in Lemma \ref{lem_mult_gen_sm}, we require $k_1, \cdots, k_r$ to be big enough, but also not every Hermitian norm on $H^0(X, L)$ lies in the image of the Hilbert map, see Sun \cite{JingSunImage} for obstructions.
 		\par 
 		c) From Corollary \ref{cor_quot_ass}, $\mathcal{H}$ is multiplicatively generated.
 		Hence, Theorem \ref{thm_induc} is a special case of Theorem \ref{thm_tame_conv} by Lemma \ref{lem_segre}.
 		Our proof of Theorem \ref{thm_tame_conv}, however, passes by Theorem \ref{thm_induc}.
 	\end{rem}
 	\par 
 	Before proceeding with the proof of Theorem \ref{thm_induc}, let us show how it implies Theorem \ref{thm_tame_conv}. 
	\begin{proof}[Proof of Theorem \ref{thm_tame_conv} assuming Theorem \ref{thm_induc}]
		Let us fix $\epsilon > 0$.
		By the uniform convergence of the sequence of metrics $FS(N_k)^{\frac{1}{k}}$, $k \in \nat^*$, there is $p_1 \in \nat$, such that for any $k \geq p_1$, we have
		\begin{equation}\label{eq_pf_fina_thm_2}
			d_{\infty} \big( FS(N_k)^{\frac{1}{k}}, FS(N) \big) < \frac{\epsilon}{5}.
		\end{equation}		
		From the growth condition on $f$, we know that there is $p_2 \in \nat$, such that for any $k \geq p_2$, we have
		\begin{equation}\label{eq_pf_fina_thm_3}
			| f(k) | \leq \frac{\epsilon}{5} k.
		\end{equation}
		\par 
		Now, let us fix $p_0 = \max \{p_1, p_2 \}$, $k \geq p_0$. We consider the Hermitian metric $[N_k \otimes \cdots \otimes N_k]$ induced by the multiplication map (\ref{eq_mult_map}) on $H^0(X, L^{k r})$.
		The tensor product is repeated here $r$ times.
		According to Theorem \ref{thm_induc}, there is $p_3 \geq p_0$, such that for any $r \geq p_3$, we have
		\begin{equation}\label{eq_pf_fina_thm_4}
			d_{\infty} \Big( [N_k \otimes \cdots \otimes N_k], {\rm{Hilb}}_{rk}(FS(N_k)^{\frac{1}{k}}) \Big)
			<
			\frac{\epsilon}{5} r.
		\end{equation}
		From Corollary \ref{cor_ident_maps}, (\ref{eq_isemd_aux_0}), (\ref{eq_pf_fina_thm_2}) and (\ref{eq_pf_fina_thm_4}), there is $p_4 \geq p_3$, such that for any $r \geq p_4$, we have
		\begin{equation}\label{eq_pf_fina_thm_5}
			d_{\infty} \Big( [N_k \otimes \cdots \otimes N_k], {\rm{Hilb}}_{rk}(FS(N)) \Big)
			\leq
			\frac{2 \epsilon}{5} r k.
		\end{equation}
		However, from (\ref{eq_half_mult_gen}), we conclude that 
		\begin{equation}\label{eq_pf_fina_thm_6}
			N_{r k} \leq [N_k \otimes \cdots \otimes N_k] \cdot \exp\Big(\frac{\epsilon}{5} r k\Big).
		\end{equation}
		In total, from (\ref{eq_pf_fina_thm_5}) and (\ref{eq_pf_fina_thm_6}), we deduce that 
		\begin{equation}\label{eq_pf_fina_thm_7}
			N_{r k} \leq {\rm{Hilb}}_{rk}(FS(N))  \cdot \exp\Big(\frac{3 \epsilon}{5} r k \Big).
		\end{equation}
		Now, since $k \in \nat$ is fixed, and the spaces $H^0(X, L^p)$, $p = k, \ldots, 2k - 1$, are finitely dimensional, the norms $N_p$ and ${\rm{Hilb}}_{p}(FS(N))$ are comparable up to a uniform constant. 
		From this and Theorem \ref{thm_mult_gen}, there is $p_5 \geq p_4$, such that for any $0 \leq l \leq k - 1$, $r \geq p_5$, we have
		\begin{equation}\label{eq_pf_fina_thm_8}
			d_{\infty} \Big( [{\rm{Hilb}}_{rk}(FS(N)) \otimes N_{k + l}], {\rm{Hilb}}_{(r + 1)k + l}(FS(N)) \Big)
			\leq
			\frac{\epsilon}{5} ((r+1)k + l).
		\end{equation}		 
		On another hand, from Definition \ref{defn_mult_gen} and (\ref{eq_pf_fina_thm_3}), we conclude that 
		\begin{equation}\label{eq_pf_fina_thm_9}
			N_{(r + 1)k + l} \leq [N_{r k} \otimes N_{k + l}] \cdot \exp \Big( \frac{\epsilon}{5} ((r+1)k + l) \Big).
		\end{equation}
		Now, from (\ref{eq_pf_fina_thm_7}), (\ref{eq_pf_fina_thm_8}) and (\ref{eq_pf_fina_thm_9}), we deduce that for any $k \geq 2 p_5 p_0$, we have
		\begin{equation}\label{eq_fin_1_22}
			N_k \leq {\rm{Hilb}}_k(FS(N)) \cdot \exp ( \epsilon k ).
		\end{equation}
		\par 
		Directly from Lemma \ref{lem_fs_inf_d}, we see that for any $k \in \nat^*$, we have
		\begin{equation}\label{eq_norm_with_fs_compar}
			N_k \geq {\rm{Ban}}^{\infty}_{k}(FS(N_k)^{\frac{1}{k}}).
		\end{equation}	
		From Corollary \ref{cor_ident_maps}, (\ref{eq_pf_fina_thm_2}) and (\ref{eq_norm_with_fs_compar}), there is $p_6 \in \nat^*$, such that for any $k \geq p_6$, we have
		\begin{equation}\label{eq_fin_1_33}
			N_k \geq {\rm{Hilb}}_k(FS(N))  \cdot  \exp ( - \epsilon k ).
		\end{equation}
		As $\epsilon > 0$ was arbitrary, the statement follows from (\ref{eq_fin_1_22}) and (\ref{eq_fin_1_33}).
	\end{proof}
	
	\begin{proof}[Proof of Theorem \ref{thm_induc}]
		Our proof is essentially based on two ingredients.
		First, we need to verify Theorem \ref{thm_induc} for the projective space by an explicit calculation.
		Second, we apply the semiclassical version of Ohsawa-Takegoshi extension theorem, Theorem \ref{thm_semicalss_ot}, for the Kodaira embedding to extend  Theorem \ref{thm_induc} from projective spaces to general manifolds.
		\par 
		Let us first do the calculations for the projective space $X := \mathbb{P}(V^*)$, where $V$ is a fixed vector space of dimension $n$.
		In this case, we take $L := \mathscr{O}(1)$.
		Then through the identification of $H^0(X, L)$ with $V$, we may view the Hermitian metric $H_1$ on $H^0(X, L)$ as a Hermitian metric on $V$.
		We induce the Fubini-Study metric $FS(H_1)$ on $L$ from this identification.
		\par 
		A direct calculation shows that the Hermitian norm $H_k := [H_1 \otimes \cdots \otimes H_1]$ corresponds to the norm ${\rm{Sym}}^k(H_1)$ induced on ${\rm{Sym}}^k(V)$ by $H_1$ under the identification (\ref{eq_sym_isom}).
		Now, as it was already exemplified during the proof of Proposition \ref{prop_expl_exmpl}, see (\ref{eq_l2_norm_calc}), this norm is not isometric to ${\rm{Hilb}}_{k}(FS(H_1))$, but the following identity holds
		\begin{equation}\label{eq_induc_1}
			{\rm{Hilb}}_{k}(FS(H_1)) = \sqrt{\frac{k!}{(k + n - 1)!}} \cdot {\rm{Sym}}^k(H_1).
		\end{equation}
		Note that the factor in (\ref{eq_induc_1}) grows polynomially in $k$ (in particular, it is subexponential). 
		Hence, Theorem \ref{thm_induc} holds in this particular case.
		\par 
		Now, to establish Theorem \ref{thm_induc} in its full generality, let us consider the Kodaira embedding ${\rm{Kod}}_1$ from (\ref{eq_kod}).
		We denote by $\res_{{\rm{Kod}}, X}$ the associated restriction operator
		\begin{equation}
			\res_{{\rm{Kod}}, k} : H^0(\mathbb{P}(H^0(X, L)^*), \mathscr{O}(k)) \to H^0(X, L^k).
		\end{equation}
		The crux of the matter is to realize that the multiplication operator ${\rm{Mult}}_{1, \cdots, 1}$ from (\ref{eq_mult_map}) factorizes under the identification (\ref{eq_sym_isom}) of $H^0(\mathbb{P}(H^0(X, L)^*), \mathscr{O}(k))$ with ${\rm{Sym}}^k(H^0(X, L))$ through the symmetrization, ${\rm{Sym}}$, and the restriction operator as follows
		\begin{equation}\label{eq_kod_map_comm_d}
		\begin{tikzcd}
			H^0(X, L)^{\otimes k} \arrow[swap, rrdd, "{\rm{Mult}}_{1, \cdots, 1}"] \arrow[r, "{\rm{Sym}}"] & {\rm{Sym}}^k(H^0(X, L)) \arrow[rd, equal] &  \\
			&  & H^0(\mathbb{P}(H^0(X, L)^*), \mathscr{O}(k)) \arrow[d, "\res_{{\rm{Kod}}, k}"]  \\
 	 		&  & H^0(X, L^k).
    	\end{tikzcd}
		\end{equation}
		\par 
		Now, the proof goes as follows.
		From the first part of the proof of Theorem \ref{thm_induc}, we obtain that the quotient metric on ${\rm{Sym}}^k(H^0(X, L))$ associated to $H \otimes \cdots \otimes H$ is equivalent under the isomorphism (\ref{eq_sym_isom}) to the metric ${\rm{Hilb}}_{k}^{\mathbb{P}(H^0(X, L)^*)}(FS(H_1))$.
		Now, by the semiclassical version of Ohsawa-Takegoshi extension theorem, see Theorem \ref{thm_semicalss_ot} or Corollary \ref{cor_comp_res_1}, the quotient of the metric ${\rm{Hilb}}_{k}^{\mathbb{P}(H^0(X, L)^*)}(FS(H_1))$ under the map $\res_{{\rm{Kod}}, k}$ is equivalent to ${\rm{Hilb}}_{k}^{X}(FS(H_1))$. 
		But from the commutative diagram (\ref{eq_kod_map_comm_d}), the quotient metric $[H \otimes \cdots \otimes H]$ on $H^0(X, L^k)$ can be obtained by two subsequent quotients: one through the map ${\rm{Sym}}$, another through the map $\res_{{\rm{Kod}}, k}$, and the above reasoning shows that the subsequent quotient is equivalent to ${\rm{Hilb}}_{k}^{X}(FS(H_1))$.
	\end{proof}
	
	\subsection{Mabuchi geodesics and geodesics on the space of norms}\label{sect_ph_st}
	The main goal of this section is to prove a relation between Mabuchi geodesics and geodesics in the space of norms, announced in Theorem \ref{thm_pj_st_ref}, from which we borrow the notation.
	Let us first recall the following result.
	\begin{thm}\label{thm_ph_st_fs}
		For any $t \in [0, 1]$, $FS(H_{k, t})^{\frac{1}{k}}$ converges uniformly to $h^L_t$, as $k \to \infty$.
		Moreover, this convergence is uniform in $t$.
	\end{thm}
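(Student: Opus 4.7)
The plan is to reduce the statement to the quantization theorem of Phong-Sturm \cite[Theorem 1]{PhongSturm} (refined by Berndtsson \cite[Theorem 1.2]{BerndtProb}), which already gives the pointwise-in-$t$ statement. First, at the endpoints $t = 0, 1$, the required uniform convergence is an immediate application of Theorem \ref{thm_quant_hilb_conv} to $h^L_0$ and $h^L_1$. The substance of the theorem is in the interior $t \in (0,1)$ and in the uniformity in $t$.

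For fixed $t \in (0,1)$, the uniform-in-$x$ convergence $FS(H_{k,t})^{1/k} \to h^L_t$ is obtained along the lines of \cite{PhongSturm}. The upper bound rests on the following subharmonicity principle: the family $s \mapsto H_{k, \Re s - 1}$ (with $s$ in the strip $S_{1,2}$) complexifies to a psh curve of norms, because it is generated from the positive self-adjoint $A_k$ via the functional calculus $A_k^s$; by Berndtsson-type positivity, this forces the complexified Fubini-Study potential $\tilde{\varphi}_k(x,s) := \tfrac{1}{k} \log FS(H_{k, \Re s - 1})(x)$ to be psh on $X \times S_{1,2}$. Its boundary values converge uniformly, by Theorem \ref{thm_quant_hilb_conv}, to $\log h^L_0, \log h^L_1$; since $t \mapsto \log h^L_t$ is, by the definition of the weak Mabuchi geodesic, the largest psh subextension on $X \times S_{1,2}$ of these boundary values, a maximum-principle argument on the strip yields $\tilde{\varphi}_{k,t} \le \log h^L_t + o(1)$ uniformly in $x$. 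The matching lower bound is proved by picking near-maximizers for the Bergman density function of $H_{k,t}$, and using the Ohsawa-Takegoshi extension with uniform constant (cf. Theorem \ref{thm_ot_expl_const} and Lemma \ref{lem_ot_loc}) applied to a suitable weight tailored to $h^L_t$ to exhibit holomorphic sections concentrated near a chosen point with the correct mass.

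To upgrade the pointwise-in-$t$ statement to uniformity in $t$, the plan is to use a convexity/equicontinuity argument. The function $t \mapsto \tilde{\varphi}_{k,t}(x) = \tfrac{1}{k}\log FS(H_{k,t})(x)$ is convex in $t$ for every fixed $x$ (it is the restriction of a psh function on the strip to a real line, hence subharmonic and automatically convex by $S^1$-invariance in $\Im s$). Similarly, $t \mapsto \log h^L_t(x)$ is convex in $t$ by the weak geodesic equation. Hence the family $\{\tilde{\varphi}_{k,t}(x) - \log h^L_t(x)\}_{k,x}$ consists of differences of convex functions on $[0,1]$ whose endpoint values tend to zero uniformly by Theorem \ref{thm_quant_hilb_conv}; a standard compactness argument (convex functions on $[0,1]$ bounded at the endpoints are uniformly Lipschitz on any compact subinterval, with Lipschitz constant controlled by the endpoint oscillation) then promotes pointwise-in-$t$ uniform-in-$x$ convergence to joint uniformity on $X \times [0,1]$.

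The main obstacle in this scheme is the lower bound in the pointwise-in-$t$ convergence, which is where genuine analytic input (Bergman kernel off-diagonal control along the geodesic plus Ohsawa-Takegoshi) is required; the upper bound and the uniformity in $t$ are soft consequences of convexity and the already-established endpoint case.
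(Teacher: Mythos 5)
The paper's proof is essentially a citation: it refers to Berndtsson \cite[Theorem 1.2, Theorem 4.2]{BerndtProb} (building on Phong--Sturm), and observes that the only place where smoothness of the extremities is used is an upper estimate on the logarithm of the Bergman kernel of $H_{k,t}$ via the diagonal Bergman kernel expansion, which can be replaced by Theorem \ref{thm_quant_hilb_conv} at the cost of weakening $O(\log k / k)$ to $o(1)$. You instead set out to reconstruct the Phong--Sturm/Berndtsson argument from scratch, which is a different (more labor-intensive) route. Your sketch of the upper bound — plurisubharmonicity of the complexified Fubini--Study potential $(x,s) \mapsto \tfrac{1}{k}\log FS(H_{k,\Re s - 1})(x)$, uniform endpoint convergence via Theorem \ref{thm_quant_hilb_conv}, and the envelope/maximum-principle characterization of the weak geodesic — correctly captures the soft half of the argument; as you note, that bound comes out uniform in $t$ for free, since adding a constant to the boundary data shifts the geodesic by that constant.

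The genuine gap is in your lower bound. You propose to exhibit, for a given $x$, a section $s$ with $|s(x)|$ large and $\|s\|_{H_{k,t}}$ small by applying Ohsawa--Takegoshi ``to a suitable weight tailored to $h^L_t$''. But Ohsawa--Takegoshi with weight $h^L_t$ controls $\|s\|_{\mathrm{Hilb}_k(h^L_t)}$, not the abstract geodesic norm $\|s\|_{H_{k,t}}$, and there is no a priori comparison between the two: indeed, the asymptotic equivalence $H_{k,t} \sim \mathrm{Hilb}_k(h^L_t)$ is the content of Theorem \ref{thm_pj_st_ref}, which is precisely what is being established — so invoking it here would be circular. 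The H\"older inequality $\|s\|_{H_{k,t}} \le \|s\|_{H_{k,0}}^{1-t}\|s\|_{H_{k,1}}^{t}$ does not save the argument, because a single Ohsawa--Takegoshi construction cannot simultaneously give the needed peak bounds against both $h^L_0$ and $h^L_1$ with the right exponents. What actually bridges this gap in the original proofs is a subgeodesic comparison between the curve $t \mapsto \mathrm{Hilb}_k(h^L_t)$ and the geodesic $H_{k,t}$, coming from Berndtsson's positivity of direct image bundles (or, in Phong--Sturm, a careful Bergman kernel analysis along the geodesic); this is the hard analytic input you have elided. Separately, the uniformity-in-$t$ step is phrased loosely: a difference of convex functions whose endpoint values tend to zero need not be small in the interior (e.g.\ $k\,t(1-t)$), so the claim must instead be run through each convex piece separately together with the pointwise-in-$t$ convergence already established; and in any case, once the subgeodesic comparison is available, both bounds come out uniform in $t$ directly, so no separate upgrade step is needed.
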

	\begin{proof}
		For smooth extremities $h^L_0, h^L_1$, this result was established by Phong-Sturm \cite[Theorem 1]{PhongSturm} for a slightly weaker than uniform convergence and by Berndtsson \cite[Theorem 1.2]{BerndtProb} for the uniform convergence.
		We argue that the proof of Berndtsson works more generally for $h^L_0, h^L_1 \in \mathcal{H}^L$.
		In fact, the regularity is used only in the first part of the proof of \cite[Theorem 4.2]{BerndtProb}, refining \cite[Theorem 1.2]{BerndtProb}, where author relies on the diagonal asymptotic expansion of the Bergman kernel to estimate from above the logarithm of the Bergman kernel of $H_{k, t}$.
		This argument can be replaced by Theorem \ref{thm_quant_hilb_conv} at the cost of replacing the $O(\frac{\log k}{k})$-estimate by an $o(1)$-estimate, which does not affect the final conclusion. 
	\end{proof}
	\par 
	Now, to establish Theorem \ref{thm_pj_st_ref}, we will need several basic fact from linear algebra.
	Let $U$ and $V$ be two finitely dimensional vector spaces and let $f : U \to V$ be any map between them.
	Recall that we say that $f$ is \textit{contracting} with respect to the norms $\| \cdot \|_U$ on $U$ and $\| \cdot \|_V$ on $V$ (or with respect to the pair $\| \cdot \|_U$, $\| \cdot \|_V$) if for any $x \in U$, we have $\| \pi(x) \|_V \leq \| x \|_U$, or, in other words $\|\pi \| \leq 1$, where $\| \cdot \|$ is the operator norm.
	Let $H^U_0$, $H^U_1$ (resp. $H^V_0$, $H^V_1$) be two Hermitian metrics on $U$ (resp. $V$).
	Let us denote by $H^U_t$, $H^V_t$, $t \in [0, 1]$, the geodesics between $H^U_0$, $H^U_1$ and $H^V_0$, $H^V_1$ in the space of Hermitian norms on $U$ and $V$ respectively.
	\begin{lem}\label{lem_interp}
		Assume that the map $\pi$ is contracting with respect to both pairs $H^U_0, H^V_0$ and $H^U_1, H^V_1$.
		Then it is contracting with respect to the pair $H^U_t, H^V_t$ for any $t \in [0, 1]$.
	\end{lem}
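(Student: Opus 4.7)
}

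The plan is to reduce the statement to an operator-norm inequality on a strip in $\comp$ and then invoke the Hadamard three-lines theorem. First, I would fix reference orthonormal bases so that $H^U_0$ and $H^V_0$ are represented by the identity matrices. Then there exist positive self-adjoint operators $A \in \enmr{U}$ and $B \in \enmr{V}$ with $\scal{x}{y}_{H^U_1} = \scal{Ax}{y}_{H^U_0}$ and $\scal{u}{v}_{H^V_1} = \scal{Bu}{v}_{H^V_0}$; by the explicit geodesic description given just before Theorem \ref{thm_pj_st_ref}, the interpolated norms satisfy $\scal{x}{y}_{H^U_t} = \scal{A^t x}{y}_{H^U_0}$ and similarly for $V$. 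Letting $P$ denote the matrix of $\pi$, the condition that $\pi$ is contracting from $H^U_t$ to $H^V_t$ unravels to the operator-norm inequality $\|B^{t/2} P A^{-t/2}\|_{\rm op} \leq 1$, where the operator norm is taken with respect to $H^U_0$ and $H^V_0$.

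Second, I would consider the entire operator-valued function
\begin{equation*}
F(z) := B^{z/2} P A^{-z/2}, \qquad z \in \comp,
\end{equation*}
which is holomorphic because $A$ and $B$ are strictly positive. On the vertical line $\Re z = 0$, the operators $B^{is/2}$ and $A^{-is/2}$ are unitary with respect to $H^V_0$ and $H^U_0$ (their eigenvalues have modulus one), so $\|F(is)\|_{\rm op} = \|P\|_{\rm op} \leq 1$ by the hypothesis at $t=0$. On the line $\Re z = 1$, write $F(1+is) = B^{is/2}(B^{1/2} P A^{-1/2})A^{-is/2}$; the flanking factors are again unitary, so $\|F(1+is)\|_{\rm op} = \|B^{1/2} P A^{-1/2}\|_{\rm op} \leq 1$ by the hypothesis at $t=1$.

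Third, I would apply the Hadamard three-lines theorem to the scalar holomorphic function $z \mapsto \scal{F(z) x}{y}_{H^V_0}$, for arbitrary unit vectors $x \in U$, $y \in V$ (or, equivalently, the three-lines theorem in its operator-valued form applied to $\|F(z)\|_{\rm op}$, which is logarithmically subharmonic). The bounds on the two boundary lines propagate to $\|F(t)\|_{\rm op} \leq 1$ for every $t \in [0, 1]$, which is exactly the contraction property with respect to the pair $H^U_t$, $H^V_t$.

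The argument is essentially routine complex interpolation; the only mildly delicate point is confirming that the geodesic formula from the paper yields exactly $A^t$ and $B^t$ as the self-adjoint operators governing $H^U_t$ and $H^V_t$, so that the three-lines geometry lines up with the hypotheses at the two endpoints. Once this bookkeeping is done, no further obstacle remains.
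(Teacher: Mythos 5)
Your proof is correct, and the route you take is genuinely different in presentation from the paper's (though it rests on the same underlying principle of complex interpolation). The paper observes that after simultaneous diagonalization of the pair $H^U_0, H^U_1$ (and of $H^V_0, H^V_1$), both families can be identified with weighted $L^2$-spaces over finite counting measures, with the weights at time $t$ being the $t$-th powers of the endpoint weights; it then cites the Stein--Weiss interpolation theorem \cite[Theorem 5.4.1]{InterpSp} as a black box. You instead unwind what the contraction property means directly in the formulation $\|B^{t/2} P A^{-t/2}\|_{\rm op} \le 1$, verify it on the boundary lines $\Re z = 0$ and $\Re z = 1$ using unitarity of $B^{is/2}$ and $A^{-is/2}$, and deduce the interior bound via the Hadamard three-lines theorem. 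This is precisely the proof mechanism underlying Stein--Weiss, made self-contained. Your setup is careful on the points that matter: $A$ and $B$ are strictly positive (so $F(z) = B^{z/2} P A^{-z/2}$ is entire and $A^{-z/2}$ is defined), the spectral formula $|\lambda^{z/2}| = \lambda^{\Re z/2}$ keeps $F$ bounded on the strip $0 \le \Re z \le 1$, and reducing to scalar functions $\scal{F(z)x}{y}$ with unit vectors avoids any need to invoke an operator-valued version of the theorem. What your approach buys is transparency and independence from the interpolation literature; what the paper's approach buys is brevity and the explicit identification with weighted $L^2$-spaces, which makes the hypotheses of a standard theorem visibly satisfied.
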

	\begin{rem}
		By choosing a simultaneous diagonalization of $H^U_0$, $H^U_1$, we may identify $(V, H^V_0)$ (resp. $(V, H^V_1)$) to the space $L^2(\{1, 2, \cdots, \dim V \}, \mu_V)$ (resp. $L^2(\{1, 2, \cdots, \dim V \}, \omega_V \mu_V)$), where $\mu_V = \delta_1 + \cdots + \delta_{\dim V}$ is the counting measure and $\omega_V : \{1, 2, \cdots, \dim V \} \to ]0, + \infty[$ is a certain weight function.
		Similarly, we may identify $(U, H^U_0)$ (resp. $(U, H^U_1)$) to the space $L^2(\{1, 2, \cdots, \dim U \}, \mu_U)$ (resp. $L^2(\{1, 2, \cdots, \dim U \}, \omega_U \mu_U)$), where similarly $\mu_U = \delta_1 + \cdots + \delta_{\dim U}$, $\omega_U : \{1, 2, \cdots, \dim V \} \to ]0, + \infty[$.
		The result now follows from the interpolation theorem of Stein-Weiss, see \cite[Theorem 5.4.1]{InterpSp}.
	\end{rem}
	\begin{cor}\label{cor_interp}
		Let us fix a surjective map $\pi : V \to Q$ between vector spaces $V, Q$. Let $N^V_0$, $N^V_1$ be two Hermitian norms on $V$ and $N^Q_0$, $N^Q_1$ be the induced quotient Hermitian norms on $Q$.
		For $t \in [0, 1]$, we denote by $N^V_t$ the geodesics between $N^V_0$ and $N^V_1$, and by $N^U_t$ the geodesics between $N^U_0$ and $N^U_1$.
		Then for any $t \in [0, 1]$, we have
		\begin{equation}
			[N^V_t]
			\geq
			N^U_t.
		\end{equation}
		Let us now denote by $N^{V, 0}_0$, $N^{V, 0}_1$ two other Hermitian norms on $V$, verifying $N^V_0 \geq N^{V, 0}_0$ and $N^V_1 \geq N^{V, 0}_1$.
		Similarly, we denote by $N^{V, 0}_t$, $t \in [0, 1]$ the geodesics between $N^{V, 0}_0$ and $N^{V, 0}_1$.
		Then for any $t \in [0, 1]$, we have
		\begin{equation}
			N^V_t \geq N^{V, 0}_t.
		\end{equation}
	\end{cor}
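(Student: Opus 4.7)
The plan is to deduce both parts as direct applications of Lemma \ref{lem_interp}, interpreting the quotient norm statement and the domination statement as contractivity statements for suitably chosen linear maps.

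For the first inequality, I would take the linear map to be $\pi : V \to Q$ itself. By the very definition of the quotient norm (\ref{eq_defn_quot_norm}), the map $\pi$ is contracting with respect to the pair $(N^V_0, N^Q_0)$, and likewise with respect to $(N^V_1, N^Q_1)$, since $N^Q_i$ is induced by $N^V_i$ via $\pi$ for $i = 0, 1$. Lemma \ref{lem_interp} then applies to give that $\pi : (V, N^V_t) \to (Q, N^Q_t)$ is contracting for every $t \in [0,1]$. Unraveling this, for any $q \in Q$ and any $g \in V$ with $\pi(g) = q$, we have $\| q \|_{N^Q_t} \leq \| g \|_{N^V_t}$, and taking the infimum over such $g$ yields $[N^V_t](q) \geq N^Q_t(q)$, which is precisely the claimed inequality $[N^V_t] \geq N^Q_t$. (Here I read the symbol $N^U_t$ in the statement as a typographic slip for $N^Q_t$.)

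For the second inequality, I would apply the same lemma to the identity map $\mathrm{id} : V \to V$. The hypotheses $N^V_0 \geq N^{V,0}_0$ and $N^V_1 \geq N^{V,0}_1$ are precisely the statements that $\mathrm{id}$ is contracting with respect to the pairs $(N^V_0, N^{V,0}_0)$ and $(N^V_1, N^{V,0}_1)$. Lemma \ref{lem_interp} then gives contractivity of $\mathrm{id}$ with respect to $(N^V_t, N^{V,0}_t)$, i.e.\ $\| v \|_{N^{V,0}_t} \leq \| v \|_{N^V_t}$ for all $v \in V$, which is exactly $N^V_t \geq N^{V,0}_t$.

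Both steps are formal consequences of Lemma \ref{lem_interp}, so there is no genuine obstacle; the only thing requiring mild care is correctly matching the definition of the quotient norm with the abstract contractivity hypothesis of the lemma, and noting that geodesics in the space of Hermitian norms are characterized intrinsically (via the self-adjoint map relating two scalar products), so one is free to apply Lemma \ref{lem_interp} verbatim in both settings without re-deriving anything.
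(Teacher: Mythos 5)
Your proof is correct and is exactly what the paper's one-line proof ("It follows trivially from Lemma \ref{lem_interp}") intends: you apply the interpolation lemma to $\pi$ itself for the quotient statement and to the identity map for the monotonicity statement, and you correctly observe that the symbols $N^U_t$ in the statement are a slip for $N^Q_t$.
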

	\begin{proof}
		It follows directly from Lemma \ref{lem_interp}.
	\end{proof}
	\begin{proof}[Proof of Theorem \ref{thm_pj_st_ref}.]
		Let us denote by $f_0$ and $f_1$ the functions from Definition \ref{defn_mult_gen} associated to ${\rm{Hilb}}(h^L_0)$, ${\rm{Hilb}}(h^L_1)$ respectively by Theorem \ref{thm_mult_gen}. 
		We will now establish that for any $t \in [0, 1]$, the inequality (\ref{eq_half_mult_gen}) holds for $N_k := H_{k, t}$ and $f := f_t := (1 - t) f_0 + t f_1$.
		Remark first that  $H_{k_1, t} \otimes \cdots \otimes H_{k_r, t}$, $t \in [0, 1]$, is a geodesic between ${\rm{Hilb}}_{k_1}(h^L_0) \otimes \cdots \otimes {\rm{Hilb}}_{k_r}(h^L_0)$ and ${\rm{Hilb}}_{k_1}(h^L_1) \otimes \cdots \otimes {\rm{Hilb}}_{k_r}(h^L_1)$ in the space of Hermitian norms on $H^0(X, L^{k_1}) \otimes \cdots \otimes H^0(X, L^{k_r})$.
		\par 
		From this and the first part of Corollary \ref{cor_interp}, we deduce that if we denote by $H_{k_1, \cdots, k_r; t}$, $t \in [0, 1]$, the geodesic between $[{\rm{Hilb}}_{k_1}(h^L_0) \otimes \cdots \otimes {\rm{Hilb}}_{k_r}(h^L_0)]$ and $[{\rm{Hilb}}_{k_1}(h^L_1) \otimes \cdots \otimes {\rm{Hilb}}_{k_r}(h^L_1)]$ in the space of Hermitian norms on $H^0(X, L^k)$, then for any $t \in [0, 1]$, we have
		\begin{equation}\label{eq_fs_st_ph_1}
			H_{k_1, \cdots, k_r; t} \leq [H_{k_1, t} \otimes \cdots \otimes H_{k_r, t}].
		\end{equation}
		Now, from the definitions of $f_0$, $f_1$ and the second part of Corollary \ref{cor_interp}, we deduce that
		\begin{equation}\label{eq_fs_st_ph_2}
			H_{k, t} \leq H_{k_1, \cdots, k_r; t} \cdot \exp(f_t(k_1) + \cdots + f_t(k_r) + f_t(k)).
		\end{equation}
		From (\ref{eq_fs_st_ph_1}) and (\ref{eq_fs_st_ph_2}), we deduce that the inequality (\ref{eq_half_mult_gen}) holds for $N_k := H_{k, t}$ and $f := f_t := (1 - t) f_0 + t f_1$.
		From this, Theorem \ref{thm_tame_conv} and Theorem \ref{thm_ph_st_fs}, we conclude that for any $t \in [0, 1]$, the graded norm $H_t$ is equivalent to ${\rm{Hilb}}(h^L_t)$.
	\end{proof}
	\begin{rem}
		A quick inspection of the proof of Theorem \ref{thm_tame_conv} shows that the equivalence of $H_t$ to ${\rm{Hilb}}(h^L_t)$ is uniform in $t \in [0, 1]$ (i.e. the convergence $\frac{1}{k} d_{\infty}(H_{k, t}, {\rm{Hilb}}_k(h^L_t))  \to 0$ from (\ref{eq_equiv_rel_defn}) is uniform in $t$).
		This is due to the fact that the function $f_t$ above is uniformly bounded by a function $f_{\max} := \max \{ f_0, f_1 \}$, satisfying the growth assumption $f_{\max}(k) = o(k)$, and the convergence from Theorem \ref{thm_ph_st_fs} is uniform.
		Hence, Theorem \ref{thm_pj_st_ref} refines Theorem \ref{thm_ph_st_fs} by Theorem \ref{thm_quant_hilb_conv}, Lemma \ref{lem_bnd_FS} and Proposition \ref{prop_expl_exmpl}.
	\end{rem}

 \appendix 

	\section{On almost sub-additive sequences of functions}\label{sect_unif_conv}
	The main goal of this section is to study almost sub-additive sequences.
	More precisely, in Section \ref{sect_fek_1}, we state the analogue of Fekete's lemma.
	In Section \ref{sect_unif_alm_sub}, we study the uniform convergence of sub-additive sequences of functions and state the analogue of Dini's theorem.

	\subsection{A version of Fekete's lemma for almost sub-additive sequences}\label{sect_fek_1}
	In this section we discuss a generalization of Fekete's lemma, cf. \cite{Fekete}, studying sub-additive sequences of real numbers. 
	We will show that the conclusion of the lemma is still valid if the sequence is sub-additive up to a small error term. 
	\par 
	Recall that Bruijn-Erdős in \cite[Theorem 23]{BruijErd} considered sequences for which the sub-additivity condition $a_{m + n} \leq a_m + a_n$ is replaced by a weaker one 
	\begin{equation}\label{eq_br_erd}
		a_{m + n} \leq a_m + a_n + g_{m + n}.
	\end{equation}
	As they show, a conclusion of Fekete's lemma only holds under the assumption
	\begin{equation}
		\sum \frac{g_n}{n^2} < + \infty.
	\end{equation}
	As this condition doesn't appear quite naturally in geometric setting, such as Bernstein-Markov property, we find it unsuitable for the encapsulation of the features of the $L^2$-norm as we do it in Definition \ref{defn_mult_gen}.
	We, thus, consider an alternative condition, which demands more from the sequence $a_n$ and less from the error term.
	\begin{defn}\label{defn_alm_sub}
		We say that a sequence of numbers $a_n \in \real$, $n \in \nat$, is \textit{almost sub-additive} if for any $r \geq 2$, $k; k_1, \ldots, k_r \in \nat$, $k_1, \ldots, k_r \geq p_0$, $k_1 + \cdots + k_r = k$, the following inequality holds
		\begin{equation}\label{eq_alm_sub}
			a_k \leq a_{k_1} + \cdots + a_{k_r} + f_{k_1} + \cdots + f_{k_r} + f_{k},
		\end{equation}
		where $f_k$, verifies $f_k = o(k)$, as $k \to \infty$.
		We say that $a_k$ is \textit{almost super-additive} if the above inequality holds with a reverse sign.
		In the latter case we denote the sequence $f_k$ by $g_k$, $k \in \nat$.
	\end{defn}
	\begin{lem}[{Chen \cite[Proposition 3.1]{ChenHNolyg}}]\label{lemma_chen}
		Assume that a sequence $a_n$, $n \in \nat$, is almost sub-additive and non-negative.
		Then the sequence $\frac{a_n}{n}$ has a finite limit, as $n \to \infty$.
	\end{lem}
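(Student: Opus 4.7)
The plan is to imitate the classical argument for Fekete's lemma, which says that $\lim a_n/n = \inf a_n/n$ for a sub-additive sequence, with appropriate care to absorb the extra error terms $f_k$ and to respect the constraint that every block in the decomposition must have size $\geq p_0$.

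First I would establish that $\limsup_{n \to \infty} a_n/n < +\infty$. For any $n$ large enough, write $n = q p_0 + s$ with $p_0 \leq s < 2 p_0$ and apply the almost sub-additivity inequality \eqref{eq_alm_sub} with $q$ copies of $p_0$ (or $q-1$ of them, plus one block of size $s$, to keep every part $\geq p_0$). Dividing by $n$ and using that $s$ ranges in a bounded set (so $a_s$ and $f_s$ stay bounded), together with $f_n = o(n)$ and $q/n \to 1/p_0$, I would obtain that $\limsup a_n/n \leq (a_{p_0} + f_{p_0})/p_0 < + \infty$. Non-negativity of $a_n$ gives $L := \liminf_{n\to\infty} a_n/n \in [0, +\infty)$.

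Next I would show $\limsup a_n/n \leq L$, which together with $\liminf \leq \limsup$ will force the limit to exist and equal $L$. Fix $\epsilon > 0$ and choose an integer $n_0 \geq p_0$ with
\[
\frac{a_{n_0}}{n_0} < L + \epsilon, \qquad \frac{f_{n_0}}{n_0} < \epsilon;
\]
both conditions are simultaneously achievable because $a_{n_0}/n_0$ attains values arbitrarily close to $L$ along some subsequence and $f_{n_0} = o(n_0)$. For any large $n$, Euclidean division yields $n = q n_0 + s$ with $p_0 \leq s \leq p_0 + n_0 - 1$ (the lower bound on $s$ is arranged by transferring one block of $n_0$ into $s$ if necessary). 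Applying \eqref{eq_alm_sub} with $r = q + 1$, $q$ blocks of size $n_0$ and one block of size $s$, we get
\[
a_n \leq q \, a_{n_0} + a_s + q \, f_{n_0} + f_s + f_n.
\]
Dividing by $n$ and letting $n \to \infty$, we have $q n_0/n \to 1$, $q/n \to 1/n_0$, while $a_s/n, f_s/n \to 0$ since $s$ is bounded, and $f_n/n \to 0$ by hypothesis. This gives
\[
\limsup_{n \to \infty} \frac{a_n}{n} \leq \frac{a_{n_0}}{n_0} + \frac{f_{n_0}}{n_0} < L + 2\epsilon.
\]
Since $\epsilon > 0$ was arbitrary, we conclude $\limsup a_n/n \leq L$, hence $\lim a_n/n = L$ exists and is finite.

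There is no real obstacle; the only mild subtlety, compared to classical Fekete, is the constraint $k_i \geq p_0$, which is why I absorb a possible small remainder into a block of size $s \in [p_0, p_0 + n_0 - 1]$ rather than letting $s$ run in $[0, n_0 - 1]$. The non-negativity hypothesis is used only to ensure $L \geq 0$, so that the limit is in fact finite (it prevents the $\liminf$ from being $-\infty$, which the argument above would not otherwise exclude).
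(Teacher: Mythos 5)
Your proof is correct. The paper itself gives no argument for this lemma -- it is cited directly from Chen [Proposition 3.1] -- so there is no internal proof to compare against; your write-up supplies a self-contained proof along the expected Fekete-type lines. The key adaptations to the "almost sub-additive with blocks $\geq p_0$" setting are handled properly: the choice of $n_0 \geq p_0$ satisfying simultaneously $a_{n_0}/n_0 < L + \epsilon$ and $f_{n_0}/n_0 < \epsilon$ is legitimate (take $n_0$ far enough along the minimizing subsequence, since $f_n/n \to 0$ along the full sequence); the remainder block $s \in [p_0,\, n_0 + p_0 - 1]$ keeps every part $\geq p_0$ and $r = q+1 \geq 2$ for $n \geq 2n_0$; the $\limsup$-finiteness step via blocks of size $p_0$ and the observation that non-negativity is what rules out $L = -\infty$ are both accurate. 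No gaps.
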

	
	\begin{cor}\label{cor_subadd_super_add}
		Assume that a sequence $a_n$, $n \in \nat$, is both almost sub-additive and almost super-additive.
		Then the sequence $\frac{a_n}{n}$ has a finite limit, as $n \to \infty$.
	\end{cor}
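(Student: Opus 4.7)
The plan is to reduce Corollary \ref{cor_subadd_super_add} to Lemma \ref{lemma_chen} by shifting $(a_n)$ by a suitable linear function to enforce non-negativity. For this reduction to work, I first need to establish that $a_n/n$ is bounded, which is precisely where both the sub-additivity and super-additivity hypotheses come in.

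First, I would show that almost sub-additivity alone implies $\limsup_n a_n/n < + \infty$. For any $n$ sufficiently large, write $n = (q-1) p_0 + (p_0 + s)$ with $0 \leq s < p_0$, and apply (\ref{eq_alm_sub}) to the decomposition $k_1 = \cdots = k_{q-1} = p_0$, $k_q = p_0 + s$, all of which lie in $[p_0, 2 p_0)$. Since $q \leq n / p_0$ and $f_n = o(n)$, and since $|a_j|$ and $|f_j|$ are uniformly bounded on the finite range $[p_0, 2 p_0)$, this yields $a_n \leq C n$ for all $n$ large, with some constant $C > 0$. Applying the same argument to $(-a_n)$, which is almost sub-additive because $(a_n)$ is almost super-additive, gives $a_n \geq - C' n$ for all $n$ large. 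After enlarging the constant to absorb finitely many exceptional small indices, there exists $C > 0$ such that $b_n := a_n + C n \geq 0$ for every $n \in \nat$.

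Second, I observe that $(b_n)$ inherits almost sub-additivity from $(a_n)$ with the same error sequence $(f_n)$, since the linear term telescopes: for any admissible decomposition $k_1 + \cdots + k_r = k$, one has $b_k = a_k + C k \leq \sum_i b_{k_i} + \sum_i f_{k_i} + f_k$. Lemma \ref{lemma_chen} then applies to $(b_n)$ and produces a finite limit $\lim_n b_n / n$, whence $\lim_n a_n / n = \lim_n b_n / n - C$ exists and is finite.

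The only point requiring any care is the boundedness step, where one must choose the decomposition of $n$ so that every summand satisfies $k_i \geq p_0$; the remainder of the proof is essentially a formal reduction, and I do not foresee any serious obstacle.
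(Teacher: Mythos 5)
Your proof is correct and takes essentially the same route as the paper: use almost super-additivity to obtain a linear lower bound $a_n \geq -Cn$, shift to $b_n := a_n + Cn \geq 0$ (noting that the linear term telescopes so $b_n$ inherits almost sub-additivity with the same error sequence), and apply Lemma \ref{lemma_chen}. You are a bit more careful than the paper about respecting the constraint $k_i \geq p_0$ in the decomposition; note also that the upper bound $a_n \leq Cn$ you derive first is superfluous for the reduction, since non-negativity only needs the lower bound.
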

	\begin{proof}
		Since $a_n$ is super-additive, we have
		\begin{equation}\label{eq_bnd_below_an}
			a_n \geq n c_n,
		\end{equation}
		where $c_n$ is defined as $c_n = a_1 + \frac{g_n}{n}$ for a sequence $g: \nat \to \real$ from Definition \ref{defn_alm_sub}.
		Clearly, by our assumption on $g_n$, the sequence $c_n$ is bounded from below by $c \in \real$.
		We can now apply Lemma \ref{lemma_chen} for a sequence $a_n - n c$ to establish the result.
	\end{proof}
	
	\subsection{Uniform convergence of almost sub-additive sequences of functions}\label{sect_unif_alm_sub}
	
	The main goal of this section is to study the uniform convergence of almost sub-additive sequences of functions.
	The following definition will be particularly useful later on.
	\begin{defn}\label{defn_unif_almsubadd}
		A sequence of functions $a_k : Y \to \real$, $k \in \nat^*$, defined over a topological space $Y$ will be called \textit{uniformly almost sub-additive} if for any $x \in Y$, the sequence $a_k(x)$ is almost sub-additive, and in the definition of almost sub-additivity the sequences $f_n := f_n(x)$ can be chosen in such way that the convergence $\frac{f_k}{k} \to 0$ holds locally uniformly on $Y$, as $k \to \infty$. Similarly, we define the notion of \textit{uniformly almost super-additive} sequences of functions.
 	\end{defn}
	The following lemma is an analogue of Dini's theorem in our setting.
	\begin{lem}\label{lem_dini}
		Assume that the sequence of non-negative functions $a_k : Y \to \real$, $k \in \nat^*$, defined over a compact space $Y$, is uniformly almost sub-additive.
		Assume also that the pointwise limit $a : Y \to \real$ of $\frac{a_k}{k}$, as $k \to \infty$, which exists by Lemma \ref{lemma_chen}, is a continuous function.
		Then the convergence of $\frac{a_k}{k}$ to $a$ is uniform.
	\end{lem}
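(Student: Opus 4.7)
The plan is to combine a Fekete-style upper bound, valid pointwise for any almost sub-additive sequence, with a compactness-and-continuity argument to upgrade pointwise to uniform convergence. (Implicit throughout is that the $a_k$ are continuous on $Y$; this is the case in the application to Section \ref{sect_conv}, and Dini-type arguments do require such a hypothesis.)

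First I would reprove the Fekete-type bound in the presence of error terms: for every $m \geq p_0$ and every $x \in Y$,
\begin{equation*}
	\limsup_{k \to \infty} \frac{a_k(x)}{k} \leq \frac{a_m(x) + f_m(x)}{m}.
\end{equation*}
This goes by Euclidean division $k = rm + s$ with $s$ in a bounded range (so that $s \in \{0\} \cup \{p_0, \ldots, m + p_0 - 1\}$), applying the almost sub-additivity inequality (\ref{eq_alm_sub}) to the decomposition into $r$ copies of $m$ and one remainder of size $s$, and dividing by $k$: since $r/k \to 1/m$, $a_s/k \to 0$, $r f_m/k \to f_m/m$, and $f_s/k, f_k/k \to 0$, the claim follows. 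Combined with the pointwise convergence from Lemma \ref{lemma_chen}, this yields the characterization
\begin{equation*}
	a(x) = \inf_{m \geq p_0} \frac{a_m(x) + f_m(x)}{m}.
\end{equation*}

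The uniform lower bound is then immediate: specializing the infimum formula to $m = k$ gives
\begin{equation*}
	\frac{a_k(x)}{k} - a(x) \geq -\frac{f_k(x)}{k},
\end{equation*}
and the right-hand side tends to $0$ uniformly on $Y$ by the defining hypothesis of uniform almost sub-additivity.

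For the uniform upper bound, I fix $\epsilon > 0$. For each $x_0 \in Y$, I choose $m(x_0) \geq p_0$ so that $\frac{a_{m(x_0)}(x_0) + f_{m(x_0)}(x_0)}{m(x_0)} < a(x_0) + \epsilon$. Using continuity of $a$ together with continuity of the fixed functions $a_{m(x_0)}$ and $f_{m(x_0)}$, this strict inequality extends, with $\epsilon$ replaced by $2\epsilon$, to an open neighborhood $U_{x_0}$ of $x_0$. Compactness of $Y$ produces a finite subcover $U_{x_1}, \ldots, U_{x_N}$ with associated levels $m_1, \ldots, m_N$, all $\leq M := \max_i m_i$. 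Applying the Fekete-style inequality of Step 1 at each of these levels, the error incurred (Euclidean remainders $a_s, f_s$ for $s \leq M$, and the global $f_k/k$) involves only finitely many continuous functions, which are bounded on the compact space $Y$, so it can be made uniformly less than $\epsilon$ for $k$ large. Combining on each piece of the cover gives $\frac{a_k(x)}{k} < a(x) + 3\epsilon$ uniformly in $x$ once $k$ is large, which completes the proof.

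The main obstacle is the step of extending the pointwise choice of $m(x_0)$ to a whole neighborhood with a uniform error: this is where continuity (or at least upper semi-continuity) of the individual $a_m$, combined with compactness, is essential, and where one sees why the result fails for a merely pointwise-well-behaved sequence. Everything else is a bookkeeping exercise on the Fekete decomposition.
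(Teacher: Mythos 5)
Your proof is correct and is precisely the ``straightforward modification of Dini's theorem'' that the paper invokes without spelling out: you use the Fekete-type upper bound and the resulting infimum formula $a(x) = \inf_{m \geq p_0} \bigl(a_m(x) + f_m(x)\bigr)/m$ to replace the monotonicity of the classical statement, then run the usual compactness-and-continuity argument for the upper bound while reading off the lower bound directly from the case $m = k$ of the infimum. Your caveat about the implicit continuity of the $a_k$ (and $f_k$) is well observed, but harmless in the paper's application in Section \ref{sect_conv}, where the $a_k$ are continuous and the error terms $f_k$ are constant in $x$.
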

	\begin{proof}
		A straightforward modification of the proof of Dini's theorem.
	\end{proof}
	The next lemma is an almost-subadditive analogue of a well-known statement for decreasing sequences of functions.
	\begin{lem}\label{lem_usc}
		Let $Y$ be any topological space.
		Assume $a_k : Y \to \real$, $k \in \nat^*$, is a sequence of positive continuous functions which is uniformly almost sub-additive. Then the pointwise limit $a : Y \to \real$ of $\frac{a_k}{k}$, as $k \to \infty$, which exists by Lemma \ref{lemma_chen}, is upper semicontinuous.
	\end{lem}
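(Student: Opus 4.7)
The plan is to derive an upper envelope for $a$ of the form $a(x) \leq \frac{a_n(x) + f_n(x)}{n}$ valid for all $n \geq p_0$ and $x \in Y$, and then to combine this with continuity of $a_n$ and the local uniform convergence of $f_k/k \to 0$ to conclude upper semicontinuity at each point.

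First I would mimic the proof of Fekete's lemma in the almost sub-additive setting: for fixed $n \geq p_0$ and $k$ sufficiently large, decompose $k = qn + s$ with $q \geq 1$ and $p_0 \leq s < n + p_0$, so that every part lies in the valid range of Definition \ref{defn_alm_sub}. Applying the almost sub-additive inequality to $q$ copies of $n$ and one copy of $s$ and dividing by $k$ yields
\[
\frac{a_k(x)}{k} \leq \frac{q}{k}\bigl(a_n(x) + f_n(x)\bigr) + \frac{a_s(x) + f_s(x) + f_k(x)}{k}.
\]
Letting $k \to \infty$ with $n$ fixed, $q/k \to 1/n$, the terms $a_s(x)$ and $f_s(x)$ stay bounded (only finitely many values of $s$ occur), and $f_k(x)/k \to 0$ pointwise. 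Invoking the existence of $a(x) = \lim_k a_k(x)/k$ from Lemma \ref{lemma_chen}, I obtain the envelope $a(x) \leq (a_n(x) + f_n(x))/n$ for every $n \geq p_0$ and every $x \in Y$.

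For upper semicontinuity at a given $x_0 \in Y$ and any $\epsilon > 0$, I would pick a neighborhood $U_0$ of $x_0$ on which $f_k/k \to 0$ uniformly (this is the local uniform convergence from Definition \ref{defn_unif_almsubadd}), so that $\sup_{x \in U_0} |f_n(x)|/n < \epsilon/3$ for all $n$ large enough. I then choose such an $n$ additionally satisfying $a_n(x_0)/n < a(x_0) + \epsilon/3$, which is possible by the pointwise convergence at $x_0$. By continuity of $a_n$, I shrink to an open neighborhood $U \subseteq U_0$ of $x_0$ on which $a_n(x)/n < a_n(x_0)/n + \epsilon/3$. Plugging into the envelope inequality gives $a(x) < a(x_0) + \epsilon$ for $x \in U$, which is precisely upper semicontinuity at $x_0$.

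The only subtle point, not really an obstacle, is that $f_n$ is not assumed continuous in $x$; this is exactly why the local uniform convergence of $f_k/k$ in Definition \ref{defn_unif_almsubadd} is needed — it replaces the pointwise-at-$x_0$ bound plus continuity argument used for $a_n$, allowing the $f_n$-term to be absorbed uniformly on a neighborhood of $x_0$.
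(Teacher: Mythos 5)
Your proof is correct and is exactly the "straightforward modification of the proof for decreasing sequences of functions" that the paper invokes: the Fekete-style decomposition $k = qn + s$ produces the envelope $a(x) \leq (a_n(x) + f_n(x))/n$, and the classical u.s.c.-via-envelope argument then goes through once the $f_n$-contribution is controlled by the locally uniform convergence $f_k/k \to 0$ (the point you correctly flag at the end, since $f_n$ is not assumed continuous). Nothing to add.
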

	\begin{proof}
		A straightforward modification of the proof for decreasing sequences of functions.
	\end{proof}
	\begin{lem}\label{lem_sub_super_unif}
		Let $Y$ be a compact topological space.
		Assume that $a_k : Y \to \real$, $k \in \nat^*$, is a sequence of continuous functions which is uniformly almost sub-additive and uniformly almost super-additive at the same time.
		Then the sequence $\frac{a_k}{k}$ has a continuous limit $a$, and the convergence of $\frac{a_k}{k}$ to $a$ is uniform.
	\end{lem}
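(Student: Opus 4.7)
The plan is to combine Corollary \ref{cor_subadd_super_add}, Lemma \ref{lem_usc} and Lemma \ref{lem_dini}, reducing to their positivity hypotheses by a linear shift. The pointwise existence of the limit $a(y) := \lim_{k \to \infty} \frac{a_k(y)}{k}$ at every $y \in Y$ is immediate from Corollary \ref{cor_subadd_super_add}, and it remains to upgrade this to uniform convergence to a continuous function on $Y$.

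The key preliminary observation is that the linear shift $a_k \mapsto a_k + Ck$ preserves both almost sub-additivity and almost super-additivity from Definition \ref{defn_alm_sub} with the very same error sequences $f_k, g_k$ (since $k_1 + \cdots + k_r = k$). Combining the uniform almost super-additivity of $a_k$ with the uniform smallness $g_k/k \to 0$ on the compact space $Y$, via a routine Fekete-type argument initialized by the (continuous, hence uniformly bounded) functions $a_j$ for $j$ in a fixed finite range, one produces a constant $C>0$, independent of $y \in Y$ and $k \in \nat^*$, such that both $\tilde a_k := a_k + Ck$ and $\tilde b_k := -a_k + Ck$ are non-negative on $Y$. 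Since the negation of a uniformly almost super-additive sequence is uniformly almost sub-additive (with error $-g_k = o(k)$), both $\tilde a_k$ and $\tilde b_k$ are non-negative, continuous, and uniformly almost sub-additive on the compact space $Y$.

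Lemma \ref{lem_usc} applied to $\tilde a_k$ gives upper semicontinuity of its pointwise limit $a+C$, hence of $a$; applied to $\tilde b_k$, it gives upper semicontinuity of $-a+C$, hence lower semicontinuity of $a$. Thus $a$ is continuous on $Y$. Finally, Lemma \ref{lem_dini} applied to the sequence $\tilde a_k$ with continuous limit $a+C$ yields the uniform convergence $\frac{\tilde a_k}{k} \to a+C$ on $Y$, which, after subtracting $C$, is the desired uniform convergence $\frac{a_k}{k} \to a$.

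No serious obstacle is anticipated: the content of the statement is already encapsulated in Lemmas \ref{lemma_chen}, \ref{lem_usc} and \ref{lem_dini}, and the only delicate (but routine) point is the reduction to their non-negativity hypothesis, which is handled by the linear shift $a_k \mapsto a_k + Ck$ preserving almost sub/super-additivity with unchanged error sequences.
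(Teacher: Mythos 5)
Your proof follows the paper's argument essentially verbatim: shift $a_k$ by $Ck$ to bring both $a_k + Ck$ and $-a_k + Ck$ into the positivity hypotheses of Lemmas \ref{lem_usc} and \ref{lem_dini}, deduce two-sided semicontinuity, and conclude with Lemma \ref{lem_dini}. One tiny wording slip: the uniform upper bound $a_k \leq Ck$ comes from the almost \emph{sub}-additivity of $a_k$ (equivalently, the super-additivity of $-a_k$), not from the super-additivity of $a_k$ as you state when introducing the constant $C$; the lower bound $a_k \geq -Ck$ is the one that uses super-additivity (this is exactly the estimate $(\ref{eq_bnd_below_an})$ the paper cites).
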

	\begin{proof}
		Uniform almost super-additivity implies that there is $C > 0$ such that $a_k \geq - C k$, see (\ref{eq_bnd_below_an}) and the paragraph after for the details.
		An easy verification shows that the sequence $h_k := a_k + Ck$ then satisfies all the assumptions of Lemma \ref{lem_usc} and hence the limit $a$ is upper semicontinuous.
		We repeat the same argument for the sequence $- a_k$ to obtain that $a$ is lower semicontinuous.
		Hence, $a$ is continuous.
		The result now follows from Lemma \ref{lem_dini}, applied to $h_k$.
	\end{proof}

\section{Normed vector spaces and induced tensor norms}\label{app_norms}

	The main goal of this section is to recall some classical results about normed vector spaces and the induced norms on the tensor products.
	\par 
	The first result is about the identity of two natural norms, defined on the quotient of a normed vector space.
	Let $(U, N_U = \| \cdot \|_U)$ be a normed vector space.
	For any quotient $\pi : U \to Q$, consider the following two metrics.
	First, we have a metric $\| \cdot \|_Q^0$ induced by $\| \cdot \|_U$ and the quotient map as in (\ref{eq_defn_quot_norm}).
	Second, we have a metric $\| \cdot \|_Q^1$, given by the dual of the norm on $Q^*$ induced by the dual metric $\| \cdot \|_U^*$ and the inclusion $\pi^* : Q^* \to U^*$.
	\begin{lem}\label{lem_dual_incl_eq}
		The following identity holds $\| \cdot \|_Q^0 = \| \cdot \|_Q^1$.
	\end{lem}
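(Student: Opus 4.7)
The plan is to verify the two inequalities $\|\cdot\|_Q^1 \leq \|\cdot\|_Q^0$ and $\|\cdot\|_Q^0 \leq \|\cdot\|_Q^1$ separately. Throughout, write $\|\phi\|_{Q^*} := \|\pi^*\phi\|_{U^*}$ for $\phi \in Q^*$, so that $\|f\|_Q^1 = \sup \{|\phi(f)| : \phi \in Q^*, \|\phi\|_{Q^*} \leq 1\}$.

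The first inequality is essentially tautological. Fix $f \in Q$ and $\phi \in Q^*$ with $\|\phi\|_{Q^*} \leq 1$. For any $g \in U$ with $\pi(g) = f$, one has
\begin{equation*}
    |\phi(f)| = |\phi(\pi(g))| = |(\pi^*\phi)(g)| \leq \|\pi^*\phi\|_{U^*} \cdot \|g\|_U \leq \|g\|_U.
\end{equation*}
Taking the infimum over all such $g$ gives $|\phi(f)| \leq \|f\|_Q^0$, and then the supremum over admissible $\phi$ yields $\|f\|_Q^1 \leq \|f\|_Q^0$.

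The reverse inequality is an application of the Hahn-Banach theorem. Fix $f \in Q \setminus \{0\}$ and set $c := \|f\|_Q^0$. On the one-dimensional subspace $\mathbb{C} \cdot f \subset Q$ define the linear functional $\phi_0(\lambda f) := \lambda c$. The key observation is that the seminorm $g \mapsto \|\pi(g)\|_Q^0$ on $U$ is majorized by $\|g\|_U$ (direct from the definition of the quotient norm). Consequently, the linear functional $\psi_0 : \pi^{-1}(\mathbb{C}\cdot f) \to \mathbb{C}$ given by $\psi_0(g) := \phi_0(\pi(g))$ satisfies $|\psi_0(g)| \leq \|\pi(g)\|_Q^0 \leq \|g\|_U$, since $|\phi_0(\lambda f)| = |\lambda| \cdot c = |\lambda| \cdot \|f\|_Q^0 = \|\lambda f\|_Q^0$. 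By Hahn-Banach, extend $\psi_0$ to $\psi \in U^*$ with $\|\psi\|_{U^*} \leq 1$. Because $\psi$ vanishes on $\ker\pi$ (by construction, $\psi_0$ does on that subspace, which is contained in its domain), it factors as $\psi = \pi^*\phi$ for a unique $\phi \in Q^*$ extending $\phi_0$. Then $\|\phi\|_{Q^*} = \|\psi\|_{U^*} \leq 1$ and $\phi(f) = c$, so $\|f\|_Q^1 \geq c = \|f\|_Q^0$.

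The only mild subtlety, which I expect to be the main point to verify carefully, is that the extension produced by Hahn-Banach genuinely factors through the quotient $\pi$ — i.e., annihilates $\ker\pi$ — so that it corresponds to an element of $Q^*$ and not merely of $U^*$. This is automatic here since $\psi_0$ itself was defined to vanish on $\ker\pi \subset \pi^{-1}(\mathbb{C}\cdot f)$, and Hahn-Banach preserves this vanishing when we extend with the optimal norm bound; alternatively one may first descend to $U/\ker\pi \cong Q$ before applying Hahn-Banach, which makes the argument transparent.
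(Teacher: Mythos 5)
Your proof is correct and fills in the details behind what the paper records only as ``a trivial verification''; one direction is the immediate operator-norm estimate, the other is the standard Hahn--Banach (or, in the finite-dimensional setting of this paper, orthogonal-projection/compactness) argument, which is exactly what one expects here. One small imprecision in the wording: you write that ``Hahn--Banach preserves this vanishing when we extend with the optimal norm bound,'' but the norm bound is irrelevant to the vanishing on $\ker\pi$. The vanishing holds for \emph{any} extension of $\psi_0$, simply because $\ker\pi\subset\pi^{-1}(\mathbb{C}\cdot f)$ lies inside the domain of $\psi_0$ and an extension must agree with $\psi_0$ there; your parenthetical giving that reason, and your suggestion to first descend to $U/\ker\pi\cong Q$, are the correct justifications, so the argument is sound as a whole.
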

	\begin{proof}
		A trivial verification.
	\end{proof}
	\par 
	The next result is about decomposable tensors and their injective and projective norms.
	Let $(U, N_U = \| \cdot \|_U)$, $(V, N_V = \| \cdot \|_V)$ be two Banach spaces.
	Recall that the projective tensor product norm $N_U \otimes_{\pi} N_V = \| \cdot \|_{\otimes_{\pi}}$, and the injective tensor product norm $N_U \otimes_{\epsilon} N_V = \| \cdot \|_{\otimes_{\epsilon}}$ were defined in (\ref{eq_defn_proj_norm}) and (\ref{eq_defn_inf_norm}) respectively.
	We will also sometimes consider the completion $(U \hat{\otimes}_{\epsilon} V, N_U \otimes_{\epsilon} N_V)$ (resp. $(U \hat{\otimes}_{\pi} V, N_U \otimes_{\pi} N_V)$) of $(U \otimes V, N_U \otimes_{\epsilon} N_V)$ (resp. $(U \otimes V, N_U \otimes_{\pi} N_V)$).
	\begin{lem}\label{lem_tens_reas}
		For any $x \in U$, $y \in V$, we have $\| x \otimes y \|_{\otimes_{\pi}} = \| x \otimes y \|_{\otimes_{\epsilon}} = \| x \|_U \cdot \| y \|_V$.
		Also for any $\phi \in U^*$, $\psi \in V^*$, we have similar identities for the induced dual norms $\| \phi \otimes \psi \|_{\otimes_{\pi}}^* = \| \phi \otimes \psi \|_{\otimes_{\epsilon}}^* = \| \phi \|_U^* \cdot \| \psi \|_V^*$.
		Moreover, any norm on $U \otimes V$, which satisfies those properties lies in between $N_U \otimes_{\epsilon} N_V$ and $N_U \otimes_{\pi} N_V$.
	\end{lem}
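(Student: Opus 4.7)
The plan is to prove all four identities on decomposable tensors by a uniform Hahn--Banach argument, and then deduce the sandwich inequality by duality.

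First I would handle $\|x\otimes y\|_{\otimes_\pi}$ and $\|x\otimes y\|_{\otimes_\epsilon}$. The inequality $\|x\otimes y\|_{\otimes_\pi}\le \|x\|_U\|y\|_V$ is immediate from (\ref{eq_defn_proj_norm}) by taking the trivial one-term decomposition. The inequality $\|x\otimes y\|_{\otimes_\epsilon}\le \|x\|_U\|y\|_V$ follows from (\ref{eq_defn_inf_norm}) and the obvious bound $|\phi(x)\psi(y)|\le \|\phi\|_U^*\|\psi\|_V^*\|x\|_U\|y\|_V$. For the matching lower bounds, apply Hahn--Banach to find $\phi\in U^*$, $\psi\in V^*$ with $\|\phi\|_U^*=\|\psi\|_V^*=1$ and $\phi(x)=\|x\|_U$, $\psi(y)=\|y\|_V$. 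Then for any decomposition $x\otimes y=\sum x_i\otimes y_i$,
\begin{equation*}
\|x\|_U\|y\|_V=\phi(x)\psi(y)=\sum \phi(x_i)\psi(y_i)\le \sum \|x_i\|_U\|y_i\|_V,
\end{equation*}
giving $\|x\|_U\|y\|_V\le \|x\otimes y\|_{\otimes_\pi}$; the same $\phi,\psi$ witness the lower bound for $\|x\otimes y\|_{\otimes_\epsilon}$.

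Next I would treat the dual identities, which are symmetric to the above once we interpret $\phi\otimes\psi$ as a linear functional on $U\otimes V$ via $(\phi\otimes\psi)(x\otimes y)=\phi(x)\psi(y)$. Unraveling the definition of the dual norm, using the triangle inequality on decompositions for the projective case, and supremum over elementary unit tensors for the injective case, both duals $\|\phi\otimes\psi\|_{\otimes_\pi}^*$ and $\|\phi\otimes\psi\|_{\otimes_\epsilon}^*$ reduce to the same product $\|\phi\|_U^*\|\psi\|_V^*$, again by Hahn--Banach. In particular no extra identification of duals (such as the statement of Lemma \ref{lem_dual_proj_inj}) is needed here, only the elementary bound $|(\phi\otimes\psi)(\sum x_i\otimes y_i)|\le \|\phi\|_U^*\|\psi\|_V^*\sum\|x_i\|_U\|y_i\|_V$ and its reverse tested on norming elementary tensors.

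Finally for the sandwich statement, let $\|\cdot\|_\otimes$ be any norm on $U\otimes V$ satisfying both elementary-tensor properties. The upper bound $\|f\|_\otimes\le \|f\|_{\otimes_\pi}$ follows by applying the triangle inequality to any decomposition $f=\sum x_i\otimes y_i$ and using $\|x_i\otimes y_i\|_\otimes=\|x_i\|_U\|y_i\|_V$, then taking the infimum as in (\ref{eq_defn_proj_norm}). The lower bound $\|f\|_{\otimes_\epsilon}\le \|f\|_\otimes$ follows from the dual property: for any unit $\phi\in U^*,\psi\in V^*$ we have $|(\phi\otimes\psi)(f)|\le \|\phi\otimes\psi\|_\otimes^*\cdot\|f\|_\otimes=\|f\|_\otimes$, and taking supremum over unit $\phi,\psi$ gives (\ref{eq_defn_inf_norm}).

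None of the steps is a genuine obstacle; the only point requiring some care is ensuring that the argument works verbatim in infinite dimensions (for which one needs the Banach-space completion and continuity of the canonical pairing), but since the application in the body of the paper is to finite-dimensional spaces of holomorphic sections, this subtlety can either be dispatched by restricting to the finite-dimensional case or by invoking the standard density of $U\otimes V$ in $U\hat\otimes_\pi V$ and $U\hat\otimes_\epsilon V$.
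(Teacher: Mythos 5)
Your proposal is correct and gives essentially the same Hahn--Banach argument that underlies the textbook results the paper cites (Ryan, Propositions 2.1, 2.3, 3.1, and 6.1); the paper merely references them rather than spelling out the reasoning. The only cosmetic point is that in the chain $\|x\|_U\|y\|_V=\phi(x)\psi(y)=\sum\phi(x_i)\psi(y_i)\le\sum\|x_i\|_U\|y_i\|_V$ one should insert an absolute value before applying the triangle inequality, since the individual summands may be complex; this does not affect the conclusion.
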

	\begin{proof}
		For the projective (resp. injective) norm, the first part of the statement is proved in \cite[Propositions 2.1 and 2.3]{RyanTensProd} (resp. \cite[Proposition 3.1]{RyanTensProd}).
		The second part of the statement is established in  \cite[Proposition 6.1]{RyanTensProd}.
	\end{proof}
	Now let us recall the following standard duality property in finitely dimensional context, which helps to pass from injective tensor product to the projective one.
	\begin{lem}[{\cite[Theorem 4.21]{RyanTensProd}}]\label{lem_dual_proj_inj}
		Assume that both $U$ and $V$ are finitely dimensional. The following identity between the norms on $U^* \otimes V^*$ holds
		\begin{equation}\label{eq_dual_proj_inj}
			N_U^* \otimes_{\pi} N_V^*
			=
			(N_U \otimes_{\epsilon} N_V)^*.
		\end{equation}
		If, moreover, both $N_U$ and $N_V$ are Hermitian norms, then
		\begin{equation}
			N_U^* \otimes N_V^*
			=
			(N_U \otimes N_V)^*.
		\end{equation}
	\end{lem}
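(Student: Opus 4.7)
The plan is to establish the two equalities separately. The first is the classical duality between the projective and injective tensor norms, and I would prove it by a Hahn-Banach/separation argument; the second, in the Hermitian setting, reduces to the bookkeeping of orthonormal bases.

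For the first equality $(N_U \otimes_{\epsilon} N_V)^* = N_U^* \otimes_{\pi} N_V^*$, I would proceed in three steps. First, I would verify that on decomposable tensors, the dual of the injective norm takes the expected product form:
\[
(N_U \otimes_{\epsilon} N_V)^*(\phi \otimes \psi) = \|\phi\|_U^* \cdot \|\psi\|_V^*.
\]
The $\leq$ direction is immediate from the very definition of $N_U \otimes_{\epsilon} N_V$: the unit vector $\phi/\|\phi\|_U^* \otimes \psi/\|\psi\|_V^* \in U^* \otimes V^*$ is one of the functionals taken into account when computing $\|T\|_{\otimes_\epsilon}$. The $\geq$ direction follows by picking unit vectors $u \in U$, $v \in V$ that (almost) attain $\|\phi\|_U^*$ and $\|\psi\|_V^*$, and testing against $T = u \otimes v$, for which $\|T\|_{\otimes_\epsilon} = \|u\|_U \|v\|_V = 1$ by Lemma \ref{lem_tens_reas}.

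Second, the inequality $(N_U \otimes_{\epsilon} N_V)^* \leq N_U^* \otimes_{\pi} N_V^*$ follows from Step~1 by writing an arbitrary $S \in U^* \otimes V^*$ as $S = \sum \phi_i \otimes \psi_i$, applying the triangle inequality for the dual norm, and taking the infimum over decompositions. Third, for the reverse inequality, I would argue by contradiction using Hahn-Banach separation. If some $S_0$ satisfied $(N_U \otimes_{\epsilon} N_V)^*(S_0) < 1 < \|S_0\|_{\otimes_\pi}$, then $S_0$ would lie outside the closed convex (balanced) hull of $\{\phi \otimes \psi : \|\phi\|_U^* \|\psi\|_V^* \leq 1\}$, which is the unit ball of $\|\cdot\|_{\otimes_\pi}$. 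A separating linear functional on $U^* \otimes V^*$ corresponds, via the canonical identification $(U^* \otimes V^*)^* \cong U \otimes V$, to some $T \in U \otimes V$ satisfying $|\langle \phi \otimes \psi, T \rangle| \leq 1$ whenever $\|\phi\|_U^* \|\psi\|_V^* \leq 1$; this is exactly the statement $\|T\|_{\otimes_\epsilon} \leq 1$. But then $|\langle S_0, T\rangle| \leq (N_U \otimes_{\epsilon} N_V)^*(S_0) < 1$, contradicting $\langle S_0, T\rangle > 1$ from the separation.

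For the Hermitian case, I would simultaneously diagonalize via orthonormal bases. Choose ONBs $(e_i)$ of $(U, N_U)$ and $(f_j)$ of $(V, N_V)$. By the very definition of the Hermitian tensor product, $(e_i \otimes f_j)_{i,j}$ is then an ONB of $(U \otimes V, N_U \otimes N_V)$, so its dual basis $(e_i^* \otimes f_j^*)_{i,j}$ in $U^* \otimes V^*$ is orthonormal both for $N_U^* \otimes N_V^*$ (since $(e_i^*)$ and $(f_j^*)$ are the dual ONBs) and for $(N_U \otimes N_V)^*$. Two Hermitian norms sharing a common orthonormal basis coincide, finishing the proof.

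The main obstacle is the sharp inequality in Step~3 of the first part: although in finite dimensions the separation argument is elementary, care is needed to identify the separating functional on $U^* \otimes V^*$ with an element of $U \otimes V$ and to recognize that the condition it imposes is precisely $\|T\|_{\otimes_\epsilon} \leq 1$. Everything else is a direct unfolding of definitions together with Lemma \ref{lem_tens_reas}.
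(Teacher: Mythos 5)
Your proof is correct. The paper offers no proof of this lemma---it simply cites Ryan's book---so your argument supplies a self-contained finite-dimensional proof that the text omits. Steps 1--2 establish $(N_U \otimes_{\epsilon} N_V)^* \le N_U^* \otimes_{\pi} N_V^*$ by checking decomposable tensors and then applying the triangle inequality and the infimum in the definition of $\otimes_\pi$; Step 3 obtains the converse via Hahn--Banach separation, correctly identifying the separating functional with an element $T \in U \otimes V$ under the canonical finite-dimensional isomorphism and recognizing that its boundedness by $1$ on decomposable tensors $\phi \otimes \psi$ with $\|\phi\|_U^* \|\psi\|_V^* \le 1$ is precisely the condition $\|T\|_{\otimes_\epsilon} \le 1$. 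The Hermitian case via a common orthonormal basis is also fine.

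Two minor remarks. First, in finite dimensions the dual norm is attained (the unit ball is compact), so the parenthetical ``(almost)'' in Step 1 is unnecessary. Second, Step 3 admits an alternative which some find slicker: first prove the ``easy'' duality $(N_U \otimes_{\pi} N_V)^* = N_U^* \otimes_{\epsilon} N_V^*$ directly, by noting that a linear functional has the same supremum over a set and over its absolutely convex hull, and that in finite dimensions the $\pi$-unit ball is exactly the (already compact) absolutely convex hull of $\{u \otimes v : \|u\|_U \|v\|_V \le 1\}$; then apply this identity to $(U^*, N_U^*)$, $(V^*, N_V^*)$, and take duals once more using $N^{**} = N$. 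Both routes invoke Hahn--Banach (the double-dual trick through the bipolar theorem), so they are equivalent in content---yours is the more direct unfolding of the definitions and equally valid.
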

	\begin{rem}
		In \cite[Theorem 4.21]{RyanTensProd}, author even gives a necessary condition so that the above identity holds in the infinite dimensional context, if one takes the Banach closure of each side of (\ref{eq_dual_proj_inj}).
	\end{rem}
	\par 
	The following lemma gives a justification for the names injective and projective tensor norms.
	Let $Q$ be a quotient of $U$. Endow it with the quotient norm $N_Q = \| \cdot \|_Q = [N_U]$ as defined in (\ref{eq_defn_quot_norm}).
	Let $W$ be a closed subspace of $U$.
	Endow it with the induced norm $N_W = \| \cdot \|_W$.
	\begin{lem}\label{lem_quot_proj}
		The norm on $Q \otimes V$ induced by $N_U \otimes_{\pi} N_V$ and the quotient map $U \hat{\otimes}_{\pi} V \to Q \otimes V$ (or $U \otimes_{\pi} V \to Q \otimes V$) coincides with the norm $N_Q \otimes_{\pi} N_V$.
		Similarly, the norm on $W \otimes V$ induced by $N_U \otimes_{\epsilon} N_V$ and the induced map $W \otimes V \to U \hat{\otimes}_{\epsilon} V$ (or $W \otimes V \to U \otimes_{\epsilon} V$) coincides with the norm $N_W \otimes_{\epsilon} N_V$.
		When both $N_U$ and $N_V$ come from Hermitian tensor products, the analogous statements hold for the Hermitian tensor product.
	\end{lem}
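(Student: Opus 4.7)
The plan is to prove both assertions by direct unraveling of the definitions of the projective and injective tensor norms, then transfer the results to the Hermitian setting by means of the duality statement in Lemma \ref{lem_dual_proj_inj} (together with Lemma \ref{lem_dual_incl_eq}). The proofs are essentially dual to each other: the projective norm behaves well under quotients, and the injective norm behaves well under subspaces. I expect no serious obstacle; the only subtlety is being careful with the infimum/supremum over decompositions, and in the injective case appealing to a Hahn--Banach type extension.

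\textit{Step 1 (projective norm and quotients).} Fix $f \in Q \otimes V$. For the inequality
\[
\|f\|_{[N_U \otimes_{\pi} N_V]} \leq \|f\|_{N_Q \otimes_{\pi} N_V},
\]
fix $\epsilon > 0$ and pick a decomposition $f = \sum q_i \otimes y_i$ with $\sum \|q_i\|_Q \cdot \|y_i\|_V \leq \|f\|_{N_Q \otimes_{\pi} N_V} + \epsilon$. By the definition (\ref{eq_defn_quot_norm}) of the quotient norm $N_Q$, one may choose lifts $x_i \in U$ of $q_i$ with $\|x_i\|_U \leq \|q_i\|_Q + \epsilon/2^i$. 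Then $g := \sum x_i \otimes y_i \in U \otimes V$ satisfies $(\pi \otimes \mathrm{id}) g = f$, and $\|g\|_{\otimes_\pi}$ is bounded by $\sum \|x_i\|_U \|y_i\|_V$, which is arbitrarily close to $\|f\|_{N_Q \otimes_{\pi} N_V}$. For the reverse inequality, take any $g \in U \hat{\otimes}_{\pi} V$ with $(\pi \otimes \mathrm{id}) g = f$ and a decomposition $g = \sum x_i \otimes y_i$ (possibly infinite, convergent in the projective norm); then $f = \sum \pi(x_i) \otimes y_i$ and using $\|\pi(x_i)\|_Q \leq \|x_i\|_U$ gives $\|f\|_{N_Q \otimes_\pi N_V} \leq \|g\|_{\otimes_\pi}$, hence passing to the infimum yields the claim.

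\textit{Step 2 (injective norm and subspaces).} Fix $f \in W \otimes V$. The trivial direction
\[
\|f\|_{N_U \otimes_{\epsilon} N_V} \leq \|f\|_{N_W \otimes_{\epsilon} N_V}
\]
follows from (\ref{eq_defn_inf_norm}): restriction $\phi \mapsto \phi|_W$ sends the unit ball of $U^*$ into the unit ball of $W^*$, so the supremum defining the left-hand side is taken over a subset of the functionals used on the right. Conversely, by the Hahn--Banach theorem, any $\phi' \in W^*$ with $\|\phi'\|_W^* = 1$ extends to some $\phi \in U^*$ with $\|\phi\|_U^* = 1$, and $(\phi \otimes \psi)(f) = (\phi' \otimes \psi)(f)$ since $f \in W \otimes V$; this proves the other direction.

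\textit{Step 3 (Hermitian case).} For the Hermitian tensor norm on $Q \otimes V$ induced by the quotient map, observe that under a Hermitian identification, $Q$ is isometric to the orthogonal complement of $\ker \pi$ in $U$, so the quotient norm on $Q \otimes V$ agrees with the restriction of $N_U \otimes N_V$ to this subspace, which is exactly the Hermitian tensor product $N_Q \otimes N_V$. Dually, for an inclusion $W \hookrightarrow U$, the dual $U^* \to W^*$ is a quotient map of Hermitian spaces and, applying the quotient statement for the projective norm to the duals together with Lemmas \ref{lem_dual_incl_eq} and \ref{lem_dual_proj_inj}, one recovers the injective statement in the Hermitian setting. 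Alternatively, both identifications can be verified directly in orthonormal coordinates, giving the claim in one line. No step here is delicate; the only thing to watch is a consistent use of Lemma \ref{lem_dual_proj_inj} when switching between primal and dual formulations.
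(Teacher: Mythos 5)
Your proof is correct and gives a self-contained direct verification of all three parts, whereas the paper simply cites Ryan's book (Propositions 2.5 and 3.2) for the projective and injective statements and says the Hermitian case is "direct" without details. Your argument is the standard one underlying those references (lift-and-estimate for the quotient/projective case, restriction and Hahn--Banach extension for the subspace/injective case, orthogonal decomposition of $\ker\pi$ for the Hermitian case), so it matches the paper's intent in spirit even though the paper delegates the work. One minor technical slip in Step 1: choosing lifts with $\|x_i\|_U \leq \|q_i\|_Q + \epsilon/2^i$ does not control $\sum \|x_i\|_U \|y_i\|_V$ unless the $\|y_i\|_V$ are bounded; instead pick $\|x_i\|_U \leq \|q_i\|_Q + \epsilon/(2^i(\|y_i\|_V + 1))$, or normalize the decomposition so that $\|y_i\|_V = 1$, which makes the estimate go through cleanly. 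Everything else, including the Hermitian identification of $Q$ with $(\ker\pi)^{\perp}$ and the dualization via Lemmas \ref{lem_dual_incl_eq} and \ref{lem_dual_proj_inj}, is sound.
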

	\begin{proof}
		The statement about the projective (resp. injective) tensor norm is established in  \cite[Proposition 2.5]{RyanTensProd} (resp. \cite[Proposition 3.2 and the discussion after]{RyanTensProd}).
		For Hermitian norms, the proof is direct.
	\end{proof}
	Let $(V_i, N_i = \| \cdot \|_i)$, $i = 1, 2, 3$, be finitely dimensional normed vector spaces.
	Our next result concerns the associativity property for injective and projective tensor norms. 
	\begin{lem}\label{lem_assoc}
		On the vector space $V_1 \otimes V_2 \otimes V_3$ we have the following identity between the norms
		\begin{equation}\label{eq_assoc}
			(N_1 \otimes_{\epsilon} N_2) \otimes_{\epsilon} N_3
			=
			N_1 \otimes_{\epsilon} (N_2 \otimes_{\epsilon} N_3),
			\quad
			(N_1 \otimes_{\pi} N_2) \otimes_{\pi} N_3
			=
			N_1 \otimes_{\pi} (N_2 \otimes_{\pi} N_3).
		\end{equation}
		In particular, the notations $N_1 \otimes_{\epsilon} N_2 \otimes_{\epsilon} N_3$, $N_1 \otimes_{\pi} N_2 \otimes_{\pi} N_3$ are well-defined.
	\end{lem}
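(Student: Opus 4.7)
The plan is to reduce both identities to a common ``triple'' tensor norm. Define on $V_1 \otimes V_2 \otimes V_3$ the triple injective norm
\begin{equation*}
\|f\|_{\epsilon,3} := \sup\bigl\{|(\phi_1 \otimes \phi_2 \otimes \phi_3)(f)| \,:\, \phi_i \in V_i^*,\; \|\phi_i\|_i^* \leq 1\bigr\},
\end{equation*}
and the triple projective norm $\|f\|_{\pi,3} := \inf \sum_k \|a_k\|_1\|b_k\|_2\|c_k\|_3$, where the infimum runs over finite decompositions $f = \sum_k a_k \otimes b_k \otimes c_k$. My goal is to show that the two iterated injective norms in \eqref{eq_assoc} both equal $\|\cdot\|_{\epsilon,3}$, and similarly that the two iterated projective norms both equal $\|\cdot\|_{\pi,3}$.

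First I would handle the projective case, which is a direct computation. Given any decomposition $f = \sum_i h_i \otimes v_i$ with $h_i \in V_1 \otimes V_2$ and $v_i \in V_3$, and any further decomposition $h_i = \sum_j a_{ij} \otimes b_{ij}$, one gets $f = \sum_{i,j} a_{ij} \otimes b_{ij} \otimes v_i$, showing $\|f\|_{\pi,3} \leq \sum_{i,j} \|a_{ij}\|_1\|b_{ij}\|_2\|v_i\|_3$; taking infima yields $\|f\|_{\pi,3} \leq \|f\|_{(N_1 \otimes_\pi N_2) \otimes_\pi N_3}$. Conversely, any triple decomposition $f = \sum_k a_k \otimes b_k \otimes c_k$ can be regrouped as $\sum_k (a_k \otimes b_k) \otimes c_k$, and since $\|a_k \otimes b_k\|_{\otimes_\pi} \leq \|a_k\|_1 \|b_k\|_2$ (this is the elementary direction of Lemma~\ref{lem_tens_reas}), one obtains the reverse inequality. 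By symmetry the same identifications hold for $N_1 \otimes_\pi (N_2 \otimes_\pi N_3)$.

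For the injective case, one direction is immediate: by Lemma~\ref{lem_tens_reas}, any $\phi_1 \otimes \phi_2$ with $\|\phi_i\|_i^* \leq 1$ satisfies $\|\phi_1 \otimes \phi_2\|_{(N_1 \otimes_\epsilon N_2)^*} = \|\phi_1\|_1^* \|\phi_2\|_2^* \leq 1$, so such functionals are admissible in the iterated supremum, giving $\|f\|_{\epsilon,3} \leq \|f\|_{(N_1 \otimes_\epsilon N_2) \otimes_\epsilon N_3}$. For the reverse inequality, let $\Phi \in (V_1 \otimes V_2)^*$ and $\phi_3 \in V_3^*$ with $\|\Phi\|_{(N_1 \otimes_\epsilon N_2)^*}, \|\phi_3\|_3^* \leq 1$. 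By Lemma~\ref{lem_dual_proj_inj}, $\|\Phi\|_{N_1^* \otimes_\pi N_2^*} \leq 1$, so for any $\eta > 0$ there is a decomposition $\Phi = \sum_k \phi_1^k \otimes \phi_2^k$ with $\sum_k \|\phi_1^k\|_1^* \|\phi_2^k\|_2^* \leq 1 + \eta$. Then
\begin{equation*}
|(\Phi \otimes \phi_3)(f)| \leq \sum_k |(\phi_1^k \otimes \phi_2^k \otimes \phi_3)(f)| \leq (1+\eta)\|f\|_{\epsilon,3},
\end{equation*}
and letting $\eta \to 0$ yields the claim. The identity $\|f\|_{N_1 \otimes_\epsilon (N_2 \otimes_\epsilon N_3)} = \|f\|_{\epsilon,3}$ follows by the analogous argument.

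The main obstacle is the harder direction of the injective associativity, which requires approximating an arbitrary functional on $V_1 \otimes_\epsilon V_2$ by sums of decomposable ones with controlled total ``projective mass''; this is exactly what Lemma~\ref{lem_dual_proj_inj} provides, and without it one would have no handle on the unit ball of $(V_1 \otimes_\epsilon V_2)^*$. Alternatively, one could first prove the projective associativity by the direct calculation above and then deduce the injective associativity by taking duals and applying Lemma~\ref{lem_dual_proj_inj} twice, using that in finite dimensions a norm coincides with its double dual; I would include this as a short remark since it gives a second, fully dual-theoretic route to the same conclusion.
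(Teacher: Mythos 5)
Your argument is correct and is essentially the same route the paper intends: the paper's proof is the terse ``Follows directly from the definitions,'' which (as a commented-out sketch in the source confirms) means verifying that both iterated norms admit the symmetric triple-tensor descriptions $\|f\|_{\epsilon,3}$ and $\|f\|_{\pi,3}$, exactly as you do. Your invocation of Lemma~\ref{lem_dual_proj_inj} to control the dual unit ball of $N_1 \otimes_\epsilon N_2$ in the harder injective direction is a necessary detail the paper leaves implicit (it is equivalent to the bipolar-theorem fact that this unit ball is the closed absolutely convex hull of the elementary tensors), and your closing remark that one could instead prove only the easy projective identity and dualize twice is an equally valid shortcut.
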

	\begin{proof}
		Follows directly from the definitions.
	\end{proof}
	Let us establish the following transitivity property for the quotients of projective tensor norm.
	Let $Q_3$ be a quotient of $V_1 \otimes V_2$ and $Q$ be a quotient of $V_1 \otimes V_2 \otimes V_3$, which factorizes through the natural map $V_1 \otimes V_2 \otimes V_3 \to Q_3 \otimes V_3$.
	\begin{cor}\label{cor_quot_ass}
		On the vector space $Q$, we have the following identity between the norms
		\begin{equation}\label{eq_quot_ass}
			\big[ [N_1 \otimes_{\pi} N_2] \otimes_{\pi} N_3 \big]
			=
			\big[ N_1 \otimes_{\pi} N_2 \otimes_{\pi} N_3 \big],
		\end{equation}
		where we used the notation for the quotient norm introduced in (\ref{eq_defn_quot_norm}).
		Moreover, if $N_1$, $N_2$, $N_3$ come from Hermitian products, then the analogous property holds for the Hermitian tensor product
		\begin{equation}\label{eq_quot_ass2}
			\big[ [N_1 \otimes N_2] \otimes N_3 \big]
			=
			\big[ N_1 \otimes N_2 \otimes N_3 \big].
		\end{equation}
	\end{cor}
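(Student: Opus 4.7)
The plan is to establish the identity by reducing it, via associativity and two successive applications of the compatibility of quotients with projective tensor products, to the general fact that the quotient norm construction is transitive with respect to compositions of surjections.

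First, I would record the elementary observation that for any surjection $\pi : (U, N_U) \to Q$ and any further surjection $\pi' : Q \to Q'$, the quotient norm $[[N_U]]$ on $Q'$, obtained by first taking the quotient through $\pi$ and then through $\pi'$, coincides with the quotient norm on $Q'$ induced directly from $N_U$ through the composition $\pi' \circ \pi$. This follows immediately from unwinding (\ref{eq_defn_quot_norm}): any representative of an element of $Q'$ lifts through $\pi' \circ \pi$ iff it lifts through $\pi$ to an element that lifts through $\pi'$, and the two successive infima over the respective fibres equal the single infimum over the combined fibre.

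Next, I would apply Lemma~\ref{lem_quot_proj} to the surjection $V_1 \otimes V_2 \to Q_3$ tensored (over $V_3$) with the identity on $V_3$: this gives the identity of norms on $Q_3 \otimes V_3$
\begin{equation*}
[N_1 \otimes_{\pi} N_2] \otimes_{\pi} N_3 = \big[(N_1 \otimes_{\pi} N_2) \otimes_{\pi} N_3\big],
\end{equation*}
where on the right the bracket refers to the quotient induced by the surjection $V_1 \otimes V_2 \otimes V_3 \to Q_3 \otimes V_3$. Using associativity of the projective tensor norm from Lemma~\ref{lem_assoc} on the right-hand side, we may replace $(N_1 \otimes_{\pi} N_2) \otimes_{\pi} N_3$ by $N_1 \otimes_{\pi} N_2 \otimes_{\pi} N_3$.

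Finally, by the hypothesis that $Q$ is a quotient of $V_1 \otimes V_2 \otimes V_3$ that factors through $Q_3 \otimes V_3$, the left-hand side of (\ref{eq_quot_ass}) is the quotient of $[N_1 \otimes_{\pi} N_2] \otimes_{\pi} N_3$ along the remaining surjection $Q_3 \otimes V_3 \to Q$. Combining this with the previous step and the transitivity of quotient norms from the first paragraph yields exactly the right-hand side of (\ref{eq_quot_ass}). The Hermitian case (\ref{eq_quot_ass2}) is proved by replacing $\otimes_{\pi}$ with the Hermitian tensor norm throughout and using the Hermitian parts of Lemmas~\ref{lem_quot_proj} and~\ref{lem_assoc}. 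I do not foresee a genuine obstacle: the only subtlety is to keep track of exactly which surjection each bracket refers to, so that Lemma~\ref{lem_quot_proj} is applied in the correct direction and transitivity of quotient norms can be invoked cleanly.
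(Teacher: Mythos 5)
Your proof is correct and follows exactly the route the paper intends: the paper proves this corollary in one line by citing Lemmas~\ref{lem_quot_proj} and~\ref{lem_assoc}, and your argument makes explicit the only piece the paper leaves implicit, namely the transitivity of quotient norms under composed surjections. The bookkeeping of which surjection each bracket refers to is handled correctly.
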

	\begin{proof}
		It follows trivially from Lemmas \ref{lem_quot_proj} and \ref{lem_assoc}.
	\end{proof}
	\par 
	On several occasions we need to compare injective and projective tensor norms.
	Let $(V_i, N_i = \| \cdot \|_i)$, $i = 1, \ldots, N$, $N \in \nat^*$ be finitely dimensional normed vector spaces.
	\begin{lem}\label{lem_inj_proj_bnd_nntr}
		The following inequality between the norms on $V_1 \otimes \cdots \otimes V_N$ holds
		\begin{equation}\label{eq_bnd_proj_inf}
			 N_1 \otimes_{\pi} \cdots \otimes_{\pi} N_N
			 \leq
			 N_1 \otimes_{\epsilon} \cdots \otimes_{\epsilon} N_N
			 \cdot
			 \frac{\dim N_1 \cdot \ldots \cdot \dim N_N}{\max \{ \dim N_1, \cdots, \dim N_N \}}.
		\end{equation}
	\end{lem}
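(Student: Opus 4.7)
The plan is to reduce the estimate to a single application of Auerbach's lemma: in every factor except the one of maximal dimension we pick a simultaneously normalized basis and dual basis, expand $f$ in the resulting "partial basis," and then interpret each partial coefficient as a vector in the remaining factor whose norm is directly controlled by the injective norm.

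More precisely, without loss of generality assume $\dim V_N = \max_k \dim V_k$, and set $d_k := \dim V_k$. By Auerbach's lemma, for each $k = 1, \ldots, N-1$ one can choose a basis $(e^k_{i})_{i=1}^{d_k}$ of $V_k$ and its dual basis $(\phi^k_i)_{i=1}^{d_k}$ of $V_k^*$ satisfying $\| e^k_i \|_k = \| \phi^k_i \|_k^* = 1$. Given $f \in V_1 \otimes \cdots \otimes V_N$, the tensor decomposes uniquely as
\begin{equation*}
	f = \sum_{i_1, \ldots, i_{N-1}} e^1_{i_1} \otimes \cdots \otimes e^{N-1}_{i_{N-1}} \otimes v_{i_1, \ldots, i_{N-1}}(f),
\end{equation*}
where $v_{i_1, \ldots, i_{N-1}}(f) = \bigl( \phi^1_{i_1} \otimes \cdots \otimes \phi^{N-1}_{i_{N-1}} \otimes \mathrm{Id}_{V_N} \bigr)(f) \in V_N$ is the partial evaluation.

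Using the triangle inequality for $N_1 \otimes_{\pi} \cdots \otimes_{\pi} N_N$, together with Lemma \ref{lem_tens_reas} and Lemma \ref{lem_assoc} applied to the decomposable summands (which have projective norm equal to $\prod_{k < N} \|e^k_{i_k}\|_k \cdot \|v_{i_1, \ldots, i_{N-1}}(f)\|_N = \|v_{i_1, \ldots, i_{N-1}}(f)\|_N$), one obtains
\begin{equation*}
	\| f \|_{\otimes_{\pi}} \leq \sum_{i_1, \ldots, i_{N-1}} \| v_{i_1, \ldots, i_{N-1}}(f) \|_N.
\end{equation*}
To finish, each summand is bounded by the injective norm: by duality,
\begin{equation*}
	\| v_{i_1, \ldots, i_{N-1}}(f) \|_N
	= \sup_{\psi \in V_N^*,\, \|\psi\|_N^* \leq 1} \bigl| \bigl( \phi^1_{i_1} \otimes \cdots \otimes \phi^{N-1}_{i_{N-1}} \otimes \psi \bigr)(f) \bigr|
	\leq \| f \|_{\otimes_{\epsilon}},
\end{equation*}
because $\|\phi^k_{i_k}\|_k^* = 1$ makes each $\phi^1_{i_1} \otimes \cdots \otimes \phi^{N-1}_{i_{N-1}} \otimes \psi$ admissible in the supremum defining the injective norm. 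Since the outer sum has $d_1 \cdots d_{N-1} = \bigl(\prod_k d_k\bigr) / d_N$ terms, the announced bound follows.

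There is no real obstacle here; the only nontrivial ingredient is Auerbach's lemma, which is what allows to avoid the naive bound $\prod_k d_k$ (that would come from using an arbitrary basis and paying separately for the norms of basis vectors and dual functionals) and gain the factor $d_N = \max_k \dim V_k$ in the denominator.
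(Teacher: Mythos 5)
Your argument is correct, but it differs from the paper's. The paper's proof is a two-liner: it cites Theorem~21 of the reference \cite{LamiPalaWint} for the case $N = 2$ and then asserts that the general case follows by induction together with Lemma~\ref{lem_assoc} (associativity of $\otimes_\pi$ and $\otimes_\epsilon$); making that induction precise also needs the monotonicity of $\otimes_\epsilon$ with respect to the norm on each factor, which the paper uses implicitly. You instead unwind the classical proof of the $N = 2$ case (Auerbach basis in all but the largest factor, triangle inequality in $\otimes_\pi$, and the dual characterization of $\otimes_\epsilon$ to control each coefficient vector) and run it directly for $N$ factors, sidestepping both the citation and the inductive bookkeeping. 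The result is a self-contained and arguably cleaner argument: it makes visible exactly where the factor $\prod_k d_k / \max_k d_k$ comes from (the number of multi-indices over the $N-1$ smaller factors) and why Auerbach normalization is the only nontrivial input, whereas the paper's route is shorter on the page but defers the real content to an external reference and an induction whose monotonicity hypothesis is not spelled out. Both are valid; yours trades a citation for a couple of explicit computations.
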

	\begin{proof}
		It is classical, cf. \cite[Theorem 21]{LamiPalaWint}, that (\ref{eq_bnd_proj_inf}) holds for $N = 2$.
		The general result now follows from this special case by induction and Lemma \ref{lem_assoc}.
	\end{proof}
	\par 
	We will finally calculate explicitly some examples of injective and projective tensor norms.
	In what follows, we consider vector space $\real^n$, endowed with the $L^1$, $L^2$ and $L^{\infty}$-norms, which we denote below in a slightly non-standard manner as $l^1_n$, $l^2_n$, $l^{\infty}_n$.
	We fix also two compact topological spaces  $K_1, K_2$ and denote by $(\ccal^0(K_i), \| \cdot \|_{L^{\infty}_i})$, $i = 1, 2$, the associated spaces of continuous functions on $K_i$, endowed with the supremum norm.
	We consider the analogous functional space on the product space $(\ccal^0(K_1 \times K_2), \| \cdot \|_{L^{\infty}_{1 \times 2}})$.
	Let us also fix two measured spaces $(X_i, \mu_i)$, $i = 1, 2$, and consider the associated spaces of $L^1$ and $L^2$-functions $(L^q(X_i), \| \cdot \|_{L^q(\mu_i)})$, $q = 1, 2$.
	We consider the product measure $\mu_1 \times \mu_2$ on $X_1 \times X_2$ and the associated spaces of $L^1$ and $L^2$-functions, $(L^q(X_1 \times X_2), \| \cdot \|_{L^q(\mu_1 \times \mu_2)})$, $q = 1, 2$.
	\begin{lem}\label{lem_inj_proj_expl}
		The following identities between Banach spaces hold
		\begin{equation}\label{eq_inj_proj_expl11}
		\begin{aligned}
			&
			\big( L^1(X_1), \| \cdot \|_{L^1(\mu_1)} \big) \hat{\otimes}_{\pi} \big( L^1(X_2), \| \cdot \|_{L^1(\mu_2)} \big) &&= \big( L^1(X_1 \times X_2), \| \cdot \|_{L^1(\mu_1 \times \mu_2)} \big),
			\\
			&
			\big( L^2(X_1), \| \cdot \|_{L^2(\mu_1)} \big) \otimes \big( L^2(X_2), \| \cdot \|_{L^2(\mu_2)} \big) &&= \big( L^2(X_1 \times X_2), \| \cdot \|_{L^2(\mu_1 \times \mu_2)} \big),
			\\
			&
			\big( \ccal^0(K_1), \| \cdot \|_{L^{\infty}_1} \big) \hat{\otimes}_{\epsilon} \big( \ccal^0(K_2), \| \cdot \|_{L^{\infty}_2} \big) &&= \big( \ccal^0(K_1 \times K_2), \| \cdot \|_{L^{\infty}_{1 \times 2}} \big),
		\end{aligned}
		\end{equation}
		where the tensor product in the second line means the tensor product of Hilbert spaces.
		In particular, under the natural isomorphism $\real^n \otimes \real^m \to \real^{n \cdot m}$, for norms on $\real^n \otimes \real^m$, we have
		\begin{equation}\label{eq_inj_proj_expl22}
			l^1_n \otimes_{\pi} l^1_m = l^1_{n \cdot m},
			\qquad
			l^2_n \otimes l^2_m = l^2_{n \cdot m},
			\qquad
			l^{\infty}_n \otimes_{\epsilon} l^{\infty}_m = l^{\infty}_{n \cdot m}.
		\end{equation}
	\end{lem}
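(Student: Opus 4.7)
The plan is to verify each of the three Banach space identities in \eqref{eq_inj_proj_expl11} by the same two-step pattern: first show that the canonical bilinear pairing $(f, g) \mapsto f(x_1) g(x_2)$ extends to an isometric embedding of the algebraic tensor product, endowed with the appropriate tensor norm, into the target function space; second, invoke density of the image to identify the completion with the target. The finite-dimensional assertions \eqref{eq_inj_proj_expl22} will then follow by specialising to $X_i$ (resp. $K_i$) a finite set with counting measure (resp. discrete topology), where no completion is needed.

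For the Hilbertian case, the identity $\langle f_1 \otimes g_1, f_2 \otimes g_2 \rangle_{L^2(\mu_1 \times \mu_2)} = \langle f_1, f_2\rangle_{L^2(\mu_1)} \cdot \langle g_1, g_2\rangle_{L^2(\mu_2)}$, which is immediate from Fubini, shows that the canonical map is an isometry for the Hilbertian tensor product structure on the domain; the density of simple tensors in $L^2(X_1 \times X_2)$, obtained by approximating indicator functions of measurable rectangles, then provides the isomorphism. For the injective case, evaluation functionals $\delta_x$ at points of $K_i$ sit in the closed unit balls of the duals; choosing $\phi = \delta_{x_1}$, $\psi = \delta_{x_2}$ in \eqref{eq_defn_inf_norm} gives $\|\cdot\|_{\otimes_\epsilon} \ge \|\cdot\|_{L^\infty_{1 \times 2}}$. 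The reverse bound is obtained by a slicing argument: for fixed $\phi \in \ccal^0(K_1)^*$ of unit norm, the function $x_2 \mapsto \phi(h(\cdot, x_2))$ on $K_2$ is continuous with sup norm at most $\|h\|_{L^\infty_{1 \times 2}}$, and applying a further unit-norm $\psi$ preserves the bound. Density of $\ccal^0(K_1) \otimes \ccal^0(K_2)$ in $\ccal^0(K_1 \times K_2)$ is furnished by Stone--Weierstrass, since the algebraic tensor product is a point-separating subalgebra containing the constants.

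The main obstacle is the projective $L^1$ case, where the norms on each side are not defined by such a clean pairing. One inequality is easy: combining Fubini on decomposable tensors with the triangle inequality yields $\|\sum f_i \otimes g_i\|_{L^1(\mu_1 \times \mu_2)} \le \sum \|f_i\|_{L^1(\mu_1)} \|g_i\|_{L^1(\mu_2)}$, and taking the infimum over representations gives $\|\cdot\|_{L^1(\mu_1 \times \mu_2)} \le \|\cdot\|_{\otimes_\pi}$. For the opposite direction, one approximates a general $h \in L^1(X_1 \times X_2)$ by step functions of the form $\sum_i c_i \chi_{A_i} \times \chi_{B_i}$ with $A_i \times B_i$ pairwise disjoint measurable rectangles, for which the tensor $\sum_i c_i \chi_{A_i} \otimes \chi_{B_i}$ has projective norm at most $\sum_i |c_i| \mu_1(A_i) \mu_2(B_i)$, matching its $L^1(\mu_1 \times \mu_2)$ norm. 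Decomposing a general complex-valued $h$ into real and imaginary, positive and negative parts and passing to the limit, one concludes $\|h\|_{\otimes_\pi} \le \|h\|_{L^1(\mu_1 \times \mu_2)}$, so that the canonical map is isometric; density of step functions together with completeness of the projective completion yields the stated identification.

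For \eqref{eq_inj_proj_expl22}, one specialises to $X_i = \{1, \ldots, n\}$ (resp. $m$) with counting measure, or $K_i$ discrete; all three identities reduce to direct computations on the standard basis $(e_i \otimes e_j)$. In the projective $\ell^1$ case, the needed lower bound is transparent since $(e_i \otimes e_j)$ is $1$-unconditional and any $h = \sum h_{ij} e_i \otimes e_j$ admits the representation certifying $\|h\|_{\otimes_\pi} \le \sum |h_{ij}|$; the reverse follows from the infinite-dimensional result or is proved identically. The Hilbertian and injective identities likewise reduce to the algebraic fact that simple tensors span and the isometric identities already verified.
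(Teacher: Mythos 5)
The paper disposes of the lemma entirely by reference: the projective $L^1$ and injective $\ccal^0$ identifications are cited to Ryan's monograph and the Hilbertian one is declared standard, with the finite-dimensional version \eqref{eq_inj_proj_expl22} then obtained by specialization to discrete spaces, as you also do. You instead give a self-contained proof of all three identities, which is correct and makes the lemma independent of the cited source at the cost of length. Your Fubini argument for the Hilbertian case and your combination of point evaluations (for the lower bound), the slicing bound (for the upper bound), and Stone--Weierstrass (for density) in the $\ccal^0$ case are the standard arguments and are fine. In the projective $L^1$ case, you are slightly terse in passing from the isometry on step functions supported on pairwise disjoint rectangles to the full identification: to upgrade $\|\cdot\|_{\otimes_\pi}\le\|\cdot\|_{L^1(\mu_1\times\mu_2)}$ from such step functions to all of $L^1(X_1)\otimes L^1(X_2)$, you should add that these step functions are dense in the algebraic tensor product for the projective norm (simple functions being dense in each $L^1(X_i)$, and $(f,g)\mapsto f\otimes g$ being jointly continuous for $\otimes_\pi$), so that the equality of the two norms propagates to the whole algebraic tensor product and both completions are then identified with $L^1(X_1\times X_2)$. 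With that remark added, your argument is a valid substitute for the paper's citations.
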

	\begin{proof}
		Remark first that the identities (\ref{eq_inj_proj_expl22}) follow directly from (\ref{eq_inj_proj_expl11}) by applying the result for discrete spaces.
		Now, the first (resp. last) identity from (\ref{eq_inj_proj_expl11}) is from Ryan \cite[Exercise 2.8]{RyanTensProd} (resp. Ryan \cite[\S 3.2, p.50]{RyanTensProd}).
		The middle identity from (\ref{eq_inj_proj_expl11}) is standard.
	\end{proof}

\bibliography{bibliography}

		\bibliographystyle{abbrv}

\Addresses

\end{document}